\documentclass[11pt]{article}
\usepackage{pdfsync}
\usepackage{amsfonts}
\usepackage{mathrsfs}
\usepackage{color}
\usepackage{amssymb,amsmath,amsthm}
\usepackage{ulem}

\usepackage{appendix}
\usepackage{cite}
\usepackage{hyperref}
\hypersetup{
	citecolor=red,
	urlcolor=cyan,
}

\usepackage[titles]{tocloft}

 \allowdisplaybreaks

\catcode`!=11
\let\!int\int \def\int{\!int}
\let\!lim\lim \def\lim{\!lim}
\let\!sum\sum \def\sum{\!sum}
\let\!sup\sup \def\sup{\!sup}
\let\!inf\inf \def\inf{\!inf}
\let\!max\max \def\max{\!max}
\let\!min\min \def\min{\!min}

\catcode`!=12

\let\oldsection\section
\renewcommand\section{\setcounter{equation}{0}\oldsection}

\newcommand{\ba}{\begin{aligned}}
\newcommand{\ea}{\end{aligned}}
\newcommand{\be}{\begin{equation}}
\newcommand{\ee}{\end{equation}}
\def\p{\partial}
\def\pf{\it{Proof.}\rm\quad}

\newcommand{\na}{\nabla}

\def\R{\mathbb{R}}

\newtheorem{thm}{Theorem}[section]

\newtheorem{pro}[thm]{Proposition}
\newtheorem{lem}[thm]{Lemma}
\newtheorem{cor}[thm]{\textbf Corollary}
\newtheorem{re}[thm]{Remark}

\everymath{\displaystyle}

\begin{document}
\title{\bf Stability threshold for 3D Boussinesq equations with rotation near the Couette flow and stratified temperature
 		\thanks{hwting702@163.com (Wenting Huang), luozekailzk@163.com (Zekai Luo), sunying@bnu.edu.cn (Ying Sun), xjxu@bnu.edu.cn (Xiaojing Xu)}
\author{
{Wenting Huang$^1$, \quad Zekai Luo$^1$, \quad Ying Sun$^2$,  \quad    Xiaojing Xu$^1$ } \\
\small   $^1$ School of Mathematical Sciences, Laboratory of Mathematics and Complex Systems, MOE,   \\  \small Beijing Normal University, Beijing, 100875, China.\\  \small   $^2$ School of Mathematical Sciences, Beijing University of Posts and Telecommunications, \\ \small Beijing, 100876, China.}
}
\date{}
\author{xxx}
\maketitle

\noindent{\bf Abstract.} 
The most important features that distinguish fluid flow in the
atmosphere and ocean are the effects of rotation and stratification, which are described by the Boussinesq equations. This paper examines the stability threshold at high Reynolds numbers $\textbf{Re}$ for the three-dimensional   Boussinesq equations with rotation on the domain $\Omega=\{(x,\,y,\,z)\in \mathbb{T} \times \mathbb{R} \times  \mathbb{T}\}$ around the Couette flow $(y,0,0)$ and the  vertically  stratified temperature $\Theta_s=1+\alpha^2 z$. For the linear system without rotation, stratification not only suppresses the lift-up effect but also exhibits certain dispersion effects, except for some points where degradation occurs, which will bring essential difficulties to nonlinear estimates.  In contrast, when rotation is taken into account, we observe that this degeneracy in dispersion effects disappears; furthermore, we can derive dispersive estimates for the second and third components of the simple-zero mode within the velocity field. However, neither the first component of velocity nor temperature exhibits any dispersion characteristics. Fortunately, we find a combined quantity from these components to obtain its dispersive estimates. Additionally, we develop three good unknowns to minimize linear coupling terms as much as possible while mitigating growth induced by linear stretching terms; through constructing a series of multipliers, we achieve enhanced dissipation and inviscid damping effects.
In our analysis of the nonlinear system aimed at establishing an improved stability threshold, we utilize quasi-linearization methods to rectify deficiencies in dispersive estimates related to both the first component of velocity and temperature, as well as address regularity issues along vertical directions caused by buoyancy forces and stratification.
Consequently, we demonstrate that if initial perturbations in velocity and temperature satisfy
$$\left\|u_{\mathrm{in}}\right\|_{H^{N+2}\cap W^{N+3,1}}+\left\|\theta_{\mathrm{in}}\right\|_{H^{N+1}\cap W^{N+3,1}}<\delta \mathbf{Re}^{-\frac{14}{15}},$$
for any $N\geq 11$ and some $\delta>0$ independent of $\mathbf{Re}$, then the solution to the 3D Boussinesq equations with rotation is nonlinearly stable  without transitioning away from the steady state.

\noindent{\bf Keywords.} Stability
threshold; Boussinesq equations with rotation; Couette flow;  Stratification;  Dispersive.

\noindent{\bf AMS Subject Classifications.} 35Q35, 76U05, 76E07, 76F10.

\tableofcontents

\section{Introduction}\label{HSXSEC1}

\qquad In this paper, we consider the following three-dimensional (3D) Boussinesq equations with rotation, which model the motion of incompressible ﬂuids under both the Coriolis force and buoyancy:
\begin{equation}\label{1.1}
 \begin{cases}
\partial_{t} V+V \cdot \nabla V-\nu \Delta V+\nabla P+\beta \mathrm{e}_3 \times V=\Theta \mathbf{g} \mathrm{e}_3, \\
\partial_{t} \Theta +V \cdot \nabla \Theta-\mu \Delta \Theta=0, \\
\nabla \cdot V=0.
\end{cases}
\end{equation}
where $(x_1, y, z) \in \mathbb{T} \times \mathbb{R} \times \mathbb{T}$ (the torus $\mathbb{T}$ is the periodized interval $[0,2\pi]$), $V=\left(V^1, V^2, V^3\right)^{\mathrm{T}}$ denotes the velocity field, $P$ is the pressure, $\Theta$ is the temperature, $\mathrm{e}_3=(0, 0, 1)^{\mathrm{T}}$ is the unit vector in the vertical direction. The term $\beta \mathrm{e}_3 \times V$ represents the Coriolis force, where $ \beta \in \mathbb{R}\backslash\{0\}$ characterizes the rotation speed $\beta/2$. $\Theta \mathbf{g} \mathrm{e}_3$ symbolizes the buoyancy with the  gravity $\mathbf{g}$. $\nu$ represents the viscosity coefficient, i.e., $\nu=\mathbf{Re}^{-1}$, where $\mathbf{Re}$ is the Reynolds number. And $\mu$ is the thermal diffusivity. For simplicity, we take $\mu=\nu,\,\mathbf{g}=1$. The derivation of  \eqref{1.1} can be referred to
 \cite{Majda2003, Pedlosky1987}. The system \eqref{1.1} is supplemented with initial data:
\begin{align}\label{invalue}
(V, \Theta)(0, x_1, y, z)=(V_{\mathrm{in}}, \Theta_{\mathrm{in}})(x_1, y, z).
\end{align}

We  study the dynamic stability behavior of solutions to \eqref{1.1}
around the nontrivial stationary solution, which is known as Couette flow with the linearly stratified temperature profile 
\begin{equation}\label{1.2}
V_s(y)=(y, 0, 0)^{\mathrm{T}},\,\,\,\,\Theta_s(z)=1+\alpha^2 z,
\end{equation}
where $\alpha > 0$ represents the Brunt-Väisälä frequency, which indicates the fluid's response strength to displacements in the direction of gravity. This scenario corresponds to a stable stratification where warmer fluid overlays colder fluid, with gravity acting as a restoring force for vertical perturbations and thereby supporting internal gravity waves.
The associated pressure  of this steady state is
\begin{equation}\label{1.4}
P_s(y, z)=z+\frac{1}{2} (\alpha^2z^2-\beta y^2).
\end{equation}

We  introduce the perturbations
   $u=V-(y, 0, 0)^{\mathrm{T}},  \theta=-\frac{1}{\alpha}\left(\Theta-\Theta_s \right),  p=P-P_s$,
the perturbed $(u, p, \theta)$ satisfies the following system
\begin{equation}\label{1.6}
\begin{cases}
\partial_{t} u+y \partial_{x_1} u-\nu \Delta u+\nabla p^L+\begin{pmatrix}
(1-\beta) u^2 \\
\beta u^1 \\
\alpha \theta
\end{pmatrix}=-u \cdot \nabla u-\nabla p^{NL}, \\
\partial_{t} \theta+y \partial_{x_1} \theta-\nu \Delta \theta-\alpha u^3=-u \cdot \nabla \theta, \\
\nabla \cdot u=0, \\
(u, \theta)(0, x_1, y, z)=(u_{\mathrm{in}}, \theta_{\mathrm{in}})(x_1, y, z),
\end{cases}
\end{equation}
where the initial data $u_{\mathrm{in}}=V_{\mathrm{in}}-(y, 0,0)^{\mathrm{T}}$, $\theta_{\mathrm{in}}=-\frac{1}{\alpha}\left(\Theta_{\mathrm{in}}-(1+\alpha^2 z)\right)$. 
The pressure $p^{L}$ and $p^{NL}$ are determined by 
\begin{align*}
\Delta p^{L}=-2\partial_{x_1} u^2+\beta\left(\partial_{x_1} u^2 -\partial_{y} u^1\right)-\alpha \partial_{z} \theta, \quad 
\Delta p^{NL}=-\partial_{i}u^j \partial_{j}u^i. 
\end{align*}

The Coriolis force, in its role concerning fluid stability, is influenced by the variation of the parameter $\beta$. The relevant physical quantity for quantifying the effects of rotation is known as the Bradshaw-Richardson number (see \cite{Bradshaw1969,Huang2018}), $$B_{\beta}:= \beta(\beta-1)\in [-\frac{1}{4},\infty).$$ This number effectively captures and quantifies the stable or unstable nature of dynamics described in equation \eqref{1.6}.

\subsection{Background and Motivation}
\qquad Before elaborating on the background and motivation,   some relevant notations are introduced. The Fourier transform of a function $f= f(x_1, y, z)$  will be denoted as follows.   Given $(k,\eta,l)\in \mathbb{Z}\times \mathbb{R}\times \mathbb{Z}$, define 
\begin{equation}\label{2.1}
	\mathcal{F}(f)=\widehat{f}\left(k, \eta, l\right)=\frac{1}{(2 \pi)^{2}}\int_{x_1 \in \mathbb{T}}\int_{y \in \mathbb{R}} \int_{z \in \mathbb{T}}f\left(x_1, y, z\right)e^{- i  \left(k x_1+ \eta y+ l z\right)}  dx_1dydz.
\end{equation}
Denoting the inverse operation to the Fourier transform by $\mathcal{F}^{-1}$ or $\check{f}$
\begin{equation}\label{2.2}
	\mathcal{F}^{-1}(f)=\check{f} \left(x_1, y, z\right)=\sum_{k \in \mathbb{Z}}\int_{\eta \in \mathbb{R}} \sum_{l \in \mathbb{Z}} f \left(k, \eta, l\right)e^{ i \left(k x_1+ \eta y+ l z\right)}  d\eta.
\end{equation}
In addition, we also define the Fourier transform in the $z$-direction of a function $h= h(y, z)$ by
\begin{equation*}
	h^l (y) = \frac{1}{2 \pi}\int_{z \in \mathbb{T}}h\left(y, z\right)e^{- i  l z} dz.
\end{equation*}
For a general Fourier multiplier with symbol  $m(k,\eta,l)$, we write $mf$ to denote $\mathcal{F}^{-1}(m(k,\eta,l)\hat{f})$. Thus, it holds
\begin{equation*}
	\widehat{m(D)f}=m(k,\eta,l)\widehat{f}.
\end{equation*}

 We define the projections on the zero modes and the non-zero modes in $x_1$ of a function $f$  as follows:
\begin{equation}\label{HSX333}
	P_{k=0}f=f_{0}=\frac{1}{2 \pi}\int_{\mathbb{T}} f(x_1,y,z)d{x_1}, \quad	P_{\neq}f=f_{\neq}=f-f_{0}. 
\end{equation}
Furthermore, the projection onto the zero frequency in $x_1$ and $z$ of a function $f(x_1,y,z)$ is denoted by
\begin{equation*}
	\overline{f}_0=\frac{1}{(2 \pi)^2}\int_{\mathbb{T}} \int_{\mathbb{T}} f(x_1,y,z)d{x_1} dz, \quad \quad \widetilde{f}_0=f_0-\overline{f}_0,
\end{equation*}
where $\overline{f}_{0}$ represents the double-zero modes and $\widetilde{f}_{0}$ stands for the simple-zero modes of $f$.

\subsubsection{Known results for the Navier-Stokes equation with/without rotation}
\qquad The stability of plane Couette flow has been extensively studied since the pioneering works of Rayleigh \cite{R1879} and Kelvin \cite{K1887}. It is well established that plane Couette flow remains linearly stable for all Reynolds numbers, as noted in \cite{DR, R1973}. However, at high Reynolds numbers, the flow can exhibit nonlinear instability and transition to turbulence under small but finite perturbations—a phenomenon known as the Sommerfeld paradox. One approach to resolving this paradox involves examining the transition threshold problem, a concept rigorously formulated by Trefethen et al. \cite{T1993}. Bedrossian, Germain, and Masmoudi \cite{MR3612004, MR3974608} developed a comprehensive mathematical framework for this issue that can be articulated as follows:

Given a norm $\|\cdot\|_{X}$, determine an exponent $\gamma = \gamma(X)$ such that
\begin{align*}
	&\left\|  u_{\mathrm{in}}  \right\|_{X} \lesssim \; \mathbf{Re}^{-\gamma } \,\, \Rightarrow  \text{ stability},\\
	&
	\left\|  u_{\mathrm{in}}  \right\|_{X}  \gg  \;\mathbf{Re}^{-\gamma } \,\, \Rightarrow  \text{ instability},
\end{align*}
where $\gamma$ is called the \textit{transition threshold}. 

In the absence of the rotation and temperature, the system \eqref{1.1} reduces to the standard Navier-Stokes equations. Significant progress has been made on the stability threshold problem for  Navier-Stokes equations near the Couette flow. For 2D case on the domain
$\mathbb{T}\times \mathbb{R}$,    Bedrossian, Masmoudi and Vicol \cite{MR3448924} demonstrated that the threshold  $\gamma\leq 0$ for the perturbations in Gevrey space.  Bedrossian, Vicol and Wang \cite{MR3867637} showed that  the threshold $\gamma\leq 1/2$ for the perturbation  in Sobolev space $H^N$ with  $N>1$. Wei and Zhang \cite{WZ2023} made the energy estimates in short and long time scale, and constructed a new multiplier to obtain that the threshold  $\gamma\leq 1/3$ in $H^3$. On the bounded domain $\mathbb{T}\times [-1,1]$, the results  depend on the boundary conditions. Under Navier-slip boundary conditions,  Wei and Zhang \cite{WZ2026}  proved that $\gamma\leq {1}/{3}$ for the perturbations in Sobolev class, and Bedrossian, He, Iyer and Wang \cite{BHIW2-2024} showed that  $\gamma\leq 1/2 $ for the perturbations in Gevrey space; Under no-slip boundary conditions, $\gamma\leq {1}/{2}$ for the perturbations in Sobolev space \cite{MR4121130}.  There are still many important works in this area, one can refer to the references \cite{AB2025, MR3974608, BHIW1-2024, BHIW2-2024,   DWZ2021, LLZ2025, LMZ2025, MR4176913} and their related literature. The 3D case is considerably more challenging due to the lift-up effect.  Bedrossian, Germain and Masmoudi \cite{MR3612004} designed the  Fourier multipliers that precisely encode the interplay between the dissipation and possible growth  and   obtained the stability threshold index $\gamma\leq 3/2$ for Sobolev space $H^{\sigma}$ $(\sigma>9/2)$. For a related work also see  \cite{MR4126259, BGM2022} and so on. Later on, Wei and Zhang \cite{MR4373161} showed that the transition threshold $\gamma \leq1$ of 3D Navier-Stokes equations in $H^2$. They further provided a mathematically rigorous proof that the regularity of initial perturbations does not affect this transition threshold.  Chen, Wei and Zhang \cite{CWZ2024}  demonstrated that the transition threshold index is also $\gamma\leq 1$ in the finite channels with the non-slip boundary condition. 

Without thermal stratification (i.e., $\Theta=0$), the system \eqref{1.1}  corresponds to the Navier-Stokes equations with rotation. In contrast to the standard Navier-Stokes equations, the Navier-Stokes equation with rotation  not only
represents a fundamental physical model, but also exhibits stabilizing effects induced by the Coriolis force.
Our prior studies \cite{HSX2024,HSX2024-2} addressed the stability of the Navier-Stokes equation with rotation near Couette flow on the domain $\mathbb{T} \times \mathbb{R} \times \mathbb{T}$. The analysis considered two distinct parameter regimes: one with significant dispersive effects for $B_{\beta}>0$, and another exhibiting a strong lift-up effect at $B_{\beta}=0$. In the dispersive regime, we established the stability threshold  $\gamma\leq 1$. Subsequently, under the additional
condition that 
\begin{align}\label{0in}
	\overline{u_0}(t=0,y):=\int_{\mathbb{T}^2} u_{\mathrm{in}}(x_1,y,z) \mathrm{d}{x_1} \mathrm{d}z=0,
\end{align}
Coti Zelati, Del Zotto and Widmayer \cite{CZDZW2025} established nonlinear dispersive estimates, thereby obtaining the threshold $\gamma \leq 8/9$ for $B_{\beta}>0$ and $\gamma \leq 5/6$ for $B_{\beta}\gtrsim \nu^{-1}$. Recently, Li–Sun–Wang–Wei–Zhang \cite{LSWZ2025} further removed the restricted condition \eqref{0in}. By deriving suitable Strichartz estimates
\[
\left\| e^{\mathcal{A} t} h^l \right\|_{L_t^2 (L_y^{\infty}(\mathbb{R}))} 
+ \nu^{1/2} |l| \left\| e^{\mathcal{A} t} h^l \right\|_{L_t^1 (L_y^{\infty}(\mathbb{R}))} 
\lesssim \nu^{-1/3} |l|^{-\frac{1}{6}} \| h^l \|_{L^2(\mathbb{R})},
\]
where the dispersive semigroup $\mathcal{A}:=\sqrt{B_{\beta}} \mathcal{R}_3^l+\nu(\partial_y^2-l^2)$ with $\mathcal{R}_3^l:=-il(l^2-\partial_y^2)^{-\frac{1}{2}}$ and $h^l=\sum_{j \in \mathbb{Z}} P_j h^l$,
they obtained an improved threshold $\gamma \leq 5/6$ in anisotropic Sobolev spaces on the domain $\mathbb{T} \times \mathbb{R}\times\mathbb{T}$. Moreover, they also provided a  stability threshold of the form $2/3+\delta$ with $\delta>0$ on the domain $\mathbb{T} \times \mathbb{R}^2$.

\subsubsection{Motivation behind \eqref{1.1} with vertical stratification}\label{s1.1.2}
\qquad The interplay between rotation and vertical thermal stratification establishes a fundamental geophysical framework that underpins numerous atmospheric, oceanic, and astrophysical flows. Consequently, examining their combined effects on the stability of Couette flow is directly pertinent to geophysical applications. Recent experimental research by Oxley and Kerswell~\cite{OK2024} has initiated an investigation into the linear stability of the 3D rotating Boussinesq equations in relation to Couette–Poiseuille flow, taking into account the synergistic influences of stratification, rotation, and viscosity. However, the corresponding mathematical theory—particularly regarding the stability of the steady-state solution \eqref{1.2}-\eqref{1.4}—remains incomplete.

The stability of Couette flow under \textit{vertical} stratification $\Theta_s = 1 + \alpha^2 z$ (buoyancy aligned with $\theta\mathbf{e}_3$) remains an unsolved problem, particularly when rotation is present. This contrasts with the more well-developed theory for horizontally stratified configurations. We first recall the linear dispersion relations for two canonical non-rotating ($\beta=0$) setups:
	\begin{itemize}
		\item \textbf{Vertical stratification} ($\Theta_s = 1 + \alpha^2 z$ and $\theta\mathbf{e}_3$):\\
		Notice that the simple zero modes satisfy
		\begin{equation}
		\partial_{t} \begin{bmatrix}
		{\widetilde{U}_{0}^{2}} \\
		{\widetilde{\Theta}_{0}}
		\end{bmatrix}=\begin{bmatrix}
		\nu \Delta_{L,0} & \alpha \partial_{y} \Delta_{L,0}^{-1} \partial_{z} \\
		-\alpha \partial_{z}^{-1} \partial_{y} & \nu \Delta_{L,0}
		\end{bmatrix}\begin{bmatrix}
		{\widetilde{U}_{0}^{2}} \\
		{\widetilde{\Theta}_{0}}
		\end{bmatrix},\nonumber
		\end{equation}
		and the eigenvalues are
		\[
		\lambda_{\pm} = -\nu(\eta^2+l^2) \pm i\frac{\alpha|\eta|}{\sqrt{\eta^2+l^2}}.
		\]
		The dispersion relation is $\frac{h}{|\eta,l|} = \frac{\alpha|\eta|}{|\eta,l|}$. And it can be seen that $h$ will serve as the denominator of the single-zero modes. We encounter a singularity at wavenumber $\eta = 0$, which poses intrinsic difficulties in establishing the dispersion estimate.

		\quad When the buoyancy term $\Theta \mathrm{e}_3$ is replaced by $\Theta \mathrm{e}_2$ with $\mathrm{e}_2=(0,1,0)^{\mathrm{T}}$, Coti Zelati and Del Zotto \cite{Zelati2023, ZZW2024} first established the linear and nonlinear stability results for the 3D Boussinesq equations near the stably stratified Couette flow, i.e., $V_s = (y, 0, 0)$, $\Theta_s = 1 + \alpha^2 y$ with $\alpha > 0$. In addition, they found that the dispersive
		structure exhibited by the zero-frequency components as follows:
		\item \textbf{Horizontal stratification} ($\Theta_s = 1 + \alpha^2 y$ and  $\theta\mathbf{e}_2$):\\
		Coti Zelati, Del Zotto and Widmayer investigated the 3D Boussinesq system with horizontal stratification in  \cite{ZZW2024}. For the simple-zero modes satisfy
		\begin{equation}
		\partial_{t} \begin{bmatrix}
		{\widetilde{U}_{0}^{2}} \\
		{\widetilde{\Theta}_{0}}
		\end{bmatrix}=\begin{bmatrix}
		\nu \Delta_{L,0} & -\alpha \partial_{z} \Delta_{L,0}^{-1} \partial_{z} \\
		\alpha & \nu \Delta_{L,0}
		\end{bmatrix}\begin{bmatrix}
		{\widetilde{U}_{0}^{2}} \\
		{\widetilde{\Theta}_{0}}
		\end{bmatrix},\nonumber
		\end{equation}
		and the eigenvalues become
		\[
		\lambda_{\pm}=-\nu (\eta^2+l^2)\pm i \frac{\alpha |l|}{\sqrt{\eta^2+l^2}}.
		\]
		Here, dispersion persists for all simple-zero modes, namely $l\neq 0$, and no degeneracy occurs. Based on this,  under the extra condition in  \cite{ZZW2024} that 
		\[
		\int_{\mathbb{T}^2} u_{\mathrm{in}} \mathrm{d}{x_1} \mathrm{d}z=\int_{\mathbb{T}^2} \theta_{\mathrm{in}} \mathrm{d}{x_1} \mathrm{d}z=0,
		\]
		 Coti Zelati, Del Zotto and Widmayer further proved the stability thresholds: $\gamma\leq 11/12$ for $\alpha>1/4$ and $\gamma\leq 8/9$ for $\alpha\gtrsim \nu^{-1}$. More recently, Cui, Wang and Wang \cite{CWW2025} demonstrated that when the initial velocity and temperature satisfy $\left\| V_{\mathrm{in}}-(y,0,0)  \right\|_{H^2} \leqslant \delta \mathbf{Re}^{-1}$ and $\left\| \Theta_{\mathrm{in}}  \right\|_{H^2} \leqslant \delta \mathbf{Re}^{-2}$ for some $\delta>0$, the 3D Boussinesq equations exhibit nonlinear stability. In their work,  the steady state  of the temperature is $\Theta_s=0$, while the steady state of the velocity  remains the Couette flow. More specifically, for the relevant results of the 2D Boussinesq equation, one can refer to the  references \cite{Arbon2025, BBCD2023, DWZ2021, MSZ2022, NZ2024, ZhaiZ2023, ZZ2023}.
	\end{itemize}
			
  When the system incorporates the rotational effect, one finds that rotation can suppress the lift-up effect and have the dispersion effect. It is great surprised that when both the rotational and stratification effects are taken into account,  we find the dispersion relation is reshaped, and the dispersive waves is restored at all streamwise frequencies $\eta \in \mathbb{R}$, thereby  eliminating the singularity at $\eta=0$ present in the non-rotating, vertically stratified case.
	\begin{itemize}
		\item \textbf {Vertical stratification with rotation} ($\Theta_s = 1 + \alpha^2 z$,  and  $\theta\mathbf{e}_3$)\\
		The linearized equations for $(\widetilde{U}_0^1, \widetilde{U}_0^2, \widetilde{\Theta}_0)$ read:
		\begin{equation*}
		\partial_{t} 
		\begin{bmatrix}
		\widetilde{U}_{0}^{1} \\
		\widetilde{U}_{0}^{2} \\
		\widetilde{\Theta}_{0}
		\end{bmatrix}
		=
		\begin{bmatrix}
		\nu \Delta_{L,0}  & \beta-1 & 0 \\[4pt]
		-\beta \partial_{z} \Delta_{L,0}^{-1} \partial_{z} & \nu \Delta_{L,0} & \alpha \partial_{y} \Delta_{L,0}^{-1} \partial_{z} \\[4pt]
		0 & -\alpha \partial_{z}^{-1} \partial_{y} & \nu \Delta_{L,0}
		\end{bmatrix}
		\begin{bmatrix}
		\widetilde{U}_{0}^{1} \\
		\widetilde{U}_{0}^{2} \\
		\widetilde{\Theta}_{0}
		\end{bmatrix},
		\end{equation*}
		
	and the eigenvalues are
		\[
		\lambda_1 = -\nu (\eta^2+l^2), \qquad 
		\lambda_{\pm} = -\nu (\eta^2+l^2) \pm i \, \frac{h(\alpha, \beta, \eta,l)}{\sqrt{\eta^2+l^2}},
		\]
		where the dispersion relation $h(\alpha, \beta, \eta,l)$ is given by
\[
	\frac{h(\alpha, \beta, \eta, l)}{|\eta,l|} = \frac{\sqrt{\alpha^2 \eta^2 + B_{\beta} l^2}}{|\eta, l|}, \quad (\eta, l) \in \mathbb{R} \times (\mathbb{Z}\setminus\{0\}),
\]
where $\alpha$ and $\beta$  represent the strengths of  stratification and the speed of rotation, respectively. The key observation is that $h(\alpha, \beta, \eta,l) > 0$ for all $l \neq 0$ and $\eta\in \R$ under the condition $B_{\beta}>0$ due to the participation of the rotation effect, which can simplify the situation. 
	\end{itemize}
\qquad The dynamics of fluids are significantly influenced by rotation and temperature stratification. Rapid rotation excites inertial waves, while perturbations in a stably stratified state generate internal gravity waves. It is also important to note that all these waves exhibit dispersive characteristics. As highlighted by Babin et al. \cite{BMN1999}, one of the primary challenges in understanding geophysical flow dynamics lies in accounting for the oscillatory effects produced by such dispersive waves. This phenomenon has already been applied to investigate the global well-posedness of equation \eqref{1.1}, as referenced in works such as \cite{HZ2023, IMT2017, KMY2012, MS2022, SLF2025}.


	


\subsection{Summary of main result}
\qquad We first introduce a coordinate transform  described in detail in \cite{MR4126259, MR3612004, Zelati2023},  which  is a central tool for our analysis. Due to the characteristics of the transport structure in (\ref{1.6}), we define a linear change of variable 
\begin{equation}\label{2.5}
x=x_{1}- t y, \quad \quad y=y, \quad \quad z=z.
\end{equation}
In the new coordinates, the corresponding differential operators become
\begin{equation}\label{2.6}
\nabla =(\partial_{x_1},\partial_{y},\partial_{z}) \mapsto \nabla_{L}:=(\partial_{x},\partial_{y}^{L},\partial_{z}), \quad\quad \Delta\mapsto \Delta_{L}
\end{equation}
where 
\begin{equation}\label{2.77}
\partial_{y}^{L}:=\partial_{y}-t\partial_{x},\quad  \Delta_{L}:=\partial_{x}^{2}+(\partial_{y}^{L})+\partial_{z}^{2}.
\end{equation}
Furthermore, we define 
\[  
\nabla_{L,h}:=(\partial_{x},\partial_{y}-t\partial_{x}),\, \quad
\Delta_{L,h}:=(\p_x)^2+(\p_y-t\p_x)^2,
\]
which implies the Fourier symbol  denoted by (\ref{2.5}) as
\begin{equation}\label{HSX444}
	|\widehat{\nabla_{L}}|=|k,\eta-kt,l|,\quad		|\widehat{\nabla_{L,h}}|=|k,\eta-kt|,
\end{equation}
and 
\begin{equation}\label{2.8}
	p(t, k, \eta, l):=\widehat{-\Delta_{L}}=k^2+(\eta- kt)^2+l^2,\quad
	p_{h}(t, k, \eta):= \widehat{-\Delta_{L,h}}=k^2+(\eta- kt)^2.
\end{equation}
Note that $p$ and $p_{h}$ are time dependent, thus their time derivative is 
\begin{equation}\label{2.9}
	\partial_{t}	{p}= \partial_{t}{p_{h}} =-2 k (\eta- kt).
\end{equation}

Combining the definition of the new coordinates as in (\ref{2.5}) and denoting $U^i(t, x, y, z)=u^i(t, x_1, y, z)$ $(i=1, 2, 3)$, $  \Theta(t, x, y, z)=\theta(t, x_1, y, z)$. \eqref{1.6} then can be  rewritten as follows

\begin{align}\label{2.11}
\begin{cases}
\partial_{t} U-\nu \Delta_{L} U+\begin{pmatrix}
\left(1-\beta \right) U^2 \\
\beta U^1 \\
\alpha\Theta\end{pmatrix}+\left( \beta-2 \right) \nabla_{L} \Delta_{L}^{-1} \partial_{x} U^2-\beta \nabla_{L} \Delta_{L}^{-1} \partial_{y}^{L} U^1\\
\quad-\alpha\nabla_{L}\Delta_{L}^{-1}\partial_{z}\Theta=-U\cdot \nabla_{L} U+\nabla_{L} \Delta_{L}^{-1}(\partial_{i}^{L} U^j \partial_{j}^{L} U^i),\\
\partial_{t}\Theta-\nu\Delta_{L}\Theta-\alpha U^{3}=-U\cdot \nabla_{L}\Theta,\\
U(t=0)=U_{\text{in}},\quad \Theta(t=0)= \Theta_{\text{in}}.
\end{cases}
\end{align}

It is important to note that under the coordinate transformation \eqref{2.5}, the initial value conditions presented in \eqref{2.11} are fundamentally equivalent to the initial values stated in \eqref{1.6}. Specifically, we have $U_{\text{in}}=u_{\text{in}}$ and $\Theta_{\text{in}}=\theta_{\text{in}}$. In the following section, we will not make a special distinction regarding this matter. We will examine the behavior of solutions derived from the linearization of \eqref{1.2}.
Our first main result gives the linear stability of \eqref{1.1} near the Couette flow and vertical stratification as follows.

%
\begin{thm}[\textbf{Linear stability}]\label{1.1.}
	Assume   that $\nu>0$, $\alpha>0$, $\beta \in \mathbb{R}$ with $B_{\beta}>0$. Let $u_{\mathrm{in}}$ with a divergence-free, smooth vector field, then the solutions $u=u_{\neq}+\widetilde{u}_0+\overline{u}_0$ and $\theta=\theta_{\neq}+\widetilde{\theta}_0+\overline{\theta}_0$ to the linearized  equation for \eqref{1.6}  satisfy the following  linear stability estimates:
	
	$(1)$  The enhanced dissipation  and inviscid damping of the non-zero  modes $(U_{\neq}, \Theta_{\neq})$:
\begin{align}
 \left\| U^1_{\neq} (t) \right\|_{H^{s}}  &\lesssim\max \left\{\sqrt{\frac{\beta-1}{\beta}}, \sqrt{\frac{\beta}{\beta-1}} \right\} e^{-\frac{1}{16}  \nu^{\frac{1}{3}} t} \left(\left\|  u_{\mathrm{in}}\right\|_{H^{s+3}}+\left\|  \theta_{\mathrm{in}}\right\|_{H^{s+2}} \right) , \label{A.11x}  \\
  {\left\langle {t} \right\rangle} \left\|  U^2_{\neq} (t) \right\|_{H^{s}} &\lesssim \max \left\{\sqrt{\frac{\beta-1}{\beta}}, \sqrt{\frac{\beta}{\beta-1}} \right\} e^{-\frac{1}{16} \nu^{\frac{1}{3}} t} \left(\left\|  u_{\mathrm{in}}\right\|_{H^{s+5}}+\left\|  \theta_{\mathrm{in}}\right\|_{H^{s+4}} \right) , \label{A.22x}  \\
  \left\| (U^3_{\neq}, \Theta_{\neq}) (t) \right\|_{H^s} &\lesssim   e^{-\frac{1}{16} \nu^{\frac{1}{3}} t} \left(\left\|  u^3_{\mathrm{in}}\right\|_{H^{s+3}} + \sqrt{\frac{\beta}{\beta-1}} \left\|  w_{\mathrm{in}}^3\right\|_{H^{s+2}}+\left\|  \theta_{\mathrm{in}}\right\|_{H^{s+2}} \right).\label{A.33x}  
\end{align}

	$(2)$ Cancellation of lift-up effect in the Bradshow-Richardson stable regime: 
	
	For $k=0$,  if $l \neq 0$, then the simple-zero modes $(\widetilde{u}_{0}, \widetilde{\theta}_{0})$ on the Fourier space are written as
    		\begin{align*}
    	\widehat{\widetilde{({u}^{1})_0}}(t, \eta, l)=&\left[ \frac{\alpha^2 \eta^2}{h^2} e^{\lambda_1 t}+\frac{\beta (\beta-1) l^2}{2h^2} (e^{\lambda_2 t}+e^{\lambda_3 t})\right] \widehat{\widetilde{({u^{1}_{\mathrm{in}}})_0}} \\
    	&+\frac{i (\beta-1) |\eta, l|}{2h} (e^{\lambda_3 t}-e^{\lambda_2 t})\widehat{\widetilde{({u^{2}_{\mathrm{in}}})_0}} \\
    	&+\left[\frac{\alpha (\beta-1) \eta l}{h^2} e^{\lambda_1 t}-\frac{\alpha (\beta-1) \eta l}{2h^2}(e^{\lambda_2 t}+e^{\lambda_3 t})\right]\widehat{\widetilde{({\theta_{\mathrm{in}}})_0}}, \\
    	\widehat{\widetilde{({u}^{2})_0}}(t, \eta, l)=&\left[ \frac{i \beta l^2 }{2h|\eta, l|} (e^{\lambda_2 t}-e^{\lambda_3 t})\right] \widehat{\widetilde{({u^{1}_{\mathrm{in}}})_0}}+\frac{1}{2} (e^{\lambda_2 t}+e^{\lambda_3 t})\widehat{\widetilde{({u^{2}_{\mathrm{in}}})_0}}   \\
    	&+\frac{i\alpha \eta l}{2h |\eta, l|}(e^{\lambda_3 t}-e^{\lambda_2 t})\widehat{\widetilde{({\theta_{\mathrm{in}}})_0}},  \\
    	\widehat{\widetilde{({u}^{3})_0}}(t, \eta, l)=&-\frac{\eta}{l}\widehat{\widetilde{({u}^{2})_0}},\\
    	\widehat{\widetilde{(\theta)_0}}(t, \eta, l)=&\left[ \frac{\alpha \beta \eta l}{h^2} e^{\lambda_1 t}-\frac{\alpha \beta \eta l}{2h^2} (e^{\lambda_2 t}+e^{\lambda_3 t})\right] \widehat{\widetilde{({u^{1}_{\mathrm{in}}})_0}} \nonumber\\
    	&+\frac{i \alpha \eta |\eta, l|}{2h l} (e^{\lambda_2 t}-e^{\lambda_3 t})\widehat{\widetilde{({u^{2}_{\mathrm{in}}})_0}} \\
    	&+\left[\frac{\beta (\beta-1) l^2}{h^2} e^{\lambda_1 t}+\frac{\alpha^2 \eta^2}{2h^2}(e^{\lambda_2 t}+e^{\lambda_3 t})\right]\widehat{\widetilde{({\theta_{\mathrm{in}}})_0}}, 
    \end{align*}
	where $h=h(\alpha, \beta, \eta, l)=\sqrt{\alpha^2 \eta^2+ \beta (\beta-1) l^2}$, $\lambda_1=-\nu (\eta^2+l^2)$, $\lambda_2=-\nu (\eta^2+l^2)+i \frac{h}{|\eta, l|}$ and $\lambda_3=-\nu (\eta^2+l^2)-i \frac{h}{|\eta, l|}$. 
	
	If  $l=0$, then the double-zero modes $(\overline{u}_0, \overline{\theta}_0)$ satisfy
	\begin{align}
	\label{thm1.1.1.1}&\overline{u}_0^1(t, y)=e^{\nu \partial_{yy} t} \overline{(u_{\mathrm{in}}^1)_0}, \qquad  
	\overline{u}_0^2(t, y)=0, \\ \label{thm1.1.2}
	&\overline{u}_0^3(t, y)=e^{\nu \partial_{yy} t} \left[ \cos(\alpha t)\overline{(u_{\mathrm{in}}^3)_0}-\sin(\alpha t)\overline{(\theta_{\mathrm{in}})_0}\right],  \\
	&\overline{\theta}_0^3(t, y)=e^{\nu \partial_{yy} t} \left[ \sin(\alpha t)\overline{(u_{\mathrm{in}}^3)_0}+\cos(\alpha t)\overline{(\theta_{\mathrm{in}})_0}\right].  \label{thm1.1.3}
	\end{align}

	$(3)$ The dispersive estimates on the simple-zero modes $\widetilde{u}^2_0$ and $\widetilde{u}^3_0$   satisfy
	\begin{align}
	\left\| (\widetilde{u}_{0}^{2}, \widetilde{u}_{0}^{3})(t)   \right\|_{L^{\infty}(\mathbb{R} \times \mathbb{T})} \lesssim & \,  e^{-\nu t} \left(q t\right)^{-\frac{1}{3}}  \left\|  \widetilde{(u_{\mathrm{in}})_0} \right\|_{W^{3, 1}(\mathbb{R} \times \mathbb{T})} \label{thm1.1.4}
	\end{align}
	where $q=\frac{|B_{\beta}-\alpha^2|}{\alpha}$.
\end{thm}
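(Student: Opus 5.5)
The plan is to treat the three parts separately, working on the Fourier side of the linearization of \eqref{2.11} (all right-hand sides set to zero).

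\textbf{Part (1), non-zero modes.} For $k\neq 0$ I will not estimate $U$ directly. Instead I introduce good unknowns that remove the linear stretching and coupling terms.

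\begin{itemize}
\item First take $Q^2=\Delta_L U^2$ and $Q^3=\Delta_L U^3$. Their equations contain only the Coriolis term through $W^3=\partial_x U^2-\partial_y^L U^1$ (the third component of vorticity) and the buoyancy term through $\partial_x\Theta$, $\partial_y^L\Theta$.
\item Next apply the symmetrizing weights $\sqrt{\beta/(\beta-1)}$ and $\sqrt{(\beta-1)/\beta}$ to the pair $(W^3,\,\partial_z U^2)$. These weights explain the constants $\max\{\sqrt{(\beta-1)/\beta},\sqrt{\beta/(\beta-1)}\}$ appearing in \eqref{A.11x}--\eqref{A.33x}. Using $B_\beta>0$, they make the rotational coupling skew-symmetric.
\item Similarly pair $\Theta$ with $U^3$ with equal weights, so that the $\alpha$-terms cancel in the energy.
\end{itemize}

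With these unknowns, I build a Fourier multiplier $\mathcal{M}=\mathcal{M}_1\mathcal{M}_2\mathcal{M}_3$ of the type in \cite{MR3612004, HSX2024}:

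\begin{itemize}
\item $\mathcal{M}_1$ absorbs the growth coming from $\partial_t p/p$, with $\partial_t\mathcal{M}_1/\mathcal{M}_1=-\frac{|k|^2}{p}$ or a truncated version of it;
\item $\mathcal{M}_2$ gives enhanced dissipation, with $\partial_t \mathcal{M}_2/\mathcal{M}_2=-\frac{\nu^{1/3}}{1+\nu^{2/3}(\eta/k-t)^2}$;
\item $\mathcal{M}_3$ handles the leftover $\frac{|k\,l|}{p}$-type coupling terms.
\end{itemize}

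A weighted energy estimate for $\mathcal{M}e^{c\nu^{1/3}t}(\text{good unknowns})$ then gives uniform bounds. Finally I convert back:

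\begin{itemize}
\item $U^2=\Delta_L^{-1}Q^2$ and the bound $p^{-1}\lesssim\langle t\rangle^{-2}\langle k,\eta,l\rangle^2$ give the inviscid damping factor $\langle t\rangle$ in \eqref{A.22x}, at the cost of two extra derivatives;
\item $U^1$ is recovered from $W^3$, $U^2$ and the divergence-free condition.
\end{itemize}

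This accounts for the regularity losses $s+3$ and $s+5$.

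\textbf{Part (2), zero modes.} For $k=0$ the system reduces to the $3\times 3$ ODE displayed in Section~\ref{s1.1.2} for $(\widetilde U^1_0,\widetilde U^2_0,\widetilde\Theta_0)$. After factoring out $e^{-\nu|\eta,l|^2t}$, the symbol matrix is constant in time. Its characteristic polynomial is
$$\lambda\Big(\lambda^2+\frac{\alpha^2\eta^2+B_\beta l^2}{\eta^2+l^2}\Big),$$
which yields $\lambda_1,\lambda_{2,3}$.

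I compute the eigenvectors and write $e^{tA}$ by spectral decomposition. The coefficient formulas follow from this, and the initial value $t=0$ serves as a consistency check.

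\begin{itemize}
\item $\widetilde u^3_0$ comes from divergence-free with $k=0$, namely $\eta u^2+l u^3=0$.
\item For $l=0$, incompressibility forces $\overline u^2_0=0$ (it is constant in $y$ and decays in $L^2$). This leaves the heat equation for $\overline u^1_0$ and a rotation system $\partial_t(\overline u^3_0,\overline\theta_0)=(-\alpha\overline\theta_0,\alpha\overline u^3_0)+\nu\partial_{yy}(\cdot)$, whose solution is \eqref{thm1.1.2}--\eqref{thm1.1.3}.
\end{itemize}

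\textbf{Part (3), dispersion — the main obstacle.} By part (2), $\widetilde u^2_0$ is a sum of the oscillatory integrals
$$\sum_{l\neq0}\int e^{-\nu(\eta^2+l^2)t}e^{i(y\eta+lz)\pm it\phi(\eta,l)}m(\eta,l)\hat f\,d\eta,\qquad \phi=\frac{\sqrt{\alpha^2\eta^2+B_\beta l^2}}{\sqrt{\eta^2+l^2}}.$$
The multipliers $m$ are bounded because $h\gtrsim|l|>0$. The first factor gives $e^{-\nu t}$ since $|l|\ge1$.

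For fixed $l$, set $\xi=\eta/l$, so that $\phi=\sqrt{(\alpha^2\xi^2+B_\beta)/(\xi^2+1)}$. A direct computation shows
$$\partial_\xi\phi=\frac{(\alpha^2-B_\beta)\,\xi}{(1+\xi^2)^2\,\phi},$$
so $\partial_\xi^2\phi$ vanishes only at isolated points $\xi=\pm c$. At those points, $\partial_\xi^3\phi\sim(\alpha^2-B_\beta)/\alpha$ up to bounded factors.

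The plan is then:

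\begin{enumerate}
\item Apply van der Corput with the third derivative, after localizing $\xi$ with a partition of unity. This gives the kernel bound $(qt)^{-1/3}$ uniformly in $y$.
\item Near infinity, $\partial_\xi^2\phi$ decays like $|\xi|^{-4}$. This region is handled by trading decay in $\xi$ for derivatives on $\hat f$: I take the $L^1_\eta$ norm of $|\eta,l|^{3}\hat f$, bounded by the $W^{3,1}$ norm, and sum in $l$ using the spare powers of $|l|$.
\end{enumerate}

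The delicate point is making the van der Corput constants uniform in $l$ and in the location of the degenerate points, and checking that the $q$-dependence comes out exactly as $q^{-1/3}$.
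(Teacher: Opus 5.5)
Your parts (2) and (3) follow the paper's route. Part (2) is the explicit diagonalization of the constant $3\times3$ symbol for $k=0$, $l\neq0$, together with the heat/rotation system for $l=0$. Part (3) is a van der Corput analysis whose only degeneracy is at the inflection points $\pm\xi_0$ of the rescaled phase. Your third-derivative van der Corput there is equivalent to the paper's excision of $[\xi_0-\delta,\xi_0+\delta]$, the bound $|\Phi''|\ge c_0q\delta$ off it, and the choice $\delta=(qt)^{-1/3}$.

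The genuine gap is in part (1). The coupling structure you describe is not the actual one, so the cancellations you rely on do not occur:
\begin{itemize}
\item The Coriolis term in the $\Delta_LU^2$ equation is not a $W^3$ term. One finds $\partial_t(\Delta_LU^2)=\nu\Delta_L^2U^2-\beta\partial_zW^2+\alpha\partial_y^L\partial_z\Theta$ with $W^2=\partial_zU^1-\partial_xU^3$, while $\partial_tW^3=\nu\Delta_LW^3+(\beta-1)\partial_zU^3$. So $(W^3,\partial_zU^2)$ is not a coupled pair.
\item The buoyancy coupling is asymmetric. In Fourier, the $\Theta$ equation contains $\alpha\widehat{U^3}$ but the $U^3$ equation contains $-\alpha\frac{p_h}{p}\widehat{\Theta}$. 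With equal weights the energy identity retains $\alpha\frac{l^2}{p}\mathrm{Re}\big(\widehat{U^3}\,\overline{\widehat{\Theta}}\big)$. Since $\int_{\mathbb{R}}\frac{l^2}{p}\,dt=\frac{\pi l^2}{|k|\,|k,l|}$, the estimate only closes with a factor exponential in $|l|$.
\item The stretching rate $|\partial_tp|/p=2|k(\eta-kt)|/p$ is not dominated by $k^2/p$, and its time integral grows logarithmically. A bounded ghost multiplier with rate $k^2/p$ therefore cannot absorb it.
\end{itemize}

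The missing idea is the choice of unknowns. The paper uses $(U^3,W^3,\Theta)$, which is star-coupled. $W^3$ and $\Theta$ are forced only by $U^3$, through $(\beta-1)\partial_zU^3$ and $\alpha U^3$. The equation for $\Delta_LU^3$ contains $-\beta\partial_zW^3$, $-\alpha\Delta_{L,h}\Theta$ and $-2\partial_xW^1$; the last is rewritten in terms of $U^3,W^3$ by incompressibility. With no closed cycle, time-dependent diagonal weights suffice. For $Q^*=-|\nabla_L|^2|\nabla_{L,h}|^{-1}U^3$, $K^*=-i\sqrt{\beta/(\beta-1)}\,|\nabla_L||\nabla_{L,h}|^{-1}W^3$ and $H^*=-|\nabla_L|\Theta$, the couplings $\sqrt{B_\beta}\,l/\sqrt p$ and $\alpha\sqrt{p_h/p}$ become exactly skew.

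Two kinds of terms remain:
\begin{itemize}
\item the stretching terms $\frac12\frac{\partial_tp_h}{p_h}$, $\frac12\frac{l^2}{p}\frac{\partial_tp_h}{p_h}$, $\frac12\frac{\partial_tp}{p}$, absorbed by the $\nu$-dependent multiplier $m^*$ (rate $|k(\eta-kt)|/p_h$ on $|t-\eta/k|\le1000\nu^{-1/3}$, after which $\nu p$ dominates);
\item the integrable cross term $2\sqrt{(\beta-1)/\beta}\,\frac{k^2}{p_h}\frac{l}{\sqrt p}$, absorbed by $M_5$.
\end{itemize}
The $\nu$-independent constants and the losses $s+3$, $s+5$ come from $\frac{|k|}{\sqrt{p_h}}\lesssim m^*\lesssim\frac{\sqrt{p_h}}{|k|}$. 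They also come from recovering $U^1,U^2$ via $\Delta_{L,h}U^1=-\partial_y^LW^3-\partial_{xz}U^3$ and $\Delta_{L,h}U^2=\partial_xW^3-\partial_y^L\partial_zU^3$.

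Switching to the $U^2$-based closure $(\Delta_LU^2,W^2,\Theta)$ familiar from rotating Navier--Stokes does not help. All three pairs are then coupled, and the two cyclic products of off-diagonal symbols carry the factors $\beta$ and $\beta-1$ respectively. Hence no diagonal weighting makes the coupling skew when $k(\eta-kt)l\neq0$. This is consistent with the paper's remark that $U^2$ is unsuitable here.
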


\begin{re} 
	$(1)$  
The condition on \( B_{\beta} \) in Theorem \ref{1.1} only necessitates that \( B_{\beta} > 0 \). The proof of the enhanced dissipation for non-zero frequencies can be found in Appendix A. In Section \ref{sec3.1}, we impose the condition \( B_{\beta} > 1/4 \) as a hypocoercivity requirement derived from the cross-estimation presented in equation \eqref{00}. This restriction will facilitate obtaining improved thresholds in the proof of  Theorem \ref{1..1}.

	$(2)$ According to the expression of $q$ in Theorem \ref{1.1.}, the stratification and rotation effect play opposite roles. And  when $B_{\beta}=\alpha^2$, the linear dispersion effect will fail.
\end{re}
Now, we provide the nonlinear stability result of \eqref{1.1}--\eqref{invalue} in Sobolev space.
\begin{thm}[\textbf{Nonlinear stability}]\label{1..1}
	Let $\nu \in (0,1)$ and $\beta \in \mathbb{R}$ with $B_{\beta}>\frac{1}{4}$ and $B_{\beta}\neq \alpha^{2}$. For any $s \geqslant 7$ and $N\geqslant s+4$, there exists a constant $\delta=\delta(N,s,\alpha,\beta)>0$ such that if the initial data $(u_{\mathrm{in}}, \theta_{\mathrm{in}})$ satisfy
	\begin{equation}\label{1.23}
		\int_{\mathbb{T}^2} u_{\mathrm{in}}^3 dx_1 dz=\int_{\mathbb{T}^2} \theta_{\mathrm{in}} dx_1 dz=0,
	\end{equation}
	and
	\begin{align}
	\left\|u_{\mathrm{in}}\right\|_{H^{N+2}\cap W^{N+3,1}}+\left\|\theta_{\mathrm{in}}\right\|_{H^{N+1}\cap W^{N+3,1}}=\epsilon<\delta \nu^{\frac{14}{15}},
	\end{align}
	then there exists a unique global solution $(u, \theta)$ to \eqref{1.6}  and satisfying
	
	$(1)$ Global stability estimate in $H^{s}$:
	\begin{equation}
	\left\| (U, \Theta) \right\|_{L^{\infty} H^s}+\nu^{\frac{1}{6}} \left\| (U_{\neq}, \Theta_{\neq}) \right\|_{L^2 H^s}+ \nu^{\frac{1}{2}} \left\| \nabla_L (U, \Theta) \right\|_{L^2 H^s}\lesssim  \epsilon.
	\end{equation}
	
	$(2)$ Inviscid damping and enhanced dissipation:
	\begin{align}
	\left\| \left(U^1_{\neq}, U^3_{\neq}, \Theta_{\neq} \right)(t) \right\|_{L^{2}}+{\left\langle {t} \right\rangle}\left\|  U^2_{\neq} (t) \right\|_{L^{2}} \lesssim  \, b_{\alpha, \beta} \, e^{-\lambda\nu^{\frac{1}{3} }t} \epsilon.,
	\end{align}
	where $b_{\alpha, \beta}=\frac{1}{c_{\alpha}}\left(\frac{2\sqrt{B_{\beta}}+1}{2\sqrt{B_{\beta}}-1}\right)^{\frac{1}{2}}\max \left\{\sqrt{\frac{\beta-1}{\beta}}, \sqrt{\frac{\beta}{\beta-1}} \right\}$ and $\lambda=\frac{1}{16}\frac{2\sqrt{B_{\beta}}-1}{2\sqrt{B_{\beta}}+1} $.

	$(3)$ Nonlinear dispersion:
	\begin{align*}
	\|(u^2_{0},\tilde{u}^3_{0})(t)\|_{W^{2, \infty}} \lesssim& (qt)^{-\frac13}e^{-\nu t}\epsilon_0 + q^{-\frac13}\nu^{-\frac23}\epsilon^2,
	\end{align*} 
	where $\epsilon_0$ represents the initial data of the simple-zero modes and $q=\frac{|B_{\beta}-\alpha^2|}{\alpha}$.
\end{thm}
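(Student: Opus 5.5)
The proof of Theorem \ref{1..1} will be a bootstrap argument in the moving frame \eqref{2.5}, working with \eqref{2.11}. We decompose every unknown into non-zero modes $(U_{\neq},\Theta_{\neq})$, simple-zero modes $(\widetilde U_0,\widetilde\Theta_0)$ and double-zero modes $(\overline U_0,\overline\Theta_0)$. The proof has four steps.

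\textbf{Step 1: bootstrap hypotheses.} On $[0,T^*]$ we assume the estimates in (1)--(3) hold with constant $2C\epsilon$, where $C$ is the constant from the linear theory. This includes the $L^\infty H^s$ and $L^2H^s$ dissipation bounds, the enhanced dissipation and inviscid damping bounds, and the dispersive bound for $(\widetilde u_0^2,\widetilde u_0^3)$. We also assume a higher-norm bound in $H^N$ that is allowed to grow slowly, namely $\lesssim \epsilon\nu^{-a}$ for some small $a$. The goal is to improve every constant to $C\epsilon$, provided $\epsilon<\delta\nu^{14/15}$.

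\textbf{Step 2: non-zero modes.} For $(U_{\neq},\Theta_{\neq})$ we follow the linear proof in Appendix A. We introduce three good unknowns: the first removes the coupling term $(1-\beta)U^2$ together with the pressure term $\beta\nabla_L\Delta_L^{-1}\partial_y^LU^1$; the second and third absorb the buoyancy coupling $\alpha\Theta$, $\alpha U^3$. We then use weighted energies with the multipliers $m$ adapted to $\partial_t p/p$, and a hypocoercive cross term whose coercivity needs $B_\beta>\tfrac14$. This gives enhanced dissipation at rate $\lambda\nu^{1/3}$. The nonlinear terms are handled through the interactions (zero, non-zero), (non-zero, zero) and (non-zero, non-zero). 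The dangerous terms are $\widetilde U_0\cdot\nabla_L U_{\neq}$ and the lift-up-type term $U^2_{\neq}\partial_yU^1_0$. These will be controlled using the dispersive decay of $\widetilde u_0^{2,3}$ in $L^\infty$ and the inviscid damping $\langle t\rangle^{-1}$ of $U^2_{\neq}$. Each such term must close under $\epsilon\nu^{-14/15}\ll1$.

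\textbf{Step 3: zero modes.} For the double-zero modes, $\overline u_0^2=0$ by incompressibility. The pair $(\overline u_0^3,\overline\theta_0)$ rotates as in \eqref{thm1.1.2}--\eqref{thm1.1.3}, and condition \eqref{1.23} removes its linear mean. Energy estimates for these modes with heat-semigroup dissipation close directly. For the simple-zero modes, the linear operator of Theorem \ref{1.1.} is diagonalized into the eigenmode $\lambda_1$ and the dispersive pair $\lambda_\pm$. The components $\widetilde u_0^2,\widetilde u_0^3$ then satisfy a Duhamel formula. Applying the $L^1\to L^\infty$ decay $(qt)^{-1/3}e^{-\nu t}$ from \eqref{thm1.1.4} to the forcing, which is quadratic and measured in $W^{3,1}$, and integrating $\int_0^t (q(t-s))^{-1/3}e^{-\nu(t-s)}\,ds\lesssim q^{-1/3}\nu^{-2/3}$, gives the bound $q^{-1/3}\nu^{-2/3}\epsilon^2$ stated in (3).

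\textbf{Step 4: the main obstacle.} The hardest step is that $\widetilde u_0^1$ and $\widetilde\theta_0$ do not disperse, since they carry the non-oscillating $e^{\lambda_1t}$ mode. I would use the combination suggested by the $\lambda_1$ eigenvector, namely $\alpha\eta\,\widetilde u_0^1-\beta l\,\widetilde\theta_0$ up to normalization. This combination is purely diffusive, while its orthogonal complement is dispersive. I would then run a quasi-linearization: the transport term $\widetilde u_0\cdot\nabla$ acting on the top derivative is moved into the linear part, so that no derivative is lost. The $\partial_z$ loss caused by buoyancy and stratification is handled by taking $N\ge s+4$ and interpolating between $H^s$ and $H^N$. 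The threshold $14/15$ should come out of balancing three quantities: the growth $\nu^{-2/3}\epsilon^2$ of the dispersive norm, the $\nu^{-1/3}$ time scale of the non-zero modes, and the $\nu^{-1}$ dissipation time of the zero modes. I expect this balance to be tight, and I would fix the exponent by optimizing the interpolation at this point.
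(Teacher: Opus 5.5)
Your Steps 2--3 reproduce the paper's linear machinery (symmetrizing unknowns, the multiplier $m$, the cross term that needs $B_\beta>\frac14$) and the Duhamel argument behind (3). The gap is in Steps 1 and 4: the mechanism meant to produce $\nu^{14/15}$ does not work, and your scheme cannot beat threshold $1$. That scheme is a single bootstrap at size $\epsilon$, with an $H^N$ norm allowed to grow, plus absorbing $\widetilde u_0\cdot\nabla$ into the linear operator. The obstructions are not derivative losses in transport terms. They are top-order products whose $L^\infty$ factor does not decay.

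In the zero modes, the partner $V_0^2=G_1\widetilde U_0^1+G_2\widetilde\Theta_0$ is forced by $G_1(U_0\cdot\nabla U_0^1)+G_2(U_0\cdot\nabla\Theta_0)$. After you extract $U_0\cdot\nabla V_0^2$, the commutators $[G_j,U_0\cdot\nabla]$ leave products of high-frequency $\widetilde U_0^{2,3}$ with low-frequency $\nabla(\widetilde U_0^1,\widetilde\Theta_0)$. The same dispersion--thermal products appear in the $H^N$ estimates for $\Lambda_0$ and $\overline{U_0^1}$. Since $\nabla\widetilde U_0^1$ decays only like $e^{-\nu t}$ in $L^\infty$, such a term costs $\int_0^t\|\nabla\widetilde U_0^1\|_{L^\infty}\|\widetilde U_0^{2,3}\|_{H^N}\|V_0^2\|_{H^N}\,d\tau\lesssim\epsilon\cdot\nu^{-1}\epsilon^2$, which forces $\epsilon\ll\nu$. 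Your dispersive bound does not help here: it holds only in $W^{2,\infty}$, and its Duhamel part of size $\nu^{-2/3}\epsilon^2$ is not small in $H^N$.

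In the non-zero modes, the real obstruction is not $\widetilde U_0\cdot\nabla_LU_{\neq}$ or $U^2_{\neq}\partial_yU^1_0$; the paper bounds those zero--non-zero terms by energy alone, without dispersion. It is $|\nabla_L|^{\frac32}|\nabla_{L,h}|^{-1}(U^2_{\neq}\partial_y^LU^3_{\neq})$ with all derivatives on $U^2_{\neq}$ and $|l|\gg\sqrt{p_h}$. There neither $|\nabla_{L,h}|^{-1}$ nor the inviscid damping of $U^2_{\neq}$ can be used, and you are left with $\nu^{-1}E_N^{1/2}F_N$. Because both obstructions sit in the top-order estimate itself, letting $H^N$ grow or interpolating with $H^s$ only gives Gronwall factors like $\exp(C\epsilon/\nu)$ or $\exp(C\epsilon^2/\nu^2)$, so you still need $\epsilon\lesssim\nu$ up to logarithms.

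The missing idea is the paper's quasi-linearization, which splits the \emph{solution} rather than the operator. A main system (\eqref{1stQuasi1}--\eqref{1stQuasi3} and \eqref{Non0NL1}) carries all the data and is estimated in $H^N$ at size $\delta\nu^{14/15}$. Its nonlinearity is filtered: dispersive equations keep only dispersion--dispersion products (e.g. $-\widetilde U_{0,1}\cdot\nabla V^2_{0,1}$ replaces $G_1(\cdot)+G_2(\cdot)$), diffusive equations keep only dispersion--thermal products, and the $Q_{\neq,1}$ equation keeps only the $x$-transport $U^1\partial_xU^3$ plus pressure terms.

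As a result, the main dispersive variables split into a linear part decaying in $W^{N,\infty}$ and a Duhamel part that is small in $H^{N,N+\frac12}$ (Lemma \ref{refine u023}). This is why the data are taken in $W^{N+3,1}$. It is exactly what controls the top-order dispersion--thermal products in $\Lambda_{0,1}$ and $\overline{U^1_{0,1}}$.

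Everything discarded forces a zero-data remainder system (\eqref{2ndQuasi1}--\eqref{2ndQuasi3} and \eqref{Non0NL2}), estimated in $H^s$ at size $\delta\nu$. There the gap $N\ge s+4$ lets you trade $mp^{\frac34}p_h^{-\frac12}\lesssim\nu^{-\frac16}|k,\eta,l|^{\frac52}\langle t\rangle^{\frac12}$ for derivatives on main-system factors. The dispersive $L^\infty$ bound is placed on the remainder variables, which start from zero data.

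The exponent $14/15$ comes from three ingredients, not from the balance you describe:
\begin{itemize}
\item $c=\frac56+\frac{\kappa}{6(1+\kappa)}$ in the main non-zero estimate, with $\nu^{-\frac12}$ from dissipation, $\nu^{-\frac13}$ from enhanced dissipation, and $\nu^{-\frac{\kappa}{6(1+\kappa)}}$ from the ghost weight $M_7$;
\item the remainder condition $2c-\frac23\ge b=1$;
\item the choice $\frac{\kappa}{6(1+\kappa)}=\frac1{10}$.
\end{itemize}

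Finally, your diffusive combination is wrong. The left $\lambda_1$-eigenvector of the simple-zero matrix is $(\alpha\eta,0,(\beta-1)l)$, so the purely diffusive quantity is $\alpha\eta\widehat{\widetilde u_0^1}+(\beta-1)l\widehat{\widetilde\theta_0}$; this is the paper's $\Lambda_0$ up to a Fourier multiplier. Your $\alpha\eta\widehat{\widetilde u_0^1}-\beta l\widehat{\widetilde\theta_0}$ instead satisfies $\partial_t(\cdot)=\lambda_1(\cdot)+\alpha\eta(2\beta-1)\widehat{\widetilde u_0^2}$.
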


\begin{re}
	Without the additional assumption that $\int_{\mathbb{T}^2} u_{\mathrm{in}}^3 d{x_1}dz=\int_{\mathbb{T}^2} \theta_{\mathrm{in}} d{x_1}dz=0$ as in Theorem \ref{1..1}, we can obtain that the stability threshold $\gamma\leq 1$.
\end{re}


\subsection{Obstacles and strategies}
\subsubsection{Linear stability}
\qquad We analysis two main difficulties on the  linear system.
\paragraph{Difficulty 1:} The interactions of stratification, rotation, and Couette flow lead to complex linear coupling phenomena, which prevent us from  directly closing the energy estimate.

More precisely, it was observed in \eqref{2.11} that four equations for $(U^1,U^2,U^3,\Theta)$ are coupled with each other and cannot be decoupled directly as in the case of two equations. Moreover, although $U^2$ plays a significant role in the stability study of the Navier-Stokes system in \cite{MR3612004} and \cite{HSX2024}, we find that it is not suitable for the linear system \eqref{1.66}. To address this difficulty, we find several good unkowns to simplify the system, which are constructed in Strategy 1.


\paragraph{Strategy 1:  Constructing good unknowns.}
 Based on the characteristics of stratification and rotation effects, we find three good unknowns about $U^3,\ \Theta$ and the third component of vorticity $W^3$. Thanks to the incompressibility condition, we can use $U^3$ and $W^3$ to recover $U^1$ and $U^2$, thus those three variables are sufficient to describe the original system. 
 
 We start the process with the formal analysis in detail. Taking the Fourier transform of \eqref{1.66} in the moving frame, we have
\[  
	(\p_t+\nu p)\begin{pmatrix} \widehat{U^3_{\neq}}\\ \widehat{W^3_{\neq}} \\ \widehat{\Theta_{\neq}} \end{pmatrix}+ \begin{pmatrix} 2\frac{k(\eta-kt)l^2}{pp_h} & - \beta\frac{il}{p} & \alpha\frac{p_h}{p} \\ (1-\beta)il & 0 & 0 \\ -\alpha & 0 & 0 \end{pmatrix}\begin{pmatrix} \widehat{U^3_{\neq}}\\ \widehat{W^3_{\neq}} \\ \widehat{\Theta_{\neq}} \end{pmatrix} + \begin{pmatrix}2i\frac{k^2l}{pp_h} \widehat{W^3_{\neq}} \\ 0 \\ 0 \end{pmatrix} = 0.
\]
The third term has so many time decay that can be controlled by the Fourier multiplier $M$. To make full use of the paired linear coupling terms, we design new variables $Q_{\neq},\ K_{\neq}$ and $H_{\neq}$ to respectively represent $U^3_{\neq},\ W^3_{\neq}$ and $\Theta_{\neq}$. Let us assume that $\widehat{Q_{\neq}}=C_{Q_{\neq}}\widehat{U^3_{\neq}},\ \widehat{K_{\neq}}=C_{K_{\neq}}\widehat{W^3_{\neq}}$ and $\widehat{H_{\neq}}=C_{H_{\neq}}\widehat{\Theta_{\neq}}$. The ODE then becomes
\begin{align*}
	(\p_t+\nu p-\begin{pmatrix} \frac{\p_t C_{Q_{\neq}}}{C_{Q_{\neq}}}\\  \frac{\p_t C_{K_{\neq}}}{C_{K_{\neq}}} \\ \frac{\p_t C_{H_{\neq}}}{C_{H_{\neq}}} \end{pmatrix}+ \begin{pmatrix} 2\frac{k(\eta-kt)l^2}{pp_h} & - \beta\frac{il}{p}\frac{C_{Q_{\neq}}}{C_{K_{\neq}}} & \alpha\frac{p_h}{p}\frac{C_{Q_{\neq}}}{C_{H_{\neq}}} \\ (1-\beta)il\frac{C_{K_{\neq}}}{C_{Q_{\neq}}} & 0 & 0 \\ -\alpha\frac{C_{H_{\neq}}}{C_{Q_{\neq}}} & 0 & 0 \end{pmatrix})\begin{pmatrix} \widehat{Q_{\neq}} \\ \widehat{K_{\neq}} \\ \widehat{H_{\neq}} \end{pmatrix} + \begin{pmatrix}2i\frac{k^2l}{pp_h}\frac{C_{Q_{\neq}}}{C_{K_{\neq}}} \widehat{K_{\neq}} \\ 0 \\ 0 \end{pmatrix} = 0.
\end{align*}
To antisymmetrize the coefficient matrix, set $\frac{C_{H_{\neq}}^2}{C_{Q_{\neq}}^2}=\frac{p_h}{p},\ \frac{C_{Q_{\neq}}^2}{C_{K_{\neq}}^2}=-p\frac{\beta-1}{\beta}$, and choose $C_{H_{\neq}}=\sqrt{\frac{p_h}{p}}C_{Q_{\neq}},\ C_{K_{\neq}}=i\sqrt{\frac{\beta}{\beta-1}}\frac{1}{\sqrt{p}}C_{Q_{\neq}}$. We also note that $\left|\frac{l^2}{p}\frac{\p_t p_h}{p_h}\right|\left/\left|\frac{l}{\sqrt{p}}\right|\right.  \leq 1, \left|\frac{\p_t p}{p}\right|\left/\left|\sqrt{\frac{p_h}{p}}\right|\right. \leq 1$, which means that by using the linear coupling terms that appear in pairs, specific stretching terms can be eliminated. Finally, we choose $C_{Q_{\neq}} = -p^{\frac34}p_h^{-\frac12}$ and construct good unknowns
\begin{equation}\label{hhll} 
	Q_{\neq}\triangleq-|\na_L|^{\frac32}|\na_{L,h}|^{-1}U^3_{\neq},\ K_{\neq}\triangleq-i\sqrt{\frac{\beta}{\beta-1}}|\na_L|^{\frac12}|\na_{L,h}|^{-1}W^3_{\neq},
	\ H_{\neq}\triangleq-|\na_L|^{\frac12}\Theta_{\neq}.
\end{equation}
This approach not only counteracts the mutually coupled linear terms, thereby clearly extracting the enhanced dissipation and inviscid damping mechanisms, but also effectively manages the growth of the linear stretching term $\partial_{xy} \Delta^{-1}$ that arises from pressure. Proper handling of this growth is typically achieved through a meticulously designed Fourier multiplier $m$, as demonstrated in \eqref{4.....4}, which accurately encodes its dependence on frequency and viscosity. The design of this multiplier $m$ aims to ensure it possesses a more optimized bound \eqref{4.11-1}, with three well-chosen variables \eqref{hhll} facilitating this objective. Furthermore, we introduce a cross-term estimate \eqref{sss} that ensures $B_{\beta}$ satisfies the condition $B_{\beta} > 1/4$. For further details, please refer to section \ref{sec3.1}. Of course, if our goal is solely to achieve enhanced dissipation at non-zero frequencies, it suffices for us to have $B_{\beta} > 0$, which is thoroughly proven in Appendix \ref{secA}.
	
\paragraph{Difficulty 2:} The single-zero modes of temperature $\tilde{\theta}_0$ and the first component $\tilde{u}_0^1$ within the velocity field does not exhibit linear dispersion estimates.

Recall from section \ref{sec3.2} that the ODE with respect to time variables $t$ can be obtained and solved by taking the Fourier transform of the single zero frequency system \eqref{1..7} with respect to $(x,y,z)$. It can be seen that $u_0^2$ and $\tilde{u}_0^3$ have linear dispersion effects. Unfortunately, $\tilde{u}_0^1$ and $\tilde{\theta}_0$ only exhibit pure thermal effects and have no dispersion estimates. To address this diﬀiculty, we find a combined quantify from $\tilde{u}_0^1$ and $\tilde{\theta}_0$, and obtain its dispersive estimates, which are constructed in the following { Strategy 2}.
\paragraph{Strategy 2:  Constructing the dispersion combined quantities} Based on the structure of single-zero modes in \eqref{3.2.4}-\eqref{3.2.7}, we choose two combined quantities concerning $\tilde{u}^1_0$ and $\tilde{\theta}_0$. 
\begin{equation*}
	\begin{aligned}
		V_0^2=&-\beta\mathcal{R}^{-1}\p_{zz}\Delta_{L,0}^{-1}\tilde{U}_0^1 + \alpha\mathcal{R}^{-1}\p_{yz}\Delta_{L,0}^{-1}\tilde{\Theta}_0=:G_1\tilde{U}_0^1 + G_2\tilde{\Theta}_0,\\
		\Lambda_0=& \alpha\sqrt{\frac{\beta}{\beta-1}}\p_{yz}\Delta_{L,0}^{-1} \tilde{U}_0^1 + \sqrt{B_{\beta}}\p_{zz}\Delta_{L,0}^{-1}\tilde{\Theta}_0=:G_3\tilde{U}_0^1 + G_4\tilde{\Theta}_0.
	\end{aligned}
\end{equation*}  
By utilizing $V_0^2$ and $\Lambda_0$, we can successfully extract out of the dispersion structure from the pure thermal effect structure in $\tilde{U}_0^1$ and $\tilde{\Theta}_0$. Furthermore, these two combined quantities allow for the recovery of both $\tilde{U}_0^1$ and $\tilde{\Theta}_0$. Additionally, we find a corresponding combined quantity, denoted as $\tilde{V}_0^3$, which aligns with $\tilde{U}_0^3$, thus circumventing the need for direct estimation via the incompressible condition given by $\tilde{U}^3_0 = -\p_z^{-1}\p_y U_0^2$.  We then employ the method of oscillatory integrals to analyze the dispersive semigroup, leading to linear dispersive estimates, further details can be found in sections \ref{sec3.2} and \ref{sub4.1}.

\subsubsection{Nonlinear stability}
\qquad We deal with two main diﬀiculties on the analysis of nonlinear system.
\paragraph{Difficulty 1: Structural issue for zero modes}  
We revisit the analysis results of the linear part. Among the zero modes, only $U_0^2$ and $\tilde{U}_0^3$ exhibit linear dispersion estimates, while $\tilde {U}_0^1$ and $\tilde {\Theta}_0$ are characterized solely by thermal decay estimates. In examining nonlinear systems of zero modes, a significant challenge lies in managing the nonlinear interactions between simple-zero modes. Here, we continue to adopt the approach of linear stability and construct the combined quantity \( V_0^2 \) to derive additional nonlinear dispersion estimates that satisfy the following equation,
\begin{equation}
\begin{aligned}\label{com11}
	\partial_t V_0^2 - \nu \Delta_{L,0} V_0^2 + \mathcal{R} U_0^2 &= - G_1\widetilde{(U_0 \cdot\nabla U_0^1)} -G_2\widetilde{(U_0 \cdot\nabla \Theta_0)}\\&\quad+ {\text{non-zero modes contributions}}, 
\end{aligned}
\end{equation}
where $\mathcal{R}$ is the dispersion operator as   in \eqref{ss11}. We note that the operators $(G_1, G_2)$ and the convection operator $U_0 \cdot \nabla$ are non-commutative. Therefore, we can not rewrite $G_1{(U_0 \cdot \nabla \tilde{U}_0^1)} + G_2{(U_0 \cdot \nabla \tilde{\Theta}_0)}$ as a product of two terms that both have the dispersion property, that is
\[
G_1{(U_0 \cdot\nabla \tilde{U}_0^1)} +G_2{(U_0\cdot\nabla\tilde{\Theta}_0)} \neq U_0 \cdot\nabla (G_1\tilde{U}_0^1 + G_2\tilde{\Theta}_0)=\underbrace{U_0\cdot\nabla V_0^2}_{\text{ dispersion-dispersion}}.
\]
It is evident that the right side of the aforementioned equation comprises two types of product terms that are challenging to handle. The first type includes {\it dispersion-thermal type terms}, such as $\tilde{U}_0^{2, 3} \cdot \nabla_{y, z} \{U^1_{0}, \Theta_{0}\}$. The second type consists of {\it thermal-thermal type terms}, exemplified by $\overline{U}_{0}^3\p_zU^1_0$. Furthermore, similar to the right-hand side terms of the $V_0^2$ equation, the nonlinear terms of the $\Lambda_0$ equation also lack the dispersive structure. Consequently, it proves difficult to demonstrate the propagation of zero modes within high Sobolev regularity norms and to derive nonlinear dispersion estimates.
\paragraph{Strategy 1:  Quasilinear method for zero modes.}
To capture more nonlinear dispersive effects as accurately as possible, we adopt a quasi-linear approach decomposing the zero-frequency solution into  the main part and the perturbation part:
\begin{equation*}
	\begin{cases}
		U_0 = U_{0,1} + U_{0,2}, \\ \Theta_0 = \Theta_{0,1} + \Theta_{0,2}, \\
		V_0^2 = V_{0,1}^2 + V_{0,2}^2, \\ \tilde{V}_0^3 = \tilde{V}_{0,1}^3 + \tilde{V}_{0,2}^3, \\
		\Lambda_0 = \Lambda_{0,1} + \Lambda_{0,2}. 
	\end{cases}
\end{equation*}

The first subsystem (the main component) encompasses all initial value information at zero frequency. The nonlinear terms account for contributions at non-zero frequencies, while the zero-frequency component is meticulously designed to filter out {\it dispersion-dispersion type terms} in the equations governing dispersive variables and {\it dispersion-thermal type terms} in the equations pertaining to non-dispersive variables, as illustrated in \eqref{1stQuasi1}:
 \begin{align*}
 	\mathcal{N}_{\neq+0}(U_{0,1}^2)=&2\partial_y\Delta_{L,0}^{-1}\big(\partial_yU_{0,1}^2\partial_yU^2_{0,1}+\partial_y\tilde{U}^3_{0,1}\partial_zU^2_{0,1}\big)-\tilde{U}_{0,1}\cdot\nabla U_{0,1}^2 \\&+ {\text{non-zero modes contributions}}.
 \end{align*}

In relation to the second subsystem (perturbation part), it solely encompasses zero initial values along with the remaining nonlinear terms, as illustrated in \eqref{2ndQuasi1}--\eqref{2ndQuasi3}. Following decomposition, the first subsystem is employed to propagate high Sobolev regularity norms by means of Lemma \ref{lowDisper}, leading to corresponding nonlinear dispersion estimates and an improved threshold. Subsequently, the energy estimates derived from the first subsystem are introduced as external forces into the second subsystem; this external force dictates the threshold size of the perturbation system. In addressing zero modes within the entire nonlinear system, we note that non-zero frequencies exhibit enhanced dissipation and inviscid damping effects, making their interactions relatively manageable. However, when considering interactions among zero modes, we aim to investigate additional dispersion mechanisms.
 
Specifically, by applying the Duhamel principle, we derive low-regularity $L^\infty$ dispersion estimates for the dispersion variables $U_{0,1}^2$, $\tilde{U}_{0,1}^3$, $V_{0,1}^2$, and $\tilde{V}_{0,1}^3$ pertaining to the first subsystem. Utilizing Lemma \eqref{lowDisper}, these dispersion variables achieve higher regularity within Sobolev spaces. Through the incorporation of {\it dispersion-dispersion type terms}, we obtain nonlinear dispersion estimates for these variables, thereby enhancing our energy estimate results. Under condition \eqref{1.23}, we leverage the {\it dispersion-dispersion type terms} associated with $\overline{U_{0,1}^3}$ and $\overline{\Theta_{0,1}}$ to refine our energy estimates further. Additionally, employing precise evaluations of the "dispersion-thermal type terms" related to $\overline{U_{0,1}^1}$ and $\Lambda_{0,1}$ allows us to improve our energy estimates even more significantly—ultimately closing off the energy estimates for this subsystem.
 
The nonlinear terms of the second subsystem can be classified into four types of interactions: external force-external force, external force-disturbance, disturbance-external force, and disturbance-disturbance. The lower Sobolev regularity propagating on the system enables us to handle these structures by Lemma \ref{lowDisper} as follows:\\
(1) Contribution of two external force terms, since $s+4\leq N$, for $G\in\{U,\Theta\}$ we have
\[\langle U_{0,1}\cdot\na G_{0,1},G_{0,2}\rangle_{H^{s,s+\frac12}}\lesssim (\|\langle\p_z\rangle^{\frac12}\tilde{U}_{0,1}^{2,3}\|_{W^{s,\infty}}+\|\overline{U_{0,1}^3}\|_{H^s})\|\na G_{0,1}\|_{H^{s}}^{\frac12}F_{0,1}^{\frac12}(t)\nu^{-\frac14}F_{0,2}^{\frac12}(t)\nu^{-\frac12}.\]
(2) Contribution of only one external force term, for $G\in\{U,\Theta\}$,
\begin{align*}
	\langle U_{0,1}\cdot\na G_{0,2},\tilde{G}_{0,2}\rangle_{H^{s,s+\frac12}}\lesssim& (\|\langle\p_z\rangle^{\frac12}\tilde{U}_{0,1}^{2,3}\|_{W^{s,\infty}}+\|\overline{U_{0,1}^3}\|_{H^s})F_{0,2}^{\frac34}(t)\nu^{-\frac34}\|\tilde{G}_{0,2}\|_{H^{s}}^{\frac12},\\
	\langle U_{0,2}\cdot\na G_{0,1},\tilde{G}_{0,2}\rangle_{H^{s,s+\frac12}}\lesssim& \|\langle\p_z\rangle^{\frac12}\left(U_{0,2}\cdot\na G_{0,1}\right)\|_{H^{s-1}}F_{0,2}^{\frac12}(t)\nu^{-\frac12}\\
	\lesssim& (\|\langle\p_z\rangle^{\frac12}\tilde{U}_{0,2}^{2,3}\|_{W^{s-3,\infty}}^{\frac12}+\|\overline{U_{0,2}^3}\|_{H^{s-3}}^{\frac12})\|\na U_{0,2}\|_{H^{s,s+\frac12}}^{\frac12}\|\na G_{0,1}\|_{H^{s+\frac32}}F_{0,2}^{\frac12}(t)\nu^{-\frac12}\\
	\lesssim& (\|\langle\p_z\rangle^{\frac12}\tilde{U}_{0,2}^{2,3}\|_{W^{s-3,\infty}}^{\frac12}+\|\overline{U_{0,2}^3}\|_{H^{s-3}}^{\frac12})F_{0,2}^{\frac34}(t)\nu^{-\frac34}F_{0,1}^{\frac14}(t)\nu^{-\frac14}E_{0,1}^{\frac14}(t).
\end{align*}
	(3) No external force term case, for $G\in\{U,\Theta\}$
	$$\langle U_{0,2}\cdot\na G_{0,2},G_{0,2}\rangle_{H^{s,s+\frac12}}\lesssim E_{0,2}^{\frac12}(t)F_{0,2}(t)\nu^{-1}.$$
\paragraph{Difficulty 2: Regularity issue for non-zero modes}
 Recalling the three new variables $Q_{\neq}^*$, $K_{\neq}^*$ and $H_{\neq}^*$ constructed in Appendix \ref{secA}, they satisfy the linear equation \eqref{Non0L1}. In terms of nonlinear stability, these three variables can eliminate some linear coupling terms, but the equations they satisfy still contain linear stretching terms, which cause the variables to have a growth effect. To effectively control this growth, we introduce new variables $Q_{\neq}$, $K_{\neq}$ and $H_{\neq}$ as in \eqref{yyy} as well as the cross-estimation of $Q_{\neq}$ and $K_{\neq}$, which can eliminate the growth caused by $-(l^2 \p_t p_h)/(2 p p_h)\{\widehat{Q_{\neq}}, \widehat{K_{\neq}}\}$. The remaining stretching term $ (\p_t p)/(4p)\{\widehat{Q_{\neq}}, \widehat{K_{\neq}}, \widehat{H_{\neq}}\}$ is handled by using the multiplier $m$ as shown in \eqref{4.....4}. Unfortunately, incorporating rotation and stratification results in coupling among the equation's variables, leading to regularity issues within the vertical direction for the nonlinear system described by equation \eqref{Non0NL}. Specifically, we will deal with the analogues of the following nonlinear term:
   \begin{align*}
 	&\left|\int_0^t\int_{\mathbb{T}\times\R\times\mathbb{T}}\na^N \mathcal{A}|\na_L|^{\frac32}|\na_{L,h}|^{-1}(U_{\neq}^2\p_y^LU_{\neq}^3)\na^N (\mathcal{A}Q_{\neq}){\rm d}x{\rm d}y{\rm d}z{\rm d}\tau \right|\\
 	& \quad\lesssim \left|\int_0^t\int_{\mathbb{T}\times\R\times \mathbb{T}}\mathcal{A}|\na_L|^{\frac32}|\na_{L,h}|^{-1}(U_{\neq}^2\na^N \p_y^LU_{\neq}^3)\na^N (\mathcal{A}Q_{\neq}){\rm d}x{\rm d}y{\rm d}z{\rm d}\tau \right| \\&\qquad+ \left|\int_0^t\int_{ \mathbb{T}\times\R\times \mathbb{T}} \mathcal{A}|\na_L|^{\frac32}|\na_{L,h}|^{-1}(\na^N U_{\neq}^2\p_y^LU_{\neq}^3)\na^N (\mathcal{A}Q_{\neq}){\rm d}x{\rm d}y{\rm d}z{\rm d}\tau \right|.
 \end{align*}
 The treatment of the first term does not bring about essential difficulties. Since $U_{\neq}^2$ has a relatively low regularity, the inviscid damping effect can be fully exploited to handle it, thereby obtaining an improved threshold. However, for the treatment of the second term,  $U_{\neq}^2$ has high regularity, and the potential good factor $|\na_{L,h}|^{-1}$ and the inviscid damping effect of $U_{\neq}^2$ cannot be utilized. Specifically,
  \begin{align*}
 	&\left|\int_0^t\int_{\mathbb{T}\times\R\times\mathbb{T}} \mathcal{A}|\na_L|^{\frac32}|\na_{L,h}|^{-1}(\na^N U_{\neq}^2\p_y^LU_{\neq}^3)\na^N (\mathcal{A}Q_{\neq}){\rm d}x{\rm d}y{\rm d}z{\rm d}\tau \right|
 	\\
 	&\quad \lesssim 	\|\mathcal{A}\na_LQ_{\neq}\|_{L^2_tH^N}(\|\mathcal{A}|\na_L|^{\frac12}(Q_{\neq},K_{\neq})\|_{L^4_tH^N}^2+\|\mathcal{A}(Q_{\neq,K_{\neq}})\|_{L^\infty_tH^N} 	\|\mathcal{A}\na_LQ_{\neq}\|_{L^2_t H^N})\\
 	&\quad \lesssim E_{\neq}^{\frac12}(t)F_{\neq}(t)\nu^{-1}\\
 	&\quad \lesssim \delta^3\nu^{3c-1}.
 \end{align*}
 When the frequency in the vertical direction is much greater than that in the horizontal direction, that is, $|l| \gg \sqrt{k^2 + (\eta - kt)^2}$, a rough estimation yields a threshold of 1. To address the  difficulty about regularity issue, we employ the quasi-linearization method.
 
\paragraph{Strategy 2: Quasilinear method for non-zero modes.} We decompose the non-zero frequency solution into a main part and a perturbation part:
\begin{equation*} 
Q_{\neq} = Q_{\neq,1} + Q_{\neq,2}, \quad K_{\neq} = K_{\neq,1} + K_{\neq,2}, \quad H_{\neq} = H_{\neq,1} + H_{\neq,2}.
\end{equation*}

The first subsystem (the main part): It carries all the non-zero frequency initial value information, carefully screens the nonlinear terms to avoid regularity issue, as shown in $\eqref{Non0NL1}$. By using enhanced dissipation and inviscid damping effects, the corresponding energy estimates can be closed under the high regularity norms in Sobolev space.

The second subsystem (perturbation) is characterized by an initial value of zero and the presence of remaining nonlinear terms. By utilizing the enhanced energy estimates derived from the first subsystem as external force inputs to the second subsystem, this external force effectively determines the threshold size of the perturbation system. The nonlinear terms within the second subsystem can be categorized into four types of interactions: force-force, force-perturbation, perturbation-force, and perturbation-perturbation. This framework allows us to address these nonlinear terms individually under lower regularity norms in Sobolev space. With our designed quasi-linearization method, we are able to resolve these previously challenging terms as follows:
\begin{align*}
&\left|\int_0^t\int_{\mathbb{T}\times\R\times\mathbb{T}} \mathcal{B}|\na_L|^{\frac32}|\na_{L,h}|^{-1}(\na^s U_{\neq,1}^2\p_y^LU_{\neq,1}^3)\na^s (\mathcal{B}Q_{\neq,2}){\rm d}x{\rm d}y{\rm d}z{\rm d}\tau \right|
\\
&\quad \lesssim 	\|\mathcal{B}Q_{\neq,2}\|_{L^\infty_tH^s}\nu^{-\frac16}\nu^{-\frac16}\|\mathcal{A}(Q_{\neq,1},K_{\neq,1})\|_{L^2_tH^{s+4}}\|\mathcal{A}Q_{\neq,1}\|_{L^2_tH^{s+\frac72}}\\
&\quad \lesssim \nu^{-\frac23}E_{\neq,2}^{\frac12}(t)\int_0^t F_{\neq,1}(\tau){\rm d}\tau\\&\quad\lesssim \delta^3\nu^{b+2c-\frac23}.
\end{align*}
Here, we employ the difference in high and low regularity $N \geq 4 + s$. It suffices to have $b + 2c - {2}/{3} \geq 2b$ to close the bootstrap argument. Clearly, $c = {14}/{15}$ and $b = 1$ meet the requirement.

The rest of the paper is organized as follows. In section \ref{Preliminaries} we give some preliminaries for later use. In section \ref{section3}, we discuss the linear stability effects of solutions and give a rigorous mathematical proof of Theorem \ref{1.1.}. Section \ref{section4}  is devoted to the brief idea of proof  of Theorem \ref{1..1}, including the bootstrap hypotheses and the weighted energy estimates of non-zero modes and zero modes. The  proof details of nonlinear stability are given in sections \ref{sub4.2}--\ref{sub4.6}.

\section{Preliminaries}\label{Preliminaries} \label{sec2}

\subsection{Notation}
\qquad Given two quantities $A$ and $B$, we denote $A \lesssim B$, if there exists a constant $C$ such that $A\leqslant CB$ where  $C>0$ depends only on $N$ and $\alpha$, but not on $\delta$, $\nu$  and $\beta$. We similarly denote $A \ll B$ if $A \leqslant cB$ for a small constant $c \in(0, 1)$ to emphasize the small size of the implicit constant.  $\sqrt{1+t^2}$ is represented by ${\left\langle {t} \right\rangle}$ and  $\sqrt{k^2+\eta^2+l^2}$ is  denoted by $|k, \eta, l|$. For two functions $f$ and $g$ and a norm $\|\cdot\|_{X}$, we write 
$$\|(f,g)\|_{X}=\sqrt{\|f\|_{X}^{2}+\|g\|_{X}^{2}}.$$
Unless specified otherwise, in the rest of this section $f$ and $g$ denote functions from $\mathbb{T} \times \mathbb{R} \times \mathbb{T}$ to $\mathbb{R}^{n}$ for some $n\in \mathbb{N}$.

\subsection{Functional spaces}
\qquad The Schwartz space $\mathscr{S}(\mathbb{R}^{n})$ with $n$ being the spatial dimension is the set of smooth functions $f$ 
on $\mathbb{R}^{n}$, we have
$$\|f\|_{\mathscr{S}}\triangleq \sup_{x\in \mathbb{R}^{n} }\left(1+|x|\right)^{\alpha} |\partial^{\beta}f(x)|<\infty,\quad \forall \, \alpha,\beta\in \mathbb{Z}_{+}^{n}.$$

The Sobolev space $H^{s} (s\geqslant0) $ is given  by  the norm 
\begin{equation}\label{HSX-A}
	\left\| f \right\|_{H^{s}}=\left\| {\left\langle {D} \right\rangle}^{s} f \right\|_{L^2}=\left\| {\left\langle {k, \eta, l} \right\rangle}^{s} \widehat{f} \right\|_{L^2}.
\end{equation}
To better estimate the zero modulus below, here we define the space
\begin{equation}\label{HSX-B}
	\left\| f_{0} \right\|_{H_{y,z}^{s,s+\frac{1}{2}}}\triangleq
	\left\| \left\langle {\partial_{z}} \right\rangle^{\frac{1}{2}} f_{0} \right\|_{H_{y,z}^{s}}=\left\| {\left\langle {\eta, l} \right\rangle}^{s}\langle l\rangle^{\frac{1}{2}} \widehat{f_{0}} \right\|_{L^2}.
\end{equation}

Recall that, for $s>\frac{3}{2}$, $H^{s}$ is an algebra. Hence, for any $f,g\in H^{s}$, one has
\begin{align}\label{2.3}
	\left\|f g\right\|_{H^{s}} \lesssim \left\|f \right\|_{H^{s}}\left\|g\right\|_{H^{s}}.
\end{align}
We write the associated inner product as
$$ \langle f,g \rangle_{H^{s}}=\int \langle\nabla\rangle^{s}f\cdot \langle \nabla \rangle^{s} g dV. $$
For function $f(t,x,y,z)$ of space and time defined on the time interval $(a,b)$, we define the Banach space $L_{t}^{p}(a,b;H^{s})$ for $1\leqslant p\leqslant \infty$ by the norm
$$\|f\|_{L_{t}^{p}(a,b; H^{s})}=\|\|f\|_{H^{s}}\|_{L_{t}^{p}(a,b)}.$$
For simplicity of notation, we usually simply write $\|f\|_{L_{t}^{p}H^{s}}$ since the time-interval of integrating in this work will be the same basically everywhere.

In the field of harmonic analysis, the Van der Corput lemma is an estimate for oscillatory integrals. In this paper, we use this lemma to obtain the dispersive estimates on the zero frequency of  velocity. 
\begin{lem}[\textbf{Van der Corput lemma} \cite{SM1993}]\label{lem2.1}
	Suppose  $\phi(\xi)$ is real-valued and smooth in $(a, b)$, and that $\left|\phi^{(k)}(\xi)\right| \geqslant 1$  for all  $\xi \in(a, b)$. For any  $\lambda \in \mathbb{R}$, there is a positive constant  $c_{k}$, which does not depend on  $\phi$ and  $\lambda$  such that
	\begin{equation}\label{vdcl}
		\left|\int_{a}^{b} e^{i \lambda \phi(\xi)} d\xi \right| \leqslant c_{k} \lambda^{-\frac{1}{k}}
	\end{equation}
	holds when:
	
	$(1)$  $k \geqslant 2$, or
	
	$(2)$  $k=1$ and $\phi(\xi)$ is monotone.
\end{lem}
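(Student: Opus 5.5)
We prove the estimate by induction on $k$, following the classical argument in \cite{SM1993}. First we reduce to $\lambda>0$. For $\lambda<0$ we take complex conjugates, which replaces $\lambda$ by $-\lambda$, so the bound should be read with $|\lambda|^{-1/k}$. The case $\lambda=0$ is excluded, since the right-hand side is infinite there. We may also assume $\phi^{(k)}\geq 1$ on $(a,b)$: by continuity $\phi^{(k)}$ has constant sign, and replacing $\phi$ by $-\phi$ and taking conjugates handles the negative case. The essential point is that $c_k$ depends on neither $a$, $b$, $\phi$ nor $\lambda$.

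\emph{Base case $k=1$.} Write
\[
e^{i\lambda\phi}=\frac{1}{i\lambda\phi'}\frac{d}{d\xi}e^{i\lambda\phi}
\]
and integrate by parts on $(a,b)$, approximating by compact subintervals if needed. This gives
\[
\int_a^b e^{i\lambda\phi}\,d\xi=\Big[\frac{e^{i\lambda\phi}}{i\lambda\phi'}\Big]_a^b-\int_a^b e^{i\lambda\phi}\frac{d}{d\xi}\Big(\frac{1}{i\lambda\phi'}\Big)d\xi .
\]
Since $|\phi'|\geq1$, the boundary terms are bounded by $2/\lambda$. Because $\phi'$ is monotone, $1/\phi'$ is monotone, so $\frac{d}{d\xi}(1/\phi')$ has a fixed sign. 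Hence the remaining integral is at most
\[
\frac1\lambda\Big|\int_a^b\frac{d}{d\xi}\frac{1}{\phi'}\,d\xi\Big|=\frac1\lambda\Big|\frac1{\phi'(b)}-\frac1{\phi'(a)}\Big|\leq\frac1\lambda .
\]
This yields $c_1=3$.

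\emph{Inductive step.} Assume the claim for $k-1\geq1$ with constant $c_{k-1}$, and let $\phi^{(k)}\geq1$. Then $\phi^{(k-1)}$ is strictly increasing, so there is at most one point $\xi_0\in[a,b]$ where $|\phi^{(k-1)}|$ is minimal. Either $\phi^{(k-1)}(\xi_0)=0$, or $\xi_0$ is an endpoint. Fix $\delta>0$, to be chosen. Outside $(\xi_0-\delta,\xi_0+\delta)$ the mean value theorem gives $|\phi^{(k-1)}|\geq\delta$. Apply the induction hypothesis to $\phi/\delta$ with parameter $\lambda\delta$ on each of the at most two remaining intervals. When $k-1=1$, $\phi'$ is monotone because $\phi''\geq1$, so case $(2)$ applies and the monotonicity hypothesis is automatically satisfied. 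Each outer piece contributes at most $c_{k-1}(\lambda\delta)^{-1/(k-1)}$, and the inner interval contributes trivially at most $2\delta$. Therefore
\[
\Big|\int_a^b e^{i\lambda\phi}\Big|\leq 2c_{k-1}(\lambda\delta)^{-\frac1{k-1}}+2\delta .
\]
Choosing $\delta=\lambda^{-1/k}$ balances the two terms and gives the bound $(2c_{k-1}+2)\lambda^{-1/k}$. So $c_k=2c_{k-1}+2$ works, independently of $\phi$, $\lambda$, $a$, $b$.

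The main subtlety is the $k=1$ case: monotonicity is genuinely required there, and it is used only to control the total variation of $1/\phi'$. In the inductive step one must check that the rescaled phase $\phi/\delta$ satisfies the hypotheses, including monotonicity when descending to $k=1$, and that the constant stays uniform. Both follow directly from $\phi^{(k)}\geq1$.
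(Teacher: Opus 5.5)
Your argument is correct and is exactly the classical induction from Stein \cite{SM1993}, which the paper cites without reproving: integration by parts gives $c_1=3$, and splitting around the minimum of $|\phi^{(k-1)}|$ with $\delta=\lambda^{-1/k}$ gives $c_k=2c_{k-1}+2$. Note that your base case uses monotonicity of $\phi'$, which is Stein's actual hypothesis, rather than monotonicity of $\phi$ as printed. The printed condition is automatic from $|\phi'|\geq1$ and does not suffice for $k=1$: take $\phi$ inverse to $u\mapsto u/2+\tfrac14\sin u$ with $\lambda=1$; then $\bigl|\int e^{i\phi}\bigr|$ equals a quarter of the interval length. So you are proving the intended statement, and you should say explicitly that you read case $(2)$ as ``$\phi'$ monotone''.
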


\section{The linear stability}\label{section3}

\qquad In this section, we elaborate on the linear stability results and corresponding proof of Bouessinesq equation with rotation near the Couette flow. Now we neglect the effect of nonlinear terms on the right-hand side of \eqref{1.6}, that is, we only need to consider the stability of the linearized equation
\begin{equation}\label{1.66}
	\begin{cases}
		\partial_{t} u+y \partial_{x_1} u-\nu \Delta u+\begin{pmatrix}
			(1-\beta) u^2 \\
			\beta u^1 \\
			\alpha \theta
	    	\end{pmatrix}+ (\beta-2)\nabla \Delta^{-1}\partial_{x_1}u^{2}\\ \hskip5cm -\beta\nabla\Delta^{-1}\partial_{y}u^{1}-\alpha\nabla\Delta^{-1}\partial_{z}\theta  =0, \\
		\partial_{t} \theta+y \partial_{x_1} \theta-\nu \Delta \theta-\alpha u^3=0, \\
		u(0, x_1, y, z)=u_{\mathrm{in}}, \quad \theta(0, x_1, y, z)=\theta_{\mathrm{in}}.
	\end{cases}
\end{equation}
 
By the moving frame \eqref{2.5}, we recall the definition of $ U(t,x,y,z)$ and $ \Theta(t,x,y,z) $, and denote $W^3\triangleq\p_xU^2-\p_y^LU^1$. Here, we introduce the new unknowns
\begin{equation} \label{yyy} 
Q_{\neq}:=-|\na_L|^{\frac32}|\na_{L,h}|^{-1}U^3_{\neq},\quad K_{\neq}:=-i\sqrt{\frac{\beta}{\beta-1}}|\na_L|^{\frac12}|\na_{L,h}|^{-1}W^3_{\neq},
\quad H_{\neq}:=-|\na_L|^{\frac12}\Theta_{\neq}.
\end{equation}   
One can represent  the linearized system of \eqref{1.66} as
\begin{align}
	\label{Non0NL-1}
	\begin{cases}
		\p_t Q_{\neq}  -\nu\Delta_L Q_{\neq}  - \frac12 \p_x\p_y^L|\na_L|^{-2} Q_{\neq}  - \p_x\p_y^L(|\na_L|^{-2}-|\na_{L,h}|^{-2}) Q_{\neq}  + i\sqrt{B_{\beta}}\p_z|\na_L|^{-1} K_{\neq}  \\
		\qquad + 2i\sqrt{\frac{\beta-1}{\beta}}\p_z\p_{xx}|\na_L|^{-1}|\na_{L,h}|^{-2}K_{\neq}  + \alpha|\na_{L,h}||\na_L|^{-1}H_{\neq}  = 0, \\
		\p_t K_{\neq} - \nu\Delta_L K_{\neq}  + \frac12 \p_x\p_y^L|\na_L|^{-2} K_{\neq}  + \p_x\p_y^L(|\na_L|^{-2}-|\na_{L,h}|^{-2}) K_{\neq}\\ \qquad - i\sqrt{B_{\beta}}\p_z|\na_L|^{-1}Q_{\neq}  = 0,\\
		\p_t H_{\neq}  -\nu\Delta_L H_{\neq}  - \frac12 \p_x\p_y^L|\na_L|^{-2} H_{\neq}  - \alpha|\na_{L,h}||\na_L|^{-1}Q_{\neq} =  0.
	\end{cases}
\end{align}
Thus, by taking the Fourier transform of the spatial variable, when $k\neq 0$,  \eqref{Non0NL-1} can be written
\begin{align}\label{Non0L}
	\begin{cases}
		\p_t \widehat{Q_{\neq}} + \nu p \widehat{Q_{\neq}} - \frac14 \frac{\p_t p}{p}\widehat{Q_{\neq}} - \frac12 \frac{l^2}{p}\frac{\p_t p_h}{p_h} \widehat{Q_{\neq}} \\
		\qquad\ \ - \sqrt{B_{\beta}}\frac{l}{p^{1/2}}\widehat{K_{\neq}} + \alpha \frac{p_h^{1/2}}{p^{1/2}}\widehat{H_{\neq}} + 2\sqrt{\frac{\beta-1}{\beta}}\frac{k^2}{p_h}\frac{l}{p^{1/2}}\widehat{K_{\neq}}  = 0, \\
		\p_t\widehat{K_{\neq}} + \nu p\widehat{K_{\neq}} + \frac{1}{4}\frac{\p_t p}{p}\widehat{K_{\neq}} +\frac12 \frac{l^2}{p}\frac{\p_t p_h}{p_h} \widehat{K_{\neq}}+ \sqrt{B_{\beta}}\frac{l}{p^{1/2}}\widehat{Q_{\neq}} = 0,\\
		\p_t\widehat{H_{\neq}} + \nu p\widehat{H_{\neq}} - \frac{1}{4}\frac{\p_t p}{p} \widehat{H_{\neq}} - \alpha \frac{p_h^{1/2}}{p^{1/2}}\widehat{Q_{\neq}} = 0.
	\end{cases}
\end{align}


\subsection{Dynamics of nonzero modes}\label{sec3.1}

\qquad As observed in \cite{MR3612004}, to capture the associated effects of inviscid damping and enhanced dissipation, it is convenient to use a suitably constructed Fourier multiplier $m$ with symbol $m=m(t,k,\eta,l)$, defined by 
\begin{align}\label{4.....4}	
	-\frac{\dot{m}}{m}=&\left\{\begin{array}{ll}
		\frac{1}{2}\frac{| k \left( \eta- kt \right)|}{k^2+\left(\eta- kt\right)^2+l^2}, & \text{ if } \ |t-\frac{\eta}{k}|\leq 1000\nu^{-\frac13}, \\
		0,  & \text{ if } \ |t-\frac{\eta}{k}|> 1000\nu^{-\frac13},
	\end{array}\right. 
\end{align}
with the initial data
\begin{align*}
m(t=0,k,\eta,l)=&\begin{cases}
1, & \text{ if } \  \frac{\eta}{k}\leq 0,\\
\left(\frac{k^2+\eta^2+l^2}{k^2+l^2}\right)^{\frac14}, & \text{ if } \ 0<\frac{\eta}{k}<1000\nu^{-\frac13},\\
\left(\frac{k^2+(1000\nu^{-\frac13}k)^2+l^2}{k^2+l^2}\right)^{\frac14},& \text{ if } \ 1000\nu^{-\frac13}\leq \frac{\eta}{k}.
\end{cases} 
\end{align*}
The central role of $m$ is to deal with the slowly decaying term $\partial_{x}\partial_{y}^{L}|\nabla_{L}|^{-2}\{Q,K,H\}  $ in \eqref{Non0NL-1}, balancing the growth that solutions experience in the transition between the inviscid damping regime $t<\frac{\eta}{k}$, and the dissipative regime $t\gtrsim \frac{\eta}{k}+\nu^{-\frac{1}{3}}$.

Define the additional multipliers $M_{i}(t, k, \eta, l), i=1,2,...,7$,  by $M_{i}(t=0, k, \eta, l)=1$ and
\begin{itemize}
	\item if  $k=0$, $M_{i}(t, k=0, \eta, l)=1$ for all $t$.
	\item if  $k \neq 0$,
	\begin{align}\label{4.5}
		\frac{\dot{M_{1}}}{M_{1}}&= \frac{- \nu^{\frac{1}{3}} }{\left[\nu^{\frac{1}{3}} \big( t-\frac{\eta}{k} \big)\right]^{2}+1}.
	\end{align}
	\item if  $k \neq 0$,
\begin{align}
  -\frac{\dot{M}_2}{M_2}&=\frac{1}{4\sqrt{B_{\beta}}}\left|\frac{\p_t p}{p}\frac{l}{\sqrt{p}}\frac{\p_t p_h}{p_h}\right|, \label{11}\\  
	-\frac{\dot{M}_3}{M_3}&=\left|\p_t\left(-\frac{1}{2\sqrt{B_{\beta}}}\frac{l}{\sqrt{p}}\frac{\p_t p_h}{p_h}\right)\right|, \label{22} \\
 -\frac{\dot{M}_4}{M_4}&=\left|\frac{2}{\beta}\frac{l^2}{p}\frac{k^2}{p_h}\frac{\p_t p_h}{p_h}\right|, \label{33} \\ -\frac{\dot{M}_5}{M_5}&=2\sqrt{\frac{\beta-1}{\beta}}\frac{k^2}{p_h}\left|\frac{l}{\sqrt{p}}\right|,\label{44}\\  
  -\frac{\dot{M}_6}{M_6}&=\frac{\alpha}{2\sqrt{B_{\beta}}}\left|\frac{l}{p}\frac{\p_tp_h}{\sqrt{p_h}}\right|, \label{55}  \\
   -\frac{\dot{M}_7}{M_7}&=\frac{1}{(k^2+|\eta-kt|^2)^{\frac{1+\kappa}{2}}},\label{66}
     \end{align}
\end{itemize}
for any $\kappa>0$. We then define
\begin{equation}\label{666}  
M=\prod_{j=1}^{7} M_j. 
\end{equation}

For the expressions and properties of the multiplier $m$, there is a similar situation as in \cite{MR3612004}, so we use it as known conclusion and ignore the proof process.

\begin{lem}
	$(1)$ The multiplier $m(t, k, \eta, l)$ can be given by the following exact formula:
\begin{itemize}
	\item $m(t, k=0, \eta, l)=1.$
	\item if  $k \neq 0$, $ \frac{\eta}{k}<-1000\nu^{-\frac13}: \ m(t,k,\eta,l)\equiv 1.$
	\item if $k\neq 0$, $-1000\nu^{-\frac13}\leq\frac{\eta}{k}\leq 0$:
	\begin{equation}  
		 m=\begin{cases}
		\left(\frac{k^2+\eta^2+l^2}{p}\right)^{\frac14},& \text{ if } \ 0\leq t\leq\frac{\eta}{k}+1000\nu^{-\frac13},\\
		\left(\frac{k^2+\eta^2+l^2}{k^2+(1000\nu^{-\frac13}k)^2+l^2}\right)^{\frac14},& \text{ if } \ \frac{\eta}{k}+1000\nu^{-\frac13}\leq t.
	     \end{cases} \nonumber
     \end{equation}
	\item if $k\neq 0, 0<\frac{\eta}{k}<1000\nu^{-\frac13}$:    \begin{equation}m=\begin{cases}
		\left(\frac{p}{k^2+l^2}\right)^{\frac14},& \text{ if } \ 0\leq t\leq\frac{\eta}{k},\\
		\left(\frac{k^2+l^2}{p}\right)^{\frac14},& \text{ if } \ \frac{\eta}{k}\leq t\leq\frac{\eta}{k} + 1000\nu^{-\frac13},\\
		\left(\frac{k^2+l^2}{k^2+(1000\nu^{-\frac13}k)^2+l^2}\right)^{\frac14},& \text{ if } \ \frac{\eta}{k} + 1000\nu^{-\frac13}\leq t.
	\end{cases}\nonumber
	\end{equation}
 
	\item if $k\neq 0, 1000\nu^{-\frac13}\leq\frac{\eta}{k}$: 
	\begin{equation} 
		m=\begin{cases}
		\left(\frac{k^2+(1000\nu^{-\frac13}k)^2+l^2}{k^2+l^2}\right)^{\frac14},& \text{ if } \ 0\leq t\leq \frac{\eta}{k}-1000\nu^{-\frac13},\\
		\left(\frac{p}{k^2+l^2}\right)^{\frac14},& \text{ if } \ \frac{\eta}{k}-1000\nu^{-\frac13}\leq t\leq\frac{\eta}{k},\\
		\left(\frac{k^2+l^2}{p}\right)^{\frac14},& \text{ if } \ \frac{\eta}{k}\leq t \leq \frac{\eta}{k}+1000\nu^{-\frac13},\\
		\left(\frac{k^2+l^2}{k^2+(1000\nu^{-\frac13}k)^2+l^2}\right)^{\frac14},& \text{ if } \ \frac{\eta}{k}+1000\nu^{-\frac13}\leq t.
	\end{cases}\nonumber
	\end{equation}
\end{itemize}	
$(2)$ In particular,  $m(t, k, \eta, l)$  are bounded above and below, but its bound depends on $\nu$
\begin{equation}\label{4.11-1}
	\nu^{\frac{1}{6}}\lesssim m\left(t, k, \eta, l \right) \leqslant \nu^{-\frac{1}{6}}.
\end{equation}
$(3)$  $m(t, k, \eta, l)$  and the frequency have the following relationship
\begin{equation}\label{4.11-2}
	\frac{|k,l|^{\frac{1}{2}}}{p^{\frac{1}{4}}} \lesssim m\left(t, k, \eta, l \right) \lesssim\frac{p^{\frac{1}{4}}}{|k,l|^{\frac{1}{2}}}.
\end{equation}
$(4)$  Moreover,  for all $t, k\neq0, \eta, \eta', l, l'$,  the following product estimates hold that
\begin{equation}\label{4.11-3}
 m(t, k, \eta, l)\lesssim \langle \eta-\eta', l-l' \rangle^{\frac{1}{2}}m(t,k, \eta', l').
\end{equation}

\end{lem}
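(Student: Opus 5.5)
The plan is to integrate the defining ODE \eqref{4.....4} for $m$ explicitly. On the active window $|t-\eta/k|\le 1000\nu^{-1/3}$, the sign of $k(\eta-kt)$ determines $\frac{\dot m}{m}$ through \eqref{2.9}. For $t<\eta/k$ we have $k(\eta-kt)=k^2(\eta/k-t)>0$, so $-\frac{\dot m}{m}=-\frac14\frac{\partial_t p}{p}$, hence $m\propto p^{1/4}$. For $t>\eta/k$ the opposite sign gives $m\propto p^{-1/4}$. Outside the window $m$ is frozen. The case $k=0$ is trivial since then $\partial_t p=0$.

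First I will treat $k\neq0$ and split according to the position of $\eta/k$ relative to $0$ and $\pm1000\nu^{-1/3}$. The only data needed is the continuity of $m$ in $t$ and the initial value $m(0)$.
\begin{itemize}
\item If $\eta/k\le0$, then $m(0)=1$, and the window is $[0,\eta/k+1000\nu^{-1/3}]$, which is empty if $\eta/k<-1000\nu^{-1/3}$. On the window $m=(p(0)/p)^{1/4}$ with $p(0)=k^2+\eta^2+l^2$, and after the window $m$ is frozen at the end value, where $p=k^2+(1000\nu^{-1/3}k)^2+l^2$.
\item If $0<\eta/k<1000\nu^{-1/3}$, the initial value $m(0)=(p(0)/(k^2+l^2))^{1/4}$ is chosen precisely so that $m=(p/(k^2+l^2))^{1/4}$ on $[0,\eta/k]$. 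This reaches $1$ at the critical time $t=\eta/k$, where $p=k^2+l^2$, and then $m=((k^2+l^2)/p)^{1/4}$ until the window closes.
\item If $\eta/k\ge1000\nu^{-1/3}$, $m$ is frozen at its initial value until $t=\eta/k-1000\nu^{-1/3}$. There $p=k^2+(1000\nu^{-1/3}k)^2+l^2$ matches the prescribed initial value, and the remaining pieces follow identically.
\end{itemize}
This gives (1).

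For (2) and (3), every formula is of the form $(A/B)^{\pm1/4}$ with $A,B$ among $k^2+l^2$, $p$, and $k^2+(1000\nu^{-1/3}k)^2+l^2$, together with $k^2+\eta^2+l^2$ in the case $\eta/k\le0$. Inside the window $|\eta-kt|\le1000\nu^{-1/3}|k|$, so
\[
k^2+l^2\le p\lesssim \nu^{-2/3}(k^2+l^2).
\]
This gives $m\le\nu^{-1/6}$ and $m\gtrsim\nu^{1/6}$ up to the constant $1000^{1/2}$. In the case $\eta/k\le0$ with $t\le \eta/k+1000\nu^{-1/3}$, I will use $|\eta|\le 1000\nu^{-1/3}|k|$ to bound $k^2+\eta^2+l^2$ in the same way. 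For (3), the same formulas give $m\le (p_{\max}/(k^2+l^2))^{1/4}$. I need to check that the frozen values are still comparable to powers of $p(t)/(k^2+l^2)$: in the frozen regimes $|\eta-kt|\ge1000\nu^{-1/3}|k|$, so $p(t)\ge$ the frozen numerator. Hence $m\lesssim p^{1/4}/|k,l|^{1/2}$, and symmetrically for the lower bound.

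The main obstacle is (4), the product estimate. I will use (3) together with a direct comparison. Write $\xi=(\eta,l)$, $\xi'=(\eta',l')$, and $\langle\xi-\xi'\rangle=\langle \eta-\eta',l-l'\rangle$. If $m(\xi)\le1$ and $m(\xi')\ge1$ there is nothing to prove, up to a constant. Otherwise the idea is to view $m$ as a function of $(\eta-kt)/|k,l|$ and note that $\log m$ changes by at most the log of the ratio of $p$ values. For fixed $t,k$, I will check that
\[
\frac{p(\eta,l)}{|k,l|^2}\lesssim \langle\eta-\eta',l-l'\rangle^2\,\frac{p(\eta',l')}{|k,l'|^2},
\]
and similarly $|k,l'|^2\le\langle l-l'\rangle^2|k,l|^2$ since $|k|\ge1$. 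Inserting these into the explicit piecewise formulas, each piece is a $\pm1/4$ power of such ratios, or a monotone clipped version of one. Shifting $\eta$ moves the critical time $\eta/k$ by $(\eta-\eta')/k$, and the region between the two critical times costs at most the factor $(1+|\eta-\eta'|^2)^{1/4}$. The delicate case is when $\xi$ and $\xi'$ lie in different regimes, for instance before and after the critical time. There I will bound $m(\xi)\le (p(\xi)/|k,l|^2)^{1/4}$ and $m(\xi')\ge (|k,l'|^2/p(\xi'))^{1/4}$. Then $|\eta-kt|$ and $|\eta'-kt|$ lie on opposite sides of zero, which forces $|\eta-kt|+|\eta'-kt|\le|\eta-\eta'|$, and this makes both ratios $\lesssim\langle\xi-\xi'\rangle^2$. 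Multiplying the two bounds gives the factor $\langle\xi-\xi'\rangle^{1/2}$. This case analysis, following \cite{MR3612004}, completes the argument.
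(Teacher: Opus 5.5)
Your derivation of (1)--(3) is sound. Integrating $\dot m/m=\pm\frac14\partial_t p/p$ on the window from the prescribed initial data reproduces each piecewise formula. Outside the window $p(t)$ dominates the frozen numerator, which gives (3), and $k^2+l^2\le p\lesssim\nu^{-2/3}(k^2+l^2)$ on the window gives (2) up to a constant. The paper gives no argument for this lemma; it only defers to \cite{MR3612004}.

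The gap is in (4), at the last step of your straddling case. You show $p(\xi)/|k,l|^2\lesssim\langle\xi-\xi'\rangle^2$ and $p(\xi')/|k,l'|^2\lesssim\langle\xi-\xi'\rangle^2$. Taking fourth roots, each factor is $\lesssim\langle\xi-\xi'\rangle^{1/2}$, so their product is $\lesssim\langle\xi-\xi'\rangle$, not $\langle\xi-\xi'\rangle^{1/2}$.

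This cannot be repaired, because \eqref{4.11-3} with exponent $\frac12$ is false. Take $k=1$, $l=l'=0$, $0<a\le 1000\nu^{-1/3}$, $t>a$, $\eta=t+a$, $\eta'=t-a$. For $\eta$, part (1) on the piece $\max\{0,\eta/k-1000\nu^{-1/3}\}\le t\le \eta/k$ gives $m(t,1,\eta,0)=(1+a^2)^{1/4}$. For $\eta'$, the piece $\eta'/k\le t\le \eta'/k+1000\nu^{-1/3}$ gives $m(t,1,\eta',0)=(1+a^2)^{-1/4}$. Hence $m(t,1,\eta,0)/m(t,1,\eta',0)=(1+a^2)^{1/2}$, while $\langle\eta-\eta'\rangle^{1/2}=(1+4a^2)^{1/4}$. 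For $a=1000\nu^{-1/3}$ the quotient of the two sides is of order $\nu^{-1/6}$, so no $\nu$-independent constant works.

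The analogy with \cite{MR3612004} breaks down for the following reason. There the multiplier is active only after the critical time and takes values in $(0,1]$, so $m(\xi)/m(\xi')$ can only pick up the decay across one post-critical window. Here $m$ must be two-sided, since $K_{\neq}$ has the opposite stretching sign from $Q_{\neq}$. It climbs to $\sim\nu^{-1/6}$ before $t=\eta/k$ and drops to $\sim\nu^{1/6}$ after it, so two frequencies on opposite sides of $kt$ collect both factors.

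With the exponents added correctly, your argument proves the sharp replacement $m(t,k,\eta,l)\lesssim\langle\eta-\eta',l-l'\rangle\, m(t,k,\eta',l')$. Same-side configurations cost $\langle\eta-\eta',l-l'\rangle^{1/2}$, straddling ones cost $\langle\eta-\eta',l-l'\rangle$, and the example above shows the latter is attained. That is the statement you should prove. Later uses of \eqref{4.11-3} then put an extra half derivative on the factor at frequency $(\eta-\eta',l-l')$, and those estimates need to be rechecked.
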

The following lemma embodies some properties of the ghost multipliers $M_{j}(t, k, \eta, l)$.
\begin{lem}\label{lem4.2}
	$(1)$ The multiplier $M_{j}$ have positive upper and lower bounds:
	\begin{align}
	&0<c<M_{j}(t, k, \eta, l) \leqslant 1, \quad  j \in \{1,2,...5,7\}, \label{4.13}\\
	&0<c_{\alpha}<M_{6}(t, k, \eta, l) \leqslant 1, \label{new111}
	\end{align}
	for a universal constant $c$ that does not depend on $\nu$, $\alpha$, $\beta$ and the  frequency, and $c_{\alpha}=\exp(-\frac{\alpha \pi}{\sqrt{B_{\beta}}})$. 
	
	$(2)$ For $k \neq 0$, we have
	\begin{equation}\label{4.15}
		1 \leq 2\left(\nu^{-\frac{1}{6}} \sqrt{-\frac{\dot{M_{1}}}{ M_{1}}}\left(k, \eta, l\right)+\nu^{\frac{1}{3}}\big|k, \eta- kt, l \big|\right).
	\end{equation}
\end{lem}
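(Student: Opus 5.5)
The plan is to work directly from the defining ODEs \eqref{4.5}--\eqref{66}. Each $M_j$ is the exponential of minus the time integral of its rate:
\[
M_j(t)=\exp\Big(-\int_0^t \Big(-\frac{\dot M_j}{M_j}\Big)(\tau)\,d\tau\Big).
\]
Every rate is nonnegative, so $M_j\le 1$ immediately. The lower bounds in part (1) therefore reduce to showing that
\[
\int_0^\infty \Big(-\frac{\dot M_j}{M_j}\Big)\,d\tau
\]
is bounded uniformly in $\nu$ and in the frequencies $(k,\eta,l)$, with $k\neq0$. Throughout I substitute $s=\tau-\eta/k$, so that $\eta-k\tau=-ks$, $p_h=k^2(1+s^2)$, $p=k^2(1+s^2)+l^2$ and $\partial_t p_h=2k^2 s$.

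The individual integrals are handled as follows.

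\textbf{$M_1$.} The substitution $\sigma=\nu^{1/3}s$ turns the integral into $\int \frac{d\sigma}{1+\sigma^2}\le\pi$.

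\textbf{$M_7$.} The integral is
\[
\int |k|^{-1-\kappa}(1+s^2)^{-\frac{1+\kappa}{2}}\,ds\lesssim_\kappa 1 .
\]

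\textbf{$M_5$.} Since $\frac{k^2}{p_h}=\frac{1}{1+s^2}$ and $\frac{|l|}{\sqrt p}\le1$, the integral is at most $2\sqrt{\frac{\beta-1}{\beta}}\,\pi$. Under $B_\beta>0$ this constant is finite. (I note that it depends on $\beta$, although the statement claims a universal $c$; I will check whether the $\beta$-dependence is absorbed, as in the convention of the Notation section.)

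\textbf{$M_4$.} Using $\frac{l^2}{p}\le1$, $\frac{k^2}{p_h}=\frac1{1+s^2}$ and $\big|\frac{\partial_t p_h}{p_h}\big|=\frac{2|s|}{1+s^2}$, the rate is integrable in $s$ with a bound $\lesssim|\beta|^{-1}$.

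\textbf{$M_2$.} Here $\big|\frac{\partial_t p}{p}\big|\le\frac{2|s|}{1+s^2}$ and $\frac{|l|}{\sqrt p}\le1$, so the integrand is bounded by
\[
\frac{1}{4\sqrt{B_\beta}}\,\frac{4s^2}{(1+s^2)^2},
\]
which is integrable.

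\textbf{$M_6$.} The rate is
\[
\frac{\alpha}{2\sqrt{B_\beta}}\,\frac{|l|}{p}\,\frac{2k^2|s|}{|k|\sqrt{1+s^2}}
\le \frac{\alpha}{\sqrt{B_\beta}}\,\frac{|l|\,|k|\,|s|}{\big(k^2(1+s^2)+l^2\big)\sqrt{1+s^2}} .
\]
Applying AM--GM in the form $k^2(1+s^2)+l^2\ge 2|k||l|\sqrt{1+s^2}$ bounds this by $\frac{\alpha}{2\sqrt{B_\beta}}\frac{|s|}{1+s^2}$, which is not integrable. So a sharper split is needed. I will set $a=|l|/|k|$, so the integrand becomes
\[
\frac{\alpha}{\sqrt{B_\beta}}\,\frac{a|s|}{(1+s^2+a^2)\sqrt{1+s^2}}.
\]
Substituting $s=a u$ when $a\ge1$ (and treating $a<1$ directly), the integral is bounded by
\[
\frac{\alpha}{\sqrt{B_\beta}}\int \frac{a^2|u|\,du}{(a^2u^2+a^2)\,a|u|}\le \frac{\alpha}{\sqrt{B_\beta}}\int\frac{du}{1+u^2}=\frac{\alpha\pi}{\sqrt{B_\beta}} .
\]
This produces exactly $c_\alpha=\exp(-\alpha\pi/\sqrt{B_\beta})$. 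This is the target constant in \eqref{new111}.

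\textbf{$M_3$.} This is the main obstacle, because its rate is the absolute value of a time derivative. The integral equals the total variation in $s$ of
\[
g(s)=\frac{1}{2\sqrt{B_\beta}}\,\frac{|l|}{\sqrt{k^2(1+s^2)+l^2}}\,\frac{2s}{1+s^2}.
\]
The plan is to show that $g$ has boundedly many monotone pieces; it is a rational function of $s$ and $\sqrt{\cdot}$, and I expect it to have only a few critical points. Then the variation is bounded by a fixed multiple of $\sup|g|\le\frac{1}{2\sqrt{B_\beta}}$. To count the monotone pieces, I will compute $g'$ and check that its numerator is a low-degree polynomial in $s^2$.

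\textbf{Part (2), \eqref{4.15}.} I split on the size of $|s|=|t-\eta/k|$.
\begin{itemize}
\item If $|s|\le\nu^{-1/3}$, then $-\frac{\dot M_1}{M_1}\ge\frac{\nu^{1/3}}{2}$, so $\nu^{-1/6}\sqrt{-\dot M_1/M_1}\ge\frac{1}{\sqrt2}$, and therefore $2\cdot\frac1{\sqrt2}\ge1$.
\item If $|s|>\nu^{-1/3}$, then $|\eta-kt|=|k||s|\ge\nu^{-1/3}$, hence $\nu^{1/3}|k,\eta-kt,l|\ge1$.
\end{itemize}
In both cases the right-hand side of \eqref{4.15} is at least $1$.
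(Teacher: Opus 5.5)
Your proposal is correct, and on the one piece the paper actually proves it follows the same route. For $M_6$ the paper bounds $|\eta-k\tau|/\sqrt{p_h}\le 1$ immediately and integrates $\frac{\alpha|l|}{\sqrt{B_\beta}}\frac{|k|}{k^2+l^2+(\eta-k\tau)^2}$ in $\tau$, obtaining $\frac{\alpha\pi}{\sqrt{B_\beta}}\frac{|l|}{|k,l|}\le\frac{\alpha\pi}{\sqrt{B_\beta}}$. Your substitution $s=au$ is the same estimate in other coordinates. The paper's version avoids the AM--GM detour and the case split in $a$; your two bounds $\sqrt{1+a^2u^2}\ge a|u|$ and $1+a^2+a^2u^2\ge a^2(1+u^2)$ hold for every $a>0$ anyway. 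Your displayed integral also drops the Jacobian $ds=a\,du$. With it, the integrand is bounded by exactly $(1+u^2)^{-1}$, so your conclusion $c_\alpha=e^{-\alpha\pi/\sqrt{B_\beta}}$ stands.

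For $M_1,\dots,M_5,M_7$ and the inequality in part (2), the paper gives no argument and only cites Bedrossian--Germain--Masmoudi and Liss. Your direct computations fill this in, and they check out. The one step you left as an expectation, the total variation for $M_3$, does close. With $A=k^2+l^2$ and $x=s^2$, for $s>0$ one has $s(1+s^2)(A+k^2s^2)\,g'/g=A-Ax-2k^2x^2$, which has exactly one positive root. Hence the odd function $g$ has three monotone pieces on $\mathbb{R}$, and its variation is at most $4\sup|g|\le 2/\sqrt{B_\beta}$.

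Your concern about $\beta$-dependence is justified. Under $B_\beta>0$ alone, the constants for $M_2,\dots,M_5$ blow up as $B_\beta\to 0$ or $\beta\to 0^-$. The ``universal'' $c$ is valid only under the standing assumption $B_\beta>\tfrac14$ from the Remark after the lemma. That assumption gives $\sqrt{B_\beta}>\tfrac12$ and $|\beta|>\tfrac{\sqrt2-1}{2}$, hence uniform bounds, including $c_\alpha\ge e^{-2\alpha\pi}$. The $M_7$ constant depends on $\kappa$, which the statement tacitly allows.
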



\pf The proof process of \eqref{4.13} and \eqref{4.15} is similar to that of \cite{MR3612004, Liss2020}   and we omit it. Here we only prove \eqref{new111}.
According to \eqref{55}, we have $ \dot{M_6}/{M_6} \leqslant 0$ for any $k$, $l \in \mathbb{Z}$, $\eta \in \mathbb{R}$. Hence, for any $t \geqslant 0$, we obtain $M_6(t, k, \eta, l) \leqslant 1$.

If $k=0$, then $M_6(t, k=0, \eta, l)=1$. If $k \neq 0$, by \eqref{55}, we have
\begin{align*}
-\ln M_6=&\frac{\alpha}{\sqrt{B_{\beta}}}\int_{0}^{t} \frac{|k||l||\eta-kt|}{(k^2+(\eta-kt)^2+l^2)\sqrt{k^2+(\eta-kt)^2}} d \tau \\
\leqslant& \frac{\alpha|l|}{\sqrt{B_{\beta}}}\int_{0}^{t} \frac{|k|}{k^2+(\eta-kt)^2+l^2} d \tau \\
\leqslant& \frac{\alpha|l|}{\sqrt{B_{\beta}}}\int_{-\infty}^{+\infty} \frac{1}{k^2+l^2+s^2} d s \\
\leqslant& \frac{\alpha \pi}{\sqrt{B_{\beta}}},
\end{align*}
where we have used the fact $\int_{-\infty}^{+\infty} \frac{1}{k^2+l^2+s^2} d s=\frac{\pi}{|k, l|}$. Hence,
\begin{equation*}
0< \exp(-\frac{\alpha \pi}{\sqrt{B_{\beta}}}) \leqslant M_6 \leqslant 1. 
\end{equation*} 
Hence, we finish the proof of Lemma \ref{lem4.2}\hfill $\square$

\begin{re}
	In  Theorem \ref{1..1}, we assume that $B_{\beta} > \frac{1}{4}$. This means that the lower bounds $c$ and $c_{\alpha}$ of the multiplier $M_{i}$ $(i=1,2,...,7)$ in Lemma \ref{lem4.2} are independent of $\beta$.
\end{re}
Using Lemma \ref{lem4.2}, we directly give the following inference.
\begin{cor}\label{cor4.1} (\cite{MR3612004, Liss2020})
 	For any $g$ and $s \geqslant 0$, the following inequality holds
	\begin{equation}\label{4.17}
		\left\|  g_{\neq}  \right\|_{L_t^2 H^{s}} \leq 2 \nu^{-\frac{1}{6}} \left( \left\|  \sqrt{-\frac{\dot{M_{1}}}{M_{1}}} g_{\neq} \right\|_{L_{t}^2 H^{s}}+ \nu^{\frac{1}{2}} \left\|  \nabla_{L} g_{\neq} \right\|_{L_t^2 H^{s}}\right),
	\end{equation}
	which will be used frequently below.
\end{cor}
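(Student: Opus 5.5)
The plan is to apply \eqref{4.15} from Lemma \ref{lem4.2} pointwise on the Fourier side and then integrate. Fix $t\ge 0$ and a frequency $(k,\eta,l)$ with $k\neq 0$; this is the only case that matters, since $g_{\neq}$ has no $k=0$ component. Multiply \eqref{4.15} by $\langle k,\eta,l\rangle^{s}|\widehat{g}(t,k,\eta,l)|$. This gives
\begin{equation*}
\langle k,\eta,l\rangle^{s}|\widehat{g}| \le 2\nu^{-\frac16}\left(\sqrt{-\frac{\dot{M_1}}{M_1}}\,\langle k,\eta,l\rangle^{s}|\widehat{g}| + \nu^{\frac12}\,|k,\eta-kt,l|\,\langle k,\eta,l\rangle^{s}|\widehat{g}|\right),
\end{equation*}
where we used $\nu^{\frac13}=\nu^{-\frac16}\nu^{\frac12}$. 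The last factor $|k,\eta-kt,l|$ is exactly the symbol of $\nabla_L$ by \eqref{HSX444}.

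Next, take the $L^2$ norm in $(k,\eta,l)$ and then in $t$. Two facts finish the argument. First, $\sqrt{-\dot M_1/M_1}$ is a Fourier multiplier, so $\|\sqrt{-\dot M_1/M_1}\,\langle k,\eta,l\rangle^{s}\widehat{g}\|_{L^2}$ equals $\|\sqrt{-\dot M_1/M_1}\, g_{\neq}\|_{H^s}$. Second, by Plancherel and the definition \eqref{HSX-A}, the other term is $\|\nabla_L g_{\neq}\|_{H^s}$. The triangle inequality in $L^2_tL^2_{k,\eta,l}$ then yields \eqref{4.17}, with the same constant $2$.

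The only real content is \eqref{4.15}, which is assumed from Lemma \ref{lem4.2}. For completeness, its mechanism is a two-case split. If $|t-\eta/k|\le \nu^{-1/3}$, then \eqref{4.5} gives $-\dot M_1/M_1\ge \nu^{1/3}/2$, so the first term is at least $1/\sqrt2$. Otherwise $|\eta-kt|\ge |k|\nu^{-1/3}\ge\nu^{-1/3}$, so $\nu^{1/3}|k,\eta-kt,l|\ge1$. No step here is a genuine obstacle. The one point needing care is that the inequality is applied pointwise in frequency before integrating, so the multiplier's dependence on $(t,k,\eta,l)$ causes no commutation issues.
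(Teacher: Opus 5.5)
Your proof is correct and is the argument the paper intends. The paper gives no separate proof: it simply asserts that \eqref{4.17} follows directly from the pointwise bound \eqref{4.15} in Lemma \ref{lem4.2}, which is what you do by multiplying by $\langle k,\eta,l\rangle^{s}|\widehat{g}|$ and taking $L^2_tL^2_{k,\eta,l}$ norms (and your two-case sketch of \eqref{4.15} is also correct).
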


The proof of Theorem \ref{1.1.}  is directly obtained from the following Lemma \ref{lem3.5} and Corollary \ref{dispercor} and the Lemma \ref{lemA.4} in Appendix A. Here, We first give the linear enhanced dissipation and inviscid damping on $U_{\neq}$ when $B_\beta>{1}/{4}$.  
\begin{lem}\label{lem3.4}
	Assume that $\nu > 0$ and $\beta \in \mathbb{R}$ with $B_\beta>\frac{1}{4}$, then there holds
	\begin{align}
		&\left\| U^1_{\neq} (t) \right\|_{L^{2}} \lesssim  \, b_{\alpha, \beta} \, e^{-\lambda  \nu^{\frac{1}{3}} t} \left(\left\|  u_{\mathrm{in}}\right\|_{H^{2}}+\left\|  \theta_{\mathrm{in}}\right\|_{H^{1}} \right) , \label{3.1x}  \\
		&{\left\langle {t} \right\rangle} \left\|  U^2_{\neq} (t) \right\|_{L^{2}} \lesssim  \, b_{\alpha, \beta} \, e^{-\lambda \nu^{\frac{1}{3}} t} \left(\left\|  u_{\mathrm{in}}\right\|_{H^{\frac{9}{2}}}+\left\|  \theta_{\mathrm{in}}\right\|_{H^{\frac{7}{2}}} \right) , \label{3.2x}  \\
		&\left\| (U^3_{\neq}, \Theta_{\neq}) (t) \right\|_{L^2} \lesssim  \, d_{\alpha, \beta} \, e^{-\lambda \nu^{\frac{1}{3}} t} \left(\left\|  u^3_{\mathrm{in}}\right\|_{H^{2}} + \sqrt{\frac{\beta}{\beta-1}} \left\|  \p_xu^2_{\mathrm{in}}-\p_yu^1_{\mathrm{in}}\right\|_{H^{1}}+\left\|  \theta_{\mathrm{in}}\right\|_{H^{1}} \right),\label{3.3x}  
	\end{align}
	where  $\lambda=\frac{1}{16}\frac{2\sqrt{B_{\beta}}-1}{2\sqrt{B_{\beta}}+1}$, $b_{\alpha, \beta}=\frac{1}{c_{\alpha}}\left(\frac{2\sqrt{B_{\beta}}+1}{2\sqrt{B_{\beta}}-1}\right)^{\frac{1}{2}}\max \left\{\sqrt{\frac{\beta-1}{\beta}}, \sqrt{\frac{\beta}{\beta-1}} \right\}$, $c_{\alpha}=\exp(-\frac{\alpha \pi}{\sqrt{B_{\beta}}})$ and $d_{\alpha, \beta}=\frac{1}{c_{\alpha}}\left(\frac{2\sqrt{B_{\beta}}+1}{2\sqrt{B_{\beta}}-1}\right)^{\frac{1}{2}}$.
\end{lem}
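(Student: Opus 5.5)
We work frequency by frequency with the ODE system \eqref{Non0L} for $(\widehat{Q_{\neq}},\widehat{K_{\neq}},\widehat{H_{\neq}})$, $k\neq0$, and build a weighted energy with the multipliers $m$ and $M$ of \eqref{4.....4}--\eqref{666}. Set $\mathcal{A}=mM$ and
\[
E(t)=|\mathcal{A}\widehat{Q_{\neq}}|^2+|\mathcal{A}\widehat{K_{\neq}}|^2+|\mathcal{A}\widehat{H_{\neq}}|^2+\frac{1}{\sqrt{B_{\beta}}}\,\mathrm{Re}\Big(\frac{l}{\sqrt p}\frac{\p_t p_h}{p_h}\,\mathcal{A}\widehat{Q_{\neq}}\,\overline{\mathcal{A}\widehat{K_{\neq}}}\Big)\cdot c_0,
\]
with $c_0$ a fixed constant; the precise weight $-\frac{1}{2\sqrt{B_\beta}}\frac{l}{\sqrt p}\frac{\p_t p_h}{p_h}$ is the one whose derivative appears in \eqref{22}. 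Differentiating, the pairs $\mp\sqrt{B_\beta}\frac{l}{\sqrt p}$ and $\pm\alpha\sqrt{p_h/p}$ are antisymmetric and cancel in $E$. The terms $-\frac14\frac{\p_t p}{p}$ are absorbed by $-\dot m/m$ in the region $|t-\eta/k|\le 1000\nu^{-1/3}$. Outside that region they are absorbed by $\nu p$, since there $|\p_t p/p|\le 2/|t-\eta/k|\lesssim \nu^{1/3}\ll \nu p$. The $K$-equation carries $+\frac14$, so it is only a damping sign issue, but the $Q$ and $H$ equations have the bad sign.

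The stretching $\mp\frac12\frac{l^2}{p}\frac{\p_t p_h}{p_h}$ has opposite signs in $Q$ and $K$ and cannot be absorbed directly. This is where the cross term enters. Its time derivative, taken through the $\sqrt{B_\beta}\frac{l}{\sqrt p}$ coupling, produces $\frac{c_0}{2}\cdot\frac{l^2}{p}\frac{\p_t p_h}{p_h}(|\widehat Q|^2-|\widehat K|^2)$, which cancels the stretching for the right choice of $c_0$. The leftover terms are controlled as follows:
\begin{itemize}
\item the derivative of the weight itself, by $M_3$ in \eqref{22};
\item the weight times $\p_t p/p$, by $M_2$ in \eqref{11};
\item the weight times the $H$ coupling, by $M_6$ in \eqref{55};
\item the weight times $k^2/p_h$ terms, by $M_4$ in \eqref{33};
\item the extra $2\sqrt{(\beta-1)/\beta}\frac{k^2}{p_h}\frac{l}{\sqrt p}\widehat K$ term, by $M_5$ in \eqref{44}.
\end{itemize}
Coercivity of $E$ needs $|\text{weight}|\le \frac{1}{2\sqrt{B_\beta}}\cdot 2<1$, using $|\frac{l}{\sqrt p}\frac{\p_t p_h}{p_h}|\le 1$ as noted after \hhll. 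This gives
\[
\frac{2\sqrt{B_\beta}-1}{2\sqrt{B_\beta}}\,|\mathcal A(\widehat Q,\widehat K,\widehat H)|^2\le E\le \frac{2\sqrt{B_\beta}+1}{2\sqrt{B_\beta}}\,|\mathcal A(\widehat Q,\widehat K,\widehat H)|^2,
\]
and this is where $B_\beta>1/4$ and the ratio in $b_{\alpha,\beta}$ and $d_{\alpha,\beta}$ come from.

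This gives $\frac{d}{dt}E+\nu p E+\big(-\frac{\dot M_1}{M_1}\big)E\le0$ up to comparability constants. By \eqref{4.15}, $\nu p-\dot M_1/M_1\gtrsim \nu^{1/3}$, so $E(t)\le \frac{2\sqrt{B_\beta}+1}{2\sqrt{B_\beta}-1}e^{-2\lambda\nu^{1/3}t}E(0)$. The loss $\frac{2\sqrt{B_\beta}-1}{2\sqrt{B_\beta}+1}$ in the rate $\lambda=\frac1{16}\cdot$ratio comes from trading the dissipation in $E$ against the norm. The bounds \eqref{4.13}--\eqref{new111} remove $M$ at the cost of $c_\alpha^{-1}$. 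The bound \eqref{4.11-2} removes $m$ with $p^{1/4}/|k,l|^{1/2}$ factors, absorbed either by the exponential or by initial regularity.

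It remains to return to the physical unknowns:
\begin{itemize}
\item $\widehat{U^3}=-p^{-3/4}p_h^{1/2}\widehat Q$, bounded directly, and $\widehat\Theta=-p^{-1/4}\widehat H$ likewise.
\item $U^1,U^2$ are recovered from $U^3$ and $W^3$ via $\widehat{U^1}=\frac{i(\eta-kt)}{p_h}\cdot(\ldots)$, using $\nabla_L\cdot U=0$ and $W^3=\p_xU^2-\p_y^LU^1$. This gives factors $p_h^{-1/2}$ times $\sqrt{(\beta-1)/\beta}\,p^{1/4}\widehat K$, plus $l\,p_h^{-1}p^{3/4}$-type multiples of $\widehat Q$. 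The $\max\{\sqrt{(\beta-1)/\beta},\sqrt{\beta/(\beta-1)}\}$ arises here and from the initial norm of $K$.
\item For $U^2$, the extra factor $p_h^{-1/2}|k|\lesssim\langle t\rangle^{-1}\langle\eta\rangle$ gives the $\langle t\rangle$ decay, at the cost of more derivatives ($H^{9/2}$).
\end{itemize}
Initial $\|\mathcal A(Q,K,H)(0)\|_{L^2}$ is bounded by $\|u^3_{in}\|_{H^2}$, $\sqrt{\beta/(\beta-1)}\|w^3_{in}\|_{H^1}$ and $\|\theta_{in}\|_{H^1}$, using $m(0)\lesssim\langle\eta\rangle^{1/2}$.

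The main obstacle is the bookkeeping of the cross-term derivative. I need to verify that every remainder is bounded by one of $-\dot M_j/M_j$, $j=2,\dots,6$, times $E$, or by the dissipation. I also need to confirm that the constant in front of the cross term is exactly what makes the $\frac{l^2}{p}\frac{\p_tp_h}{p_h}$ stretching cancel while keeping $E$ coercive, which is what forces $B_\beta>1/4$.
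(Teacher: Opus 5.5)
Your proposal follows the paper's proof essentially step for step. The shared ingredients are the unknowns \eqref{yyy}, the weight $mM$ with $M_2,\dots,M_6$ assigned to the same remainders as in \eqref{1010}, and the corrector $2\widehat{G}\,\widehat{\mathcal{A}Q_{\neq}}\,\overline{\widehat{\mathcal{A}K_{\neq}}}$. The bound $|\widehat{G}|\le \frac{1}{2\sqrt{B_\beta}}$ gives the two-sided comparison that forces $B_\beta>\frac14$ and yields $\lambda$, $b_{\alpha,\beta}$, $d_{\alpha,\beta}$, and $U^1,U^2$ are recovered via \eqref{3.20}. Your only deviation is closing with a per-frequency Gronwall from \eqref{4.15} rather than putting $e^{\lambda\nu^{1/3}t}$ into $\mathcal{A}$ and absorbing via Corollary \ref{cor4.1}, which is equivalent.

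Two slips to fix when writing it out. First, coercivity requires $|\widehat{G}|<1$, i.e. $\frac{1}{2\sqrt{B_\beta}}<1$, not $\frac{1}{\sqrt{B_\beta}}<1$. Second, your recovery multipliers are inverted. Since $\widehat{W^3}=i\sqrt{\frac{\beta-1}{\beta}}\,p^{-1/4}p_h^{1/2}\widehat{K_{\neq}}$ and $\widehat{U^3}=-p^{-3/4}p_h^{1/2}\widehat{Q_{\neq}}$, the coefficients in $\widehat{U^1}$ are $\frac{|\eta-kt|}{p_h^{1/2}}p^{-1/4}$ and $\frac{|k||l|}{p_h^{1/2}}p^{-3/4}$, both at most $p^{-1/4}$, rather than involving $p^{1/4}$ and $p^{3/4}$.
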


\pf To control the evolution of the nonzero modes, we introduce the Fourier multiplies
\begin{equation}\label{888}
	\mathcal{A}:=mMe^{\lambda \nu^{\frac{1}{3}}t},
\end{equation} 
which incorporates the multiplier $m$ already used in the linear analysis and additional ghost weights ${M}$ as in \eqref{666} as well as the time weight $e^{\lambda\nu^{\frac{1}{3}}t}$, which $\lambda$ can be confirmed later. Recall the system  \eqref{Non0L}, energy estimates yield that 
\begin{equation}\label{999}
\begin{aligned}
    \frac12&\frac{d}{dt}|\mathcal{A}(\widehat{Q_{\neq}},\widehat{K_{\neq}},\widehat{H_{\neq}})|^2  + \nu p |\mathcal{A}(\widehat{Q_{\neq}},\widehat{K_{\neq}},\widehat{H_{\neq}})|^2 +\left[(-\frac{\dot{M}}{M})+(-\frac{\dot{m}}{m})-\lambda\nu^{\frac13}\right]|\mathcal{A}(\widehat{Q_{\neq}},\widehat{K_{\neq}},\widehat{H_{\neq}})|^2\\
    &-\left(\frac12\frac{\p_t p_h}{p_h}-\frac14\frac{\p_t p}{p}\right)\left(|\widehat{\mathcal{A}Q_{\neq}}|^2-|\widehat{\mathcal{A}K_{\neq}}|^2\right)-\frac14\frac{\p_t p}{p}|\widehat{\mathcal{A}H_{\neq}}|^2+2\sqrt{\frac{\beta-1}{\beta}}\frac{k^2}{p_h}\frac{l}{\sqrt{p}}\widehat{\mathcal{A}K_{\neq}}\overline{\widehat{\mathcal{A}Q_{\neq}}}=0.
\end{aligned}
\end{equation}
Consider the cross terms, we introduce operator $G$ whose fourier symbol  satisfies $$\widehat{G}:=-\frac{1}{2\sqrt{B_{\beta}}}\frac{l}{\sqrt{p}}\frac{\p_t p_h}{p_h},$$
thus one has 
\begin{align}\label{sss}
	&\p_t\left(\widehat{G}\widehat{\mathcal{A}Q_{\neq}}\overline{\widehat{\mathcal{A}K_{\neq}}}\right) \nonumber\\
	&\quad=(\p_t \widehat{G})\widehat{\mathcal{A}Q_{\neq}}\overline{\widehat{\mathcal{A}K_{\neq}}} + \left[(\frac{\dot{M}}{M})+(\frac{\dot{m}}{m})+\lambda\nu^{\frac13}\right]2\widehat{G}\widehat{\mathcal{A}Q_{\neq}}\overline{\widehat{\mathcal{A}K_{\neq}}} \nonumber\\
	&\qquad -\widehat{G}\left[\nu p\widehat{\mathcal{A}Q_{\neq}}-\left(\frac12\frac{\p_t p_h}{p_h}-\frac14\frac{\p_t p}{p}\right)\widehat{\mathcal{A}Q_{\neq}}-\sqrt{B_{\beta}}\frac{l}{\sqrt{p}}\widehat{\mathcal{A}K_{\neq}}+\alpha\sqrt{\frac{p_h}{p}}\widehat{\mathcal{A}H_{\neq}}+2\sqrt{\frac{\beta-1}{\beta}}\frac{k^2}{p_h}\frac{l}{\sqrt{p}}\widehat{\mathcal{A}K_{\neq}}\right]\overline{\widehat{\mathcal{A}K_{\neq}}} \nonumber\\
	&\qquad-\widehat{G}\widehat{\mathcal{A}Q_{\neq}}\left[\nu p\overline{\widehat{\mathcal{A}K_{\neq}}}+\left(\frac12\frac{\p_t p_h}{p_h}-\frac14\frac{\p_t p}{p}\right)\overline{\widehat{\mathcal{A}K_{\neq}}}+\sqrt{B_{\beta}}\frac{l}{\sqrt{p}}\overline{\widehat{\mathcal{A}Q_{\neq}}}\right].
\end{align}
This combined with \eqref{999} implies that
\begin{align*}
    \frac12&\frac{d}{dt}\left(|\mathcal{A}(\widehat{Q_{\neq}},\widehat{K_{\neq}},\widehat{H_{\neq}})|^2+2\widehat{G}\widehat{\mathcal{A}Q_{\neq}}\overline{\widehat{\mathcal{A}K_{\neq}}}\right) +\left[ \nu p+\left(-\frac{\dot{M}}{M}\right)\right]\left(|\mathcal{A}(\widehat{Q_{\neq}},\widehat{K_{\neq}},\widehat{H_{\neq}})|^2+2\widehat{G}\widehat{\mathcal{A}Q_{\neq}}\overline{\widehat{\mathcal{A}K_{\neq}}}\right)\\
    &=\lambda\nu^{\frac13}\left(|\mathcal{A}(\widehat{Q_{\neq}},\widehat{K_{\neq}},\widehat{H_{\neq}})|^2+2\widehat{G}\widehat{\mathcal{A}Q_{\neq}}\overline{\widehat{\mathcal{A}K_{\neq}}}\right)\\
    &\quad-\left[(-\frac{\dot{m}}{m})-\frac14\frac{\p_t p}{p}\right]|\widehat{\mathcal{A}Q_{\neq}}|^2-\left[(-\frac{\dot{m}}{m})+\frac14\frac{\p_t p}{p}\right]|\widehat{\mathcal{A}K_{\neq}}|^2-\left[(-\frac{\dot{m}}{m})-\frac14\frac{\p_t p}{p}\right]\|\widehat{\mathcal{A}H_{\neq}}|^2-(-\frac{\dot{m}}{m})2\widehat{G}\widehat{\mathcal{A}Q_{\neq}}\overline{\widehat{\mathcal{A}K_{\neq}}}\\
    &\quad+(\p_t \widehat{G})\widehat{\mathcal{A}Q_{\neq}}\overline{\widehat{\mathcal{A}K_{\neq}}}+\frac{1}{\beta}\frac{l^2}{p}\frac{k^2}{p_h}\frac{\p_t p_h}{p_h}|\widehat{\mathcal{A}K_{\neq}}|^2-2\sqrt{\frac{\beta-1}{\beta}}\frac{k^2}{p_h}\frac{l}{\sqrt{p}}\widehat{\mathcal{A}K_{\neq}}\overline{\widehat{\mathcal{A}Q_{\neq}}}+\frac{\alpha}{2\sqrt{B_{\beta}}}\frac{l}{p}\frac{\p_tp_h}{\sqrt{p_h}}\widehat{\mathcal{A}H_{\neq}}\overline{\widehat{\mathcal{A}K_{\neq}}}.
\end{align*}

Donate that $e_{\neq}(t):=|\mathcal{A}(\widehat{Q_{\neq}},\widehat{K_{\neq}},\widehat{H_{\neq}})|^2+2\widehat{G}\widehat{\mathcal{A}Q_{\neq}}\overline{\widehat{\mathcal{A}K_{\neq}}}$, then using the   expression of $\hat{G}$  yields that 
\begin{equation}
\begin{aligned}\label{00}
    \left(1-\frac{1}{2\sqrt{B_{\beta}}}\right)|\mathcal{A}(\widehat{Q_{\neq}},\widehat{K_{\neq}},\widehat{H_{\neq}})|^2\leq e_{\neq}(t)\leq \left(1+\frac{1}{2\sqrt{B_{\beta}}}\right)|\mathcal{A}(\widehat{Q_{\neq}},\widehat{K_{\neq}},\widehat{H_{\neq}})|^2,
\end{aligned}
\end{equation}
Notice that the definition of $m$ as in \eqref{4.....4}, we have
$ 
 -\frac{\dot{m}}{m}+\frac{\nu p}{2}-\frac14 \big|\frac{\p_t p}{p}\big|\gtrsim 0$.
A direct calculation yields
\begin{equation}
\begin{aligned}\label{1010}
    &\frac{d}{dt}e_{\neq}(t) + \left[\nu p+(-\frac{\dot{M}}{M})\right]\left(1-\frac{1}{2\sqrt{B_{\beta}}}\right)|\mathcal{A}(\widehat{Q_{\neq}},\widehat{K_{\neq}},\widehat{H_{\neq}})|^2\\
    &\quad\leq 2\lambda\nu^{\frac13}\left(1+\frac{1}{2\sqrt{B_{\beta}}}\right)|\mathcal{A}(\widehat{Q_{\neq}},\widehat{K_{\neq}},\widehat{H_{\neq}})|^2-\left[(-\frac{\dot{M_2}}{M_2})+(-2\frac{\dot{m}}{m})\widehat{G}\right]2\widehat{\mathcal{A}Q_{\neq}}\overline{\widehat{\mathcal{A}K_{\neq}}}\\
    &\qquad-\left[(-\frac{\dot{M_3}}{M_3})-\p_t \widehat{G}\right]2\widehat{\mathcal{A}Q_{\neq}}\overline{\widehat{\mathcal{A}K_{\neq}}}-\left[(-\frac{\dot{M_4}}{M_4})-\frac{2}{\beta}\frac{l^2}{p}\frac{k^2}{p_h}\frac{\p_t p_h}{p_h}\right]|\widehat{\mathcal{A}K_{\neq}}|^2\\
    &\qquad-\left[(-\frac{\dot{M_5}}{M_5})+2\sqrt{\frac{\beta-1}{\beta}}\frac{k^2}{p_h}\frac{l}{\sqrt{p}}\right]2\widehat{\mathcal{A}K_{\neq}}\overline{\widehat{\mathcal{A}Q_{\neq}}}-\left[(-\frac{\dot{M_6}}{M_6})-\frac{\alpha}{2\sqrt{B_{\beta}}}\frac{l}{p}\frac{\p_tp_h}{\sqrt{p_h}}\right]2\widehat{\mathcal{A}H_{\neq}}\overline{\widehat{\mathcal{A}K_{\neq}}},
\end{aligned}
\end{equation}
where we have used \eqref{00} and  $M=\prod_{j=1}^{7} M_j$ and $M_j(t,k,\eta, l),\ 1\leq j\leq 7$  are defined by \eqref{4.5}-\eqref{66}.
In the following, we introduce two new notation
\begin{align*}
    E_{\neq}(t):=&\|\mathcal{A}(Q_{\neq},K_{\neq},H_{\neq})\|_{H^r}^2 +2\langle G\mathcal{A}Q_{\neq},\mathcal{A}K_{\neq}\rangle_{H^r},\\
    F_{\neq}(t):=&\nu\|\mathcal{A}\na_L (Q_{\neq},K_{\neq},H_{\neq})\|_{H^r}^2 +2\nu\langle G\mathcal{A}\na_L Q_{\neq},\mathcal{A}\na_L K_{\neq}\rangle_{H^r}\\
    &+\left\|\mathcal{A}\sqrt{-\frac{\dot{M}}{M}} \left(Q_{\neq},K_{\neq},H_{\neq}\right)\right\|_{H^r}^2 +2\left\langle G\mathcal{A}\sqrt{-\frac{\dot{M}}{M}} Q_{\neq},\mathcal{A}\sqrt{-\frac{\dot{M}}{M}} K_{\neq}\right\rangle_{H^r},
\end{align*}
for any $r \geq 0$. Using \eqref{00}, $  E_{\neq}(t) $ and $  F_{\neq}(t) $ satisfy
\begin{align*}
    & \left(1-\frac{1}{2\sqrt{B_{\beta}}}\right)\|\mathcal{A}(Q_{\neq},K_{\neq},H_{\neq})\|_{H^r}^2\leq E_{\neq}(t)\leq  \left(1+\frac{1}{2\sqrt{B_{\beta}}}\right)\|\mathcal{A}(Q_{\neq},K_{\neq},H_{\neq})\|_{H^r}^2,\\
    & \left(1-\frac{1}{2\sqrt{B_{\beta}}}\right)\left(\nu\|\mathcal{A}\na_L(Q_{\neq},K_{\neq},H_{\neq})\|_{H^r}^2 + \left\|\mathcal{A}\sqrt{-\frac{\dot{M}}{M}}(Q_{\neq},K_{\neq},H_{\neq})\right\|_{H^r}^2\right)\\
	&\quad \leq F_{\neq}(t)\leq \left(1+\frac{1}{2\sqrt{B_{\beta}}}\right)\left(\nu\|\mathcal{A}\na_L(Q_{\neq},K_{\neq},H_{\neq})\|_{H^r}^2 + \left\|\mathcal{A}\sqrt{-\frac{\dot{M}}{M}}(Q_{\neq},K_{\neq},H_{\neq})\right\|_{H^r}^2\right),
\end{align*}
thus, recalling \eqref{1010} and going back to the physical space, we obtain that
\begin{align*}
    &\frac{\rm d}{{\rm d}t}E_{\neq}(t) + \left(1-\frac{1}{2\sqrt{B_{\beta}}}\right)\nu\|\mathcal{A}\na_L(Q_{\neq},K_{\neq},H_{\neq})\|_{H^r}^2 + \left(1-\frac{1}{2\sqrt{B_{\beta}}}\right)\|\mathcal{A}\sqrt{-\frac{\dot{M}}{M}}(Q_{\neq},K_{\neq},H_{\neq})\|_{H^r}^2\\
    &\quad\leq2\lambda\nu^{\frac13}\left(1+\frac{1}{2\sqrt{B_{\beta}}}\right)\|\mathcal{A}(Q_{\neq},K_{\neq},H_{\neq})\|_{H^r}^2.
\end{align*}
Taking $\lambda=\frac{1}{16}\frac{2\sqrt{B_{\beta}}-1}{2\sqrt{B_{\beta}}+1}$   and integrating the above inequality in time yields that
\begin{align*}
    &\|\mathcal{A}(Q_{\neq},K_{\neq},H_{\neq})\|_{H^r}^2 + \nu\|\mathcal{A}\na_L(Q_{\neq},K_{\neq},H_{\neq})\|_{L^2_tH^r}^2 + \left\|\mathcal{A}\sqrt{-\frac{\dot{M}}{M}}(Q_{\neq},K_{\neq},H_{\neq})\right\|_{L^2_tH^r}^2\\
    & \quad\leq \frac{2\sqrt{B_{\beta}}+1}{2\sqrt{B_{\beta}}-1}\|\mathcal{A}(Q_{\neq},K_{\neq},H_{\neq})(t=0)\|_{H^r}^2 +\frac{\nu^{\frac13}}{8}\|\mathcal{A}(Q_{\neq},K_{\neq},H_{\neq})\|_{L^2_tH^r}^2.
\end{align*}
Due to $\mathcal{A}=e^{\lambda \nu^{\frac{1}{3}} t }mM$, \eqref{4.17} and 
 \eqref{4.11-2},  we have
\begin{equation}\label{3x.0}
\begin{aligned}
	&\|m M(Q_{\neq},K_{\neq},H_{\neq})\|_{H^r}^2 + \frac{ \nu}{2}\|mM\na_L(Q_{\neq},K_{\neq},H_{\neq})\|_{L^2 H^r}^2
	\\&\qquad + \frac{1}{2} \left\|\sqrt{-\dot{M} M}m(Q_{\neq},K_{\neq},H_{\neq})\right\|_{L^2 H^r}^2  \\
& 	\quad\leq \frac{2\sqrt{B_{\beta}}+1}{2\sqrt{B_{\beta}}-1} e^{-2\lambda \nu^{\frac{1}{3}} t} \| (Q_{\mathrm{in}},K_{\mathrm{in}},H_{\mathrm{in}})\|_{H^{r+{\frac{1}{2}}}}^2.
\end{aligned}
\end{equation}

Next, recalling the definition of $Q$, $K$ and  $H$ and combining with \eqref{4.11-2}, we have the following fact
\begin{align}\label{3x.1}
	|\widehat{U^3_{\neq}}|= \frac{p_h^{\frac{1}{2}}}{p^{\frac{3}{4}}}|\widehat{Q_{\neq}}| \leq \frac{1}{c_{\alpha}}\frac{p_h^{\frac12}}{p^{\frac12}(k^2+l^2)^{\frac14}}|mM\widehat{Q_{\neq}}|\leq \frac{1}{c_{\alpha}(k^2+l^2)^{\frac14}}|mM\widehat{Q_{\neq}}|,
\end{align}
and
\begin{align}\label{3x.2}
	|\widehat{\Theta_{\neq}}|=\frac{1}{p^{\frac{1}{4}}}|H_{\neq}|\leq \frac{1}{c_{\alpha}(k^2+l^2)^{\frac14}}|mM\widehat{H_{\neq}}|.
\end{align}
Hence, together  \eqref{3x.0} with \eqref{3x.1}-\eqref{3x.2} gives 
\begin{equation}
\begin{aligned}\label{3x.3}
	\left\|{U^3_{\neq}}\right\|_{L^2}^2\leq& \frac{1}{c_{\alpha}^{2}}\left\| mM{Q_{\neq}}   \right\|_{L^2}^2 \\
	\leq& \frac{1}{c_{\alpha}^{2}}\frac{2\sqrt{B_{\beta}}+1}{2\sqrt{B_{\beta}}-1} e^{-2\lambda \nu^{\frac{1}{3}} t} \| (Q_{\mathrm{in}},K_{\mathrm{in}},H_{\mathrm{in}})\|_{H^{\frac{1}{2}}}^2 \\
	\leq& \frac{1}{c_{\alpha}^{2}}\frac{2\sqrt{B_{\beta}}+1}{2\sqrt{B_{\beta}}-1} e^{-2\lambda \nu^\frac{1}{3} t} \left(\| u^3_{\mathrm{in}}\|_{H^2}^2+{\frac{\beta}{\beta-1}} \left\|  \p_xu^2_{\mathrm{in}}-\p_yu^1_{\mathrm{in}}\right\|_{H^1}^2+\| \theta_{\mathrm{in}}\|_{H^1}^2 \right),
\end{aligned}
\end{equation}
similarly, we have
\begin{equation}
\begin{aligned}\label{3x.4}
	\left\|{\Theta_{\neq}}\right\|_{L^2}^2	\leq \frac{1}{c_{\alpha}^{2}}\frac{2\sqrt{B_{\beta}}+1}{2\sqrt{B_{\beta}}-1} e^{-2\lambda \nu^{\frac{1}{3}} t} \left(\| u^3_{\mathrm{in}}\|_{H^2}^2+{\frac{\beta}{\beta-1}} \left\|  \p_xu^2_{\mathrm{in}}-\p_yu^1_{\mathrm{in}}\right\|_{H^1}^2+\| \theta_{\mathrm{in}}\|_{H^1}^2 \right).
\end{aligned}
\end{equation}

Using the incompressible condition $\partial_{x} U^1+\partial_{y}^L U^2=-\partial_{z} U^3$ and the definition of $W^3$, one gets
\begin{equation}\label{3.20}
	\begin{cases}
		\left(\partial_{x x}+\partial_{y y}^L\right) U^{1}=-\partial_{y}^{L} W^{3}-\partial_{x z} U^{3},  \\
		\left(\partial_{x x}+\partial_{y y}^L\right) U^{2}=\partial_{x} W^{3}-\partial_{y z}^{L} U^{3},
	\end{cases}
\end{equation}
By \eqref{3.20} and $\eqref{4.11-2}$, we get $\widehat{U_{\neq}^1}$ and $\widehat{U_{\neq}^2}$ which satisfy
\begin{equation}\label{ll6}
\begin{aligned}
	|\widehat{U_{\neq}^1}|\leq &\frac{1}{p_h}\left( \sqrt{\frac{\beta-1}{\beta}}  \frac{|\eta-kt| p_h^{\frac{1}{2}}}{p^{\frac{1}{4}}} |\widehat{K_{\neq}}|+ \frac{|k||l| p_h^{\frac{1}{2}}}{p^{\frac{3}{4}}} |\widehat{Q_{\neq}}|\right)\\
	\leq & \frac{1}{c_{\alpha}}\frac{1}{\left(k^2+l^2\right)^{\frac{1}{4}}} \left(\sqrt{\frac{\beta-1}{\beta}}|mM\widehat{K_{\neq}}| + |mM\widehat{Q_{\neq}}|\right),
\end{aligned}
\end{equation}
and 
\begin{equation}\label{ll7}
	\begin{aligned}
	|\widehat{U_{\neq}^2}|\leq &\frac{1}{p_h}\left( \sqrt{\frac{\beta-1}{\beta}}  \frac{|k| p_h^{\frac{1}{2}}}{p^{\frac{1}{4}}} |\widehat{K_{\neq}}|+ \frac{|\eta-kt||l| p_h^{\frac{1}{2}}}{p^{\frac{3}{4}}} |\widehat{Q_{\neq}}|\right)\\
	\leq &  \frac{1}{c_{\alpha}}\left(\sqrt{\frac{\beta-1}{\beta}} \frac{|k|^{\frac{1}{2}}}{p_h^{\frac12}}|mM\widehat{K_{\neq}}| +\frac{|l|^{\frac{1}{2}}}{p^{\frac12}} |mM\widehat{Q_{\neq}}|\right).
\end{aligned}
\end{equation}
Thus,  this together \eqref{3x.0}  leads to $L^2$-norm of $U_{\neq}^1(t)$ and $U_{\neq}^2(t)$
\begin{equation}\label{4x.5}
\begin{aligned}
	\left\|{U^1_{\neq}}\right\|_{L^2}^2\leq& \frac{1}{c_{\alpha}^{2}}\left({\frac{\beta-1}{\beta}}\left\|mM\widehat{K_{\neq}}\right\|_{L^2}^2 + \left\|mM\widehat{Q_{\neq}}\right\|_{L^2}^2\right) \\
	\lesssim& \frac{1}{c_{\alpha}^{2}}\frac{2\sqrt{B_{\beta}}+1}{2\sqrt{B_{\beta}}-1} \max \left\{ {\frac{\beta-1}{\beta}}, 1 \right\} e^{-2\lambda \nu^{\frac{1}{3}} t} \| (Q_{\mathrm{in}},K_{\mathrm{in}},H_{\mathrm{in}})\|_{H^{\frac{1}{2}}}^2 \\
	\lesssim& \frac{1}{c_{\alpha}^{2}}\frac{2\sqrt{B_{\beta}}+1}{2\sqrt{B_{\beta}}-1} \max \left\{ {\frac{\beta-1}{\beta}},  {\frac{\beta}{\beta-1}} \right\} e^{-2\lambda \nu^{\frac{1}{3}} t} \left(\| u_{\mathrm{in}}\|_{H^2}^2+\| \theta_{\mathrm{in}}\|_{H^1}^2 \right),
\end{aligned}
\end{equation}
and
\begin{equation}\label{4x.6}
\begin{aligned}
	\left\|{U^2_{\neq}}\right\|_{L^2}^2\leq& {\left\langle {t} \right\rangle}^{-2} \frac{1}{c_{\alpha}^{2}}\left({\frac{\beta-1}{\beta}}\left\|mM\widehat{K_{\neq}}\right\|_{H^{\frac{5}{2}}}^2 + \left\|mM\widehat{Q_{\neq}}\right\|_{H^{\frac{5}{2}}}^2\right) \\
	\lesssim& {\left\langle {t} \right\rangle}^{-2} \, \frac{1}{c_{\alpha}^{2}}\frac{2\sqrt{B_{\beta}}+1}{2\sqrt{B_{\beta}}-1} \max \left\{ {\frac{\beta-1}{\beta}}, 1 \right\} e^{-2\lambda \nu^{\frac{1}{3}} t} \| (Q_{\mathrm{in}},K_{\mathrm{in}},H_{\mathrm{in}})\|_{H^3}^2 \\
	\lesssim& {\left\langle {t} \right\rangle}^{-2} \,\frac{1}{c_{\alpha}^{2}}\frac{2\sqrt{B_{\beta}}+1}{2\sqrt{B_{\beta}}-1} \max \left\{ {\frac{\beta-1}{\beta}},  {\frac{\beta}{\beta-1}} \right\} e^{-2\lambda \nu^{\frac{1}{3}} t} \left(\| u_{\mathrm{in}}\|_{H^{\frac{9}{2}}}^2+\| \theta_{\mathrm{in}}\|_{H^{\frac{7}{2}}}^2 \right),
\end{aligned}
\end{equation}
where we have used  the fact $p_h^{-1} \lesssim {\left\langle {t} \right\rangle}^{-2} |k, \eta, l|^2$. Hence, together \eqref{3x.3}, \eqref{3x.4}, \eqref{4x.5} with \eqref{4x.6} gives \eqref{3.1x}-\eqref{3.3x} and  the proof of Lemma \ref{lem3.4} is finished. It is worth noting that in Lemma \ref{lem3.4} where we require $B_{\beta} > 1/4$, but in fact, obtaining the enhanced dissipation of $U_{\neq}$ and $ \Theta_{\neq}$ does not necessitate this restriction. We have provided a detailed explanation of this point in Appendix \ref{secA} later on. 
 \hfill $\square$

\subsection{Analysis of the zero-modes $(u_0, \theta_0)$}\label{sec3.2}
%

\qquad This section is devoted to the dynamics of the zero modes in the $x$ frequency, which satisfy
\begin{equation}\label{1..7}
	\begin{cases}
		\partial_{t} u_0^1-\nu \Delta_{y, z} u_0^1+
		(1-\beta)u_0^2=0, \\
		\partial_{t} u_0^2-\nu \Delta_{y, z} u_0^2+
		\beta \partial_{z} \Delta_{y, z}^{-1} \partial_{z} u_0^1-\alpha \partial_{y} \Delta_{y, z}^{-1} \partial_{z} \theta_0=0, \\
		\partial_{t} u_0^3-\nu \Delta_{y, z} u_0^3-\beta \partial_{z} \Delta_{y, z}^{-1} \partial_{y} u_0^1+\alpha \partial_{y} \Delta_{y, z}^{-1} \partial_{y} \theta_{0}=0, \\
		\partial_{t} \theta_0-\nu \Delta_{y, z} \theta_0-
		\alpha u_0^3=0, \\
		u(t=0)=u_{\mathrm{in}}, \quad \theta(t=0)=\theta_{\mathrm{in}}.
	\end{cases}
\end{equation}
The zero modes  can be decomposed into  simple-zero modes and double-zero modes, that is,   $u_0(t, y, z)=\overline{u}_0(t, y)+\widetilde{u}_{0}(t, y, z)$ and $\theta_0(t, y, z)=\overline{\theta}_0(t, y)+\widetilde{\theta}_{0}(t, y, z)$.  For the linearized system \eqref{1..7}, we express the solution in an explicit form via Fourier transform and Duhamel’s principle. One gives the following lemma.
\begin{lem}\label{lem3.5}
	$(1)$ Assume that $l=0$, then \eqref{1..7} degenerates into the following heat equations 
	\begin{equation}\label{3.2.2}
		\begin{cases}
			\partial_{t} \overline{u}_0^1-\nu \partial_{y}^2 \overline{u}_0^1=0, \\
			\overline{u}_{0}^{2}=0,\\
			\partial_{t} \overline{u}_0^3-\nu \partial_{y}^2 \overline{u}_0^3+\alpha \overline{\theta}_0=0, \\
			\partial_{t} \overline{\theta}_0-\nu \partial_{y}^2 \overline{\theta}_0-
			\alpha \overline{u}_0^3=0,\\
			\overline{u_0}(t=0)=\overline{({u}_{\mathrm{in}})_0}, \overline{\theta_0}(t=0)=\overline{({\theta}_{\mathrm{in}})_0}.
		\end{cases}
	\end{equation}
	In this case, the double zero frequency $(\overline{u}_0, \overline{\theta}_0)$  of \eqref{3.2.2} is easily obtained, which is \eqref{thm1.1.1.1}--\eqref{thm1.1.3} as in Theorem \ref{1.1.}.
	
	$(2)$ Assume that $l \neq 0$, the simple zero frequency $\widetilde{u}_{0}$ of system \eqref{1..7} can be expressed as the following form
	\begin{align}
		\widehat{\widetilde{({u}^{1})_0}}(t, \eta, l)=&\left[ \frac{\alpha^2 \eta^2}{h^2} e^{\lambda_1 t}+\frac{\beta (\beta-1) l^2}{2h^2} (e^{\lambda_2 t}+e^{\lambda_3 t})\right] \widehat{\widetilde{({u^{1}_{\mathrm{in}}})_0}} \nonumber\\
		&+\frac{i (\beta-1) |\eta, l|}{2h} (e^{\lambda_3 t}-e^{\lambda_2 t})\widehat{\widetilde{({u^{2}_{\mathrm{in}}})_0}} \nonumber\\
		&+\left[\frac{\alpha (\beta-1) \eta l}{h^2} e^{\lambda_1 t}-\frac{\alpha (\beta-1) \eta l}{2h^2}(e^{\lambda_2 t}+e^{\lambda_3 t})\right]\widehat{\widetilde{({\theta_{\mathrm{in}}})_0}},\label{3.2.4}\\
		\widehat{\widetilde{({u}^{2})_0}}(t, \eta, l)=&\left[ \frac{i \beta l^2 }{2h|\eta, l|} (e^{\lambda_2 t}-e^{\lambda_3 t})\right] \widehat{\widetilde{({u^{1}_{\mathrm{in}}})_0}}+\frac{1}{2} (e^{\lambda_2 t}+e^{\lambda_3 t})\widehat{\widetilde{({u^{2}_{\mathrm{in}}})_0}} \nonumber \\
		&+\frac{i\alpha \eta l}{2h |\eta, l|}(e^{\lambda_3 t}-e^{\lambda_2 t})\widehat{\widetilde{({\theta_{\mathrm{in}}})_0}}, \label{3.2.5}\\
		\widehat{\widetilde{({u}^{3})_0}}(t, \eta, l)=&-\frac{\eta}{l}\widehat{\widetilde{({u}^{2})_0}},\label{3.2.6}\\
		\widehat{\widetilde{(\theta)_0}}(t, \eta, l)=&\left[ \frac{\alpha \beta \eta l}{h^2} e^{\lambda_1 t}-\frac{\alpha \beta \eta l}{2h^2} (e^{\lambda_2 t}+e^{\lambda_3 t})\right] \widehat{\widetilde{({u^{1}_{\mathrm{in}}})_0}} \nonumber\\
		&+\frac{i \alpha \eta |\eta, l|}{2h l} (e^{\lambda_2 t}-e^{\lambda_3 t})\widehat{\widetilde{({u^{2}_{\mathrm{in}}})_0}} \nonumber\\
		&+\left[\frac{\beta (\beta-1) l^2}{h^2} e^{\lambda_1 t}+\frac{\alpha^2 \eta^2}{2h^2}(e^{\lambda_2 t}+e^{\lambda_3 t})\right]\widehat{\widetilde{({\theta_{\mathrm{in}}})_0}}, \label{3.2.7}
	\end{align}
	where $h=h(\alpha, \beta, \eta, l)=\sqrt{\alpha^2 \eta^2+ \beta (\beta-1) l^2}$, $\lambda_1=-\nu (\eta^2+l^2)$, $\lambda_2=-\nu (\eta^2+l^2)+i \frac{h}{|\eta, l|}$ and $\lambda_3=-\nu (\eta^2+l^2)-i \frac{h}{|\eta, l|}$. 
\end{lem}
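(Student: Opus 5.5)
The plan is to solve \eqref{1..7} explicitly on the Fourier side in $(y,z)$ and to treat the cases $l=0$ and $l\neq0$ separately. Since the dissipation $-\nu\Delta_{y,z}$ acts as the same scalar $-\nu(\eta^2+l^2)$ on every component, I will first remove it. For each unknown I set $f^\sharp(t,\eta,l):=e^{\nu(\eta^2+l^2)t}\widehat{f}(t,\eta,l)$. The system then becomes a linear ODE with constant coefficients for each fixed $(\eta,l)$, and the factor $e^{\lambda_1t}=e^{-\nu(\eta^2+l^2)t}$ is restored at the end.

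\textbf{Case $l=0$.} Incompressibility of the zero mode in $x$ reads $\partial_y u_0^2+\partial_z u_0^3=0$. At $l=0$ this gives $\partial_y\overline{u}_0^2=0$, so $\overline{u}_0^2=0$ by decay in $y$. The operator $\beta\partial_z\Delta^{-1}_{y,z}\partial_y$ vanishes at $l=0$, while $\alpha\partial_y\Delta_{y,z}^{-1}\partial_y$ acts as the identity times $\alpha$. This yields \eqref{3.2.2}. The $\overline{u}_0^1$ equation is the pure heat equation. For the pair $(\overline{u}_0^3,\overline{\theta}_0)$, after removing the heat factor we get $\partial_t(u^\sharp,\theta^\sharp)=\alpha(-\theta^\sharp,u^\sharp)$, i.e.\ a rotation by angle $\alpha t$. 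This gives \eqref{thm1.1.2}--\eqref{thm1.1.3} directly (equivalently, one can work with the complex quantity $u^3+i\theta$).

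\textbf{Case $l\neq0$.} I eliminate $\widetilde u_0^3$ through $\widehat{\widetilde u_0^3}=-\frac{\eta}{l}\widehat{\widetilde u_0^2}$, which is \eqref{3.2.6}. Substituting this into the $\theta$ equation gives $\partial_t\theta^\sharp=-\alpha\frac{\eta}{l}u^{2\sharp}$. The vector $X=(u^{1\sharp},u^{2\sharp},\theta^\sharp)$ then satisfies $\partial_tX=AX$ with
\[
A=\begin{pmatrix}0&\beta-1&0\\ -\frac{\beta l^2}{|\eta,l|^2}&0&-\frac{\alpha\eta l}{|\eta,l|^2}\\ 0&-\frac{\alpha\eta}{l}&0\end{pmatrix}.
\]
These entries come from reading off the symbols of $\beta\partial_z\Delta^{-1}\partial_z$ and $\alpha\partial_y\Delta^{-1}\partial_z$; I will recheck the signs against the matrix displayed in the introduction. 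A direct computation gives $\det(\lambda-A)=\lambda(\lambda^2+\omega^2)$ with $\omega^2=\frac{\beta(\beta-1)l^2+\alpha^2\eta^2}{|\eta,l|^2}=\frac{h^2}{|\eta,l|^2}$. By Cayley--Hamilton, $A^3=-\omega^2A$, and hence
\[
e^{At}=I+\frac{\sin\omega t}{\omega}A+\frac{1-\cos\omega t}{\omega^2}A^2 .
\]
The assumption $B_\beta>0$ guarantees $h>0$, so $\omega\neq0$ and this formula is non-degenerate. Rewriting $\cos\omega t$ and $\sin\omega t$ through $e^{\pm i\omega t}$, multiplying back by $e^{\lambda_1t}$, and computing $A^2$ explicitly produces the coefficients in \eqref{3.2.4}, \eqref{3.2.5} and \eqref{3.2.7}. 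For example, $(A^2)_{11}=-\beta(\beta-1)l^2/|\eta,l|^2$, which combines with $1$ to give $\frac{\alpha^2\eta^2}{h^2}+\frac{\beta(\beta-1)l^2}{h^2}\cos\omega t$, in agreement with \eqref{3.2.4}.

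The main obstacle is purely the bookkeeping. One must track the signs and factors of $i$ coming from the Fourier symbols $i\eta$, $il$, and from $\frac{\eta}{l}$ versus $|\eta,l|$, so that every entry of $A^2$ and of $\frac{\sin\omega t}{\omega}A$ matches the stated formulas. As a consistency check, I will verify that the $\lambda_1$ eigenvector is the kernel of $A$, namely $(\alpha\eta,0,\beta l)$ up to normalization, and that it reproduces the $e^{\lambda_1t}$ coefficients, e.g.\ $\frac{\alpha\beta\eta l}{h^2}$ in \eqref{3.2.7}.
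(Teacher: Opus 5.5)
Your $(2,3)$ entry of $A$ has the wrong sign. The symbol of $\partial_y\Delta_{y,z}^{-1}\partial_z$ is $(i\eta)(il)/(-(\eta^2+l^2))=+\eta l/|\eta,l|^2$. This is the same computation that correctly gives you $+l^2/|\eta,l|^2$ for $\partial_z\Delta_{y,z}^{-1}\partial_z$. So the $u^2$-equation contributes $A_{23}=+\alpha\eta l/|\eta,l|^2$, as in the paper's matrix.

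With your sign, $\det(\lambda-A)=\lambda\bigl(\lambda^2+(B_\beta l^2-\alpha^2\eta^2)/|\eta,l|^2\bigr)$ and $\ker A$ is spanned by $(\alpha\eta,0,-\beta l)$. The characteristic polynomial and kernel you state therefore belong to the corrected matrix, not the one you wrote. Since $A_{23}$ enters $\omega$ itself, the sign must be fixed, though this is a transcription slip rather than a gap.

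With the correct sign,
\[
A^2=\begin{pmatrix}-\frac{B_\beta l^2}{|\eta,l|^2}&0&\frac{\alpha(\beta-1)\eta l}{|\eta,l|^2}\\ 0&-\omega^2&0\\ \frac{\alpha\beta\eta l}{|\eta,l|^2}&0&-\frac{\alpha^2\eta^2}{|\eta,l|^2}\end{pmatrix}.
\]
Now form $e^{\lambda_1 t}\bigl(I+\frac{\sin\omega t}{\omega}A+\frac{1-\cos\omega t}{\omega^2}A^2\bigr)$ and use $e^{\lambda_1t}\cos\omega t=\frac12(e^{\lambda_2t}+e^{\lambda_3t})$ and $e^{\lambda_1t}\sin\omega t=\frac{i}{2}(e^{\lambda_3t}-e^{\lambda_2t})$. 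This reproduces all nine coefficients of \eqref{3.2.4}, \eqref{3.2.5} and \eqref{3.2.7}; I checked each one.

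Your $l=0$ argument is the paper's, which uses $\phi=\widehat{\overline u_0^3}+i\widehat{\overline\theta_0}$. For $l\neq0$ your route is genuinely different.

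The paper keeps the heat term in the matrix and computes the eigenvectors $\mathcal{V}_i$, the adjugate $\mathcal{P}^{*}$ and $\det\mathcal{P}$. It then writes $e^{\mathcal{A}t}=\mathcal{P}\mathcal{J}\mathcal{P}^{*}/\det\mathcal{P}$, which gives explicit eigenvectors.

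You strip the scalar heat factor and use only that the spectrum is $\{0,\pm i\omega\}$, so $e^{At}$ is a quadratic polynomial in $A$. This needs no eigenvectors and no inversion, and the real $\cos/\sin$ form comes out directly. It is also less error-prone: the printed $(1,1)$ entry of $\mathcal{P}^{*}$ should be $-2i\alpha\eta h/(l|\eta,l|)$, although the paper's final formulas are correct.

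Your route also shows the structure clearly. The $e^{\lambda_1t}$ coefficient is the spectral projector $I+A^2/\omega^2$ onto $\ker A$, and its second row and column vanish. That is exactly why $\widetilde u_0^2$ and $\widetilde u_0^3$ are purely oscillatory, while $\widetilde u_0^1$ and $\widetilde\theta_0$ carry a non-dispersive heat part.

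For your planned consistency check, the right null vector $v=(\alpha\eta,0,\beta l)$ alone does not determine the $e^{\lambda_1t}$ coefficients. You also need the left null vector $w=(\alpha\eta,0,(\beta-1)l)$, and then $I+A^2/\omega^2=vw^{T}/h^2$.
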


\pf First, we give the proof of $(1)$. When $l=0$, $\overline{u}_0^1$ satisfies the standard heat equation which gives the estimate \eqref{thm1.1.1.1}$_{1}$. The coupling of $\overline{u}_0^3$ and $\overline{\theta}_0$ in a zero-order manner indicates that this does not directly lead to energy dissipation or enhancement. Due to the symmetry of the coupling relationship, the Fourier transform is obtained
\begin{equation*} 
	\begin{cases}
		\partial_{t} \widehat{\overline{u}_0^3}+\nu \eta^2 \widehat{\overline{u}_0^3}+\alpha \widehat{\overline{\theta}_0}=0, \\
		\partial_{t} \widehat{\overline{\theta}_0}+\nu \eta^2 \widehat{\overline{\theta}_0}-
		\alpha \widehat{\overline{u}_0^3}=0,\\
		\widehat{\overline{u^3_0}}(t=0)=\widehat{\overline{({u}^3_{\mathrm{in}})_0}}, \widehat{\overline{\theta_0}}(t=0)=\widehat{\overline{({\theta}_{\mathrm{in}})_0}}.
	\end{cases}
\end{equation*}
Let	$\phi=\widehat{\overline{u}_0^3}+i \widehat{\overline{\theta}_0}$ and $\psi=\widehat{\overline{u}_0^3}-i \widehat{\overline{\theta}_0}$ with the initial data $\phi_{\mathrm{in}}=\widehat{\overline{({u}^3_{\mathrm{in}})_0}}+i \widehat{\overline{({\theta}_{\mathrm{in}})_0}}$ and $\psi_{\mathrm{in}}=\widehat{\overline{({u}^3_{\mathrm{in}})_0}}-i \widehat{\overline{({\theta}_{\mathrm{in}})_0}}$, then we have
\begin{equation*}
	\partial_{t} \phi=(-\nu \eta^2+i \alpha) \phi, \quad \partial_{t} \psi=(-\nu \eta^2-i \alpha) \psi.
\end{equation*}
Thus, a direct calculation yields
\begin{equation*}
	\phi(t, \eta)=e^{-\nu \eta^2 t+ i \alpha t}\phi_{\mathrm{in}}, \quad \psi(t, \eta)=e^{-\nu \eta^2 t- i \alpha t}\psi_{\mathrm{in}}.
\end{equation*}
Noticing that $\widehat{\overline{u}_0^3}=\frac{\phi+\psi}{2}$ and $\widehat{\overline{\theta}_0}=\frac{\phi-\psi}{2i}$, we immediately have
\begin{equation}
\begin{aligned}\label{5.x}
	\widehat{\overline{u}_0^3}(t, \eta)=&\frac{1}{2}e^{-\nu \eta^2 t}\left[ e^{i \alpha t}\left(\widehat{\overline{({u}^3_{\mathrm{in}})_0}}+i \widehat{\overline{({\theta}_{\mathrm{in}})_0}}\right)+e^{- i \alpha t}\left(\widehat{\overline{({u}^3_{\mathrm{in}})_0}}-i \widehat{\overline{({\theta}_{\mathrm{in}})_0}}\right)\right]\\
	=&e^{-\nu \eta^2 t} \left[ \cos(\alpha t) \widehat{\overline{({u}^3_{\mathrm{in}})_0}}-\sin(\alpha t) \widehat{\overline{({\theta}_{\mathrm{in}})_0}}\right].
\end{aligned}
\end{equation}
Similarly, it holds
\begin{equation}\label{6.x}
	\widehat{\overline{\theta}_0}(t, \eta)=e^{-\nu \eta^2 t} \left[ \sin(\alpha t) \widehat{\overline{({u}^3_{\mathrm{in}})_0}}+\cos(\alpha t) \widehat{\overline{({\theta}_{\mathrm{in}})_0}}\right].
\end{equation}
\eqref{5.x} and \eqref{6.x} implies  \eqref{thm1.1.2}  and \eqref{thm1.1.3}.

Next, we prove the case when $l\neq 0$. By using the incompressible condition, $\widetilde{u}_{0}^{3}$   can be expressed as $-\partial_{z}^{-1} \partial_{y} \widetilde{u}_{0}^{2}(t, y, z)$. To simplify the calculation, taking  the Fourier transform to the $\widetilde{u}_{0}^{1}$,  $\widetilde{u}_{0}^{2}$ and $\widetilde{\theta}_{0}$ equations in \eqref{1..7}, it yields
\begin{equation*}
	\partial_{t} \begin{bmatrix}
		\widehat{\widetilde{u}_{0}^{1}} \\
		\widehat{\widetilde{u}_{0}^{2}} \\
		\widehat{\widetilde{\theta}_{0}}
	\end{bmatrix}=\begin{bmatrix}
		-\nu(\eta^2+l^2)  & \beta-1 & 0 \\
		-\beta \frac{l^2}{\eta^2+l^2} & -\nu(\eta^2+l^2) & \alpha \frac{\eta l}{\eta^2+l^2} \\
		0 & -\alpha \frac{\eta}{l} & -\nu(\eta^2+l^2)
	\end{bmatrix}\begin{bmatrix}
		\widehat{\widetilde{u}_{0}^{1}} \\
		\widehat{\widetilde{u}_{0}^{2}} \\
		\widehat{\widetilde{\theta}_{0}}
	\end{bmatrix}=:\mathcal{A}\begin{bmatrix}
		\widehat{\widetilde{u}_{0}^{1}} \\
		\widehat{\widetilde{u}_{0}^{2}} \\
		\widehat{\widetilde{\theta}_{0}}
	\end{bmatrix}.
\end{equation*}
Then, the single zero frequency satisfies the following expression
\begin{equation}
	\begin{aligned}
\label{7.xx}
	\begin{bmatrix}
	\widetilde{u}_{0}^{j}(t, \eta, l) \\
	\widehat{\widetilde{\theta}_{0}}(t, \eta, l)  
\end{bmatrix}=e^{\mathcal{A}t}\begin{bmatrix}
\widehat{\widetilde{({u^j_{\mathrm{in}}})_0}}\\
\widehat{\widetilde{({\theta_{\mathrm{in}}})_0}} 
\end{bmatrix},
\quad \mathrm{for} \quad j=1,2.
\end{aligned}
\end{equation}

We compute the fundamental matrix $e^{\mathcal{A} t}$ explicitly. The characteristic polynomial associated
with $\mathcal{A}$ is given by
\begin{equation*}
	\left(\lambda+ \nu \left( \eta^2+l^2 \right)\right)\left[ \left(\lambda+ \nu \left( \eta^2+l^2 \right)\right)^2+\alpha^2 \frac{\eta^2}{\eta^2+l^2}+\beta \left( \beta-1\right) \frac{l^2}{\eta^2+l^2}\right]=0,
\end{equation*}
and thus the spectra of $\mathcal{A}$ as follows
\begin{equation*}
	\lambda_1=-\nu |\eta, l|^2, \quad \lambda_2=-\nu |\eta, l|^2+i \frac{h}{|\eta, l|}, \quad  \quad \lambda_3=-\nu |\eta, l|^2-i \frac{h}{|\eta, l|},
\end{equation*}
where $h=h(\alpha, \beta, \eta, l)=\sqrt{\alpha^2 \eta^2+ \beta (\beta-1) l^2}$ with $B_{\beta}>0$.
Note that the eigenvalues $\lambda_2$ and $\lambda_3$ are a pair of conjugate complex roots. The eigenvectors $\mathcal{V}_{i}$ $(i=1, 2, 3)$ associated with  $\lambda_{i}$  satisfy
\begin{align*}
	\left( \lambda_{i} \textbf{I} - \mathcal{A} \right) \mathcal{V}_{i}=\begin{bmatrix}
		\lambda_{i}+\nu(\eta^2+l^2)  & 1-\beta & 0 \\
		\beta \frac{l^2}{\eta^2+l^2} & \lambda_{i}+\nu(\eta^2+l^2) & -\alpha \frac{\eta l}{\eta^2+l^2} \\
		0 & \alpha \frac{\eta}{l} & \lambda_{i}+\nu(\eta^2+l^2)
	\end{bmatrix} \begin{bmatrix}
		\mathcal{V}_{i 1}\\
		\mathcal{V}_{i 2}\\
		\mathcal{V}_{i 3}
	\end{bmatrix}=0,
\end{align*}
which gives
\begin{equation*}
	\mathcal{V}_{1}=\begin{bmatrix}
		\alpha \frac{\eta}{l} \\
		0 \\
		\beta 
	\end{bmatrix}, \quad \quad
	\mathcal{V}_{2}=\begin{bmatrix}
		i(1-\beta) \\
		\frac{h}{|\eta, l|}\\
		i \alpha \frac{\eta}{l}
	\end{bmatrix}, \quad \quad
	\mathcal{V}_{3}=\begin{bmatrix}
		i(\beta-1) \\
		\frac{h}{|\eta, l|}\\
		-i \alpha \frac{\eta}{l}
	\end{bmatrix} .
\end{equation*}
Let 
\begin{equation}\label{3.2.9}
	\mathcal{P}:=[\mathcal{V}_{1}, \mathcal{V}_{2}, \mathcal{V}_{3}]=\begin{bmatrix}
		\alpha \frac{\eta}{l} & \quad i(1-\beta) & 	\quad i(\beta-1) \\
		0  &  \quad	\frac{h}{|\eta, l|} & \quad	\frac{h}{|\eta, l|}\\
		\beta & \quad i \alpha \frac{\eta}{l} &  \quad -i \alpha \frac{\eta}{l}
	\end{bmatrix},\quad \mathcal{J}:=\begin{bmatrix}
		e^{\lambda_1 t} &   & 	  \\
		&  e^{\lambda_2 t} &  \\
		&   &  e^{\lambda_3 t}
	\end{bmatrix},
\end{equation}
then we can calculate the adjoint matrix $\mathcal{P^*}$ and determinant $\det(\mathcal{P})$ of $\mathcal{P}$, respectively,
\begin{equation*}
	\mathcal{P^*}=\begin{bmatrix}
		-2 i\alpha \frac{h^2}{l (\eta^2+l^2)} & \quad 0 & 	\quad 2 i(1-\beta) \frac{h}{|\eta, l|} \\
		\beta \frac{h}{|\eta, l|}   &  \quad	-i \alpha^2 \frac{\eta^2}{l^2}-iB_{\beta} & \quad	-\alpha \frac{\eta h}{l |\eta, l|} \\
		-\beta \frac{h}{|\eta, l|} & \quad -i \alpha^2 \frac{\eta^2}{l^2}-iB_{\beta} &  \quad \alpha \frac{\eta h}{l |\eta, l|}
	\end{bmatrix},
\end{equation*}
and 
$\label{3.2.11}
det(\mathcal{P})=-2i \frac{h^3}{l^2 |\eta, l|}.$
Hence, we obtain that 
$e^{\mathcal{A} t}=\frac{1}{\det(\mathcal{P})} \mathcal{P} \mathcal{J} \mathcal{P^*}$, which combined with \eqref{7.xx} gives
\eqref{3.2.4}--\eqref{3.2.7}. \hfill $\square$

Contrast to the classical Navier-Stokes equations, by means of \eqref{3.2.5} and \eqref{3.2.6} we find that although the combined effect of rotation and stratification brings about fluid oscillation, it cancels the lift-up effect. In other words, in the zero frequency part, the oscillation effect caused by the rotation and stratification is significantly stronger than the lift-up effect caused by the Couette flow. 

\subsection{Linear dispersive estimates}\label{sub4.1}
\qquad Recall the analysis of the zero frequency in section \ref{sec3.2}, we know that the simple-zero modes of $u^{2}$ and $u^{3}$ exhibit a dispersion mechanism from the expression. However, the simple-zero modes of $u^{1}$ and $\theta$ has no dispersion attenuation estimation, which brings difficulties to our research on the nonlinear stability threshold problem.  Fortunately we introduce some new variables, which brought about a good structure.
\begin{equation}
	\begin{aligned}\label{000}
		V_0^2=&-\beta\mathcal{R}^{-1}\p_{zz}\Delta_{L,0}^{-1}\tilde{U}_0^1 + \alpha\mathcal{R}^{-1}\p_{yz}\Delta_{L,0}^{-1}\tilde{\Theta}_0=:G_1\tilde{U}_0^1 + G_2\tilde{\Theta}_0,\\
		{V}_0^3=&-\p_z^{-1}\p_yV_0^2=\beta\mathcal{R}^{-1}\p_{yz}\Delta_{L,0}^{-1}\tilde{U}_0^1 - \alpha\mathcal{R}^{-1}\p_{yy}\Delta_{L,0}^{-1}\tilde{\Theta}_0=:G_1'\tilde{U}_0^1 + G_2'\tilde{\Theta}_0,\\
		\Lambda_0=& \alpha\sqrt{\frac{\beta}{\beta-1}}\p_{yz}\Delta_{L,0}^{-1} \tilde{U}_0^1 + \sqrt{B_{\beta}}\p_{zz}\Delta_{L,0}^{-1}\tilde{\Theta}_0=:G_3\tilde{U}_0^1 + G_4\tilde{\Theta}_0,
	\end{aligned}
\end{equation}  
where $l\neq 0$,    $\Delta_{L,0}:=\p_y^2+\p_z^2$ and its symbol $p_0:=\eta^2 + l^2$ and
\begin{equation}\label{ss11}
	\mathcal{R}:=i\left(B_{\beta}\p_{zz}\Delta_{L,0}^{-1}+\alpha^2\p_{yy}\Delta_{L,0}^{-1}\right)^{1/2}.
\end{equation}   
Then using the linearized system \eqref{1..7}, we give 
\begin{align}\label{S0L}
	\begin{cases}
		\p_t\widehat{V_0^j}+\nu p_0\widehat{V_0^j} + \sqrt{B_{\beta}l^2p_0^{-1}+\alpha^2\eta^2p_0^{-1}}\widehat{U_0^j} =0,\\
		\p_t\widehat{U_0^j}+\nu p_0\widehat{U_0^j} - \sqrt{B_{\beta}l^2p_0^{-1}+\alpha^2\eta^2p_0^{-1}}\widehat{V_0^j} =0,\\
		\p_t\widehat{\Lambda_0}+\nu p_0\widehat{\Lambda_0} =0,
	\end{cases} 
\end{align}
or equivalently
\begin{align}\label{sol1}
	\begin{cases}
		\p_t(U_0^j+iV_0^j) = \mathcal{L}(U_0^j+iV_0^j),\,\, \mathcal{L}:=\nu\Delta_{L,0}-\mathcal{R},\\
		(\p_t - \nu\Delta_{L,0})\Lambda_0=0,
	\end{cases}
\end{align}
for $j\in\{2,3\}$.   

Recalling the definition of  operator $\mathcal{R}$ as in \eqref{ss11} is of dispersive nature. To prove this, we  employ the Fourier transform.

For $f\in\R\times\mathbb{T}\to\mathbb{C}$,  we have $$ \mathcal{R}f(y,z)=\sum_{l\neq 0}e^{ilz}\mathcal{R}^lf_l(y),$$
with  $f_l(y):=\frac{1}{2\pi}\int_{\mathbb{T}}e^{-ilz}f(y,z){\rm d}z$,
where for $ h\in\mathscr{S}(\R)$, 
we set 
\[  
\mathcal{R}^l h(y,z):=\mathcal{F}_{\eta,l}^{-1}\left(i\sqrt\frac{B_{\beta}l^2+\alpha^2\eta^2}{l^2+\eta^2}\hat{h}(l,\eta)\right)(y,z).
\]

Then we can calculate the extra decay from the semigroup generated by $\mathcal{R}^l$ as follows. 
\begin{pro}\label{disperprop}
	For any $\sigma>0$, there exists a constant $C>0$ such that for all $h\in\mathscr{S}(\R) $ and  $l\neq0,\ a\geq0$,  then there holds that 
	\begin{align*}
		 |l|^a\left\|(l^2-\p_y^2)^{-\frac{a}2}e^{-t\mathcal{R}^l}h^l\right\|_{L_y^\infty(\R)}\lesssim |l|(qt)^{-\frac13}\|h^{l}\|_{L^1_y(\R)} + |l|^{-1}(qt)^{-\frac12}\|h^l\|_{W_y^{2+\sigma,1}(\R)} 
	\end{align*}  
	with   $ q:=\frac{|\beta(\beta-1)-\alpha^2|}{\alpha}$.
\end{pro}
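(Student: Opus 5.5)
The plan is to write $e^{-t\mathcal{R}^l}$ as convolution with an explicit oscillatory kernel and bound that kernel in $L^\infty_y$ by the Van der Corput lemma (Lemma \ref{lem2.1}). Young's inequality $\|K*h\|_{L^\infty}\le\|K\|_{L^\infty}\|h\|_{L^1}$ then gives the $L^1_y$ bounds.

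\textbf{Reduction to a one-variable phase.} In Fourier variables,
\begin{equation*}
|l|^a(l^2-\p_y^2)^{-\frac a2}e^{-t\mathcal{R}^l}h^l=\int_{\R}e^{iy\eta}\,\frac{|l|^a}{(l^2+\eta^2)^{a/2}}\,e^{-it\phi_l(\eta)}\,\widehat{h^l}(\eta)\,d\eta,
\qquad
\phi_l(\eta)=\sqrt{\frac{B_\beta l^2+\alpha^2\eta^2}{l^2+\eta^2}}.
\end{equation*}
Substituting $\eta=l\xi$ removes $l$ from the phase: $\phi_l(l\xi)=\psi(\xi)$ with
\begin{equation*}
\psi(\xi)^2=\alpha^2+\frac{B_\beta-\alpha^2}{1+\xi^2}.
\end{equation*}
The multiplier becomes $(1+\xi^2)^{-a/2}$, which is bounded with bounded variation uniformly in $a\ge0$, and the Jacobian contributes the factor $|l|$.

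\textbf{Derivatives of $\psi$.} Every derivative of $\psi$ carries the factor $B_\beta-\alpha^2$. Since $\psi$ lies between $\min\{\alpha,\sqrt{B_\beta}\}$ and $\max\{\alpha,\sqrt{B_\beta}\}$, the natural size of $\psi',\psi'',\psi'''$ is $q=|B_\beta-\alpha^2|/\alpha$ times explicit rational functions of $\xi$. Two facts about these functions are needed.
\begin{itemize}
\item On a fixed compact set $|\xi|\le R$, the quantity $|\psi''|+|\psi'''|$ is bounded below by $c_R\,q$. The zeros of $\psi''$ are finitely many inflection points (the zeros of a polynomial in $\xi^2$), and $\psi'''$ should not vanish there. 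This is where the $t^{-1/3}$ rate originates.
\item For $|\xi|\sim 2^j\gg1$ one has $|\psi''(\xi)|\sim q\,2^{-4j}$, since $\psi^2-\alpha^2$ behaves like $(B_\beta-\alpha^2)\xi^{-2}$.
\end{itemize}

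\textbf{Splitting the kernel.} I split with a smooth Littlewood--Paley partition in $\xi$: a low piece $|\xi|\lesssim R$ and dyadic pieces $|\xi|\sim2^j$.
\begin{itemize}
\item \emph{Low piece.} Divide $[-R,R]$ into finitely many intervals on which either $|\psi''|\gtrsim q$ or $|\psi'''|\gtrsim q$. Apply Van der Corput with $k=2$ or $k=3$ to the phase $ly\xi/t-\psi(\xi)$, whose second and third derivatives do not depend on $y$. The amplitude is handled by the usual integration-by-parts version of the lemma. This gives the kernel bound $|l|\,(qt)^{-1/3}$, uniform in $y$, and Young's inequality yields the first term $|l|(qt)^{-1/3}\|h^l\|_{L^1}$.
\item \emph{Dyadic pieces.} Use $k=2$: the piece has length $\sim2^j$ and $|\psi''|\sim q2^{-4j}$, so its kernel is bounded by $|l|(qt)^{-1/2}2^{2j}$. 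Convolving with $P_jh^l$ (frequency $|\eta|\sim|l|2^j$) gives
\begin{equation*}
|l|\,2^{2j}\,\|P_jh^l\|_{L^1}\lesssim |l|\,\frac{\langle\eta\rangle^{2}}{l^2}\,\|P_jh^l\|_{L^1}\lesssim |l|^{-1}\,2^{-\sigma j'}\,\|h^l\|_{W^{2+\sigma,1}},
\end{equation*}
where $2^{j'}\sim|l|2^j$ and the extra $\sigma$ derivatives make the sum over $j$ converge. This produces the second term $|l|^{-1}(qt)^{-1/2}\|h^l\|_{W^{2+\sigma,1}_y}$.
\end{itemize}

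\textbf{Main obstacle.} The delicate step is the compact-region lower bound $|\psi''|+|\psi'''|\gtrsim q$ with constants depending only on $\alpha$, not on $\beta$ beyond $q$. It requires writing $\psi''$ and $\psi'''$ explicitly from $\psi^2=\alpha^2+(B_\beta-\alpha^2)(1+\xi^2)^{-1}$ and checking that their zero sets are disjoint. When $B_\beta$ is close to $\alpha^2$ one also has to verify that the normalization by $q$ absorbs the dependence on $\psi$ in the denominators, up to constants depending on $\alpha$. If the exact non-degeneracy at the inflection points is hard to verify, I would fall back on a perturbative expansion in $\varepsilon=(B_\beta-\alpha^2)/\alpha^2$: $\psi\approx\alpha+\frac{\alpha\varepsilon}{2(1+\xi^2)}$, whose derivatives $q\,\partial_\xi^k(1+\xi^2)^{-1}/2$ can be checked by hand. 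Larger $\varepsilon$ would be handled by compactness in $B_\beta/\alpha^2$.
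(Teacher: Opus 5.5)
Your proposal is correct and follows the paper's architecture. Young's inequality reduces everything to an $L^\infty_y$ bound on the kernel, and the substitution $\eta=l\xi$ produces the factor $|l|$ and an $l$-independent phase. On the high-frequency dyadic blocks, you and the paper both use Van der Corput with $k=2$ and $|\psi''|\sim q\,2^{-4j}$, then sum against $W^{2+\sigma,1}_y$.

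The genuine difference is at the inflection points $\pm\xi_0$. The paper removes a $\delta$-neighbourhood of $\xi_0$, uses $|\Phi''|\ge c_0q\delta$ on the rest with $k=2$, and optimizes $\delta=(qt)^{-1/3}$. You instead apply Van der Corput with $k=3$ directly. You also treat the whole compact region as one piece, where the paper sums $\min\{2^j|l|^{-1},(qt)^{-1/2}\}$ over the low dyadic blocks. And you put $(1+\xi^2)^{-a/2}$ into the amplitude, while the paper applies it afterwards as an $L^1$ multiplier. Both routes rest on the same fact, that the zeros of $\psi''$ are simple, and yours is slightly cleaner. On the $k=2$ subintervals of your low piece, use the trivial bound when $qt<1$.

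The step you single out as the main obstacle is not delicate, and you should drop the uniformity you aim for there. The paper's formula for $\Phi''$, up to sign, gives
\[
\psi''(\xi)=-(B_\beta-\alpha^2)\frac{N(\xi)}{D(\xi)},\qquad N(\xi)=B_\beta-2B_\beta\xi^2-3\alpha^2\xi^4,\qquad D(\xi)=(1+\xi^2)^{5/2}(B_\beta+\alpha^2\xi^2)^{3/2}.
\]
At a zero of $N$ one has $\psi'''=-(B_\beta-\alpha^2)N'/D$, with $N'(\xi)=-4\xi(3\alpha^2\xi^2+B_\beta)\neq0$. Hence $|\psi''|+|\psi'''|\ge c\,q$ on $[-R,R]$ with $c=c(\alpha,\beta,R)$, which is all the statement needs. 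The paper's $\xi_0$, $j_0$ and $c_0$ likewise depend on $\mu=B_\beta/\alpha^2$.

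A constant depending on $\alpha$ alone is in fact impossible. For $B_\beta\gg\alpha^2$ one has $|\psi'''(\xi_0)|\sim\sqrt{B_\beta}$, much smaller than $q\sim B_\beta/\alpha$. Near the degenerate stationary point the kernel then really decays like $(\sqrt{B_\beta}\,t)^{-1/3}$, so $C$ must grow like $(\sqrt{B_\beta}/\alpha)^{1/3}$. Your perturbative fallback and the compactness argument in $B_\beta/\alpha^2$ are therefore unnecessary, and the latter could not give uniformity over an unbounded range of $B_\beta$ anyway.
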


\pf Recall the Littlewood-Paley decomposition, there exists a smooth function  $ \phi:\R\to[0,1]$ with $ \operatorname{supp}\phi\subset[-2,2]  $ and $ \phi\big|_{[-\frac32 ,\frac32]}=1$, such that
\[  
h=\sum_{j\in\mathbb{Z}} P_j h,\ \mathcal{F}(P_j h)(\eta)=\varphi_j(\eta)\hat{h}(\eta),\ \varphi_j(\eta):=\varphi(2^{-j}\eta),\ \varphi(\eta) = \phi(\eta)-\phi(2\eta).
\] 
By Young's convolution inequality, we have
\begin{equation} 
	\begin{aligned}\label{ll1}
		\left\|e^{-t\mathcal{R}^l}h\right\|_{L^{\infty}} \leq& \sum_{j\in\mathbb{Z}}\|e^{-t\mathcal{R}^l}P_j h\|_{L^{\infty}} \\
		\leq& \sum_{j\in\mathbb{Z}}\left\|\mathcal{F}^{-1}\left(e^{-it\sqrt{\frac{\beta(\beta-1)l^2+\alpha^2\eta^2}{l^2+\eta^2}}}\varphi(2^{-j}\eta)\right)*\mathcal{F}^{-1}\left(\varphi^{\dagger}(2^{-j}\eta)\hat{h}(\eta)\right)\right\|_{L^{\infty}}\\
		\leq& \sum_{j\in\mathbb{Z}}\left\|e^{-t\mathcal{R}^l}\check{\varphi_j} \right\|_{L^\infty}\left\|P_j^{\dagger} h\right\|_{L^1},
	\end{aligned}
\end{equation}
where $\varphi^{\dagger}$ has similar  support properties  as $\varphi$  and $\varphi=\varphi\varphi^{\dagger} $, and $\ P_j^{\dagger}$ is the associated Littlewood-Paley projection.  We next calculate carefully and use stationary phase method to handle the behaviour of semigroup:
\begin{equation}\label{ll2}
	\begin{aligned}
		e^{-t\mathcal{R}^l}\check{\varphi_j}(y)&=\int_\R e^{- it\sqrt{\frac{\beta(\beta-1)l^2 + \alpha^2\eta^2}{l^2+\eta^2}}}e^{iy\eta}\varphi(2^{-j}\eta){\rm d}\eta\\
		&\stackrel{\eta=|l|\xi}{=}|l|\int_{\R} e^{iy|l|\xi-it\sqrt{\frac{\beta(\beta-1)l^2 + \alpha^2\eta^2}{l^2+\eta^2}}}\varphi(2^{-j}|l|\xi){\rm d}\xi\\
		&=|l|\int_{\R}e^{it\Phi(\xi)}\varphi(2^{-j}|l|\xi){\rm d}\xi=:|l|I(t,\alpha,\beta,l,j),
	\end{aligned}
\end{equation}
where the phase function
\begin{equation}\label{ll3} \Phi(\xi):=\frac{yl}{t}\xi-\sqrt{\frac{\beta(\beta-1)+\alpha^2\xi^2}{1+\xi^2}}.
\end{equation}
Direct calculation   shows that 
\begin{equation*}\label{l4}
	\Phi''(\xi)=[\beta(\beta-1)-\alpha^2]\frac{-3\alpha^2\xi^4-2\beta(\beta-1)\xi^2+\beta(\beta-1)}{(1+\xi^2)^{5/2}(\beta(\beta-1)+\alpha^2\xi^2)^{3/2}}.
\end{equation*}
When $ \beta(\beta-1)\neq\alpha^2,$ with $ \alpha>0 $ and $ \beta\notin[0,1]$, then one has
\[
\Phi''(\xi)=0\Rightarrow \xi=\pm\xi_0,\ \xi_0=\sqrt{\frac{\sqrt{\mu^2+3\mu}-\mu}{3}}\leq\frac{1}{\sqrt{2}},
\]
here  $ \mu=\frac{\beta(\beta-1)}{\alpha^2}$.
We decompose 
\begin{align*}
	I(t,\alpha,\beta,l,j)=& I_{-}(t,\alpha,\beta,l,j)+I_{+}(t,\alpha,\beta,l,j),\ I_{+}(t,\alpha,\beta,l,j):=\int_{\R^+}e^{it\Phi(\xi)}\varphi(2^{-j}|l|\xi){\rm d}\xi,  
\end{align*}
and we focus on $I_{+}$. Let $ 2^{j_0+2}l^{-1}=\xi_0$, then $ j_0=-2+\log_2 l + \frac12 \log_{2}\frac{\sqrt{\mu^2+3\mu}-\mu}{3}$. 

{\textit{ Case 1.} } $j\in[j_0,j_0+4]. $ Let $\delta>0$ be a small parameter to be chosen later. We decompose   
\[ 
I_{+}(t,\alpha,\beta,l,j)=\int_{[\xi_0-\delta,\xi_0+\delta]}e^{it\Phi(\xi)}\varphi(2^{-j}|l|\xi){\rm d}\xi+\int_{\mathbb{R}_{+}\backslash[\xi_0-\delta,\xi_0+\delta]}e^{it\Phi(\xi)}\varphi(2^{-j}|l|\xi){\rm d}\xi=: I_{+}^1+ I_{+}^2.
\] 
It is easy to see that $|I_{+}^1|\leq 2\delta$. To bound $I_{+}^{2}$, there exists a small constant $c_0>0$, independent of $l,j$, such that
\[   
|\Phi''(\xi)|=|\beta(\beta-1)-\alpha^2|\frac{3\alpha^2\left(\xi^2+\frac{\sqrt{\mu^2+3\mu}+\mu}{3}\right)}{(1+\xi^2)^{5/2}\left(\beta(\beta-1)+\alpha^2\xi^2\right)^{3/2}}|\xi^2-\xi_0^2|\geq c_0q\delta.
\] 
Then applying Van der Corput lemma, we have 
\begin{align*}
	|I_{+}^2|\leq& C(\delta qt)^{-\frac12}\left[\|\varphi\|_{L^\infty}+\int_{\R^+\slash[\xi_0-\delta,\xi_0+\delta]}|\frac{\rm d}{{\rm d}\xi}\left(\varphi(2^{-j}|l|\xi)\right)|{\rm d}\xi\right]\leq 2C(\delta qt)^{-\frac12}.
\end{align*}
Let $ \delta\sim (\delta qt)^{-\frac12}$, we choose $ \delta=(qt)^{-\frac13}$ and obtain that
\[  
|I_+(t,\alpha,\beta,l,j)|\leq C(qt)^{-\frac13}.
\]

{\textit{Case 2.}} $j\neq[j_0,j_0+4]$, then $|\Phi''(\xi)|\geq c>0$. When $j<j_0$, we can easily get
\[ 
|I_{+}(t,\alpha,\beta,l,j)|\leq \operatorname{supp}_{\xi} \varphi(2^{-j}l\xi)=C2^j|l|^{-1}.
\]
Notice that in this case $|\xi|< \frac{\xi_{0}}{4}$, there exists a constant $C>0$ independent of $j,l$ such that $\Phi''(\xi)>C$. Applying   Lemma \ref{lem2.1} again yields 
\[   
|I_{+}(t,\alpha,\beta,l,j)|\leq C(qt)^{-\frac12}.
\]
On the other hand, when $j>j_0+4$,   using the fact that $|\xi|\leq C2^j|l|^{-1}$, we obtain $|\Phi''(\xi)|\geq C|\xi|^{-4}\geq C2^{-4j}|l|^4$.   We have by Lemma \ref{lem2.1}
\[ 
|I_{+}(t,\alpha,\beta,l,j)|\leq C(qt)^{-\frac12}2^{2j}|l|^{-2}.
\]
Summarizing the estimates above, we arrive at 
\begin{align*}
	|I(t,\alpha,\beta,j,l)|\lesssim&\begin{cases}
		\min\{2^j|l|^{-1},(qt)^{-\frac12}\},\ j<j_0,\\
		(qt)^{-\frac13},\ j_0\leq j\leq j_0+4,\\
		(qt)^{-\frac12}2^{2j}|l|^{-2},\ j_0+4<j.
	\end{cases}
\end{align*}

Inserting this into \eqref{ll1}, we find that
\begin{align*}
	    \|e^{-t\mathcal{R}^l}h\|_{L^\infty}\lesssim& |l|\sum_{j<j_0}\min\{2^j|l|^{-1},(qt)^{-\frac12}\}\|P_j^{\dagger}h\|_{L^1} \\
	    &+ |l|(qt)^{-\frac13}\sum_{j_0}^{j_0+4}\|P_j^{\dagger}h\|_{L^1} + |l|^{-1}(qt)^{-\frac12}\sum_{j>j_0+4}2^{2j}\|P_j^{\dagger}h\|_{L^1} \\
	    \lesssim& |l|(qt)^{-\frac13}\|h\|_{L^1} + |l|^{-1}(qt)^{-\frac12}\|h\|_{W^{2+\sigma,1}}. 
\end{align*}

	The proposition is proved in the case when $a=0$. For $a>0$, using the fact that
	$
	\|(l^2-\p_y^2)^{-\frac{a}2}P_j f\|_{L^1}\lesssim|l|^{-a}\|P_j^{\dagger}f\|_{L^1}
	$ and we finish the proof.\hfill $\square$

\begin{re}
	Recalling the proof of the proposition and combining it with the characteristics of the phase function, we obtain the following classifications:
	
	If $\beta=0$ or $\beta=1$, we have $ \Phi''(\xi)=3\alpha\frac{\xi}{(1+\xi^2)^{5/2}}$, which enjoys dispersion.
	
	If $\alpha=0$, we have $ \Phi''(\xi)=\sqrt{\beta(\beta-1)}\frac{1-2\xi^2}{(1+\xi^2)^{5/2}}$ which also enjoys dispersion. This is consistent with the dispersion mechanism exhibited in the stability problem of the rotating Navier-Stokes equation, see \cite{ CZDZW2025, HSX2024,  LSWZ2025}.
	
	If $\beta(\beta-1)=\alpha^2$, the phase function does not provide oscillation, resulting in the degeneration of the dispersion effect.
	
\end{re}
\begin{cor}\label{dispercor}
	There exists a constant $C>0$ such that for all $f\in\mathscr{S}(\R\times\mathbb{T})$ with $ \int_{\mathbb{T}}f(y,z){\rm d}z=0$, then for any $a\geq0,\ \epsilon>0$, there holds that  
	\begin{align*}
		\||\p_z|^a(-\Delta_{L,0})^{-\frac{a}2}e^{\mathcal{L}t}f\|_{L^\infty(\R\times\mathbb{T})}\leq& C e^{-\nu t}(qt)^{-\frac13}\|f\|_{W^{3,1}(\R\times\mathbb{T})}.
	\end{align*}
\end{cor}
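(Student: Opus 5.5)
The plan is to reduce the corollary to Proposition \ref{disperprop}, working one vertical Fourier mode at a time and handling the viscous part as a separate factor. Because $\int_{\mathbb{T}}f\,dz=0$, we can write $f=\sum_{l\neq0}e^{ilz}f_l(y)$. Every operator that appears ($|\p_z|^a$, $(-\Delta_{L,0})^{-a/2}$, $\mathcal{R}$, $\nu\Delta_{L,0}$) is a Fourier multiplier in $(\eta,l)$, so
\[
|\p_z|^a(-\Delta_{L,0})^{-\frac a2}e^{\mathcal{L}t}f=\sum_{l\neq0}e^{ilz}\,|l|^a(l^2-\p_y^2)^{-\frac a2}e^{\nu(\p_y^2-l^2)t}e^{-t\mathcal{R}^l}f_l .
\]
The triangle inequality then gives the bound $\sum_{l\neq0}\big\||l|^a(l^2-\p_y^2)^{-a/2}e^{\nu(\p_y^2-l^2)t}e^{-t\mathcal{R}^l}f_l\big\|_{L^\infty_y}$.

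For the viscous factor, $e^{-\nu l^2 t}\le e^{-\nu t}$ because $|l|\ge1$. The heat semigroup $e^{\nu t\p_y^2}$ commutes with the other multipliers and is given by convolution with a probability kernel, so it is a contraction on $L^1_y$ and on $W^{k,1}_y$. We therefore apply it to $f_l$ before using the proposition, which leaves the factor $e^{-\nu t}$ together with the norms of $f_l$ itself. Proposition \ref{disperprop}, taken with $\sigma=1$ say, then gives for each $l$
\[
e^{-\nu t}\Big(|l|(qt)^{-\frac13}\|f_l\|_{L^1_y}+|l|^{-1}(qt)^{-\frac12}\|f_l\|_{W^{3,1}_y}\Big).
\]

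The main obstacle is summing over $l$ while losing only the derivatives allowed by $W^{3,1}$. The tool is integration by parts in $z$: $\|f_l\|_{W^{k,1}_y}\lesssim |l|^{-m}\|\p_z^m f\|_{W^{k,1}(\R\times\mathbb{T})}$.
\begin{itemize}
\item For the first term, take $m=3$. This gives $|l|\,\|f_l\|_{L^1_y}\lesssim |l|^{-2}\|f\|_{W^{3,1}}$, which is summable in $l$.
\item For the second term, the proposition already consumes 3 derivatives in $y$, so it is handled with the $|l|^{-1}$ prefactor alone. The crude bound $\|f_l\|_{W^{3,1}_y}\le\|f\|_{W^{3,1}}$ does not make $\sum_l |l|^{-1}$ converge. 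The fix I would try first is to rerun the dyadic sum in the proof of Proposition \ref{disperprop} with $2+\sigma$ replaced by a slightly smaller power, trading the factor $2^{2j}|l|^{-2}$ against an extra $|l|^{-\epsilon}$ taken from the $z$-derivatives. Mixed derivatives $\p_y^{a}\p_z^{b}$ with $a+b\le3$ are contained in $W^{3,1}$, so one gets $\sum_l |l|^{-1-\epsilon}\|\,\cdot\,\|$ with total derivative count at most $3$. This is where the free parameter $\epsilon>0$ in the statement is meant to be absorbed.
\end{itemize}

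Finally, the second term's $(qt)^{-1/2}$ is dominated by $(qt)^{-1/3}$ when $qt\ge1$. For $qt\le1$ the right-hand side is large and the bound follows trivially from $\|\widehat{f}\|_{L^1}\lesssim\|f\|_{W^{2,1}}$, since the multipliers have modulus at most $1$. Combining the two regimes gives $Ce^{-\nu t}(qt)^{-1/3}\|f\|_{W^{3,1}}$.
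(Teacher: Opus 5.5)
This is essentially the paper's argument: expand in $z$, pull out $e^{-\nu l^2 t}\le e^{-\nu t}$ and the heat semigroup, apply Proposition~\ref{disperprop} mode by mode, and sum in $l$ by taking an extra $|l|^{-\sigma}$ from fractional $z$-regularity with total derivative count $2+2\sigma<3$. The paper simply chooses $0<\sigma<\frac12$ in the proposition from the start, bounding $|l|^{-1}\|f_l\|_{W^{2+\sigma,1}_y}\le |l|^{-1-\sigma}\|f\|_{W^{2+\sigma,1}_yW^{\sigma,1}_z}$, so nothing needs to be rerun. Your separate treatment of $qt\le 1$ covers a step the paper skips (it silently bounds $(qt)^{-1/2}$ by $(qt)^{-1/3}$), but there you should use $\|\widehat f\|_{L^1}\lesssim\|f\|_{W^{3,1}}$: the decay $\langle\eta,l\rangle^{-2}$ available from $W^{2,1}$ is not summable over $\R\times\mathbb{Z}$, whereas $\langle\eta,l\rangle^{-3}$ is.
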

\pf According to Proposition \ref{disperprop}, for $0<\sigma<\frac12$ small enough, we have 
\begin{align*}
	\left||\p_z|^a(-\Delta_{L,0})^{-\frac{a}2}e^{t\mathcal{L}}f(y,z)\right|=&\left|\sum_{l\neq0}|l|^a e^{izl-\nu l^2t}\int_{\eta}e^{iy\eta-\nu \eta^2t - t\mathcal{R}^l}(\eta^2+l^2)^{-\frac{a}2}\hat{f_l}(\eta){\rm d}\eta\right| \\
	\leq&   \sum_{l\neq0}e^{-\nu l^2 t}|l|^a \left\|e^{-\mathcal{R}^lt}\left(e^{\nu t\p_y^2}(l^2-\p_y^2)^{-\frac{a}2}f_l\right)\right\|_{L^\infty(\R)}\\
	\leq&   (qt)^{-\frac13}\sum_{l\neq0}e^{-\nu l^2t} \left(|l|\|e^{\nu t\p_y^2}f_l\|_{L^1_y(\R)}+|l|^{-1}\|e^{\nu t\p_y^2}f_l\|_{W^{2+\sigma,1}_y(\R)}\right) \\
	\leq&   e^{-\nu t}(qt)^{-\frac13}\left(\sum_{l\neq 0}|l|^{-1-\sigma}\|f\|_{L^1_yW^{2+\sigma,1}_z(\R\times\mathbb{T})}+\sum_{l\neq 0}|l|^{-1-\sigma}\|f\|_{W^{2+\sigma,1}_yW^{\sigma,1}_z(\R\times\mathbb{T})}\right)\\ \leq& C e^{-\nu t}(qt)^{-\frac13}\|f\|_{W^{3,1}(\R\times\mathbb{T})},
\end{align*}
thus, we finish the proof of the corollary.\qed
 
Based on Corollary \ref{dispercor}, we directly obtain the linear dispersion estimate \eqref{thm1.1.4} of $u_0^2$ and $\tilde{u}_0^3$.

\section{The nonlineqar stability}\label{section4}
\subsection{Quasi-linearization method}\label{sub4.2} 
\qquad In this section, we discuss the quasi-linearization method, which has been used in many papers, see \cite{MR4121130, CWZ2024, NZ2024, WZ2026, ZhaiZ2023}.   The quasi-linearization method is used to extract the nonlinear dispersive effect of simple-zero modes and improve the stability threshold of non-zero modes, simple-zero modes and double-zero modes. The main idea of quasi-linearization method is to decompose the system into two or more systems: a good system that carries the main structure, regularity and the size of original initial data, and some quasi-linearization and nonlinear systems that contain the smaller nonlinear part, which start from zero initial data.  Let's recall the equation \eqref{1..7} and \eqref{sol1} satisfied by zero modes  and consider the nonlinear terms, which gives
\begin{align}\label{S0NL1}
    &\begin{cases}
        \p_t U_0^2 - \nu \Delta_{L,0} U_0^2 - \mathcal{R}V_0^2 ={\mathcal{N}_{0+\neq}(U_0^2)}:=-(U\cdot\nabla_L U^2)_0 + \p_y\Delta_{L,0}^{-1}(\p_i^LU^j\p_j^LU^i)_0,\\
        \p_t V_0^2 - \nu \Delta_{L,0} V_0^2 + \mathcal{R} U_0^2 ={\mathcal{N}_{0+\neq}(V_0^2)}:= - G_1\widetilde{(U\cdot\nabla_L U^1)}_0 -G_2\widetilde{(U\cdot\nabla_L\Theta)}_0 ,\\
    \end{cases}\\
    & \quad \p_t \Lambda_0 - \nu\Delta_{L,0} \Lambda_0 ={\mathcal{N}_{0+\neq}(\Lambda_0)}:= - G_3\widetilde{(U\cdot\nabla_L U^1)}_0 -G_4\widetilde{(U\cdot\nabla_L\Theta)}_0,\label{S0NL2}\\
    &\begin{cases}\label{S0NL3}
        \p_t \tilde{U}_0^3 - \nu \Delta_{L,0} \tilde{U}_0^3 - \mathcal{R}\tilde{V}_0^3 ={\mathcal{N}_{0+\neq}(\tilde{U}_0^3)}:=-(\widetilde{U\cdot\nabla_L U^3})_0 + \p_z\Delta_{L,0}^{-1}(\widetilde{\p_i^LU^j\p_j^LU^i})_0,\\
        \p_t \tilde{V}_0^3 - \nu \Delta_{L,0} \tilde{V}_0^3 + \mathcal{R} \tilde{U}_0^3 ={\mathcal{N}_{0+\neq}(\tilde{V}_0^3)}:= - \widetilde{G_1'(U\cdot\nabla_L U^1)}_0 -\widetilde{G_2'(U\cdot\nabla_L\Theta)}_0,
    \end{cases}\\
    &\begin{cases}\label{S0NL4}
        \p_t \overline{U_0^1} - \nu \p_{yy} \overline{U_0^1} = -\p_y(\overline{U^2 U^1})_0,\\
        \p_t \overline{U_0^3} - \nu \p_{yy} \overline{U_0^3} +\alpha\overline{\Theta_0}= -\p_y(\overline{U^2 U^3})_0,\\
        \p_t \overline{\Theta_0} - \nu \p_{yy} \overline{\Theta_0} -\alpha\overline{U_0^3}= -\p_y(\overline{U^2 \Theta})_0.
    \end{cases} 
\end{align}
Due to the complex structure of the system, we cannot directly obtain the nonlinear dispersion estimates of simple zero modes. We will use a quasi-linearization method to decompose the system into two parts, one of which retains the regularity and size of the initial data and has a good nonlinear dispersion effect. Another part contains smaller nonlinear terms, and their initial data starts from zero. More precisely, we decompose 
\[  
     U_0=U_{0,1}+U_{0,2},\ \Theta_0=\Theta_{0,1} + \Theta_{0,2},\ V_0^2=V_{0,1}^2+V_{0,2}^2,\ \tilde{V}_0^3=\tilde{V}_{0,1}^3+\tilde{V}_{0,2}^3,\ \Lambda_0=\Lambda_{0,1}+\Lambda_{0,2},
\] 
which satisfies the following systems: 
\begin{flalign}\label{1stQuasi1}
    &\ \left\{\begin{array}{ll}
        \p_t U^2_{0,1} - \nu\Delta_{L,0}U^2_{0,1} - \mathcal{R}V_{0,1}^2 ={\mathcal{N}_{0+\neq}(U_{0,1}^2)}\\
        := -\tilde{U}_{0,1}\cdot\na U_{0,1}^2 + 2\p_y\Delta_{L,0}^{-1}(\p_yU_{0,1}^2\p_yU^2_{0,1}+\p_y\tilde{U}^3_{0,1}\p_zU^2_{0,1})- (U_{\neq,1}\cdot\na_L U^2_{\neq,1})_0 \\
        \quad + \p_y\Delta_{L,0}^{-1}(\p_i^L U_{\neq,1}\cdot\na_L U^i_{\neq,1})_0 ,\\
        \p_t V^2_{0,1} - \nu\Delta_{L,0}V^2_{0,1} + \mathcal{R}U_{0,1}^2 ={\mathcal{N}_{0+\neq}(V_{0,1}^2)}\\
        := -\tilde{U}_{0,1}\cdot\na V_{0,1}^2 - G_1\widetilde{(U_{\neq,1}\cdot\na_L U^1_{\neq,1})}_0 - G_2\widetilde{(U_{\neq,1}\cdot\na_L \Theta_{\neq,1})}_0,\\
        \p_t\Lambda_{0,1} - \nu\Delta_{L,0}\Lambda_{0,1} ={\mathcal{N}_{0+\neq}(\Lambda_{0,1})}\\
		:= {- G_3(\tilde{U}_{0,1}\cdot\na_L U^1_{0,1}) - G_4(\tilde{U}_{0,1}\cdot\na_L \Theta_{0,1})} - G_3\widetilde{(U_{\neq,1}\cdot\na_L U^1_{\neq,1})}_0 - G_4\widetilde{(U_{\neq,1}\cdot\na_L \Theta_{\neq,1})}_0.
    \end{array}\right. &
\end{flalign}
\begin{flalign}\label{1stQuasi2}
    &\ \left\{\begin{array}{ll}
        \p_t \tilde{U}^3_{0,1} - \nu\Delta_{L,0}\tilde{U}^3_{0,1} - \mathcal{R} \tilde{V}_{0,1}^3={\mathcal{N}_{0+\neq}(\tilde{U}_{0,1}^3)} \\
        := -\widetilde{\tilde{U}_{0,1}\cdot\na \tilde{U}_{0,1}^3} + 2\p_z\Delta_{L,0}^{-1}(\widetilde{\p_yU_{0,1}^2\p_yU^2_{0,1}}+\widetilde{\p_y\tilde{U}^3_{0,1}\p_zU^2_{0,1}}) - \widetilde{(U_{\neq,1}\cdot\na_L U^3_{\neq,1})}_0 \\
        \quad+ \p_z\Delta_{L,0}^{-1}\widetilde{(\p_i^L U_{\neq,1}\cdot\na_L U^i_{\neq,1})}_0 ,\\
        \p_t \tilde{V}^3_{0,1} - \nu\Delta_{L,0} \tilde{V}^3_{0,1} + \mathcal{R}\tilde{U}_{0,1}^3 ={\mathcal{N}_{0+\neq}(\tilde{V}_{0,1}^3)} \\
        := {\p_z^{-1}\p_y(\tilde{U}_{0,1}\cdot\na V_{0,1}^2)} - G_1'\widetilde{(U_{\neq,1}\cdot\na_L U^1_{\neq,1})}_0 - G_2'\widetilde{(U_{\neq,1}\cdot\na_L \Theta_{\neq,1})}_0.
    \end{array}\right. &
\end{flalign}
\begin{flalign}\label{1stQuasi3}
    &\ \left\{\begin{array}{ll}
        \p_t\overline{U_{0,1}^1} - \nu\p_{yy}\overline{U_{0,1}^1} ={\mathcal{N}_{0+\neq}(\overline{U_{0,1}^1})} := -\p_y\overline{(U_{0,1}^2\tilde{U}_{0,1}^1)} - \p_y(\overline{U^2_{\neq,1}U^1_{\neq,1}})_0,\\
        \p_t\overline{U_{0,1}^3} - \nu\p_{yy}\overline{U_{0,1}^3} + \alpha\overline{\Theta_{0,1}} ={\mathcal{N}_{0+\neq}(\overline{U_{0,1}^3})} := -\p_y\overline{(U_{0,1}^2\tilde{U}_{0,1}^3)} - \p_y(\overline{U^2_{\neq,1}U^3_{\neq,1}})_0,\\
        \p_t\overline{\Theta_{0,1}} - \nu\p_{yy}\overline{\Theta_{0,1}} - \alpha\overline{U^3_{0,1}} = {\mathcal{N}_{0+\neq}(\overline{\Theta_{0,1}})} := - \p_y(\overline{U^2_{\neq,1}\Theta_{\neq,1}})_0.
    \end{array}\right.&
\end{flalign}
with the initial data
\begin{align*}
	U_{0,1}^2(t=0)=&U_0^2(t=0),\ V_{0,1}^2(t=0)=V_0^2(t=0),\ \Lambda_{0,1}(t=0)=\Lambda_0(t=0),\  \tilde{U}_{0,1}^3(t=0)=\tilde{U}_0^3(t=0),\\
	\tilde{V}_{0,1}^3(t=0)=&\tilde{V}_0^3(t=0),\ \overline{U_{0,1}^1}(t=0)=\overline{U_0^1}(t=0),\ \overline{U_{0,1}^3}(t=0)=\overline{U_0^3}(t=0),\ \overline{\Theta_{0,1}}(t=0)=\overline{\Theta_0}(t=0),
\end{align*}
and
\begin{flalign}\label{2ndQuasi1}
    &\ \left\{\begin{array}{ll}
        \p_t U^2_{0,2} - \nu\Delta_{L,0}U^2_{0,2} - \mathcal{R}V_{0,2}^2 \\
        ={\mathcal{N}_{0+\neq}(U_{0,2}^2)}:= -U_0\cdot\na U_0^2 + 2\p_y\Delta_{L,0}^{-1}(\p_yU_0^2\p_yU^2_0+\p_yU^3_0\p_zU^2_0)\\
        \quad -\left[-\tilde{U}_{0,1}\cdot\na U_{0,1}^2 + 2\p_y\Delta_{L,0}^{-1}(\p_yU_{0,1}^2\p_yU^2_{0,1}+\p_y\tilde{U}^3_{0,1}\p_zU^2_{0,1})\right]\\
        \quad -(U_{\neq}\cdot\na_L U^2_{\neq})_0+ (U_{\neq,1}\cdot\na_L U^2_{\neq,1})_0 + \p_y\Delta_{L,0}^{-1}(\p_i^L U_{\neq}\cdot\na_L U^i_{\neq})_0-\p_y\Delta_{L,0}^{-1}(\p_i^L U_{\neq,1}\cdot\na_L U^i_{\neq,1})_0,\\ 
        \p_t V^2_{0,2} - \nu\Delta_{L,0}V^2_{0,2} + \mathcal{R}U_{0,2}^2 \\
        ={\mathcal{N}_{0+\neq}(V_{0,2}^2)}:= - G_1\widetilde{(U_0\cdot\na U^1_0)} - G_2\widetilde{(U_0\cdot\na \Theta_0)} + \tilde{U}_{0,1}\cdot\na V_{0,1}^2 \\
        \quad - G_1\widetilde{(U_{\neq}\cdot\na_L U^1_{\neq})}_0 + G_1\widetilde{(U_{\neq,1}\cdot\na_L U^1_{\neq,1})}_0 - G_2\widetilde{(U_{\neq}\cdot\na_L \Theta_{\neq})}_0 + G_2\widetilde{(U_{\neq,1}\cdot\na_L \Theta_{\neq,1})}_0,\\
        \p_t\Lambda_{0,2} - \nu\Delta_{L,0}\Lambda_{0,2} \\
        ={\mathcal{N}_{0+\neq}({\Lambda_{0,2}})}:= - G_3\widetilde{(U_0\cdot\na U_0^1)} - G_4\widetilde{(U_0\cdot\na \Theta_0)} +{G_3(\tilde{U}_{0,1}\cdot\na U^1_{0,1}) + G_4(\tilde{U}_{0,1}\cdot\na \Theta_{0,1})} \\
        \quad - G_3\widetilde{(U_{\neq}\cdot\na_L U^1_{\neq})}_0 + G_3\widetilde{(U_{\neq,1}\cdot\na_L U^1_{\neq,1})}_0 - G_4\widetilde{(U_{\neq}\cdot\na_L \Theta_{\neq})}_0 + G_4\widetilde{(U_{\neq,1}\cdot\na_L \Theta_{\neq,1})}_0.
    \end{array}    
    \right.&
\end{flalign}
\begin{flalign}\label{2ndQuasi2}
    &\ \left\{\begin{array}{ll}
        \p_t \tilde{U}^3_{0,2} - \nu\Delta_{L,0}\tilde{U}^3_{0,2} - \mathcal{R}V_{0,2}^3 \\
        ={\mathcal{N}_{0+\neq}(\tilde{U}_{0,2}^3)}:= -\widetilde{U_{0}\cdot\na U_{0}^3} + 2\p_z\Delta_{L,0}^{-1}(\widetilde{\p_yU_{0}^2\p_yU^2_{0}}+\widetilde{\p_yU^3_{0}\p_zU^2_{0}})\\
        \quad -\left[-\widetilde{\tilde{U}_{0,1}\cdot\na \tilde{U}_{0,1}^3} + 2\p_z\Delta_{L,0}^{-1}(\widetilde{\p_yU_{0,1}^2\p_yU^2_{0,1}} + \widetilde{\p_y\tilde{U}^3_{0,1}\p_zU^2_{0,1}})\right]\\
        \quad - (\widetilde{U_{\neq}\cdot\na_L U^3_{\neq}})_0 + \widetilde{(U_{\neq,1}\cdot\na_L U^3_{\neq,1})}_0 + \p_z\Delta_{L,0}^{-1}(\widetilde{\p_i^L U_{\neq}\cdot\na_L U^i_{\neq}})_0 - \p_z\Delta_{L,0}^{-1}\widetilde{(\p_i^L U_{\neq,1}\cdot\na_L U^i_{\neq,1})}_0,\\
        \p_t \tilde{V}^3_{0,2} - \nu\Delta_{L,0}\tilde{V}^3_{0,2} + \mathcal{R}\tilde{U}_{0,2}^3 \\
        ={\mathcal{N}_{0+\neq}(\tilde{V}_{0,2}^3)}:= - \widetilde{G_1'(U_{0}\cdot\na U^1_0)}- \widetilde{G_2'(U_{0}\cdot\na \Theta_0)} - {\p_z^{-1}\p_y(\tilde{U}_{0,1}\cdot\na V_{0,1}^2)} \\
        \quad - \widetilde{G_1'(U_{\neq}\cdot\na_L U^1_{\neq})_0} + G_1'\widetilde{(U_{\neq,1}\cdot\na_L U^1_{\neq,1})}_0 - \widetilde{G_2'(U_{\neq}\cdot\na_L \Theta_{\neq})_0}- G_2'\widetilde{(U_{\neq,1}\cdot\na_L \Theta_{\neq,1})}_0.
    \end{array}\right.&
\end{flalign}
\begin{flalign}\label{2ndQuasi3}
    &\ \left\{\begin{array}{ll}
        \p_t\overline{U_{0,2}^1} - \nu\p_{yy}\overline{U_{0,2}^1} = -\p_y\overline{(U_{0}^2\tilde{U}_0^1)} + \p_y\overline{(U_{0,1}^2\tilde{U}_{0,1}^1)}- \p_y(\overline{U^2_{\neq}U^1_{\neq}})_0 + \p_y(\overline{U^2_{\neq,1}U^1_{\neq,1}})_0,\\
        \p_t\overline{U_{0,2}^3} - \nu\p_{yy}\overline{U_{0,2}^3} + \alpha\overline{\Theta_{0,2}} = -\p_y\overline{(U_{0}^2\tilde{U}_0^3)} + \p_y\overline{(U_{0,1}^2\tilde{U}_{0,1}^3)} - \p_y(\overline{U^2_{\neq}U^3_{\neq}})_0 + \p_y(\overline{U^2_{\neq,1}U^3_{\neq,1}})_0,\\
        \p_t\overline{\Theta_{0,2}} - \nu\p_{yy}\overline{\Theta_{0,2}} - \alpha\overline{U^3_{0,2}} = -\p_y\overline{(U_{0}^2\tilde{\Theta}_0)} - \p_y(\overline{U^2_{\neq}\Theta_{\neq}})_0 + \p_y(\overline{U^2_{\neq,1}\Theta_{\neq,1}})_0
    \end{array}    
    \right.&
\end{flalign}
with zero initial data
\begin{align*}
	&U_{0,2}^2(t=0)=\tilde{U}_{0,2}^3(t=0)=V_{0,2}^2(t=0)=\tilde{V}_{0,2}^3(t=0)=\Lambda_{0,2}(t=0)=0,\\
	&\overline{U_{0,2}^1}(t=0)=\overline{U_{0,2}^3}(t=0)=\overline{\Theta_{0,2}}(t=0)=0,
\end{align*}
where we have frequently used the fact that
\[  
(\p_i^L U^j\p_j^L U^i)_0=\p_i U^j_0\p_jU^i_0 + (\p_i^LU^j_{\neq}\p_j^LU^i_{\neq})_0,\,\, \p_iU^j_0\p_j U^i_0=2(\p_yU_0^2\p_yU^2_0+\p_y\tilde{U}^3_0\p_zU^2_0) + 2\p_y\overline{U^3_0}\p_zU^2_0.
\]

Similarly, for non-zero modes, we recall that \eqref{Non0NL-1} and consider the nonlinear terms, which yields 
\begin{align}\label{Non0NL}
	\begin{cases}
		\p_t Q_{\neq} -\nu\Delta_L Q_{\neq} - \frac12 \p_x\p_y^L|\na_L|^{-2} Q_{\neq} - \p_x\p_y^L(|\na_L|^{-2}-|\na_{L,h}|^{-2}) Q_{\neq} \\
		\qquad \ + i\sqrt{\beta(\beta-1)}\p_z|\na_L|^{-1} K_{\neq} + 2i\sqrt{\frac{\beta-1}{\beta}}\p_z\p_{xx}|\na_L|^{-1}|\na_{L,h}|^{-2}K_{\neq} + \alpha|\na_{L,h}||\na_L|^{-1}H_{\neq} \\
		= |\na_L|^{\frac32}|\na_{L,h}|^{-1}(U\cdot\nabla_L U^3)_{\neq} - \p_z|\na_{L,h}|^{-1}|\na_L|^{-\frac12}(\p_i^L U^j\p_j^L U^i)_{\neq}, \\
		\p_t K_{\neq} - \nu\Delta_L K_{\neq} + \frac12 \p_x\p_y^L|\na_L|^{-2} K_{\neq}  + \p_x\p_y^L(|\na_L|^{-2}-|\na_{L,h}|^{-2}) K_{\neq} - i\sqrt{\beta(\beta-1)}\p_z|\na_L|^{-1}Q_{\neq} \\
		= i\sqrt{\frac{\beta}{\beta-1}}|\na_L|^{\frac12}|\na_{L,h}|^{-1}\left[\p_x(U\cdot\nabla_L U^2)_{\neq}  - \p_y^L(U\cdot\nabla_L U^1)_{\neq} \right],\\
		\p_t H_{\neq} -\nu\Delta_L H_{\neq} - \frac12 \p_x\p_y^L|\na_L|^{-2} H_{\neq} - \alpha|\na_{L,h}||\na_L|^{-1}Q_{\neq} =  |\na_L|^{\frac12}(U\cdot\nabla_L\Theta)_{\neq}.
	\end{cases}
\end{align}
We do some decomposition via quasi-linearization method as follows 
\[
Q_{\neq}=Q_{\neq,1}+Q_{\neq,2},\ K_{\neq}=K_{\neq,1}+K_{\neq,2},\ H_{\neq}=H_{\neq,1}+H_{\neq,2}.
\]
Then the first part $(Q_{\neq,1}, K_{\neq,1}, H_{\neq,1})$ enjoys the fisrt subsystem 
\begin{align}\label{Non0NL1}
	\begin{cases}
		\p_t Q_{\neq,1} -\nu\Delta_L Q_{\neq,1} - \frac12 \p_x\p_y^L|\na_L|^{-2} Q_{\neq,1} - \p_x\p_y^L(|\na_L|^{-2}-|\na_{L,h}|^{-2}) Q_{\neq,1} \\
		\qquad \ + i\sqrt{\beta(\beta-1)}\p_z|\na_L|^{-1} K_{\neq,1} + 2i\sqrt{\frac{\beta-1}{\beta}}\p_z\p_{xx}|\na_L|^{-1}|\na_{L,h}|^{-2}K_{\neq,1} + \alpha|\na_{L,h}||\na_L|^{-1}H_{\neq,1} \\
		= |\na_L|^{\frac32}|\na_{L,h}|^{-1}(U^1_{\neq,1}\p_x U^3_{\neq,1}+U^1_{0,1}\p_x U^3_{\neq,1}) \\
		\quad - \p_z|\na_{L,h}|^{-1}|\na_L|^{-\frac12}(\p_{x,z} U^{1,3}_{\neq,1}\cdot\na_{x,z} U^{1,3}_{\neq,1}+2\na_{L,h}U_{\neq,1}^2\p_y^LU_{\neq,1}^{1,2}+2\p_z U^{1,3}_{0,1}\cdot\na_{x,z} U^3_{\neq,1}+2\p_yU_{0,1}^1\p_xU_{\neq,1}^2), \\
		\p_t K_{\neq,1} - \nu\Delta_L K_{\neq,1} + \frac12 \p_x\p_y^L|\na_L|^{-2} K_{\neq,1} + \p_x\p_y^L(|\na_L|^{-2}-|\na_{L,h}|^{-2}) K_{\neq,1} - i\sqrt{\beta(\beta-1)}\p_z|\na_L|^{-1}Q_{\neq,1} \\
		= i\sqrt{\frac{\beta}{\beta-1}}|\na_L|^{\frac12}|\na_{L,h}|^{-1}\big[\p_xU_{\neq,1}\cdot\na_L U^2_{\neq,1}-\p_y^LU_{\neq,1}\cdot\na_LU^1_{\neq,1} +U_{\neq,1}\cdot\na_LW^3_{\neq,1}\\
		\quad +\p_xU_{\neq,1}\cdot\na U_{0,1}^2-\p_y^LU_{\neq,1}^2\p_y U_{0,1}^1-\p_yU_{0,1}^{1,3}\cdot\na_{x,z}U^1_{\neq,1}+U_{0,1}^{1,3}\cdot\na_{x,z}W^3_{\neq,1} \big],\\
		\p_t H_{\neq,1} -\nu\Delta_L H_{\neq,1} - \frac12 \p_x\p_y^L|\na_L|^{-2} H_{\neq,1} - \alpha|\na_{L,h}||\na_L|^{-1}Q_{\neq,1}\\
		=  |\na_L|^{\frac12}(U_{\neq,1}\cdot\nabla_L\Theta_{\neq,1}+U_{0,1}^{1,3}\cdot\nabla_{x,z}\Theta_{\neq,1}+U_{\neq,1}^3\p_z\Theta_{0,1}),\\
		Q_{\neq,1}(t=0)=Q_{\neq}(t=0),\ K_{\neq,1}(t=0)=K_{\neq}(t=0),\ H_{\neq,1}(t=0)=H_{\neq}(t=0),
	\end{cases}
\end{align}
and the rest part $(Q_{\neq,2}, K_{\neq,2}, H_{\neq,2})$ satisfies the second subsystem
\begin{align}\label{Non0NL2}
	\begin{cases}
		\p_t Q_{\neq,2} -\nu\Delta_L Q_{\neq,2} - \frac12 \p_x\p_y^L|\na_L|^{-2} Q_{\neq,2} - \p_x\p_y^L(|\na_L|^{-2}-|\na_{L,h}|^{-2}) Q_{\neq,2} \\
		\quad \ + i\sqrt{\beta(\beta-1)}\p_z|\na_L|^{-1} K_{\neq,2} + 2i\sqrt{\frac{\beta-1}{\beta}}\p_z\p_{xx}|\na_L|^{-1}|\na_{L,h}|^{-2}K_{\neq,2} + \alpha|\na_{L,h}||\na_L|^{-1}H_{\neq,2} \\
		= |\na_L|^{\frac32}|\na_{L,h}|^{-1}\left(U^{2,3}_{\neq,1}\na_{y,z}^LU^3_{\neq,1}+U^3_{\neq,1}\p_zU^3_{\neq,2}+U^3_{\neq,2}\p_zU^3_{\neq,1}+U^h_{\neq,1}\cdot\nabla_{L,h} U^3_{\neq,2}+ U^h_{\neq,2}\cdot\nabla_{L,h} U^3_{\neq,1}\right.\\
		\quad +U_{\neq,2}\cdot\na_L U^3_{\neq,2}+U^2_{0,1}\p_y^L U^3_{\neq,1}+U^3_{0,1}\p_z U^3_{\neq,1}+U^2_{\neq,1}\p_y U^3_{0,1}+U^3_{\neq,1}\p_zU^3_{0,1}+U^2_{0,2}\p_y^L U^3_{\neq,1}+U^3_{0,2}\p_z U^3_{\neq,1}\\
		\quad +U_{0,1}\cdot\na_L U_{\neq,2}^3 + U^1_{0,2}\p_x U^3_{\neq,1} + U^2_{\neq,1}\p_y U^3_{0,2} +U^2_{\neq,2}\p_y U^3_{0,1}+U^3_{\neq,1}\p_zU^3_{0,2} +U^3_{\neq,2}\p_zU^3_{0,1}+U_{\neq,2}^{2,3}\cdot\na_{y,z} U^3_{0,2} \\
		\quad \left. +U_{0,2} \cdot \nabla_L U^3_{\neq,2} \right) - \p_z|\na_{L,h}|^{-1}|\na_L|^{-\frac12}\left(2\p_zU_{\neq,1}^2\p_y^LU_{\neq,1}^3  +\p_i^L U^j_{\neq,1}\p_j^L U^i_{\neq,2}+\p_i^L U^j_{\neq,2}\p_j^L U^i_{\neq,1}+\p_i^L U^j_{\neq,2}\p_j^L U^i_{\neq,2}\right. \\
		\quad +2\p_yU_{0,1}^3\p_zU_{\neq,1}^2+ \p_i U^2_{0,1}\p_y^L U^i_{\neq,1} + \p_y^L U^i_{\neq,1}\p_i U^2_{0,1}+\p_i U^j_{0,1}\p_j^L U^i_{\neq,2}+ \p_i^L U^j_{\neq,2}\p_j U^i_{0,1}+ \p_i U^j_{0,2}\p_j^L U^i_{\neq,1}\\
		\quad \left.+ \p_i^L U^j_{\neq,1}\p_j U^i_{0,2} + \p_i U^j_{0,2}\p_j^L U^i_{\neq,2} + \p_i^L U^j_{\neq,2}\p_j U^i_{0,2}\right), \\
		\p_t K_{\neq,2} - \nu\Delta_L K_{\neq,2} + \frac12 \p_x\p_y^L|\na_L|^{-2} K_{\neq,2} + \p_x\p_y^L(|\na_L|^{-2}-|\na_{L,h}|^{-2}) K_{\neq,2} - i\sqrt{\beta(\beta-1)}\p_z|\na_L|^{-1}Q_{\neq,2} \\
		= i\sqrt{\frac{\beta}{\beta-1}}|\na_L|^{\frac12}|\na_{L,h}|^{-1}\left(\p_xU_{\neq,1}\cdot\na_L U^2_{\neq,2}+\p_xU_{\neq,2}\cdot\na_L U^2_{\neq,1}-\p_y^LU_{\neq,1}\cdot\na_LU^1_{\neq,2}-\p_y^LU_{\neq,2}\cdot\na_LU^1_{\neq,1}\right. \\
		\quad +U_{\neq,1}\cdot\na_LW^3_{\neq,2}+U_{\neq,2}\cdot\na_LW^3_{\neq,1}+\p_xU_{\neq,2}\cdot\na_L U^2_{\neq,2}-\p_y^LU_{\neq,2}\cdot\na_LU^1_{\neq,2}+U_{\neq,2}\cdot\na_LW^3_{\neq,2}\\
		\quad +{U_{0,1}^2\p_y^LW_{\neq,1}^3}-U_{\neq,1}^2\p_{yy}U^1_{0,1}-\p_y U_{0,1}^2\p_y^L U^1_{\neq,1}-\p_y^LU_{\neq,1}^3\p_z U_{0,1}^1 - U_{\neq,1}^3\p_z\p_yU^1_{0,1}+ \p_xU_{\neq,2}\cdot\na U_{0,1}^2\\
		\quad -\p_y^LU_{\neq,2}\cdot\na U_{0,1}^1-\p_yU_{0,1}\cdot\na_LU^1_{\neq,2}+U_{0,2}^2\p_y^L W^3_{\neq,1}-U_{\neq,1}^2\p_{yy}U^1_{0,2}+U_{0,1}\cdot\na_LW^3_{\neq,2} - U_{\neq,2}\cdot\na \p_yU^1_{0,1} \\
		\quad + \p_xU_{\neq,1}\cdot\na U_{0,2}^2-\p_y^LU_{\neq,1}\cdot\na U_{0,2}^1-\p_yU_{0,2}\cdot\na_LU^1_{\neq,1}+U_{0,2}^{1,3}\cdot\na_{x,z}W^3_{\neq,1} - U_{\neq,1}^3\p_z\p_yU^1_{0,2} \\
		\quad  \left.+ \p_xU_{\neq,2}\cdot\na U_{0,2}^2-\p_y^LU_{\neq,2}\cdot\na U_{0,2}^1-\p_yU_{0,2}\cdot\na_LU^1_{\neq,2}+U_{0,2}\cdot\na_LW^3_{\neq,2}- U_{\neq,2}\cdot\na \p_yU^1_{0,2}\right),\\
		\p_t H_{\neq,2} -\nu\Delta_L H_{\neq,2} - \frac12 \p_x\p_y^L|\na_L|^{-2} H_{\neq,2} - \alpha|\na_{L,h}||\na_L|^{-1}Q_{\neq,2}\\
		=  |\na_L|^{\frac12}(U_{\neq,1}\cdot\nabla_L\Theta_{\neq,2}+U_{\neq,2}\cdot\nabla_L\Theta_{\neq,1}+U_{\neq,2}\cdot\nabla_L\Theta_{\neq,2} +U^2_{\neq,1}\p_y\Theta_{0,1}+ {U_{0,1}^2\p_y^L\Theta_{\neq,1}}+ U_{0,2}^2\p_y^L\Theta_{\neq,1}\\
		\quad +U^2_{\neq,1}\p_y\Theta_{0,2}+U_{0,1}\cdot\nabla_L\Theta_{\neq,2}+U_{\neq,2}\cdot\nabla\Theta_{0,1}+U_{0,2}^{1,3}\cdot\nabla_{x,z}\Theta_{\neq,1}+U_{\neq,1}^3\p_z\Theta_{0,2}+U_{0,2}\cdot\nabla_L\Theta_{\neq,2}+U_{\neq,2}\cdot\nabla\Theta_{0,2} ),\\
		Q_{\neq,2}(t=0)=0,\ K_{\neq,2}(t=0)=0,\ H_{\neq,2}(t=0)=0.
	\end{cases}
\end{align}

With the help of analysis above and the relationships between variables, such as \eqref{3x.1}, \eqref{3x.2}, \eqref{ll6} and \eqref{ll7}, theorem \ref{1..1} is mainly proved by using the following bootstrap argument. We set $N\geq s+4$ with $s>6$ and define $$\mathcal{B}:= mMe^{\frac{\lambda}{2} \nu^{\frac{1}{3}}t}.$$

\begin{pro}[Bootstrap step]\label{boots}
	Under the hypothesis of Theorem \ref{1..1}, assume that for some $T>0$, we have the following bounds for $t\in[0,T]$:
	
	$(1)$ the bounds on $Q_{\neq,1}$,  ${K}_{\neq,1}$ and  ${H}_{\neq,1}$:	
		\begin{align}\label{qkh1}
			&\|\mathcal{A}(Q_{\neq,1},K_{\neq,1},H_{\neq,1})\|_{L^\infty_t H^N}^2 + \nu\|\na_L\mathcal{A}(Q_{\neq,1},K_{\neq,1},H_{\neq,1})\|_{L^2_t H^N}^2\\ &\quad+ \left\|\sqrt{-\frac{\dot{M}}{M}}\mathcal{A}(Q_{\neq,1},K_{\neq,1},H_{\neq,1})\right\|_{L^2_t H^N}^2 \leq 100\delta^2\nu^{2c},\nonumber
		\end{align}
		
		$(2)$ the bounds on $Q_{\neq,2}$,  ${K}_{\neq,2}$ and  ${H}_{\neq,2}$:	
		\begin{align}\label{qkh2}
		&\|\mathcal{B}(Q_{\neq,2},K_{\neq,2},H_{\neq,2})\|_{L^\infty_t H^s}^2 + \nu\|\na_L\mathcal{B}(Q_{\neq,2},K_{\neq,2},H_{\neq,2})\|_{L^2_t H^s}^2\\&\quad + \left\|\sqrt{-\frac{\dot{M}}{M}}\mathcal{B}(Q_{\neq,2},K_{\neq,2},H_{\neq,2})\right\|_{L^2_t H^s}^2 \leq 100\delta^2\nu^{2b},\nonumber
		\end{align}
		
	$(3)$ the bounds on $U_{0}$ and $ \Theta_{0}$:
		\begin{equation}\label{u01}
		\begin{aligned}
		\|(U_{0,1}^2,V_{0,1}^2,\tilde{U}_{0,1}^3,\tilde{V}_{0,1}^3,\Lambda_{0,1})\|_{L^\infty_t H^{N,N+\frac12}}^2 + \nu\|\na(U_{0,1}^2,V_{0,1}^2,\tilde{U}_{0,1}^3,\tilde{V}_{0,1}^3,\Lambda_{0,1})\|_{L^2_t H^{N,N+\frac12}}^2 \leq 100\delta^2\nu^{2c},\\
		\|(\tilde{U}_{0,1}^1,\tilde{\Theta}_{0,1})\|_{L^\infty_t H^{N,N+\frac12}}^2 + \nu\|\na(\tilde{U}_{0,1}^1,\tilde{\Theta}_{0,1})\|_{L^2_t H^{N,N+\frac12}}^2 \leq 100\delta^2\nu^{2c},\\
		\|(\overline{U_{0,1}^1},\overline{U_{0,1}^3},\overline{\Theta_{0,1}})\|_{L^\infty_tH^N}^2 + \nu\|\p_y(\overline{U_{0,1}^1},\overline{U_{0,1}^3},\overline{\Theta_{0,1}})\|_{L^2_tH^N}^2 \leq 100\delta^2\nu^{2c}, 
				\end{aligned}
	\end{equation}
		\begin{equation}\label{u02}
		\begin{aligned}
		\|(U_{0,2}^2,V_{0,2}^2,\tilde{U}_{0,2}^3,\tilde{V}_{0,2}^3,\Lambda_{0,2})\|_{L^\infty_t H^{s,s+\frac12}}^2 + \nu\|\na(U_{0,2}^2,V_{0,2}^2,\tilde{U}_{0,2}^3,\tilde{V}_{0,2}^3,\Lambda_{0,2})\|_{L^2_t H^{s,s+\frac12}}^2 \leq 100\delta^2\nu^{2a},\\
		\|(\tilde{U}_{0,2}^1,\tilde{\Theta}_{0,2})\|_{L^\infty_t H^{s,s+\frac12}}^2 + \nu\|\na(\tilde{U}_{0,2}^1,\tilde{\Theta}_{0,2})\|_{L^2_t H^{s,s+\frac12}}^2 \leq 100\delta^2\nu^{2a},\\
		\|(\overline{U_{0,2}^1},\overline{U_{0,2}^3},\overline{\Theta_{0,2}})\|_{L^\infty_tH^s}^2 + \nu\|\p_y(\overline{U_{0,2}^1},\overline{U_{0,2}^3},\overline{\Theta_{0,2}})\|_{L^2_tH^{s}}^2 \leq 100\delta^2\nu^{2a},
		\end{aligned}
\end{equation}
where $a,b,c$ are some constants which will be chosen later.	Assume that $\epsilon_{0}=\left\|u_{\mathrm{in}}\right\|_{H^{N+2}\cap W^{N+3,1}}+\left\|\theta_{\mathrm{in}}\right\|_{H^{N+1}\cap W^{N+3,1}}\leq \delta \nu^{\frac{14}{15}}$  for any $\delta>0$ and $\nu \in (0, 1)$,  the  estimates \eqref{qkh1}--\eqref{u02} hold on $[0,T]$. Then for $\delta$ sufficiently small depending only on $N, s, \alpha, \beta$,  but not on $T$, these same estimates hold 
with all the occurrences of 100 on the right-hand side replaced by 50.
\end{pro}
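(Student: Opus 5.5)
The plan is a standard continuity (bootstrap) argument, organised by the four groups of bounds \eqref{qkh1}--\eqref{u02}. I fix $c=\frac{14}{15}$, $b=1$ and choose $a$ afterwards, with $a\ge c$ and compatible with the non-zero perturbation estimates. For each group I perform the weighted energy estimate used in the proof of Lemma \ref{lem3.4}, now with the nonlinear right-hand sides. For the non-zero modes the multiplier is $\mathcal{A}$ (for $Q_{\neq,1},K_{\neq,1},H_{\neq,1}$ in $H^N$) or $\mathcal{B}$ (for the perturbation part in $H^s$). The cross term $2\langle G\,\cdot,\cdot\rangle$ is kept so that, by \eqref{00} and the condition $B_\beta>\frac14$, the linear part yields exactly the dissipation $F_{\neq}$. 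Integrating in time then bounds each left-hand side by $C\epsilon^2$ (initial data) plus time-integrated trilinear terms. For the first subsystems the initial term is $\lesssim\delta^2\nu^{2c}$, because $\|(Q,K,H)_{\rm in}\|_{H^{N+1/2}}\lesssim\epsilon$ by \eqref{yyy}, and $\nu^{1/6}\lesssim m$ together with \eqref{4.11-2} only costs half a derivative. The second subsystems have zero data. The closure requirement is therefore that every nonlinear contribution be $\ll\delta^2\nu^{2c}$, resp. $\delta^2\nu^{2b}$ or $\delta^2\nu^{2a}$, once $\delta$ is small.

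For the zero modes of the first subsystem I use the Duhamel formula with the semigroup $e^{\mathcal{L}t}$ from \eqref{sol1} and Corollary \ref{dispercor}. This gives the low-regularity $L^\infty$ decay $(qt)^{-1/3}e^{-\nu t}\epsilon$ for $U^2_{0,1},\tilde U^3_{0,1},V^2_{0,1},\tilde V^3_{0,1}$, which uses the $W^{N+3,1}$ assumption on the data. The inhomogeneous parts are dispersion–dispersion and non-zero–non-zero products. The former are handled by the dispersive bound again; the latter are handled by enhanced dissipation through \eqref{4.17}, which gains $\nu^{-1/3}$ rather than $\nu^{-1}$. I then upgrade to $W^{s,\infty}$ via the stated lemma (Lemma \ref{lowDisper}). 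In the $H^{N,N+1/2}$ energy estimate, transport terms $\tilde U_{0,1}\cdot\nabla f$ cancel at top order after a commutator. The remaining terms are bounded by $\|\tilde U_{0,1}^{2,3}\|_{W^{s,\infty}}\|f\|^2_{H^N}$ plus non-zero contributions of size $\nu^{-1/3}E_{\neq,1}F_{\neq,1}$-type. Since $\int_0^\infty\min(1,(qt)^{-1/3})^{2}e^{-\nu t}\,dt$ is bounded by roughly $\nu^{-1/3}$, the total gives $\delta^3\nu^{3c-2/3}\ll\delta^2\nu^{2c}$, because $c\ge\frac23$. The double zero modes are treated in the same way: I use \eqref{1.23} and the rotation structure of $(\overline{U^3_0},\overline{\Theta_0})$, for which the linear coupling cancels in $L^2$ by antisymmetry, and $\Lambda_{0,1}$ is a pure heat equation.

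For the second subsystems I expand each nonlinearity into the four interaction types listed in the strategy section: force–force, force–perturbation, perturbation–force and perturbation–perturbation. I then apply the bounds displayed there, measuring forces in $H^{N}$ with $N\ge s+4$ to absorb the derivative loss. This yields contributions such as $\nu^{-2/3}\delta^3\nu^{b+2c}$ and $\delta^3\nu^{3b-1}$. Requiring these to be $\ll\delta^2\nu^{2b}$ is exactly $b+2c-\frac23\ge 2b$ and $b\ge1$, satisfied by $c=\frac{14}{15}$, $b=1$. For the zero perturbation part the analogous requirement fixes $a$ (for instance $a=1$ works, checking $2c-\frac23\ge a$ and similar conditions). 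Finally I recover $U,\Theta$ from $Q,K,H$ through \eqref{3x.1}, \eqref{3x.2}, \eqref{ll6}, \eqref{ll7}, using $c_\alpha\le M\le1$ from Lemma \ref{lem4.2}, and choose $\delta$ small so that every constant becomes $\le 50$.

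I expect the main obstacle to be the $H^N$ estimate for $(Q_{\neq,1},K_{\neq,1},H_{\neq,1})$ when $|l|\gg|k,\eta-kt|$, where the factor $|\nabla_L|^{3/2}|\nabla_{L,h}|^{-1}$ is large. Here I would rely on the screening already built into \eqref{Non0NL1}: only products in which the high-derivative factor carries $\partial_x$ or $\nabla_{x,z}$ are kept, so that $|\nabla_{L,h}|^{-1}$ cancels, while the dangerous term $\nabla^N U^2\,\partial^L_y U^3$ has been moved to \eqref{Non0NL2}. The second subtle point is the thermal–dispersive terms in the $\Lambda_{0,1}$ and $\tilde U^1_{0,1},\tilde\Theta_{0,1}$ equations. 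For these I would write $U^1,\Theta$ in terms of $V^2,\Lambda$ by inverting $(G_1,G_2;G_3,G_4)$ so that at least one factor always disperses.
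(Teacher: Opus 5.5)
Your architecture matches the paper's. The bootstrap closes through the four separate estimates of Propositions \ref{E01}, \ref{propEneq1}, \ref{propE02} and \ref{propEneq2}, using the weights $\mathcal{A}$ and $\mathcal{B}$, the cross term $G$, Duhamel plus Corollary \ref{dispercor} for the zero modes, and the four interaction types for the perturbation systems. Two steps, however, would not close as you describe them.

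\textbf{Zero modes at top regularity.} The commutator cancellation only covers transport of a dispersive unknown by $\tilde U_{0,1}$. The $\Lambda_{0,1}$ equation (hence $\tilde U^1_{0,1},\tilde\Theta_{0,1}$) and the double-zero equations are forced by $\tilde U^{2,3}_{0,1}\cdot\na F_{0,1}$ and $\p_y\overline{(U^2_{0,1}\tilde U^1_{0,1})}$ with $F\in\{U^1,\Theta\}$ non-dispersive. Having one dispersive factor is not enough. In $H^{N,N+\frac12}$, the piece with all derivatives on the dispersive factor is $\|\tilde U^{2,3}_{0,1}\|_{H^{N,N+\frac12}}\|F_{0,1}\|_{W^{1,\infty}}$. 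Nothing in it decays, and $W^{s,\infty}$ control is useless; estimated crudely it closes only if $c\ge1$. Rewriting $U^1,\Theta$ through $(V^2,\Lambda)$ does not help, since $\Lambda_{0,1}$ never disperses. The paper's remedy is Lemma \ref{refine u023}:
\begin{itemize}
\item split $U^2_{0,1}=U^{2,in}_{0,1}+U^{2,nl}_{0,1}$;
\item use $W^{N,\infty}$ decay of the linear part, which is what the $W^{N+3,1}$ data are for (not a low-regularity bound);
\item show the Duhamel part is small in $H^{N,N+\frac12}$.
\end{itemize}
Your time integral also keeps only the decaying linear part of the dispersive bound. The Duhamel part (size $q^{-\frac13}\nu^{-\frac23}\delta^2\nu^{2c}$ in $L^\infty$, resp.\ the $H^{N,N+\frac12}$ bound of Lemma \ref{refine u023}) does not decay in time. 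It produces the terms $\delta^2\nu^{2c}\cdot\delta^4\nu^{4c-\frac{10}3}$ and $q^{-\frac23}\delta^2\nu^{2c}\cdot\delta^6\nu^{6c-\frac{16}3}$ in Lemmas \ref{double0} and \ref{lambda}. So the zero modes alone need $c\ge\frac89$, not $c\ge\frac23$. You also invoke \eqref{1.23} without saying what it does: it makes $\overline{U^3_{0,1}},\overline{\Theta_{0,1}}$ purely nonlinear, of size $\delta^2\nu^{2c-\frac23}\ll\nu$. That smallness is what lets the term $\nu^{-1}\|(\overline{U^3_{0,1}},\overline{\Theta_{0,1}})\|_{H^{s+1}}F_{0,2}$ in Proposition \ref{propE02} be absorbed.

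\textbf{The exponents.} Your closing conditions ($b+2c-\frac23\ge 2b$, $b\ge1$, $2c-\frac23\ge a$, $c\ge\frac23$) all hold for $c=\frac56$, where the paper's estimates do not close. You impose $c=\frac{14}{15}$ by hand and never estimate the main non-zero system quantitatively.
\begin{itemize}
\item In Proposition \ref{propEneq1}, the term $U^2_{\neq,1}\p_y^LW^3_{\neq,1}$ and the transport terms of the $H_{\neq,1}$ equation need $\mathcal{A}|\na_{L,h}|^{-\frac12}(Q_{\neq,1},K_{\neq,1})$ in $L^2_tH^N$. The paper controls this through the ghost weight $M_7$ at the cost \eqref{tool5}; via \eqref{4.17} alone one loses $\nu^{-\frac16}$, and $M_7$ is bounded below only for $\kappa>0$. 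The result is $\delta^2\nu^{2c}\cdot\delta\nu^{c-\frac56-\frac{\kappa}{6(1+\kappa)}}$, which forces $c\ge\frac56+\frac{\kappa}{6(1+\kappa)}$.
\item In \eqref{L66}, force--perturbation terms such as $U^3_{\neq,1}\p_zU^3_{\neq,2}$ and $U^2_{0,2}\p_y^LU^3_{\neq,1}$ carry $E_{\neq,1}^{1/2}\nu^{-\frac{11}{12}}\approx\delta\nu^{c-\frac{11}{12}}$. These are the coefficients $\delta\nu^{\frac{\kappa}{6(1+\kappa)}-\frac1{12}}$ and $\delta\nu^{\frac{\kappa}{3(1+\kappa)}-\frac16}$ there, after setting $c=\frac56+\frac{\kappa}{6(1+\kappa)}$. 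They force $c\ge\frac{11}{12}$.
\end{itemize}
The paper closes with $a=b=1$ and $\frac{\kappa}{6(1+\kappa)}=\frac1{10}$, i.e.\ $c=\frac{14}{15}$. The threshold is thus set by the $M_7$ loss in the main non-zero system balanced against these perturbation terms, not by $b+2c-\frac23\ge 2b$.
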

  
  \pf For the sake of simplicity in calculation, we assume that $c\leq a$ and $c\leq b$. We will maintain this assumption throughout the following proof. Proposition \ref{boots}  is directly derived from the following propositions \ref{E01}, \ref{propEneq1},   \ref{propE02}  and  \ref{propEneq2}. For specific details, see subsection \ref{sub4.3}-\ref{sub4.6}.  \hfill$\square$
  
For convenience, let us define some energy functionals which will be used in the following sections.

$(1)$ the energy functionals for non-zero modes of the systems \eqref{Non0NL1} and \eqref{Non0NL2}:
	\begin{align}
E_{\neq,1}(t)=&\|\mathcal{A}(Q_{\neq,1},K_{\neq,1},H_{\neq,1})\|_{H^N}^2,\label{pre1} \\
F_{\neq,1}(t)=&\nu\|\mathcal{A}\na_L(Q_{\neq,1},K_{\neq,1},H_{\neq,1})\|_{H^N}^2 + \left\|\mathcal{A}\sqrt{-\frac{\dot{M}}{M}}(Q_{\neq,1},K_{\neq,1},H_{\neq,1})\right\|_{H^N}^2 \label{pre2}, 
\end{align}
\begin{align}
	E_{\neq,2}(t)=&\|\mathcal{B}(Q_{\neq,2},K_{\neq,2},H_{\neq,2})\|_{H^s}^2,\label{pre3}\\
	F_{\neq,2}(t)=&\nu\|\mathcal{B}\na_L(Q_{\neq,2},K_{\neq,2},H_{\neq,2})\|_{H^s}^2 + \left\|\mathcal{B}\sqrt{-\frac{\dot{M}}{M}}(Q_{\neq,2},K_{\neq,2},H_{\neq,2})\right\|_{H^s}^2 \label{pre4}.
\end{align} 

$(2)$ the energy functionals for zero modes of the systems \eqref{1stQuasi1}-\eqref{1stQuasi3} and \eqref{2ndQuasi1}-\eqref{2ndQuasi3}:
	\begin{align}
	E_{0,1}(t)=&\|(U^2_{0,1},\tilde{U}^3_{0,1}, V^2_{0,1},\tilde{V}^3_{0,1},\Lambda_{0,1} )\|_{H^{N,N+\frac12}}^2 + \|(\overline{U^1_{0,1}},\overline{U^3_{0,1}},\overline{\Theta_{0,1}})\|_{H^{N,N+\frac12}}^2,\label{pre5}  \\
	F_{0,1}(t)=&\nu\left(\|\na (U^2_{0,1},\tilde{U}^3_{0,1},V^2_{0,1},\tilde{V}^3_{0,1},\Lambda_{0,1})\|_{H^{N,N+\frac12}}^2 + \|\na (\overline{U^1_{0,1}},\overline{U^3_{0,1}},\overline{\Theta_{0,1}})\|_{H^{N,N+\frac12}}^2\right),\label{pre6}\\
	 E_{0,2}(t)=&\|(U^2_{0,2},\tilde{U}^3_{0,2}, V^2_{0,2},\tilde{V}^3_{0,2},\Lambda_{0,2} )\|_{H^{s,s+\frac12}}^2 + \|(\overline{U^1_{0,2}},\overline{U^3_{0,2}},\overline{\Theta_{0,2}})\|_{H^{s,s+\frac12}}^2, \label{pre7} \\
	F_{0,2}(t)=&\nu\left(\|\na (U^2_{0,2},\tilde{U}^3_{0,2},V^2_{0,2},\tilde{V}^3_{0,2},\Lambda_{0,2})\|_{H^{s,s+\frac12}}^2 + \|\na (\overline{U^1_{0,2}},\overline{U^3_{0,2}},\overline{\Theta_{0,2}})\|_{H^{s,s+\frac12}}^2\right).\label{pre8}
\end{align}

\subsection{ Energy estimates on $U_{0,1} $ and $ \Theta_{0,1}$   }\label{sub4.3}
\qquad In this section, we aim to prove that under the bootstrap hypotheses of Proposition \ref{boots}, the estimates on $U_{0,1} $, and $ \Theta_{0,1}$  hold (i.e., \eqref{u01}), with 100 replaced by 50 on the right-hand side.

\begin{pro}\label{E01}
 Suppose that the assumptions in Proposition \ref{boots} are true, then it holds that for any $t\in [0,T]$,
	\begin{align*}
		\sup_{t>0}E_{0,1}(t)+\int_0^t F_{0,1}(\tau){\rm d}\tau\leq& \epsilon_0^2 + 100C\delta^2\nu^{2c}\left(\delta^2\nu^{2c-\frac43} + \delta^4\nu^{4c-\frac{10}{3}}\right) \\
		&+ 100q^{-\frac23}\delta^2\nu^{2c}\left[\nu^{-\frac{4}{3}}\epsilon_0^2 + C\left(\delta^2\nu^{2c-\frac{4}{3}} + \delta^4\nu^{4c-\frac{10}{3}} + \delta^6\nu^{6c-\frac{16}{3}}\right)\right],
	\end{align*}
 where $ E_{0,1}(t)$ and $ F_{0,1}(t)$ are defined by \eqref{pre5}
and \eqref{pre6}, respectively. 
\end{pro}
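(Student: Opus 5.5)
The plan is to run $H^{N,N+\frac12}$ energy estimates on each equation of the first zero-mode subsystem \eqref{1stQuasi1}--\eqref{1stQuasi3}, pairing the dispersive variables so that the linear coupling cancels. Since $\mathcal{R}$ is skew-adjoint with a real Fourier multiplier, $\langle -\mathcal{R}V^2_{0,1},U^2_{0,1}\rangle+\langle\mathcal{R}U^2_{0,1},V^2_{0,1}\rangle=0$, and the same cancellation holds for $(\tilde U^3_{0,1},\tilde V^3_{0,1})$. The $\Lambda_{0,1}$ equation is a pure heat equation. The couplings $\pm\alpha$ between $\overline{U^3_{0,1}}$ and $\overline{\Theta_{0,1}}$ cancel by antisymmetry. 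The commuting multiplier $\langle\partial_z\rangle^{1/2}$ preserves all of this. Hence
\[
\frac{d}{dt}E_{0,1}+F_{0,1}\lesssim |\text{nonlinear pairings}|,
\]
and integrating in time gives the term $\epsilon_0^2$ from the initial data. The quantities $\tilde U^1_{0,1},\tilde\Theta_{0,1}$ are recovered from $(V^2_{0,1},\Lambda_{0,1})$ through the invertible $2\times2$ symbol built from $G_1,\dots,G_4$. That symbol is bounded above and below uniformly in $(\eta,l)$ because $h^2=\alpha^2\eta^2+B_\beta l^2>0$, so their norms come for free.

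The nonlinear pairings split into two kinds. The first kind is non-zero$\times$non-zero forcing, such as $(U_{\neq,1}\cdot\nabla_L U^j_{\neq,1})_0$ and $G_i\widetilde{(\cdots)}_0$. Here I will convert $U_{\neq,1}$ to $(Q,K,H)_{\neq,1}$ through \eqref{3x.1}--\eqref{ll7}. I then use the enhanced dissipation bound \eqref{4.17}, the bounds \eqref{4.11-1}--\eqref{4.11-2} on $m$ (each costs at most $\nu^{-1/6}$), and the bootstrap \eqref{qkh1}. A typical pairing is bounded by $\|\nabla U_{0,1}\|\,\|U_{\neq,1}\|_{L^2_tH^N}^2$. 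This produces terms of size $\delta^2\nu^{2c}\cdot\delta^2\nu^{2c-4/3}$, and three-factor contributions give $\delta^4\nu^{4c-10/3}$. That accounts for the first bracket.

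The second kind is zero$\times$zero terms. For the transport terms $\tilde U_{0,1}\cdot\nabla G$, I integrate by parts, use the commutator, and apply Kato–Ponce. Every term then carries a factor $\|\tilde U^{2,3}_{0,1}\|_{W^{2,\infty}}$ or $\|\langle\partial_z\rangle^{1/2}\tilde U^{2,3}_{0,1}\|_{W^{s,\infty}}$, up to lower-order pieces bounded via Lemma \ref{lowDisper}. The quadratic pressure term $2\partial_y\Delta_{L,0}^{-1}(\cdots)$ is also dispersion–dispersion and is handled the same way. So is the double-zero forcing $\partial_y\overline{(U^2_{0,1}\tilde U^{1,3}_{0,1})}$, where one factor is always dispersive. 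The $\Lambda_{0,1}$ forcing $G_3(\tilde U_{0,1}\cdot\nabla U^1_{0,1})+G_4(\cdots)$ is handled with the same $L^\infty$ factor on $\tilde U_{0,1}$.

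The resulting bound is
\[
\int_0^t\|\tilde U^{2,3}_{0,1}\|_{W^{\cdot,\infty}}\,E_{0,1}\,d\tau\le \|\tilde U^{2,3}_{0,1}\|_{L^2_tW^{\cdot,\infty}}\;\Big(\sup_t E_{0,1}\Big)^{1/2}\Big(\int_0^t F_{0,1}\Big)^{1/2}\nu^{-1/2},
\]
or a similar $L^1_t$ splitting.

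The main obstacle is the nonlinear dispersive estimate for $\tilde U^{2,3}_{0,1}$ in $L^\infty$, in square-integrable form. I will write Duhamel for $U^2_{0,1}+iV^2_{0,1}$ using \eqref{sol1}, and apply Corollary \ref{dispercor} to both the linear part and the forcing. This gives
\[
\|\tilde U^{2,3}_{0,1}(t)\|_{W^{2,\infty}}\lesssim (qt)^{-\frac13}e^{-\nu t}\epsilon_0+\int_0^t (q(t-\tau))^{-\frac13}e^{-\nu(t-\tau)}\|\mathcal{N}(\tau)\|_{W^{5,1}}\,d\tau .
\]
Squaring and integrating, the $t^{-1/3}$ kernel together with $e^{-\nu t}$ yields $\|\cdot\|_{L^2_t}^2\lesssim q^{-2/3}\nu^{-1/3}(\cdot)$. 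Controlling the $W^{5,1}$ norm of the forcing by products of $L^2$ norms (Hölder), together with the time integrals, produces the factor $\nu^{-4/3}\epsilon_0^2$. The nonlinear contributions give $\delta^2\nu^{2c-4/3}$, $\delta^4\nu^{4c-10/3}$ and $\delta^6\nu^{6c-16/3}$, the last coming from cubic interactions in which the forcing itself involves zero modes. Multiplying by the energy factor $100\delta^2\nu^{2c}$ from the bootstrap gives the second bracket. The delicate points are two. First, the near-zero-time singularity $t^{-1/3}$ is square integrable, which is fine. Second, the $W^{3,1}$ requirement in Corollary \ref{dispercor} forces the use of $N+3$ derivatives in $L^1$ on the data, which is why $\epsilon_0$ includes $W^{N+3,1}$. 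I expect balancing the regularity budget (high norm $N$ versus the $W^{s,\infty}$ norms) via Lemma \ref{lowDisper} to be the hardest bookkeeping.
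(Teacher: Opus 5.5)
\paragraph{Gap: dispersion--thermal products at top order.} Your scheme has the right ingredients: the energy identity with the $\mathcal{R}$- and $\alpha$-couplings cancelling, bootstrap control of the non-zero$\times$non-zero forcing, and Duhamel plus Corollary \ref{dispercor} for $L^\infty$ decay. The problem is the zero$\times$zero terms. You claim every such term carries a factor $\|\tilde U^{2,3}_{0,1}\|_{W^{2,\infty}}$ or $\|\langle\partial_z\rangle^{\frac12}\tilde U^{2,3}_{0,1}\|_{W^{s,\infty}}$. That holds only for dispersion--dispersion products, namely the transport terms in the $(U^2,V^2)_{0,1}$ and $(\tilde U^3,\tilde V^3)_{0,1}$ equations and the pressure term. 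There Kato--Ponce always leaves a dispersive factor in $L^\infty$.

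It fails for the dispersion--thermal products:
\begin{itemize}
\item $\partial_y\overline{(U^2_{0,1}\tilde U^1_{0,1})}$ in the $\overline{U^1_{0,1}}$ equation;
\item $G_3(\tilde U_{0,1}\cdot\nabla U^1_{0,1})+G_4(\tilde U_{0,1}\cdot\nabla\Theta_{0,1})$ in the $\Lambda_{0,1}$ equation.
\end{itemize}
In the $\Lambda_{0,1}$ equation there is no transport cancellation, since $G_3,G_4$ do not commute with $\tilde U_{0,1}\cdot\nabla$. You must write the term in divergence form and pay with $\nu\|\nabla\Lambda_{0,1}\|^2_{H^{N,N+\frac12}}$. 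So you need to control $\nu^{-1}\int_0^t\|\tilde U^{2,3}_{0,1}(U^1_{0,1},\Theta_{0,1})\|^2_{H^{N,N+\frac12}}d\tau$.

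When the top-order derivatives land on the dispersive factor, the thermal factor sits in $L^\infty$ with no decay. Kato--Ponce then gives at best
\[
\nu^{-1}\|U^2_{0,1}\|^2_{L^2_tH^N}\|\tilde U^1_{0,1}\|^2_{L^\infty_{t,y,z}}\lesssim\delta^4\nu^{4c-2},
\]
which closes only for $c\ge 1$ and is not the bound in the statement. Lemma \ref{lowDisper} cannot repair this. It controls at most $N-4$ derivatives in $L^\infty$, because the forcing is known only in $W^{k,1}$ with $k\le N-1$ from the $H^N$ bootstrap, and Corollary \ref{dispercor} costs three more.

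\paragraph{Missing idea: the splitting of Lemma \ref{refine u023}.} Write $U^2_{0,1}=U^{2,in}_{0,1}+U^{2,nl}_{0,1}$, and likewise for $\tilde U^3_{0,1},V^2_{0,1},\tilde V^3_{0,1}$.
\begin{itemize}
\item The free part $e^{t\mathcal{L}}\Gamma^2_{0,1}(0)$ decays with all $N$ derivatives: $\|\langle\partial_z\rangle^{\frac12}U^{2,in}_{0,1}\|_{W^{N,\infty}}\lesssim(qt)^{-\frac13}e^{-\nu t}\epsilon_0$. This is what the $W^{N+3,1}$ assumption on the data is for; your argument as written only needs $W^{5,1}$.
\item The Duhamel part is estimated in $H^{N,N+\frac12}$, not in $L^\infty$. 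This uses that $e^{t\mathcal{L}}$ is contractive there. It also uses that the first subsystem was built so its dispersive equations contain only dispersion--dispersion and non-zero--non-zero forcing. The result is $\|U^{2,nl}_{0,1}\|_{H^{N,N+\frac12}}\lesssim\delta^2\nu^{2c-\frac23}+q^{-\frac13}\delta\nu^c(\nu^{-\frac23}\epsilon_0+\delta^2\nu^{2c-\frac53})$.
\end{itemize}
Then
\[
\|U^2_{0,1}\tilde U^1_{0,1}\|_{H^N}\lesssim\|\langle\partial_z\rangle^{\frac12}U^{2,in}_{0,1}\|_{W^{N,\infty}}\|\tilde U^1_{0,1}\|_{H^N}+\|U^{2,nl}_{0,1}\|_{H^{N,N+\frac12}}\|\tilde U^1_{0,1}\|_{H^{N,N+\frac12}}.
\]
Inserting this into $\nu^{-1}\int_0^t(\cdot)^2d\tau$ gives the terms in the statement. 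The free part yields $q^{-\frac23}\nu^{-\frac43}\epsilon_0^2$. The square of the Duhamel part yields the $\delta^4\nu^{4c-\frac{10}{3}}$ and $q^{-\frac23}\delta^6\nu^{6c-\frac{16}{3}}$ contributions. The $\Lambda_{0,1}$ forcing is handled the same way.

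\paragraph{Smaller point.} The map $(\tilde U^1_{0,1},\tilde\Theta_{0,1})\mapsto(V^2_{0,1},\Lambda_{0,1})$ is not uniformly invertible. Both rows of its symbol carry the factor $|l|/|(\eta,l)|$, so inverting it costs a $\partial_y$. For this proposition, take $\tilde U^1_{0,1},\tilde\Theta_{0,1}$ directly from the bootstrap bound \eqref{u01} instead.
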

 
 \pf  Proposition \ref{E01} is proved by the following Lemmas \ref{refine u023}, \ref{double0} and \ref{lambda}. \hfill$\square$

We first established the low regularity $L^\infty$ dispersion estimates of $U_0^2$and $\tilde{U}_0^3$, which provide improved index estimates.
\begin{lem}\label{lowDisper}
    Suppose that the assumptions in Proposition \ref{boots} are true, then it holds that for any $t\in [0,T]$,
    \begin{align*}
        \sum_{\gamma\in\mathbb{N}^2_0,|\gamma|\leq {s-3}} \|\nabla^{\gamma}_{y,z}\langle\p_z\rangle^{\frac12}(U^2_0,\tilde{U}^3_0,V_0^2,\tilde{V}_0^3)(t)\|_{L^{\infty}_{y,z}} \lesssim& (qt)^{-\frac13}e^{-\nu t}\epsilon_0 + q^{-\frac13}\nu^{-\frac23}\delta^2\nu^{2c},\\
        \sum_{\gamma\in\mathbb{N}^2_0,|\gamma|\leq {N-4}} \|\nabla^{\gamma}_{y,z}\langle\p_z\rangle^{\frac12}(U^2_{0,1},\tilde{U}^3_{0,1},V_{0,1}^2,\tilde{V}_{0,1}^3)(t)\|_{L^{\infty}_{y,z}} \lesssim& (qt)^{-\frac13}e^{-\nu t}\epsilon_0 + q^{-\frac13}\nu^{-\frac23}\delta^2\nu^{2c},\\
		\sum_{\gamma\in\mathbb{N}^2_0,|\gamma|\leq {s-3}} \|\nabla^{\gamma}_{y,z}\langle\p_z\rangle^{\frac12}(U^2_{0,2},\tilde{U}^3_{0,2},V_{0,2}^2,\tilde{V}_{0,2}^3)(t)\|_{L^{\infty}_{y,z}} \lesssim& q^{-\frac13}\nu^{-\frac23}\delta^2\nu^{2c},
    \end{align*} 
here $q$ is the same as in Proposition \ref{disperprop}.
\end{lem}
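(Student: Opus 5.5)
We will prove Lemma \ref{lowDisper} through Duhamel's formula for the dispersive pairs $(U_0^j,V_0^j)$, $j\in\{2,3\}$. By \eqref{S0NL1}, \eqref{S0NL3} and \eqref{sol1}, the complex combination $Z^j:=U_0^j+iV_0^j$ (and its analogue $Z^j_{0,1}$, $Z^j_{0,2}$ for the two quasilinear subsystems) satisfies
\[
\p_t Z^j-\mathcal{L}Z^j=\mathcal{N}_{0+\neq}(U_0^j)+i\mathcal{N}_{0+\neq}(V_0^j),
\]
so that
\[
Z^j(t)=e^{\mathcal{L}t}Z^j(0)+\int_0^t e^{\mathcal{L}(t-\tau)}\big(\mathcal{N}(U_0^j)+i\mathcal{N}(V_0^j)\big)(\tau)\,{\rm d}\tau .
\]
Real and imaginary parts then recover $U_0^j$ and $V_0^j$; note that $\mathcal{R}$ is real-symmetric up to the factor $i$, so separating them is legitimate. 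We apply $\nabla^\gamma_{y,z}\langle\p_z\rangle^{\frac12}$, which commutes with $e^{\mathcal{L}t}$, and use Corollary \ref{dispercor} with $a=0$, noting that all these quantities have zero $z$-average because $l\neq0$.

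The linear term gives $(qt)^{-\frac13}e^{-\nu t}\|\nabla^\gamma\langle\p_z\rangle^{\frac12}Z^j(0)\|_{W^{3,1}}$. The initial quantities $V_0^2(0)$ and $\tilde V_0^3(0)$ are images of $\tilde u_{\rm in,0}^1$ and $\tilde\theta_{\rm in,0}$ under $G_1,G_2,G_1',G_2'$, which are zero-order multipliers smooth away from the origin (since $h>0$ for $l\neq0$). Hence they are bounded on $W^{k,1}$ up to a small loss, which is absorbed by the $W^{N+3,1}$ norm in $\epsilon_0$ once $|\gamma|\le N-4$.

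For the Duhamel term we use Corollary \ref{dispercor} on each slice and integrate:
\[
\int_0^t \min\{(q(t-\tau))^{-\frac13},C\}\,\|\nabla^\gamma\langle\p_z\rangle^{\frac12}\mathcal{N}(\tau)\|_{W^{3,1}\cap H^{2}}\,{\rm d}\tau .
\]
When $t-\tau\lesssim1$ we use the trivial bound via Sobolev embedding $H^{2}\subset L^\infty$ in place of dispersion. Each nonlinearity is a product $f\,\nabla g$, possibly composed with zero-order multipliers or with $\p_y\Delta_{L,0}^{-1}\p$, so its $W^{k,1}$ norm is bounded by $\|f\|_{H^{k}}\|\nabla g\|_{H^{k}}$ via Cauchy--Schwarz. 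For the zero-mode interactions we use the bootstrap bounds \eqref{u01} and \eqref{u02}: $\|f\|_{L^\infty_tH^N}\lesssim\delta\nu^{c}$ and $\|\nabla g\|_{L^2_tH^N}\lesssim\delta\nu^{c-\frac12}$. For the non-zero interactions we first convert $(Q,K,H)$ back to $U_{\neq},\Theta_{\neq}$ through \eqref{3x.1}, \eqref{3x.2}, \eqref{ll6}, \eqref{ll7} and \eqref{4.11-1}, which costs a factor $\nu^{-\frac16}$ from $m$ and is compensated by the enhanced dissipation bound \eqref{4.17}. Hölder in time then gives
\[
\int_0^t (q(t-\tau))^{-\frac13}\|f\|\,\|\nabla g\|\,{\rm d}\tau\le q^{-\frac13}\Big(\int_0^t (t-\tau)^{-\frac23}\|\nabla g\|^{\frac43}\Big)^{\frac12}\cdots .
\]
More simply, we split the integral at $t-\tau\sim\nu^{-1}$ and exploit the decay factor $e^{-\nu(t-\tau)}$. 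This yields $\int_0^t (t-\tau)^{-\frac13}e^{-\nu(t-\tau)}\,{\rm d}\tau\lesssim\nu^{-\frac23}$ multiplied by $\sup_\tau\|\mathcal{N}\|_{W^{k,1}}$. Where only an $L^2_t$ bound is available, we instead apply Young's inequality and place $\nu^{-\frac12}$ from the gradient against the $e^{-\nu}$ kernel. The target is therefore $q^{-\frac13}\nu^{-\frac23}\delta^2\nu^{2c}$.

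The three statements differ only in regularity bookkeeping. The full-solution bound at order $s-3$ uses $s$-level control of both parts. The bound on the ${}_{0,1}$ part at order $N-4$ loses $3$ derivatives for $W^{3,1}$ and $\frac12+$ more for the product, and it only involves the ${}_{0,1}$ and ${}_{\neq,1}$ terms of \eqref{1stQuasi1} and \eqref{1stQuasi2}. The ${}_{0,2}$ bound follows either by subtraction or directly, since its initial data vanish and no linear term appears.

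The main obstacle is the nonlinearities that carry a non-commuting multiplier, such as $G_1\widetilde{(U\cdot\nabla U^1)}$ and $\p_z^{-1}\p_y(\tilde U_{0,1}\cdot\nabla V_{0,1}^2)$, because $\p_z^{-1}\p_y$ is unbounded in $\eta$. I would first try to absorb the extra $|\eta|/|l|$ by one more $y$-derivative, using $|l|\ge1$, which is affordable within the gap $N-4$ versus $N$. I would also check that the $W^{3,1}$-boundedness of these multipliers holds uniformly in $l$ via Mikhlin on each $z$-frequency.
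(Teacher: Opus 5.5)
Your argument is essentially the paper's proof: Duhamel for $U_0^2+iV_0^2$ and $\tilde U_0^3+i\tilde V_0^3$, Corollary \ref{dispercor} for the free part, and for the Duhamel part exactly your $L^2_t$ variant, i.e.\ Cauchy--Schwarz in time with $\|\tau^{-\frac13}e^{-\nu\tau}\|_{L^2_\tau}\lesssim\nu^{-\frac16}$ against $\|\mathcal{N}_{0+\neq}\|_{L^2_tW^{s,1}}\lesssim\nu^{-\frac12}\delta^2\nu^{2c}$, with the ${}_{0,1}$ and ${}_{0,2}$ parts treated identically, so the short-time Sobolev bound and the sup-in-time splitting are unnecessary. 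Two small corrections: first, you lose no factor $\nu^{-\frac16}$ on the non-zero interactions, since \eqref{3x.1}, \eqref{3x.2}, \eqref{ll6}, \eqref{ll7} (which use \eqref{4.11-2}, not \eqref{4.11-1}) bound $U_{\neq},\Theta_{\neq}$ and their $\nabla_L$-derivatives directly by $mM(Q,K,H)$, so nothing has to be compensated by \eqref{4.17}; second, Mikhlin gives no $L^1$ bounds, but none is needed, because on each slice $l\neq0$ the symbols of $G_1,G_2,G_1',G_2'$ and $\nabla\Delta_{L,0}^{-1}\nabla$ are smooth functions of $\xi=\eta/l$ built from $\frac{1}{1+\xi^2}$, $\frac{\xi}{1+\xi^2}$, $\frac{\xi^2}{1+\xi^2}$ and the smooth positive factor $\big(\frac{B_\beta+\alpha^2\xi^2}{1+\xi^2}\big)^{-\frac12}$, whose $y$-kernels are integrable with norm independent of $l$ by scaling.
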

\pf Let $\Gamma_0^2:= U_0^2+iV_0^2,\ \Gamma_0^3:= \tilde{U}_0^3+i\tilde{V}_0^3$, and donate $g_1,\ g_2,\ g_1',\ g_2'$ as the symbol of $G_1,\ G_2,\ G_1',\ G_2'$, respectively. According to the Duhamel's principle, 
\begin{align*}
    \p_t \Gamma_0^{i} = \mathcal{L}\Gamma_0^i + \mathcal{N}(U,\Gamma_0^i) \Rightarrow \Gamma_0^i(t)=e^{t\mathcal{L}}\Gamma^i_0(0) + \int_0^t e^{(t-\tau)\mathcal{L}}\mathcal{N}_{0+\neq}(\Gamma_0^i)(\tau){\rm d}\tau,\ i\in\{2,3\},
\end{align*}
where 
\begin{align*} 
    \mathcal{N}_{0+\neq}(\Gamma_{0}^2)= \mathcal{N}_{0+\neq}(U_{0}^2) + i\mathcal{N}_{0+\neq}(V_{0}^2),\ \mathcal{N}_{0+\neq}(\Gamma_{0}^3)= \mathcal{N}_{0+\neq}(\tilde{U}_{0}^3) + i\mathcal{N}_{0+\neq}(\tilde{V}_{0}^2).
\end{align*}
It is easy to see that $\int_{\mathbb{T}}\Gamma_0^i(t,y,z){\rm d}z = \int_{\mathbb{T}}\mathcal{N}_{0+\neq}(\Gamma_0^i)(t,y,z){\rm d}z =0$ for $t\geq 0,\ i\in\{2,3\}$. Notice that $|g_1|+|g_2|+|g_1'|+|g_2'|\lesssim 1$, then we can apply Corollary \ref{dispercor} and obtain  
\begin{align*}
    &\|\nabla^{\gamma}_{y,z}\Gamma_0^{2,3}(t)\|_{L^\infty_{y,z}} \lesssim e^{-\nu t}(qt)^{-\frac13}\|(U^{in}_0,\Theta_0^{in})\|_{W^{s,1}(\R\times\mathbb{T})} +  \int_{0}^te^{-\nu(t-\tau)}(q(t-\tau))^{-\frac13}\|\mathcal{N}_{0+\neq}(\Gamma_0^{2,3})(\tau)\|_{W^{s,1}(\R\times\mathbb{T})}{\rm d}\tau.
\end{align*}
Notice that $ \|t^{-\frac13}e^{-\nu t}\|_{L^2_t}\lesssim \nu^{-\frac16}$. By bootstrap assumptions,    for $i\in\{2,3\}$ we have   
\begin{align*}
    \|\mathcal{N}_{0+\neq}(\Gamma_0^i)\|_{L^2_tW^{s,1}(\R\times\mathbb{T})} 
	\lesssim& \sum_{F\in\{U^1,U^2,U^3\}}\left(\|F_0\|_{L^{\infty}_tH^{s}} + \|\mathcal{A}F_{\neq}\|_{L^{\infty}_tH^s}\right)  \\
    &\times \left(\sum_{F\in\{U^1,U^2,U^3,\Theta\}}\left(\|\nabla F_0\|_{L^2_tH^{s}} + \|\nabla_L\mathcal{A}F_{\neq}\|_{L^2_tH^s}\right) + \|\p_y(\overline{\Theta_0},\overline{U^1_0},\overline{U^3_0})\|_{L^2_tH^{s}}\right) \\
    \lesssim& \nu^{-\frac12}\delta^2\nu^{2c},
\end{align*}
thus, one has
\[
    \int_{0}^te^{-\nu(t-\tau)}(q(t-\tau))^{-\frac13}\|\mathcal{N}_{0+\neq}(\Gamma_0^i)(\tau)\|_{W^{s,1}(\R\times\mathbb{T})}{\rm d}\tau\lesssim q^{-\frac13}\nu^{-\frac23}\delta^2\nu^{2c}.
\]
The estimates of $U^2_{0,1},\tilde{U}^3_{0,1}$ and $U^2_{0,2},\tilde{U}^3_{0,2}$ are similar to that of  $U^2_{0},\tilde{U}^3_{0}$, so we omit the proof. Note that we can propagation $\p_z^{\frac12}$ in our estimats, we finish the proof. \hfill$\square$

For the main system \eqref{1stQuasi1}-\eqref{1stQuasi3}, we establish the nonlinear dispersive estimate of $U_{0,1}^2$ and $\tilde{U}_{0,1}^3$. Let $\Gamma_{0,1}^2:= U_{0,1}^2+iV_{0,1}^2,\ \Gamma_{0,1}^3:= \tilde{U}_{0,1}^3+i\tilde{V}_{0,1}^3$, thus 
\begin{align*}
    \p_t \Gamma_{0,1}^2 - \nu\Delta_{L,0}\Gamma_{0,1}^2 + i\mathcal{R}\Gamma_{0,1}^2 =  \mathcal{N}_{0+\neq}(\Gamma_{0,1}^2),\ \p_t \Gamma_{0,1}^3 - \nu\Delta_{L,0}\Gamma_{0,1}^3 + i\mathcal{R}\Gamma_{0,1}^3 =  \mathcal{N}_{0+\neq}(\Gamma_{0,1}^3),
\end{align*}
where 
\begin{align*}   
    \mathcal{N}_{0+\neq}(\Gamma_{0,1}^2)= \mathcal{N}_{0+\neq}(U_{0,1}^2) + i\mathcal{N}_{0+\neq}(V_{0,1}^2),\ \mathcal{N}_{0+\neq}(\Gamma_{0,1}^3)= \mathcal{N}_{0+\neq}(\tilde{U}_{0,1}^3) + i\mathcal{N}_{0+\neq}(\tilde{V}_{0,1}^2)
\end{align*}
are the nonlinear terms. We then decompose 
\[
    U_{0,1}^2=U_{0,1}^{2,in}+U_{0,1}^{2,nl},\ \tilde{U}_{0,1}^3=\tilde{U}_{0,1}^{3,in}+\tilde{U}_{0,1}^{3,nl},\ V_{0,1}^2=V_{0,1}^{2,in}+V_{0,1}^{2,nl},\ \tilde{V}_{0,1}^3=\tilde{V}_{0,1}^{3,in}+\tilde{V}_{0,1}^{3,nl}.
\]
\begin{lem}\label{refine u023}  Suppose that the assumptions in Proposition \ref{boots} are true, then it holds that for any  $t\in [0,T]$,
    \begin{align*}
        \|\langle\p_z\rangle^{\frac12}(U_{0,1}^{2,in},\tilde{U}_{0,1}^{3,in})(t)\|_{W^{N,\infty}}+\|\langle \p_z\rangle^{\frac12}(V_{0,1}^{2,in},V_{0,1}^{3,in})(t)\|_{W^{N,\infty}} \lesssim& (qt)^{-\frac13}e^{-\nu t}\epsilon_0 ,\\
        \|(U_{0,1}^{2,nl},\tilde{U}_{0,1}^{3,nl})(t)\|_{H^{N,N+\frac12}} + \|(V_{0,1}^{2,nl},V_{0,1}^{3,nl})(t)\|_{H^{N,N+\frac12}} \lesssim& \delta\nu^c(\delta\nu^{c-\frac23 }) + q^{-\frac13}\delta\nu^c(\nu^{-\frac{2}{3}}\epsilon_0 + \delta^2\nu^{2c-\frac53}).
    \end{align*}
\end{lem}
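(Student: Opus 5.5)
Following the splitting introduced just before the statement, I would define $\Gamma_{0,1}^{i,in}:=e^{t\mathcal{L}}\Gamma_{0,1}^i(0)$, which carries all the zero-mode initial data, and $\Gamma_{0,1}^{i,nl}:=\int_0^te^{(t-\tau)\mathcal{L}}\mathcal{N}_{0+\neq}(\Gamma_{0,1}^i)(\tau)\,{\rm d}\tau$ for $i\in\{2,3\}$. The components are then read off by taking real and imaginary parts, $U^{\cdot}=\mathrm{Re}\,\Gamma$ and $V^{\cdot}=\mathrm{Im}\,\Gamma$.

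\emph{The linear piece.} For the first estimate I would apply Corollary \ref{dispercor} to $\nabla^\gamma_{y,z}\langle\p_z\rangle^{\frac12}\Gamma_{0,1}^{i}(0)$ with $|\gamma|\le N$. This is possible because $e^{t\mathcal{L}}$ commutes with these Fourier multipliers and the simple-zero data have zero $z$-mean. The corollary bounds this by $(qt)^{-\frac13}e^{-\nu t}$ times a $W^{N+3+\frac12,1}$ norm of the initial $(U^2_0,\tilde U^3_0,V^2_0,\tilde V^3_0)$. The symbols of $G_1,G_2,G_1',G_2'$ are bounded and homogeneous of degree zero away from $l=0$. Handling them in $L^1$ costs at most a small loss of derivatives, which is absorbed by the $W^{N+3,1}$ assumption; the extra half $z$-derivative can be placed in the $\sum_l|l|^{-1-\sigma}$ slack of the corollary's proof. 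This gives the bound $(qt)^{-\frac13}e^{-\nu t}\epsilon_0$.

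\emph{The nonlinear piece: energy estimate.} For the second estimate I would run an $H^{N,N+\frac12}$ energy estimate on the system for $\Gamma^{i,nl}_{0,1}$, which has zero data. Since $\mathcal{R}$ is skew, it drops out of the energy identity, leaving
\begin{align*}
\frac{\rm d}{{\rm d}t}\|\Gamma^{nl}\|^2_{H^{N,N+\frac12}}+\nu\|\nabla\Gamma^{nl}\|^2_{H^{N,N+\frac12}}\lesssim |\langle\mathcal{N}_{0+\neq}(\Gamma_{0,1}),\Gamma^{nl}\rangle_{H^{N,N+\frac12}}|.
\end{align*}
I would split the forcing into two groups.
\begin{itemize}
\item \emph{Non-zero-mode forcing} $(U_{\neq,1}\cdot\nabla_LU_{\neq,1})_0$ and its analogues. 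Using the bootstrap bound \eqref{qkh1}, Corollary \ref{cor4.1} and \eqref{4.11-1}, these are bounded by $\nu^{-\frac13-\frac16\cdot\text{loss}}$ times $\|\mathcal{A}\cdot\|_{L^2H^N}^2$. This yields the $\delta\nu^c\cdot\delta\nu^{c-\frac23}$ term, after integrating in time and taking square roots.
\item \emph{Zero-mode forcing} $\tilde U_{0,1}\cdot\nabla\Gamma_{0,1}$ and the $\p_y\Delta^{-1}_{L,0}$ quadratic terms. This forcing is of dispersion--dispersion type by the design of \eqref{1stQuasi1}--\eqref{1stQuasi2}. I would write each factor as in-part plus nl-part, put the low-derivative factor in $W^{\cdot,\infty}$ and the high one in $H^{N,N+\frac12}$, and use the commutator structure (the transport term $\tilde U_{0,1}\cdot\nabla$ is antisymmetric at top order) to avoid a derivative loss. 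The in-part factor contributes $\int_0^t(q\tau)^{-\frac13}e^{-\nu\tau}\,{\rm d}\tau\,\epsilon_0\lesssim q^{-\frac13}\nu^{-\frac23}\epsilon_0$. The low-regularity $L^\infty$ control of the full dispersive part comes from Lemma \ref{lowDisper} and contributes $q^{-\frac13}\nu^{-\frac23}\delta^2\nu^{2c}$ per unit of $\|\cdot\|_{L^\infty H^N}$. Multiplied by the energy $\delta\nu^c$, this produces the $q^{-\frac13}\delta\nu^c(\nu^{-\frac23}\epsilon_0+\delta^2\nu^{2c-\frac53})$ term, where the extra $\nu^{-1}$ comes from trading a gradient against the dissipation via $\nu^{-\frac12}F^{\frac12}$.
\end{itemize}

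\emph{Main obstacle.} I expect the hardest step to be the top-derivative zero-mode interaction. When all $N$ derivatives fall on a factor we only control in $H^{N,N+\frac12}$, its partner must be estimated in $W^{1,\infty}$ using dispersion. Lemma \ref{lowDisper} supplies this only up to order $N-4$, so one must check that the paraproduct always puts at most $N-4$ derivatives on the $L^\infty$ factor. The other worry is the nonlocal operators $G_j$ and $\p_y\Delta^{-1}_{L,0}$, which do not commute with products. Here I would use that their symbols are order zero and bounded, so they act boundedly on $H^{N,N+\frac12}$ and lose nothing. If the $\langle\p_z\rangle^{\frac12}$ weight causes trouble in the product, I would first try the product estimate with $\langle l\rangle^{\frac12}\lesssim\langle l-l'\rangle^{\frac12}+\langle l'\rangle^{\frac12}$.
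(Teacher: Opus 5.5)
Your linear step agrees with the paper, which applies Corollary~\ref{dispercor} directly; your remarks on the $G_j$ symbols and the $\langle\p_z\rangle^{\frac12}$ weight only make explicit what the paper leaves implicit. The nonlinear step, however, has a gap in the time integration.

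Lemma~\ref{lowDisper} bounds the $W^{2,\infty}$ norm of $(U^2_{0,1},\tilde U^3_{0,1},V^2_{0,1},\tilde V^3_{0,1})$ by a decaying term plus the \emph{non-decaying} constant $C_*:=q^{-\frac13}\nu^{-\frac23}\delta^2\nu^{2c}$. You use it with one factor at the energy level $\delta\nu^c$ and one gradient traded against the dissipation. Then this part contributes $\int_0^t C_*\|\na\Gamma_{0,1}(\tau)\|_{H^{N,N+\frac12}}\,{\rm d}\tau$ to $\|\Gamma^{nl}_{0,1}(t)\|_{H^{N,N+\frac12}}$. Without any decay in $\tau$, Cauchy--Schwarz only gives $C_*\,t^{\frac12}\nu^{-\frac12}\delta\nu^c$. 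That bound grows with $t$ and carries a single factor $\nu^{-\frac12}$, whereas the claimed $q^{-\frac13}\delta^3\nu^{3c-\frac53}$ needs $\nu^{-1}$ uniformly on $[0,T]$.

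The missing idea is the spectral gap of the simple-zero modes. Since $|l|\ge 1$, one has $\|e^{(t-\tau)\mathcal{L}}f\|_{H^{N,N+\frac12}}\le e^{-\nu(t-\tau)}\|f\|_{H^{N,N+\frac12}}$. The paper writes $\Gamma^{nl}_{0,1}$ by Duhamel and bounds the forcing crudely by $\|\cdot\|_{W^{2,\infty}}\|\na\cdot\|_{H^{N,N+\frac12}}$. It then applies Cauchy--Schwarz in $\tau$ with $\|e^{-\nu(t-\cdot)}\|_{L^2_\tau}\lesssim\nu^{-\frac12}$ and $\|\na\Gamma_{0,1}\|_{L^2_tH^{N,N+\frac12}}\lesssim\nu^{-\frac12}\delta\nu^c$. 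This is exactly where $\nu^{-\frac53}=\nu^{-\frac23}\nu^{-\frac12}\nu^{-\frac12}$ comes from. The $\epsilon_0$ term is treated the same way using $\|\tau^{-\frac13}e^{-\nu\tau}\|_{L^2_\tau}\lesssim\nu^{-\frac16}$, which gives $\nu^{-\frac23}=\nu^{-\frac16}\nu^{-\frac12}$.

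In your energy formulation, the equivalent step is the Poincar\'e bound $\|\Gamma^{nl}_{0,1}\|_{H^{N,N+\frac12}}\le\|\na\Gamma^{nl}_{0,1}\|_{H^{N,N+\frac12}}$, followed by absorption into $\nu\|\na\Gamma^{nl}_{0,1}\|^2_{H^{N,N+\frac12}}$. Without it the estimate does not close for large $t$. For the non-zero forcing the correct count is $\|\mathcal{A}U_{\neq,1}\|_{L^2_tH^N}\lesssim\nu^{-\frac16}\delta\nu^c$ (Corollary~\ref{cor4.1}) times $\|\na_L\mathcal{A}F_{\neq,1}\|_{L^2_tH^N}\lesssim\nu^{-\frac12}\delta\nu^c$.

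Conversely, the obstacle you single out is not where the difficulty lies. The same Poincar\'e inequality gives $\|\tilde U_{0,1}\|_{H^{N,N+\frac12}}\le\|\na\tilde U_{0,1}\|_{H^{N,N+\frac12}}$, and the bootstrap controls $\nu\|\na(\cdot)\|^2_{L^2_tH^{N,N+\frac12}}$ for all the $\Gamma_{0,1}$ variables. So in the tame product estimate the top-order derivatives can always sit on a factor measured in $\|\na\cdot\|_{H^{N,N+\frac12}}$. The $L^\infty$ partner then needs only $W^{2,\infty}$, far below the $N-4$ derivatives supplied by Lemma~\ref{lowDisper}. No antisymmetry or commutator structure is needed. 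In any case antisymmetry would cancel only $\tilde U_{0,1}\cdot\na\Gamma^{nl}_{0,1}$, not the piece $\tilde U_{0,1}\cdot\na\Gamma^{in}_{0,1}$ of the forcing.
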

\pf To give the  nonlinear dispersive estimate of $U_{0,1}^{2}$ and $U_{0,1}^{3}$, we only estimate $\Gamma_{0,1}^2$ and $\Gamma_{0,1}^3$,  since 
\[  
    U_{0,1}^2=\mathfrak{Re}\Gamma_{0,1}^2,\ \tilde{U}_{0,1}^3=\mathfrak{Re}\Gamma_{0,1}^3,\ V_{0,1}^2=\mathfrak{Im}\Gamma_{0,1}^2,\ \tilde{V}_{0,1}^3=\mathfrak{Im}\Gamma_{0,1}^3.
\]
Thanks to the Duhamel's formula, we have
\[ 
    \Gamma_{0,1}^{k,in}(t)=e^{t\mathcal{L}}\Gamma_{0,1}^k(0),\ \Gamma_{0,1}^{k,nl}(t)=\int_0^t e^{(t-\tau)\mathcal{L}}\mathcal{N}_{0+\neq}(\Gamma_{0,1}^k)(\tau){\rm d}\tau,\ k\in\{2,3\},
\]
with $\mathcal{L}$ being the same as in \eqref{sol1}. The estimate of $\Gamma_{0,1}^{k,in}$  follows directly from the Corollary \ref{dispercor}. 
For estimation of $\Gamma_{0,1}^{k,nl}$, we mainly divide it into the contribution from single zero frequency and   double zero frequency. Specifically.  Using Lemma \ref{lowDisper}, we derive the nonlinear dispersion estimates which is more precise than   one obtains by the direct energy method. Recall the system \eqref{1stQuasi1} and \eqref{1stQuasi2},  we calculate the term  $\Gamma_{0,1}^{k,nl}$ which is is equivalent to  requiring  the estimation of the following terms.
For the estimates of simple zero contributions, one has 
\begin{align*}
    &\|\tilde{U}_{0,1}\cdot\na \Gamma_{0,1}^2\|_{H^{N,N+\frac12}} + {\|\tilde{U}_{0,1}\cdot\na \tilde{U}_{0,1}^3\|_{H^{N,N+\frac12}} + \|\p_z^{-1}\p_y(\tilde{U}_{0,1}\cdot\na V_{0,1}^2)\|_{H^{N,N+\frac12}}} \\
	&\quad\leq \|\tilde{U}_{0,1}\cdot\na \Gamma_{0,1}^2\|_{H^{N,N+\frac12}} + \|\tilde{U}_{0,1}\cdot\na \tilde{U}_{0,1}^3\|_{H^{N,N+\frac12}} + \|\p_z^{-1}(\p_y\tilde{U}_{0,1}\cdot\na V_{0,1}^2)\|_{H^{N,N+\frac12}}\\
	&\qquad + \|\p_z^{-1}(\tilde{U}_{0,1}\cdot\na \p_z\tilde{V}_{0,1}^3)\|_{H^{N,N+\frac12}}\\&\quad
	\lesssim \|(U_{0,1}^2,\tilde{U}_{0,1}^3,V_{0,1}^2,\tilde{V}_{0,1}^3)\|_{W^{2,\infty}}\|\na (U_{0,1}^2,\tilde{U}_{0,1}^3,V_{0,1}^2,\tilde{V}_{0,1}^3)\|_{H^{N,N+\frac12}} \\&\quad
    \lesssim   q^{-\frac13}\left(t^{-\frac13}e^{-\nu t}\epsilon_0 + \delta^2\nu^{2c-\frac23}\right)\|\na (U_{0,1}^2,\tilde{U}_{0,1}^3,V_{0,1}^2,\tilde{V}_{0,1}^3)\|_{H^{N,N+\frac12}},
\end{align*}
and
\begin{align*}
    &\|\na_{y,z}\Delta_{L,0}^{-1}(\p_yU^2_{0,1}\p_yU^2_{0,1}+\p_y\tilde{U}^3_{0,1}\p_zU^2_{0,1})\|_{H^{N,N+\frac12}}\\
    &\quad\lesssim \|\p_yU^2_{0,1}\p_yU^2_{0,1}\|_{H^{N-1,N-\frac12}}+\|\p_y\tilde{U}^3_{0,1}\p_zU^2_{0,1}\|_{H^{N-1,N-\frac12}}\\&\quad \lesssim \|(U^2_{0,1}, \tilde{U}_{0,1}^3)\|_{W^{2,\infty}}(\| U^2_{0,1}\|_{H^{N,N+\frac12}} + \|\tilde{U}^3_{0,1}\|_{H^{N,N+\frac12}}) \\
   &\quad \lesssim q^{-\frac13}\left(t^{-\frac13}e^{-\nu t}\epsilon_0 + \delta^2\nu^{2c-\frac23}\right)\|\na (U_{0,1}^2,\tilde{U}_{0,1}^3,V_{0,1}^2,\tilde{V}_{0,1}^3)\|_{H^{N,N+\frac12}}.
\end{align*}
Setting,
 $I\in\{\tilde{U}_0\cdot\na \Gamma_0^k,\ \na_{y,z}\Delta_{L,0}^{-1}(\p_yU^2_{0,1}\p_yU^2_{0,1}+\p_y\tilde{U}^3_{0,1}\p_zU^2_{0,1},\p_z^{-1}\p_y(\tilde{U}_{0,1}\cdot\na V_{0,1}^2)\}$, then
\begin{align*}
    \int_0^{t} \|e^{(t-\tau)\mathcal{L}}I(\tau)\|_{H^{N,N+\frac12}}{\rm d}\tau \lesssim& \int_0^{t} e^{-\nu(t-\tau)}q^{-\frac13}\left(\tau^{-\frac13}e^{-\nu \tau}\epsilon_0 + \delta^2\nu^{2c-\frac23}\right)\|\na (U_{0,1}^2,\tilde{U}_{0,1}^3,V_{0,1}^2,\tilde{V}_{0,1}^3)\|_{H^{N,N+\frac12}}{\rm d}\tau\\
    \lesssim&   q^{-\frac13}\cdot \delta\nu^c\nu^{-\frac12}(\nu^{-\frac16}\epsilon_0+ \delta^2\nu^{-\frac23}\nu^{2c}\nu^{-\frac12})=q^{-\frac13}\delta\nu^c\left(\nu^{-\frac{2}{3}}\epsilon_0 + \delta^2\nu^{2c-\frac53}\right).
\end{align*}
For the nonzero contributions $\mathcal{N}_{\neq}(U_{0,1}^2,\tilde{U}_{0,1}^3,V_{0,1}^2,\tilde{V}_{0,1}^3)$, we have for $F\in\{U^1,U^2,U^3,\Theta\}$,
\begin{align*}
    &\|\mathcal{N}_{\neq}(U_{0,1}^2,\tilde{U}_{0,1}^3,V_{0,1}^2,\tilde{V}_{0,1}^3)\|_{H^{N,N+\frac12}}\\
	&\quad\lesssim \|U_{\neq,1}\|_{W^{1,\infty}}\|\na_L F_{\neq,1}\|_{H^{N,N+\frac12}} + \| U_{\neq,1}\|_{H^{N,N+\frac12}}\|\na_L F_{\neq,1}\|_{L^{\infty}}\\ &\quad\lesssim \|\mathcal{A}U_{\neq,1}\|_{H^{N}}\|\na_L \mathcal{A}F_{\neq,1}\|_{H^{N}},
\end{align*}
and thus we obtain
\begin{align*}
    \int_0^{t} \|e^{(t-\tau)\mathcal{L}}\mathcal{N}_{\neq}(U_{0,1}^2,\tilde{U}_{0,1}^3,V_{0,1}^2,\tilde{V}_{0,1}^3)(\tau)\|_{H^{N,N+\frac12}}{\rm d}\tau \lesssim \int_0^{t} \|\mathcal{A}U_{\neq,1}\|_{H^{N}}\|\na_L \mathcal{A}F_{\neq,1}\|_{H^{N}}{\rm d}\tau \lesssim \delta\nu^c\cdot \delta\nu^{c-\frac23},
\end{align*}
Thus we finish the proof of Lemma \ref{refine u023}.\hfill$\square$

 We then establish the nonlinear estimates of $\overline{U^1_{0,1}},\overline{U^3_{0,1}}$ and $\overline{\Theta_{0,1}}$. 
\begin{lem}\label{double0}
	Suppose that the assumptions in Proposition \ref{boots} are true, 
    and  in addition, $\overline{U^3_0}(t=0)= \overline{\Theta_0}(t=0)=0$,  then it holds that for any  $t\in [0,T]$,
    \begin{align*}
        &\|\overline{U^1_{0,1}}\|_{L^{\infty}_t H^N}^2 + \nu\|\p_y \overline{U^1_{0,1}}\|_{L^2_t H^N}^2 \lesssim \epsilon_0^2+\delta^2\nu^{2c}(\delta^2\nu^{2c-\frac43} + \delta^4\nu^{4c-\frac{10}{3}}) + q^{-\frac23}\delta^2\nu^{2c}(\delta^2\nu^{2c-\frac{4}{3}} +\delta^4 \nu^{4c-\frac{10}{3}} + \delta^6\nu^{6c-\frac{16}{3}}),\\
		&\|(\overline{U^3_{0,1}},\overline{\Theta_{0,1}})\|_{L^{\infty}_t H^N}^2 + \nu\|\p_y (\overline{U^3_{0,1}},\overline{\Theta_{0,1}})\|_{L^2_t H^N}^2 \lesssim \delta^2\nu^{2c}(\delta^2\nu^{2c-\frac43} + q^{-\frac23}\delta^4\nu^{4c-\frac{10}{3}}).
    \end{align*} 
\end{lem}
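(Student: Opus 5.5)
The plan is to run energy estimates in $H^N$ directly on the double-zero subsystem \eqref{1stQuasi3}, treating $\overline{U^1_{0,1}}$ separately from the coupled pair $(\overline{U^3_{0,1}},\overline{\Theta_{0,1}})$. The structural remark behind this is that the linear coupling $\alpha\overline{\Theta_{0,1}}$ and $-\alpha\overline{U^3_{0,1}}$ is antisymmetric, so it cancels in $\frac{d}{dt}\big(\|\overline{U^3_{0,1}}\|_{H^N}^2+\|\overline{\Theta_{0,1}}\|_{H^N}^2\big)$, exactly as in the linear computation leading to \eqref{5.x}--\eqref{6.x}. Since all right-hand sides are $\p_y$ of products, I would integrate by parts once to obtain
\[
\frac{d}{dt}\|\overline{G}\|_{H^N}^2+2\nu\|\p_y\overline{G}\|_{H^N}^2\lesssim \|\p_y\overline{G}\|_{H^N}\,\|\text{product}\|_{H^N}.
\]
Absorbing via Young then gives a bound $\nu^{-1}\int_0^t\|\text{product}\|_{H^N}^2\,d\tau$. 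For $\overline{U^3_{0,1}},\overline{\Theta_{0,1}}$ the initial term vanishes by \eqref{1.23}, i.e. the extra hypothesis $\overline{U^3_0}(0)=\overline{\Theta_0}(0)=0$. For $\overline{U^1_{0,1}}$ the initial term is $\epsilon_0^2$.

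Next I would split the forcing into non-zero and zero-mode contributions.

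\emph{Non-zero modes.} For $(\overline{U^2_{\neq,1}F_{\neq,1}})_0$ with $F\in\{U^1,U^3,\Theta\}$, I would use the inviscid damping of $U^2_{\neq}$ from \eqref{ll7} and the enhanced dissipation of the $\mathcal{A}$-weighted norms, in the form $\|\mathcal{A}U_{\neq,1}\|_{L^2_tH^N}\lesssim\nu^{-1/6}F_{\neq,1}^{1/2}$ from Corollary \ref{cor4.1}, together with the bootstrap \eqref{qkh1}. Here I would bound $\nu^{-1}\int\|\cdot\|^2$ by $\nu^{-1}\|\cdot\|_{L^\infty}^2\|\cdot\|_{L^2}^2$ split suitably, yielding $\delta^2\nu^{2c}\cdot\delta^2\nu^{2c-4/3}$.

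\emph{Zero modes.} For $\overline{U^2_{0,1}\tilde U^j_{0,1}}$ with $j=1,3$, the factor $U^2_{0,1}$ is dispersive. I would split it as $U^{2,in}_{0,1}+U^{2,nl}_{0,1}$ as in Lemma \ref{refine u023}. On the $in$ part I would put $\langle\p_z\rangle^{1/2}U^{2,in}_{0,1}$ in $W^{N,\infty}$ with bound $(qt)^{-1/3}e^{-\nu t}\epsilon_0$, the other factor in $L^2$, and use $\|t^{-1/3}e^{-\nu t}\|_{L^2_t}^2\lesssim\nu^{-1/3}$. The $nl$ part is controlled by its $H^{N,N+1/2}$ bound, with the high-low splitting handled by Lemma \ref{lowDisper}. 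Squaring and multiplying by $\nu^{-1}$ produces the $q^{-2/3}$ terms. For $\tilde U^3_{0,1}$ one may instead use its own dispersion directly, which is why the $(\overline{U^3},\overline{\Theta})$ bound only carries the milder terms.

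The main obstacle will be the thermal–thermal interaction $\overline{U^2_{0,1}\tilde U^1_{0,1}}$: $\tilde U^1_{0,1}$ has no dispersion and only the bootstrap size $\delta\nu^c$ in $H^N$, so all the decay must come from $U^2_{0,1}$. Both factors at top regularity $N$ are in play. I would first try placing derivatives on $\tilde U^1_{0,1}$ and keeping $U^2_{0,1}$ in $W^{N-4,\infty}$ via Lemma \ref{lowDisper}. When derivatives fall on $U^2_{0,1}$, I would use the $in$ part's full $W^{N,\infty}$ decay, and for the $nl$ part pair $\|U^{2,nl}_{0,1}\|_{H^{N,N+1/2}}$ with $\|\tilde U^1_{0,1}\|_{W^{1,\infty}}\lesssim\|\tilde U^1_{0,1}\|_{H^{3}}$ in time-integrated form. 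Careful bookkeeping of the powers of $\nu$ then yields the three terms $\delta^2\nu^{2c-4/3}$, $\delta^4\nu^{4c-10/3}$ and $\delta^6\nu^{6c-16/3}$ in the statement.
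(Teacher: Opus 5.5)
Your proposal is correct and follows essentially the paper's argument: $H^N$ energy estimates on \eqref{1stQuasi3}, with the $\alpha$-coupling cancelling and the $\p_y$ moved onto the unknown and absorbed by Young, which leaves $\nu^{-1}\int_0^t\|\text{product}\|_{H^N}^2\,d\tau$. The non-zero contributions are then bounded by $\nu^{-1}\int_0^t\|\mathcal{A}F_{\neq,1}\|_{H^N}^4\,d\tau$ via Corollary \ref{cor4.1}, and the zero-mode products by the $in/nl$ splitting of Lemma \ref{refine u023} for $U^2_{0,1}$ and Lemma \ref{lowDisper} for $\tilde U^3_{0,1}$. Your extra high--low bookkeeping for $\overline{U^2_{0,1}\tilde U^1_{0,1}}$ is finer than the paper's direct bound $\big(\|\langle\p_z\rangle^{\frac12}U^{2,in}_{0,1}\|_{W^{N,\infty}}+\|U^{2,nl}_{0,1}\|_{H^{N,N+\frac12}}\big)\|\tilde U^1_{0,1}\|_{H^{N,N+\frac12}}$, but it yields the same three powers of $\nu$.
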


\pf The energy estimates tell us that 
\begin{align*}
    &\|\overline{U^1_{0,1}}\|_{H^{N}}^2 + \nu\|\p_y \overline{U^1_{0,1}}\|_{L^2_t H^{N}}^2\\&\quad \lesssim \|\overline{U^1_{0,1}}(0)\|_{H^{N}}^2 + \nu^{-1}\int_0^{t} \|\overline{U_{0,1}^2\tilde{U}^1_{0,1}}\|_{H^{N}}^2  {\rm d}\tau+  \nu^{-1}\int_0^{t} \|(\overline{U_{\neq,1}^2U^1_{\neq,1}})_0\|_{H^{N}}^2 {\rm d}\tau,
\end{align*}
and
\begin{align*}
	&\|(\overline{U^3_{0,1}},\overline{\Theta_{0,1}})\|_{H^{N}}^2 + \nu\|\p_y (\overline{U^3_{0,1}},\overline{\Theta_{0,1}})\|_{L^2_t H^{N}}^2 \\
   & \quad\lesssim \|(\overline{U^3_{0,1}},\overline{\Theta_{0,1}})(0)\|_{H^{N}}^2 + \nu^{-1}\int_0^{t} \|\overline{U_{0,1}^2\tilde{U}^3_{0,1}}\|_{H^{N}}^2 {\rm d}\tau\\&\qquad+  \nu^{-1}\int_0^{t} (\|(\overline{U_{\neq,1}^2U^3_{\neq,1}})_0\|_{H^{N}}^2 + \|(\overline{U_{\neq,1}^2\Theta_{\neq,1}})_0\|_{H^{N}}^2) {\rm d}\tau.
\end{align*}

On the one hand, the zero modes contributions can be expressed as
\begin{align*}
    \nu^{-1}\int_0^{t} \|\overline{U_{0,1}^2\tilde{U}_{0,1}^1}\|_{H^{N}}^2 {\rm d}\tau \lesssim& \nu^{-1}\int_0^{t} \left(\|\langle\p_z\rangle^{\frac12}U_{0,1}^{2,in}\|_{W^{N,\infty}}^2 + \|U_{0,1}^{2,nl}\|_{H^{N,N+\frac12}}^2\right)\|\tilde{U}_{0,1}^1\|_{H^{N,N+\frac12}}^2 {\rm d}\tau \\
    \lesssim&   q^{-\frac23}\delta^2\nu^{2c}\nu^{-1}\int_0^{t} \tau^{-\frac23}e^{-2\nu \tau}\|\tilde{U}_{0,1}^1\|_{H^{N,N+\frac12}}^2{\rm d}\tau \\
    &+\nu^{-1}\int_0^{t} \left(\delta\nu^c(\delta\nu^{c-\frac23 }) + q^{-\frac13} \delta\nu^c \left(\delta\nu^{c-\frac{2}{3}} + \delta^2\nu^{2c-\frac53}\right)\right)^2\|\tilde{U}_{0,1}^1\|_{H^{N,N+\frac12}}^2 {\rm d}\tau \\ 
    \lesssim&   \delta^2\nu^{2c}\delta^4\nu^{4c-\frac{10}{3}} + q^{-\frac23}\delta^2\nu^{2c}(\delta^2\nu^{2c-\frac{4}{3}} + \delta^4\nu^{4c-\frac{10}{3}} + \delta^6\nu^{6c-\frac{16}{3}}),\\
	\nu^{-1}\int_0^{t} \|\overline{U_{0,1}^2\tilde{U}^3_{0,1}}\|_{H^{N}}^2 {\rm d}\tau\lesssim& \nu^{-1}\int_0^{t} \|\tilde{U}_{0,1}^{2,3}\|_{L^\infty}^2\|\tilde{U}_{0,1}^{2,3}\|_{H^{N,N+\frac12}}^2 {\rm d}\tau\\
	\lesssim& \nu^{-1}\int_0^{t} q^{-\frac23}\delta^2\nu^{2c}\left(\tau^{-\frac13}e^{-\nu\tau}+\delta\nu^{c-\frac23}\right)^2\|\tilde{U}_{0,1}^{2,3}\|_{H^{N,N+\frac12}}^2 {\rm d}\tau\\
	\lesssim& q^{-\frac23}\delta^2\nu^{2c}(\delta^2\nu^{2c-\frac43} + \delta^4\nu^{4c-\frac{10}{3}}).
\end{align*}
On the other hand, the nonzero modes contributions can be expressed as $\mathcal{N}_{\neq}(\overline{U^1_{0,1}},\overline{U^3_{0,1}},\overline{\Theta_{0,1}})$, and one has
\[ 
    \nu^{-1}\int_0^{t}\mathcal{N}_{\neq}(\overline{U^1_{0,1}},\overline{U^3_{0,1}},\overline{\Theta_{0,1}}){\rm d}\tau\lesssim \nu^{-1}\int_0^{t}\|\mathcal{A}F_{\neq,1}\|_{H^N}^4{\rm d}\tau \lesssim \delta^2\nu^{2c}(\delta^2\nu^{2c-\frac43}).
\]
Thus we finish the proof of Lemma \ref{double0}.\hfill $\square$

 In the following, we then establish the nonlinear estimates of $\Lambda_{0,1}$. 
\begin{lem}\label{lambda}
    Suppose that the assumptions in Proposition \ref{boots} are true, then it holds that for any  $t\in [0,T]$,
    \begin{align*}
        &\|(\Lambda_{0,1},\tilde{U}^1_{0,1},\tilde{\Theta}_{0,1})\|_{L^\infty_t H^{N,N+\frac12}}^2 + \nu\|\na(\Lambda_{0,1},\tilde{U}^1_{0,1},\tilde{\Theta}_{0,1})\|_{L^2_t H^{N,N+\frac12}}^2 \\
		& \quad\lesssim{\epsilon_0^2}+\delta^2\nu^{2c}(\delta^2\nu^{2c-\frac43} + \delta^4\nu^{4c-\frac{10}{3}}) + q^{-\frac23}\delta^2\nu^{2c}(\delta^2\nu^{2c-\frac{4}{3}} + \delta^4\nu^{4c-\frac{10}{3}} + \delta^6\nu^{6c-\frac{16}{3}}).
    \end{align*}

\end{lem}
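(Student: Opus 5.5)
We will run a direct $H^{N,N+\frac12}$ energy estimate on the heat equation for $\Lambda_{0,1}$ in \eqref{1stQuasi1}, and then recover $\tilde U^1_{0,1},\tilde\Theta_{0,1}$ from $(\Lambda_{0,1},V^2_{0,1})$. The reconstruction step is algebraic. By \eqref{000}, on the Fourier side ($l\neq0$), the map $(\hat{\tilde U}^1_0,\hat{\tilde\Theta}_0)\mapsto(\widehat{V_0^2},\widehat{\Lambda_0})$ is a $2\times2$ matrix with entries $g_1,g_2,g_3,g_4$ that are homogeneous of degree zero in $(\eta,l)$. Its determinant is
\[
-\frac{\beta\sqrt{B_\beta}\,l^4+\alpha^2\sqrt{\beta/(\beta-1)}\,\eta^2l^2}{p_0^2\,(i\sqrt{(B_\beta l^2+\alpha^2\eta^2)/p_0})}\sim \frac{l^2}{p_0}\sqrt{\frac{B_\beta l^2+\alpha^2\eta^2}{p_0}}\cdot\frac{1}{\ldots}.
\]
Computing it carefully should give $|\det|\gtrsim_{\alpha,\beta} l^2/p_0$, and the inverse matrix then carries factors of $p_0/l^2\lesssim\langle\eta\rangle^2$.

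To avoid losing derivatives there, I would instead exploit the explicit eigen-structure from Lemma~\ref{lem3.5}. That is, $|\hat{\tilde U}^1_0|+|\hat{\tilde\Theta}_0|\lesssim |\widehat{V^2_0}|+|\widehat{\Lambda_0}|$ with constants depending only on $\alpha,\beta$, which is what the normalisation in \eqref{000} is designed to achieve (with $\eta\to\infty$ handled by the $\alpha^2\eta^2$ part of $h$ and $\eta\to0$ by $B_\beta l^2$). Given this, and given that $V^2_{0,1}$ is already controlled by Lemma~\ref{refine u023} together with the bootstrap, it suffices to bound $\Lambda_{0,1}$.

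For $\Lambda_{0,1}$ we test the equation with $\Lambda_{0,1}$ in $H^{N,N+\frac12}$. This gives
\[
\tfrac{d}{dt}\|\Lambda_{0,1}\|^2+2\nu\|\nabla\Lambda_{0,1}\|^2\le 2|\langle \mathcal N_{0+\neq}(\Lambda_{0,1}),\Lambda_{0,1}\rangle|.
\]
The initial contribution is $\lesssim\epsilon_0^2$. We handle the two parts of the nonlinearity as follows.

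\emph{Non-zero mode part.} For $G_3\widetilde{(U_{\neq,1}\cdot\nabla_LU^1_{\neq,1})}_0$ and the analogous $\Theta$ term, $|g_3|,|g_4|\lesssim1$. We bound them by $\nu^{-1}\int\|\mathcal AF_{\neq,1}\|_{H^N}^2\|\nabla_L\mathcal AF_{\neq,1}\|_{H^N}^2$ after Cauchy--Schwarz with $\nu^{1/2}\|\nabla\Lambda_{0,1}\|$. Together with Corollary~\ref{cor4.1}-type bounds, this yields $\delta^2\nu^{2c}\cdot\delta^2\nu^{2c-\frac43}$, as in Lemma~\ref{double0}.

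\emph{Zero mode part.} The terms $G_3(\tilde U_{0,1}\cdot\nabla U^1_{0,1})+G_4(\tilde U_{0,1}\cdot\nabla\Theta_{0,1})$ are of dispersion--thermal type. Here $\tilde U_{0,1}$ means the $(y,z)$ transport by $U^2_{0,1},\tilde U^3_{0,1}$, since $\partial_x$ vanishes on zero modes. We place the dispersive factor in $L^\infty$ and move one derivative onto $\Lambda_{0,1}$ (note $\nabla U_{0,1}$ with $U^1_{0,1}=\tilde U^1_{0,1}+\overline{U^1_{0,1}}$). By Lemma~\ref{refine u023} and Lemma~\ref{lowDisper},
\[
\int_0^t\|\tilde U^{2,3}_{0,1}\|_{W^{N,\infty}}\|(U^1_{0,1},\Theta_{0,1})\|_{H^{N,N+\frac12}}\|\nabla\Lambda_{0,1}\|_{H^{N,N+\frac12}}\,d\tau ,
\]
and the high-derivative-on-$\tilde U^{2,3}_{0,1}$ case is handled via the $H^{N,N+\frac12}$ bound of its nonlinear part. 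Using $\nu^{-1}\int (\tau^{-1/3}e^{-\nu\tau})^2 d\tau\lesssim\nu^{-4/3}$, splitting the in/nl parts, and applying Young, this gives $q^{-\frac23}\delta^2\nu^{2c}(\nu^{-\frac43}\epsilon_0^2+\delta^4\nu^{4c-\frac{10}{3}}+\dots)$. Replacing $\epsilon_0^2\nu^{-4/3}$ by $\delta^2\nu^{2c-4/3}$ via $\epsilon_0\le\delta\nu^{14/15}$ matches the stated bound.

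The main obstacle is the derivative count in the top-order commutator: the full derivative $\nabla^N$ lands on $U^1_{0,1},\Theta_{0,1}$, which are themselves only bounded through $\Lambda_{0,1},V^2_{0,1}$, so the estimate is circular. I would absorb it by writing $\langle G_3(\tilde U_{0,1}\cdot\nabla\partial^N U^1),\partial^N\Lambda\rangle$ and integrating by parts using $\nabla\cdot\tilde U_{0,1}=0$. This leaves a commutator $[G_3,\tilde U_{0,1}\cdot\nabla]$, which costs no derivative since $G_3$ has order zero, plus a term controlled by $\|\nabla\tilde U_{0,1}\|_{L^\infty}\|(U^1,\Theta)_{0,1}\|^2_{H^{N,N+\frac12}}$. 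The time-integrability of $\|\nabla\tilde U_{0,1}\|_{L^\infty}$ (namely $t^{-1/3}$ decay plus the small $\nu^{2c-2/3}$ part) is then closed by Gronwall within the bootstrap.
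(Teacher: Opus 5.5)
Your energy estimate for $\Lambda_{0,1}$ is the paper's. You put the nonlinearity in divergence form and absorb it into $\nu\|\nabla\Lambda_{0,1}\|^2_{H^{N,N+\frac12}}$. You treat $\tilde U^{2,3}_{0,1}F_{0,1}$ with the in/nl splitting of Lemma~\ref{refine u023}, exactly as in Lemma~\ref{double0}, and the non-zero modes with \eqref{qkh1}. One slip: after the divergence form the non-zero integrand is $\nu^{-1}e^{-4\lambda\nu^{1/3}\tau}\|\mathcal{A}F_{\neq,1}\|_{H^N}^4$ with no $\nabla_L$; as you wrote it you only get $\delta^4\nu^{4c-2}$.

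\textbf{The gap is the reconstruction step.} The inequality $|\widehat{\tilde U^1_0}|+|\widehat{\tilde\Theta_0}|\lesssim_{\alpha,\beta}|\widehat{V^2_0}|+|\widehat{\Lambda_0}|$ is false. Inverting \eqref{000} (for $\beta>1$, so $\sqrt{\beta/(\beta-1)}\sqrt{B_\beta}=\beta$) gives
\begin{align*}
\widehat{\tilde U^1_0}&=-\frac{iB_\beta}{\beta}\frac{\sqrt{p_0}}{h}\,\widehat{V^2_0}+\frac{\alpha\sqrt{B_\beta}}{\beta}\frac{\eta\,p_0}{l\,h^2}\,\widehat{\Lambda_0},\\
\widehat{\tilde\Theta_0}&=i\alpha\frac{\eta\sqrt{p_0}}{l\,h}\,\widehat{V^2_0}+\sqrt{B_\beta}\,\frac{p_0}{h^2}\,\widehat{\Lambda_0},
\end{align*}
where $h^2=\alpha^2\eta^2+B_\beta l^2\simeq p_0$. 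Your determinant estimate $\simeq l^2/p_0$ is right, but the inverse loses $|\eta|/|l|$, not $p_0/l^2$, and the eigenstructure does not remove this loss.

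The $\eta/l$ in front of $\widehat{V^2_0}$ is harmless: it equals $-i\alpha\frac{\sqrt{p_0}}{h}\widehat{\tilde V^3_0}$, and $\tilde V^3_{0,1}$ is in $E_{0,1}$. The coefficient of $\widehat{\Lambda_0}$ in $\widehat{\tilde U^1_0}$ is a real loss. Take the heat eigenvector $\mathcal{V}_1=(\alpha\eta/l,0,\beta)$ of Lemma~\ref{lem3.5}. Then $\widehat{V^2_0}=\widehat{\tilde V^3_0}=\widehat{U^2_0}=\widehat{\tilde U^3_0}=0$ and $|\widehat{\Lambda_0}|=|\beta|h^2/(\sqrt{B_\beta}\,p_0)\simeq1$, but $|\widehat{\tilde U^1_0}|=\alpha|\eta|/|l|$.

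The eigenbasis itself is well conditioned. The problem is that $\widehat{\Lambda_0}=\frac{l}{\sqrt{B_\beta}\,p_0}\big(\alpha\beta\eta\,\widehat{\tilde U^1_0}+B_\beta l\,\widehat{\tilde\Theta_0}\big)$ carries one factor $|l|/\sqrt{p_0}$ too many. Consequently nothing in $E_{0,1}$ controls $\tilde U^1_{0,1}$ in $H^{N,N+\frac12}$; you lose a $\partial_y$. The bootstrap bound \eqref{u01} that feeds the $\Lambda_{0,1}$ estimate is therefore not reproduced.

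The paper's proof makes the same one-line appeal to \eqref{000}, so it does not supply this step either. A clean repair is to run your estimate for $\Lambda^\sharp_{0,1}:=|\partial_z|^{-1}(-\Delta_{L,0})^{\frac12}\Lambda_{0,1}$. The multiplier is time independent and the linear part is the heat equation, so $\Lambda^\sharp$ satisfies the same equation with modified symbols:
\begin{align*}
g_3=\frac{\alpha\beta}{\sqrt{B_\beta}}\frac{\eta l}{p_0}&\;\longrightarrow\;\frac{\alpha\beta}{\sqrt{B_\beta}}\frac{\eta l}{|l|\sqrt{p_0}},\\
g_4=\sqrt{B_\beta}\frac{l^2}{p_0}&\;\longrightarrow\;\sqrt{B_\beta}\frac{|l|}{\sqrt{p_0}},
\end{align*}
and both new symbols are still bounded. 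Your estimate then goes through verbatim. Moreover, $\widehat{\tilde U^1_0}$ and $\widehat{\tilde\Theta_0}$ become combinations of $\widehat{V^2_0},\widehat{\tilde V^3_0},\widehat{\Lambda^\sharp_0}$ with coefficients bounded in terms of $\alpha,\beta$.

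\textbf{The ``main obstacle'' is not one.} After the divergence form, at most $N$ derivatives (plus $\langle\partial_z\rangle^{\frac12}$) fall on $F_{0,1}$. The paper simply inserts the bootstrap bound \eqref{u01}, which is not circular once the recovery closes. Your commutator/Gronwall alternative would also fail as written, for two reasons:
\begin{itemize}
\item Integrating by parts against $\Lambda_{0,1}$ rather than $U^1_{0,1}$ gives no transport cancellation.
\item $\int_0^T\|\nabla\tilde U_{0,1}\|_{L^\infty}\,d\tau$ is not bounded uniformly in $T$, because the nonlinear part $q^{-\frac13}\nu^{-\frac23}\delta^2\nu^{2c}$ of the bound in Lemma~\ref{lowDisper} does not decay in time.
\end{itemize}
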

\pf  Recall \eqref{1stQuasi1}$_{1}$, the energy estimates tell us that for $F\in\{U^1,\Theta\}$,
\begin{align*}
    \|\Lambda_{0,1}\|_{H^{N,N+\frac12}}^2 + \nu\|\na\Lambda_{0,1}\|_{L^2_t H^{N,N+\frac12}}^2 \lesssim& \|\Lambda_{0,1}(t=0)\|_{H^{N,N+\frac12}}^2 + \nu^{-1}\int_0^{t} \sum_{F\in\{U^1,\Theta\}}\|\tilde{U}_{0,1}^{2,3}F_{0,1}\|_{H^{N,N+\frac12}}^2{\rm d}\tau \\
    & + \nu^{-1}\int_0^{t} \left( \|U_{\neq,1}U_{\neq,1}^1\|_{H^{N,N+\frac12}}^2 + \|U_{\neq,1}\Theta_{\neq,1}\|_{H^{N,N+\frac12}}^2\right){\rm d}\tau.
\end{align*}
For the zero modes contributions,  the estimates  are the same as  the processing of   double-zero modes  in Lemma \ref{double0} 
\begin{align*}
    \nu^{-1}\int_0^{t} \sum_{F\in\{U^1,\Theta\}}\|\tilde{U}_{0,1}^{2,3}F_{0,1}\|_{H^{N,N+\frac12}}^2{\rm d}\tau \lesssim \delta^2\nu^{2c}( \delta^4\nu^{4c-\frac{10}{3}}) + q^{-\frac23}\delta^2\nu^{2c}(\delta^2\nu^{2c-\frac{4}{3}} + \delta^4\nu^{4c-\frac{10}{3}} +\delta^6 \nu^{6c-\frac{16}{3}}).
\end{align*}
We donate the nonzero modes contributions as $\mathcal{N}_{\neq}(\Lambda_{0,1})$. By \eqref{qkh1}, we have 
\[ 
    \nu^{-1}\int_0^{t} \mathcal{N}_{\neq}(\Lambda_{0,1}){\rm d}\tau \lesssim \nu^{-1}\int_0^{t}\|\mathcal{A}F_{\neq,1}\|_{H^N}^4{\rm d}\tau \lesssim \delta^2\nu^{2c}(\delta^2\nu^{2c-\frac43}).
\]
Thus, we obtain the estimation of $\Lambda_{0,1}$. By using \eqref{000} and Lemma \ref{refine u023}, we can return to the estimates of $\tilde{U}^1_{0,1}$ and $\tilde{\Theta}_{0,1}$.
  We finish the proof of  Lemma \ref{lambda}. \hfill$\square$

\vspace{0.3cm}

\subsection{  Energy estimates on $Q_{\neq,1}, K_{\neq,1}$ and $H_{\neq,1}$}\label{sub4.4}
\qquad  In this section, we  aim to   prove that under the bootstrap hypotheses of Proposition \ref{boots}, the estimates on $U_{\neq,1} $, and $ \Theta_{\neq,1}$  hold (i.e., \eqref{qkh1}), with 100 replaced by 50 on the right-hand side. To this end, we present some inequalities, which are frequently employed in non-zero modes estimates. 

\begin{lem}\label{lem4.7}
	Under the assumptions in Proposition \ref{boots},   there holds that
	\begin{align}
		\label{tool1}
		\|\mathcal{A}|\na_L|^{\frac12}(Q_{\neq,1},K_{\neq,1},H_{\neq,1})\|_{L^2_tH^N}^2\lesssim& \nu^{-\frac23}\int_0^t F_{\neq,1}(\tau){\rm d}\tau,\\
		\|\mathcal{B}|\na_L|^{\frac12}(Q_{\neq,2},K_{\neq,2},H_{\neq,2})\|_{L^2_tH^s}^2\lesssim& \nu^{-\frac23}\int_0^t F_{\neq,2}(\tau){\rm d}\tau,\label{tool1.1}\\
		\label{tool5}
		\|\mathcal{A}|\na_{L,h}|^{-\frac12}(Q_{\neq,1},K_{\neq,1},H_{\neq,1})\|_{L^2_tH^N}^2\lesssim& \nu^{-\frac{\kappa}{3(1+\kappa)}}\int_0^t F_{\neq,1}(\tau){\rm d}\tau,\\
		\|\mathcal{B}|\na_{L,h}|^{-\frac12}(Q_{\neq,2},K_{\neq,2},H_{\neq,2})\|_{L^2_tH^s}^2\lesssim& \nu^{-\frac{\kappa}{3(1+\kappa)}}\int_0^t F_{\neq,2}(\tau){\rm d}\tau, \label{tool5.1}\\
		\label{tool7}
		\|p^{\frac14}\widehat{m(Q_{\neq,i},K_{\neq, i},H_{\neq,i})}\|_{L^\infty_tL^1}^2\lesssim&\nu^{-\frac13}\sup_{t>0}E_{\neq,i}(t), \\
		\label{tool8}
		\||k,l|^{\frac32}p_h^{\frac14}\widehat{K_{\neq,i}}\|_{L^2_tL^1}^2\lesssim&\nu^{-1}\int_0^t F_{\neq,i}(\tau){\rm d}\tau,\\
		\label{tool2}
		\|\widehat{\na_{x,z} (U_{\neq,i},\Theta_{\neq,i})}\|_{L^\infty_tL^1}^2\lesssim& \sup_{t>0}E_{\neq,i}(t), \\
		\label{tool3}
		\|\widehat{\na_L (U_{\neq,i},\Theta_{\neq,i})}\|_{L^\infty_tL^1}^2 \lesssim& \nu^{-\frac23}\sup_{t>0}E_{\neq,i}(t), \\
		\label{tool4}
		\|\na_L U_{\neq,i}^2\|_{L^\infty_tL^\infty}^2\lesssim& \sup_{t>0}E_{\neq,i}(t), \\
		\label{tool6}
		\|\widehat{U^2_{\neq,i}}\|_{L^2_tL^1}^2\lesssim& \sup_{t>0}E_{\neq,i}(t), 
	\end{align}
where $i=1,2$ and  $\kappa>0$ is  same as in \eqref{66}.
\end{lem}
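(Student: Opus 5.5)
All ten inequalities are pointwise Fourier-multiplier bounds combined with either Plancherel in $(k,\eta,l)$ or Cauchy--Schwarz in the frequency variables. The bootstrap bounds \eqref{qkh1}--\eqref{qkh2} and the multiplier bounds \eqref{4.11-1}, \eqref{4.11-2}, \eqref{4.13}--\eqref{new111} are the inputs. The indices $i=1$ and $i=2$ are handled identically, with $(\mathcal{A},N)$ replaced by $(\mathcal{B},s)$. The factor $e^{\lambda\nu^{1/3}t}$ or $e^{\frac\lambda2\nu^{1/3}t}$ is $\geq1$ and is harmless.

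\textbf{The $L^2_t$ bounds \eqref{tool1}--\eqref{tool5.1}.} For \eqref{tool1}, multiply \eqref{4.15} by $p^{1/4}$ and use
\[
p^{1/2}\lesssim \nu^{-1/3}\Big(-\frac{\dot M_1}{M_1}\Big)+\nu^{1/3}p+\nu^{-1/3},
\]
which follows from \eqref{4.15} and Young's inequality. The terms $-\dot M_1/M_1$ and $\nu p$ are both pieces of $F_{\neq,1}$, so \eqref{tool1} follows after integrating in time and applying Plancherel. For \eqref{tool5}, split into two cases:
\begin{itemize}
\item If $p_h^{(1+\kappa)/2}\le \nu^{-\kappa/(3(1+\kappa))\cdot\ldots}$ (a threshold to be tuned), use $p_h^{-1/2}\lesssim \nu^{-\theta}p_h^{-(1+\kappa)/2}=\nu^{-\theta}(-\dot M_7/M_7)$.
\item Otherwise $p_h^{-1/2}$ is small, and we absorb it using \eqref{4.15} as above.
\end{itemize}
Optimizing the threshold between the bound from $M_7$ and the bound from the $M_1$/dissipation term gives the exponent $\kappa/(3(1+\kappa))$. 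I expect this interpolation to be the most delicate bookkeeping step.

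\textbf{The $L^1_\xi$ bounds \eqref{tool7}--\eqref{tool3} and the $L^\infty$ bounds \eqref{tool4}--\eqref{tool6}.} For these I apply Cauchy--Schwarz in $(k,\eta,l)$:
\[
\|\hat f\|_{L^1}\le \|\langle k,\eta,l\rangle^{-2}\|_{L^2}\,\|\langle k,\eta,l\rangle^{2}\hat f\|_{L^2}.
\]
This spends two derivatives, which is affordable since $N,s\ge 7$. It remains to express each symbol through $Q,K,H$ using \eqref{3x.1}, \eqref{3x.2}, \eqref{ll6}, \eqref{ll7} and $m^{-1}\lesssim p^{1/4}|k,l|^{-1/2}$. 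For example, $\widehat{U^3}=p_h^{1/2}p^{-3/4}\hat Q$, so $|\widehat{\nabla_{x,z}U^3}|\lesssim |k,l|^{1/2}|m\hat Q|$. This gives \eqref{tool2}, and the other components work similarly: the $|\na_{L,h}|^{-1}$ factors in \eqref{ll6} and \eqref{ll7} are compensated by $p_h^{1/2}$.
\begin{itemize}
\item \eqref{tool3}: the factor $\nu^{-2/3}$ arises from $|\eta-kt|\,m^{-1}\lesssim \langle\eta\rangle p^{1/2}\cdots$ combined with $m^{-1}\lesssim\nu^{-1/6}$ from \eqref{4.11-1}. More precisely, the growth $p^{1/4}$ versus the required $p^{1/2}$ costs $m^{-2}\le\nu^{-1/3}$ twice, i.e.\ $\nu^{-2/3}$ after squaring.
\item \eqref{tool7}: use $m\lesssim\nu^{-1/6}$ together with the Cauchy--Schwarz step.
\item \eqref{tool4}: by \eqref{ll7}, $|\widehat{\nabla_LU^2}|\lesssim |k|^{1/2}|mK|+|l|^{1/2}|mQ|$, with no loss.
\item \eqref{tool6}: use the decay $p_h^{-1}\lesssim\langle t\rangle^{-2}|k,\eta,l|^2$ from \eqref{4x.6}, so that $\langle t\rangle^{-1}\in L^2_t$ and the weight $e^{\lambda\nu^{1/3}t}$ is not needed.
\item \eqref{tool8}: bound $|k,l|^{3/2}p_h^{1/4}\lesssim |k,l|^{1/2}p^{1/2}\cdot|k,l|p^{-1/4}$. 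The factor $p^{1/2}$ is paid by the dissipation $\nu\|\na_L\cdot\|^2$, which gives $\nu^{-1}$. The factor $|k,l|^{1/2}p^{-1/4}\lesssim m$ comes from \eqref{4.11-2}, and $M$ is bounded below by $c_\alpha$.
\end{itemize}

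The main obstacle is checking, case by case, that the powers of $p$ and $p_h$ produced by the good-unknown relations never exceed what $m^{\pm1}$ and the dissipation provide. This is worst near the critical time $t\approx\eta/k$, where $p_h\sim k^2$. I would verify these bounds first in that regime and then separately for $|l|\gg|k,\eta-kt|$.
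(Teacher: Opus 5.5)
There is a genuine gap in two of the ten estimates. Most of your plan is sound and close to the paper. The estimates \eqref{tool2}, \eqref{tool4}, \eqref{tool6} and \eqref{tool8} are proved there essentially as you describe. Your pointwise case split for \eqref{tool5} works with the threshold $p_h\sim\nu^{-2/(3(1+\kappa))}$; it is the pointwise version of the paper's H\"older interpolation $\|\mathcal{A}f\|^{\kappa/(1+\kappa)}\|\mathcal{A}\sqrt{-\dot M_7/M_7}f\|^{1/(1+\kappa)}$ combined with Corollary~\ref{cor4.1}. In \eqref{tool1}, the constant term $\nu^{-1/3}$ in your inequality is not a piece of $F_{\neq,1}$. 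One more use of \eqref{4.15} turns it into $\nu^{-2/3}(-\dot M_1/M_1)+\nu^{1/3}p$, which then gives the claimed $\nu^{-2/3}$.

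The gap is in \eqref{tool3} and \eqref{tool7}. At fixed frequency with $k\neq0$, the symbols $|\eta-kt|$ and $p^{1/4}$ grow without bound as $t\to\infty$. Every multiplier you invoke is bounded uniformly in time: $\nu^{1/6}\lesssim m\le\nu^{-1/6}$, and $M$ is bounded below by a positive constant (Lemma~\ref{lem4.2}). The relations \eqref{3x.1}--\eqref{ll7} only trade powers of $p,p_h$ for powers of $m$ and of the frequency. So no bookkeeping with \eqref{4.11-1}--\eqref{4.11-2} can turn a factor $\langle t\rangle$ or $\langle t\rangle^{1/2}$ into a power of $\nu$. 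The heuristic that the growth $p^{1/4}$ ``costs $m^{-2}$ twice'' is not a valid inequality, and using $m\lesssim\nu^{-1/6}$ in \eqref{tool7} does nothing about the factor $p^{1/4}$. The fixed-time inequalities you would actually be proving, for instance $\|p^{1/4}\widehat{mQ_{\neq}}(t)\|_{L^1}\lesssim\nu^{-1/6}\|mMQ_{\neq}(t)\|_{H^r}$, fail for large $t$.

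The missing ingredient is the factor you set aside at the start as harmless: the time weight $e^{\lambda\nu^{1/3}t}$ in $\mathcal{A}$ (respectively $e^{\frac{\lambda}{2}\nu^{1/3}t}$ in $\mathcal{B}$). The paper argues as follows.
\begin{itemize}
\item It bounds $|\eta-kt|\le\langle t\rangle\langle k,\eta\rangle$ and $p^{1/4}\lesssim\langle t\rangle^{1/2}\langle k,\eta,l\rangle^{1/2}$.
\item It expresses $\widehat{U_{\neq}}$ and $\widehat{\Theta_{\neq}}$ through $mM(Q_{\neq},K_{\neq},H_{\neq})$ with only frequency losses, which the spare derivatives absorb.
\item It then uses $\sup_{t\ge0}\langle t\rangle e^{-\lambda\nu^{1/3}t}\lesssim\nu^{-1/3}$ and $\sup_{t\ge0}\langle t\rangle^{1/2}e^{-\lambda\nu^{1/3}t}\lesssim\nu^{-1/6}$.
\end{itemize}
After squaring, these give exactly the $\nu^{-2/3}$ in \eqref{tool3} and the $\nu^{-1/3}$ in \eqref{tool7}. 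So the $\nu$-losses there come from the enhanced-dissipation time scale $\nu^{-1/3}$, not from the bounds on $m$. Dropping the exponential weight is legitimate only in \eqref{tool2}, \eqref{tool4}, \eqref{tool6} and \eqref{tool8}. There no time growth has to be absorbed: in \eqref{tool6} the $\langle t\rangle^{-1}$ decay supplies time integrability, and in \eqref{tool8} the dissipation term $\nu\|\mathcal{A}\na_L K_{\neq,i}\|^2$ does.
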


\pf For any $3<r\leq N$, by Corollary \ref{cor4.1}, it holds 

\begin{align*}
\|\mathcal{A}|\na_L|^{\frac12}(Q_{\neq,1},K_{\neq,1},H_{\neq,1})\|_{L^2_tH^r}	&\lesssim \|\mathcal{A}(Q_{\neq,1},K_{\neq,1},H_{\neq,1})\|_{L^2_tH^r}^{\frac12}\|\mathcal{A}\na_L (Q_{\neq,1},K_{\neq,1},H_{\neq,1})\|_{L^2_tH^r}^{\frac12}\\& \lesssim \nu^{-\frac13}(\int_0^t F_{\neq,1}{\rm d}\tau)^{\frac12},\\
\|\mathcal{A}|\na_{L,h}|^{-\frac12}(Q_{\neq,1},K_{\neq,1},H_{\neq,1})\|_{L^2_tH^r}	&\lesssim \|\mathcal{A}(Q_{\neq,1},K_{\neq,1},H_{\neq,1})\|_{L^2_tH^r}^{\frac{\kappa}{1+\kappa}}\left\|\mathcal{A}\sqrt{-\frac{\dot{M_7}}{M_7}} (Q_{\neq,1},K_{\neq,1},H_{\neq,1})\right\|_{L^2_tH^r}^{\frac{1}{1+\kappa}}\\
&\lesssim \|\mathcal{A}(Q_{\neq,1},K_{\neq,1},H_{\neq,1})\|_{L^2_tH^r}^{\frac{\kappa}{1+\kappa}}\left\|\mathcal{A}\sqrt{-\frac{\dot{M}}{M}} (Q_{\neq,1},K_{\neq,1},H_{\neq,1})\right\|_{L^2_tH^r}^{\frac{1}{1+\kappa}}\\
&\lesssim \nu^{-\frac{\kappa}{6(1+\kappa)}}(\int_0^t F_{\neq,1}{\rm d}\tau)^{\frac12},
	\end{align*}
which gives \eqref{tool1} and \eqref{tool5.1}. The  proof  of \eqref{tool1.1} and \eqref{tool5.1} is similar. Direct calculation gives
	\begin{align*}
	\|\widehat{\na_{x,z} (U_{\neq},\Theta_{\neq})}\|_{L^\infty_tL^1}&\lesssim \|\langle k,\eta,l\rangle^r\widehat{\na_{x,z} (U_{\neq},\Theta_{\neq})}\|_{L^\infty_tL^2}\|\langle k,\eta,l\rangle^{-r}\|_{L^2}\\
	&\lesssim \|\mathcal{A}(Q_{\neq},K_{\neq},H_{\neq})\|_{L^\infty_tH^{r+1}},\\
	\|\widehat{\na_L (U_{\neq},\Theta_{\neq})}\|_{L^\infty_tL^1}&\lesssim \|e^{\lambda\nu^{\frac13}t}\widehat{\na (U_{\neq},\Theta_{\neq})}\|_{L^\infty_tL^1}\|\langle t\rangle e^{-\lambda\nu^{\frac13}t}\|_{L^\infty_t} \\
	&\lesssim \nu^{-\frac13}\|\mathcal{A}(Q_{\neq},K_{\neq},H_{\neq})\|_{L^\infty_tH^r}.
\end{align*}
Using the  relationships between variables, such as \eqref{3x.1}, \eqref{3x.2}, \eqref{ll6} and \eqref{ll7}, we have
\begin{align*}
	&\|\na_L U_{\neq,1}^2\|_{L^\infty_tL^\infty}\lesssim \|\mathcal{A}(Q_{\neq,1},K_{\neq,1})\|_{L^\infty_tH^r},\\
	&\|\widehat{U^2_{\neq}}\|_{L^2_tL^1}\lesssim \|\mathcal{A}(Q_{\neq},K_{\neq})\|_{L^\infty_tH^r}\|\langle t\rangle^{-1}\|_{L^2_t}\lesssim \|\mathcal{A}(Q_{\neq},K_{\neq})\|_{L^\infty_tH^r},\\
	&\|p^{\frac14}\widehat{mH_{\neq}}\|_{L^\infty_tL^1}\lesssim \||\na|^{\frac12}\widehat{\mathcal{A}H_{\neq}}\|_{L^\infty_tL^1}\|t^{\frac12}e^{-\lambda\nu^{\frac13}t}\|_{L^\infty_t}\lesssim\nu^{-\frac16}\|\mathcal{A} H_{\neq}\|_{L^\infty_tH^r},\\
	&\||k,l|^{\frac32}p_h^{\frac14}\widehat{K_{\neq}}\|_{L^2_tL^1}\lesssim \||k,l|p^{\frac12}\widehat{mK_{\neq}}\|_{L^2_tL^1}\lesssim \|\mathcal{A} \na_L K_{\neq,1}\|_{L^2_tH^r}.
\end{align*}  Thus, we finish the proof of Lemma \ref{lem4.7} \hfill$\square$
 
\begin{pro}\label{propEneq1}
Suppose that the assumptions in Proposition \ref{boots} are true, then it holds that for any $t\in [0,T]$,
	\begin{align}\label{pro1}
		\sup_{t>0}E_{\neq,1}(t)+\int_0^t F_{\neq,1}(\tau){\rm d}\tau\leq \epsilon_{0}^{2}+ C\delta^2 \nu^{2c}\delta\nu^{c-\frac56-\frac{\kappa}{6(1+\kappa)}}.
	\end{align}
 where $ E_{\neq,1}(t)$ and $ F_{\neq,1}(t)$ are defined by \eqref{pre1}
and \eqref{pre2}, respectively. 
\end{pro}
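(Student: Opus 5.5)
We apply $\mathcal{A}$ and $\langle\nabla\rangle^N$ to the first subsystem \eqref{Non0NL1} and repeat the weighted energy argument of Lemma \ref{lem3.4} with $r=N$, including the cross term $2\langle G\mathcal{A}Q_{\neq,1},\mathcal{A}K_{\neq,1}\rangle_{H^N}$ in the energy. The linear part is then exactly as in Section \ref{sec3.1}. The condition $B_\beta>\frac14$ and \eqref{00} make the modified energy equivalent to $E_{\neq,1}$. The ghost multipliers $M_2,\dots,M_7$ absorb the stretching and coupling terms, and $m$ together with $\lambda\le\frac1{16}\frac{2\sqrt{B_\beta}-1}{2\sqrt{B_\beta}+1}$ absorbs $\frac14\frac{\p_tp}{p}$. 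Integrating in time gives
\[
\sup_t E_{\neq,1}+\int_0^tF_{\neq,1}\,d\tau\le E_{\neq,1}(0)+C\sum \Big|\int_0^t\langle \mathcal{A}\mathcal{T}(\mathrm{NL}),\mathcal{A}X\rangle_{H^N}d\tau\Big|,
\]
where $X\in\{Q_{\neq,1},K_{\neq,1},H_{\neq,1}\}$ and $\mathcal{T}$ denotes the corresponding prefactor operator. The initial term is bounded by $\epsilon_0^2$ using \eqref{4.11-1} and $N+2$ derivatives of the data.

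The nonlinear terms in \eqref{Non0NL1} are of two kinds, nonzero–nonzero ($U_{\neq,1}\cdot\nabla_L F_{\neq,1}$ and the pressure terms) and zero–nonzero ($U^{1,3}_{0,1}\cdot\nabla_{x,z}F_{\neq,1}$ and $\p_zU_{0,1}\cdot\nabla U_{\neq,1}$). We split each product by paraproducts into high–low and low–high pieces. Whenever a derivative falls on the high-frequency factor carrying $\nabla_L$, we pair it with $\mathcal{A}\nabla_LX$ through $F_{\neq,1}$, which costs $\nu^{-1/2}$. The low factor is put in $L^\infty$ or $\widehat{L^1}$ using \eqref{tool2}–\eqref{tool6}. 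The symbol $|\nabla_L|^{3/2}|\nabla_{L,h}|^{-1}$ is redistributed with \eqref{4.11-3} and the relations \eqref{3x.1}–\eqref{ll7}. The leftover $|\nabla_{L,h}|^{-1/2}$ factors are controlled by \eqref{tool5}, which gives the loss $\nu^{-\kappa/(6(1+\kappa))}$. The $|\nabla_L|^{1/2}$ factors are controlled by \eqref{tool1}, at a cost of $\nu^{-1/3}$.

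The typical worst term should then be bounded by $E_{\neq,1}^{1/2}$ times $\nu^{-1/2}\big(\int F\big)^{1/2}$ times $\nu^{-1/3-\kappa/(6(1+\kappa))}\big(\int F\big)^{1/2}$. Under \eqref{qkh1} this gives $\delta^3\nu^{3c-5/6-\kappa/(6(1+\kappa))}$, as claimed. For the zero–nonzero terms, $U^{1,3}_{0,1}$ and $\p_zU_{0,1}$ are bounded in $L^\infty$ by \eqref{u01} and Sobolev embedding. These terms involve only $\p_x$ or $\p_z$ acting on the nonzero part, and $|k,l|\lesssim|\nabla_L|$, so they fit the same bound.

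The main obstacle is the vertical regularity issue discussed in the introduction: a term in which all $N$ derivatives land on $U^2_{\neq,1}$ or $U^{1,3}_{\neq,1}$ in the regime $|l|\gg|k,\eta-kt|$. There the factor $|\nabla_{L,h}|^{-1}$ gives no gain and neither does the inviscid damping of $U^2$. The first subsystem was designed to exclude the worst such pairing, $\p_zU^2\p_y^LU^3$, which was moved to \eqref{Non0NL2}. For the remaining terms we would check case by case that the high-derivative factor is either $\p_{x,z}$ of $U^{1,3}$ or $\nabla_{L,h}U^2$. In the latter case we use \eqref{tool8} and the $p_h^{1/2}$ in \eqref{ll7} to cancel $|\nabla_{L,h}|^{-1}$. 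Throughout we track the $M_6$ lower bound $c_\alpha$ and the $\beta$-dependent constants.
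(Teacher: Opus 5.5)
Your framework is the paper's. You use the energy inequality \eqref{kkk} for \eqref{Non0NL1} with weight $\mathcal{A}$ and the cross term $2\langle G\mathcal{A}Q_{\neq,1},\mathcal{A}K_{\neq,1}\rangle_{H^N}$. You split into nonzero--nonzero terms $I_1,\dots,I_4$ and zero--nonzero terms $J_1,\dots,J_4$, bound them with Lemma \ref{lem4.7} under \eqref{qkh1} and \eqref{u01}, and the $\kappa$-loss enters through \eqref{tool5}.

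The gap is in how you handle the difficulty you single out, high vertical frequency $|l|\gg\sqrt{p_h}$. You propose to check that in \eqref{Non0NL1} the factor carrying the top derivatives is always $\p_{x,z}U^{1,3}_{\neq,1}$ or $\na_{L,h}U^2_{\neq,1}$. This is false. Among the nonzero--nonzero interactions, only $U^{2,3}_{\neq,1}\na^L_{y,z}U^3_{\neq,1}$ and $2\p_zU^2_{\neq,1}\p^L_yU^3_{\neq,1}$ in the $Q$-equation were moved to \eqref{Non0NL2}. The first subsystem still contains:
\begin{itemize}
\item $U^1_{\neq,1}\p_xU^3_{\neq,1}$, with the top derivatives on the undifferentiated $U^1$;
\item $\p^L_yU^{1,3}_{\neq,1}\cdot\na_{x,z}U^1_{\neq,1}$ and $U^2_{\neq,1}\p^L_yW^3_{\neq,1}$ in the $K$-equation;
\item $U_{\neq,1}\cdot\na_L\Theta_{\neq,1}$ in the $H$-equation.
\end{itemize}
These are exactly where the paper's worst bound arises. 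The terms $I_{34}$ (i.e.\ $U^2_{\neq,1}\p^L_yW^3_{\neq,1}$) and $I_4$ (top derivatives on the undifferentiated $U^{1,3}_{\neq,1}$) are the only ones estimated by $\delta^3\nu^{3c-\frac56-\frac{\kappa}{6(1+\kappa)}}$. So the case check fails, and your plan has no mechanism for these terms.

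Your worst-term exponent is also not derived from the available tools. Nothing in Lemma \ref{lem4.7} yields a single factor $\nu^{-\frac13-\frac{\kappa}{6(1+\kappa)}}(\int_0^tF_{\neq,1})^{\frac12}$. In $I_{34}$ the loss is a product of: two factors $\nu^{-\frac16}$ from \eqref{4.11-1}, $\nu^{-\frac13}$ from \eqref{tool1}, $\nu^{-\frac{\kappa}{6(1+\kappa)}}$ from \eqref{tool5}, and $\nu^{-\frac16}$ from \eqref{tool7}.

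What replaces your dichotomy in the paper is a frequency-regime splitting of the symbols. Using $m\lesssim\min\{\nu^{-\frac16},p^{\frac14}|k,l|^{-\frac12}\}$ and separating $l^2\le p_h$ from $l^2\ge p_h$ gives \eqref{multi1}--\eqref{multi4}. For example, $mp^{\frac34}p_h^{-\frac12}\lesssim|l|^{\frac32}p_h^{-\frac12}+\nu^{-\frac16}p^{\frac14}$ and $mp^{\frac14}p_h^{-\frac12}\lesssim(k^2+l^2)^{-\frac14}+|l|^{\frac12}\frac{|l|^{1/2}}{\sqrt{p_h}(k^2+l^2)^{1/4}}$. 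In the large-$|l|$ piece the paper proceeds in three steps:
\begin{itemize}
\item A power of $|l|$ is placed on $\na_LX$ or $|\na_L|^{\frac12}X$, via $F_{\neq,1}$ or \eqref{tool1}.
\item The factor $\frac{|l|^{1/2}}{\sqrt{p_h}(k^2+l^2)^{1/4}}$ is absorbed by $\sqrt{-\dot M/M}$. Alternatively, as in $I_{11}$, it is absorbed by $|k'|/\sqrt{p_h}$ through $\sqrt{|k'|}\le\sqrt{|k-k'|}+\sqrt{|k|}$.
\item The remaining $|l|^{\frac12}\lesssim|l-l'|^{\frac12}+|l'|^{\frac12}$ is absorbed by the $(k^2+l^2)^{-\frac14}$ gains in \eqref{3x.1} and \eqref{ll6}. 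On a zero mode it is absorbed by the extra $\langle\p_z\rangle^{\frac12}$ in $H^{N,N+\frac12}$.
\end{itemize}
The $U^2_{\neq,1}\p^L_y$ terms additionally use \eqref{multi3}, the time integrability \eqref{tool6} of $\widehat{U^2_{\neq,1}}$ with $L^4_t$ interpolation, and \eqref{tool7}.

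There are three smaller points.
\begin{itemize}
\item Not all zero--nonzero terms put only $\p_x$ or $\p_z$ on the nonzero factor. For instance, $\p^L_yU^2_{\neq,1}\p_yU^1_{0,1}$ in the $K$-equation is controlled via \eqref{tool4}.
\item When the top derivatives fall on the zero mode, you need $\|U_{0,1}\|_{L^\infty_tH^{N,N+\frac12}}$ or $\|\na U_{0,1}\|_{L^2_tH^{N,N+\frac12}}$, not an $L^\infty$ bound.
\item The initial term must be bounded with \eqref{4.11-2} at $t=0$, at the cost of half a derivative. Using \eqref{4.11-1} instead loses $\nu^{-\frac13}$, which breaks the threshold.
\end{itemize}
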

 
\pf According to the linear analysis, an energy estimate yields
\begin{equation}
\begin{aligned}\label{kkk}
	&\|\mathcal{A}(Q_{\neq,1},K_{\neq,1},H_{\neq,1})\|_{H^N}^2 + \nu\|\mathcal{A}\na_L(Q_{\neq,1},K_{\neq,1},H_{\neq,1})\|_{L^2_tH^N}^2 + \left\|\mathcal{A}\sqrt{-\frac{\dot{M}}{M}}(Q_{\neq,1},K_{\neq,1},H_{\neq,1})\right\|_{L^2_tH^N}^2\\
	&\quad\lesssim \frac{2\sqrt{\beta(\beta-1)}+1}{2\sqrt{\beta(\beta-1)}-1}\|\mathcal{A}(Q_{\neq,1},K_{\neq,1},H_{\neq,1})(t=0)\|_{H^N}^2 +\frac{\nu^{\frac13}}{16}\|\mathcal{A}(Q_{\neq,1},K_{\neq,1},H_{\neq,1})\|_{L^2_tH^N}^2\\
	&\qquad+\frac{4\sqrt{\beta(\beta-1)}}{2\sqrt{\beta(\beta-1)}-1}\int_0^t\mathcal{N}_{\mathcal{A}1}{\rm d}\tau+\frac{4\sqrt{\beta(\beta-1)}}{2\sqrt{\beta(\beta-1)}-1}\int_0^tG\mathcal{N}_{\mathcal{A}2}{\rm d}\tau,
\end{aligned}
\end{equation}
where 
\begin{align*}
	\int_0^t\mathcal{N}_{\mathcal{A}1}{\rm d}\tau 
	&=\int_0^t\big|\langle\mathcal{A}Q_{\neq,1},\mathcal{A}|\na_{L}|^{\frac32}|\na_{L,h}|^{-1}(U^1_{\neq,1}\p_xU^3_{\neq,1})\rangle_{H^N}\big|{\rm d}\tau\\
	&\quad + \int_0^t\big|\langle\mathcal{A}Q_{\neq,1},\mathcal{A}\p_z|\na_{L,h}|^{-1}|\na_L|^{-\frac12}(\p_{x,z} U^{1,3}_{\neq,1}\cdot\na_{x,z} U^{1,3}_{\neq,1}+2\na_{L,h}U_{\neq,1}^2\p_y^LU_{\neq,1}^{1,2})\rangle_{H^N}\big|{\rm d}\tau\\
	&\quad + \int_0^t\big|\langle\mathcal{A}K_{\neq,1},\mathcal{A}\sqrt{\frac{\beta}{\beta-1}}|\na_L|^{\frac12}|\na_{L,h}|^{-1}\big[\p_xU_{\neq,1}\cdot\na_L U^2_{\neq,1}-\p_y^LU_{\neq,1}\cdot\na_LU^1_{\neq,1} \\
	&\quad+U_{\neq,1}^{1,3}\cdot\na_{x,z}W^3_{\neq,1}+U_{\neq,1}^{2}\p_y^L W^3_{\neq,1}\big]\rangle_{H^N}\big|{\rm d}\tau\\&\quad +\int_0^t\big|\langle\mathcal{A}H_{\neq,1},\mathcal{A}|\na_L|^{\frac12}\big[U_{\neq,1}^{1,3}\cdot\na_{x,z}\Theta_{\neq,1}+U_{\neq,1}^2\p_y^L\Theta_{\neq,1}\big]\rangle_{H^N}\big|{\rm d}\tau\\&\quad + \int_0^t\big|\langle\mathcal{A}Q_{\neq,1},\mathcal{A}|\na_L|^{\frac32}|\na_{L,h}|^{-1}\big(U_{0,1}^1\p_xU_{\neq,1}^3\big)\rangle_{H^N}\big|{\rm d}\tau\\&\quad + \int_0^t\big|\langle\mathcal{A}Q_{\neq,1},\mathcal{A}\p_z|\na_L|^{-\frac12}|\na_{L,h}|^{-1}\big[2\p_z U^{1,3}_{0,1}\cdot\na_{x,z} U^3_{\neq,1}+2\p_yU_{0,1}^1\p_xU_{\neq,1}^2\big]\rangle_{H^N}\big|{\rm d}\tau\\
	&\quad + \sqrt{\frac{\beta}{\beta-1}}\int_0^t\big|\langle\mathcal{A}K_{\neq,1},\mathcal{A}|\na_L|^{\frac12}|\na_{L,h}|^{-1}\big[(\p_xU_{\neq,1}\cdot\na U^2_{0,1}-\p_y^LU_{\neq,1}^2\p_y U_{0,1}^1-\p_y U_{0,1}^{1,3}\cdot\na_{x,z} U_{\neq,1}^1)\\
	&\quad +(U_{0,1}^1\p_x W_{\neq,1}^3+U_{0,1}^3\p_z W_{\neq,1}^3)\big]\rangle_{H^N}\big|{\rm d}\tau\\&\quad + \int_0^t\big|\langle\mathcal{A}H_{\neq,1},\mathcal{A}|\na_L|^{\frac12}\big[U_{0,1}^{1,3}\na_{x,z}\Theta_{\neq,1}+U^3_{\neq,1}\p_z\Theta_{0,1}\big]\rangle_{H^N}\big|{\rm d}\tau\\&=:\sum_{i=1}^4\left[I_i(\neq,\neq)+J_{i}(0,\neq)\right],
\end{align*}
and $\int_0^tG\mathcal{N}_{\mathcal{A}2}{\rm d}\tau$ is the associated  nonlinear cross terms,
\begin{align*}
	\int_0^tG\mathcal{N}_{\mathcal{A}2}{\rm d}\tau=&\int_0^t\langle G\mathcal{A}K_{\neq,1},\text{Nonlinear terms in the equation of }Q_{\neq,1}\rangle_{H^N}{\rm d}\tau\\
	&+\int_0^t\langle G\mathcal{A}Q_{\neq,1},\text{Nonlinear terms in the equation of }K_{\neq,1}\rangle_{H^N}{\rm d}\tau.
\end{align*} 

Now we are ready to estimate each term on the right-hand side of \eqref{kkk}. We first focus on the  interaction between two non-zero frequencies. For $I_{1}$, using  the Plancherel formula, we have 
\begin{align*}
	I_1=&\int_0^t\big|\langle\mathcal{A}Q_{\neq,1},\mathcal{A}|\na_{L}|^{\frac32}|\na_{L,h}|^{-1}(U^1_{\neq,1}\p_xU^3_{\neq,1})\rangle_{H^N}\big|{\rm d}\tau\\
	\leq&\int_0^t\sum_{\substack{k,l\\k',l'}}\iint_{\eta,\eta'}\widehat{\langle\na\rangle^N}|\mathcal{A}\widehat{Q_{\neq,1}}(k,\eta,l)|\mathcal{A}p^{\frac34}p_h^{-\frac12}(k,\eta,l)\widehat{\langle\na\rangle^N}|\widehat{U_{\neq,1}^1}(k-k',\eta-\eta',l-l')||k'\widehat{U^3_{\neq,1}}(k',\eta',l')|{\rm d}\eta{\rm d}\eta'{\rm d}\tau\\
	\lesssim& \int_0^t\sum_{\substack{k,l\\k',l'}}\iint_{\eta,\eta'}\widehat{\langle\na\rangle^{N}}|\mathcal{A}\widehat{Q_{\neq,1}}(k,\eta,l)|e^{\lambda\nu^{\frac13}\tau}\frac{|l|^{\frac32}}{\sqrt{p_h(k,\eta)}}\widehat{\langle\na\rangle^N}|\widehat{U_{\neq,1}^1}(k-k',\eta-\eta',l-l')||k'\widehat{U^3_{\neq,1}}(k',\eta',l')|\\
	&+ \nu^{-\frac16}\widehat{\langle\na\rangle^N}|\mathcal{A}\widehat{Q_{\neq,1}}(k,\eta,l)|e^{\lambda\nu^{\frac13}\tau}p^{\frac14}(k,\eta)\widehat{\langle\na\rangle^N}|\widehat{U_{\neq,1}^1}(k-k',\eta-\eta',l-l')||\widehat{\p_xU^3_{\neq,1}}(k',\eta',l')|{\rm d}\eta{\rm d}\eta'{\rm d}\tau\\
	=:&I_{11}+I_{12}.
\end{align*}
where in the second inequality  we have used the following fact 
\begin{equation}\label{multi1} mp^{\frac34}p_h^{-\frac12}\lesssim \frac{|l|^{\frac32}}{\sqrt{p_h}}+\nu^{-\frac16}p^{\frac14},
\end{equation}
due to
\begin{equation}\label{6666}\begin{cases}
	mp^{\frac34}p_h^{-\frac12}\lesssim \sup(m)p_h^{\frac14}\lesssim\min\{\nu^{-\frac16},\frac{p_h^{\frac14}}{(k^2+l^2)^{\frac14}}\}p_h^{\frac14}\lesssim p_h^{\frac12}\lesssim p^{\frac12}\langle l\rangle^{\frac12},\ \text{if }l^2\leq p_h,\\
	mp^{\frac34}p_h^{-\frac12}\lesssim\frac{|l|^{\frac32}}{\sqrt{p_h}}\lesssim |l|^{\frac32}\lesssim p^{\frac12}\langle l\rangle^{\frac12},\ \text{if }l^2\geq p_h.
\end{cases}\end{equation}
For $I_{11}$, by the fact that $\sqrt{|k'|}\leq \sqrt{|k-k'|}+\sqrt{|k|}$, we have
\begin{align*}
	I_{11}\leq&\int_0^t\sum_{\substack{k,l\\k',l'}}\iint_{\eta,\eta'}\widehat{\langle\na\rangle^{N}}|l||\widehat{\mathcal{A}Q_{\neq,1}}(k,\eta,l)|e^{\lambda\nu^{\frac13}\tau}\widehat{\langle\na\rangle^{N}}\frac{|l|^{\frac12}}{\sqrt{p_h(k,\eta)}}|\widehat{U_{\neq,1}^1}(k-k',\eta-\eta',l-l')|\\
	&\times|k'||\widehat{U_{\neq,1}^3}(k',\eta',l')|{\rm d}\eta'{\rm d}\eta{\rm d}\tau \\
	\leq& \int_0^t\sum_{\substack{k,l\\k',l'}}\iint_{\eta,\eta'}\widehat{\langle\na\rangle^{N}}|l||\widehat{\mathcal{A}Q_{\neq,1}}(k,\eta,l)|e^{\lambda\nu^{\frac13}\tau}\widehat{\langle\na\rangle^{N}}\frac{|l|^{\frac12}}{\sqrt{p_h(k,\eta)}}\sqrt{|k-k'|}|\widehat{U_{\neq,1}^1}(k-k',\eta-\eta',l-l')|\\
	&  \times\sqrt{|k'|}|\widehat{U_{\neq,1}^3}(k',\eta',l')|{\rm d}\eta'{\rm d}\eta{\rm d}\tau + \int_0^t\sum_{\substack{k,l\\k',l'}}\iint_{\eta,\eta'}\widehat{\langle\na\rangle^{N}}|l||\widehat{\mathcal{A}Q_{\neq,1}}(k,\eta,l)|e^{\lambda\nu^{\frac13}\tau}\widehat{\langle\na\rangle^{N}}\frac{|l|^{\frac12}\sqrt{|k|}}{\sqrt{p_h(k,\eta)}}\\
	&  \times|\widehat{U_{\neq,1}^1}(k-k',\eta-\eta',l-l')|\sqrt{|k'|}|\widehat{U_{\neq,1}^3}(k',\eta',l')|{\rm d}\eta'{\rm d}\eta{\rm d}\tau\\
	\lesssim& \|\mathcal{A}\na_L Q_{\neq,1}\|_{L^2_tH^N}\left(\left\|e^{\lambda\nu^{\frac13}\tau}|l|^{\frac12}\big(\widehat{U_{\neq,1}^1}*\widehat{|\p_x|^{\frac12}U_{\neq,1}^3}\big)\right\|_{L^2_tH^N} +  \left\|e^{\lambda\nu^{\frac13}\tau}|l|^{\frac12}\big(\widehat{|\p_x|^{\frac12}U^1_{\neq,1}}*\widehat{|\p_x|^{\frac12}U^3_{\neq,1}}\big)\right\|_{L^2_tH^N}\right)\\
	\lesssim& \|\mathcal{A}\na_L Q_{\neq,1}\|_{L^2_tH^N}\|e^{\lambda\nu^{\frac13}\tau}|\p_z|^{\frac12}(|\p_x|^{\frac12}U_{\neq,1}^1|\p_x|^{\frac12}U_{\neq,1}^3)\|_{L^2_tH^N}\\
	\lesssim& \|\mathcal{A}\na_L Q_{\neq,1}\|_{L^2_tH^N}\|\mathcal{A}(Q_{\neq,1},K_{\neq,1})\|_{L^{\infty}_tH^N}\|\mathcal{A}|\na_L|^{\frac12}(Q_{\neq,1},K_{\neq,1})\|_{L^2_tH^N}\\
	\lesssim& \delta\nu^c\nu^{-\frac12}\delta\nu^c\delta\nu^c\nu^{-\frac13}=\delta^2\nu^{2c}\cdot\delta\nu^{c-\frac56},
\end{align*}
where in the last inequality we have used \eqref{tool1}. For $I_{12}$, we use \eqref{tool2} to obtain
\begin{align*}
	I_{12}\leq& \nu^{-\frac16}\int_0^t\sum_{\substack{k,l\\k',l'}}\iint_{\eta,\eta'}\widehat{\langle\na\rangle^N} p^{\frac14}(k,\eta,l)|\widehat{\mathcal{A}Q_{\neq,1}(k,\eta,l)}|\widehat{\langle\na\rangle^N}|\widehat{U_{\neq,1}^1}(k-k',\eta-\eta',l-l')||\widehat{\p_xU_{\neq,1}^3}(k',\eta',l')|{\rm d}\eta'{\rm d}\eta{\rm d}\tau\\
	\lesssim& \nu^{-\frac16}\|\mathcal{A}|\na_L|^{\frac12}Q_{\neq,1}\|_{L^2_tH^N}\big(\|\widehat{U^1_{\neq,1}}\|_{L^\infty_tL^1}\|\mathcal{A}|\na_L|^{\frac12}Q_{\neq,1}\|_{L^2_tH^N}+\|\mathcal{A}(Q_{\neq,1},K_{\neq,1})\|_{L^2_tH^N}\|\widehat{\p_xU^3_{\neq,1}}\|_{L^\infty_tL^1}\big)\\
	\lesssim&  \nu^{-\frac16}\delta\nu^c\nu^{-\frac13}(\delta^2\nu^{2c}\nu^{-\frac13}+\delta\nu^c\nu^{-\frac16}\delta\nu^c)\lesssim \delta^2\nu^{2c}\cdot\delta\nu^{c-\frac56}.
\end{align*}

Now we focus on $I_2$. Note that 
\begin{equation}
	\label{multi4}
		m|l|p^{-\frac14}p_h^{-\frac12}\lesssim  \frac{\sqrt{|l|}}{\sqrt
			{p_h}(k^2+l^2)^{\frac14}}\cdot\sqrt{|l|},
\end{equation}
by definition of $m$, using \eqref{tool3} and   inviscid damping effect gives
\begin{align*}
	I_2=&\int_0^t\big|\langle\mathcal{A}Q_{\neq,1},\mathcal{A}\p_z|\na_{L,h}|^{-1}|\na_L|^{-\frac12}(\p_{x,z} U^{1,3}_{\neq,1}\cdot\na_{x,z} U^{1,3}_{\neq,1}+2\na_{L,h}U_{\neq,1}^2\p_y^LU_{\neq,1}^{1,2})\rangle_{H^N}\big|{\rm d}\tau\\
	\leq& 2\int_0^t\sum_{\substack{k,l\\k',l'}}\iint_{\eta,\eta'}\widehat{\langle\na\rangle^N}\widehat{\mathcal{A}Q_{\neq,1}}(k,\eta,l)e^{\lambda\nu^{\frac13}\tau}m|l|p_h^{-\frac12}p^{-\frac14}(k,\eta,l)\widehat{\langle\na\rangle^N}\big[|\widehat{\na_{x,z}U_{\neq,1}^{1,3}}(k-k',\eta-\eta',l-l')|\\
	&  \times|\widehat{\na_{x,z}U_{\neq,1}^{1,3}}(k',\eta',l')|+|\widehat{\na_{L,h}U_{\neq,1}^2}(k-k',\eta-\eta',l-l')||\widehat{\p_y^LU_{\neq,1}^{1,2}}(k',\eta',l')|\big]{\rm d}\eta{\rm d}\eta'{\rm d}\tau\\
	\leq&  2\int_0^t\sum_{\substack{k,l\\k',l'}}\iint_{\eta,\eta'}\widehat{\langle\na\rangle^N}\frac{|l|^{\frac12}}{\sqrt{p_h}(k^2+l^2)^{\frac14}}\widehat{\mathcal{A}Q_{\neq,1}}(k,\eta,l)e^{\lambda\nu^{\frac13}\tau}\widehat{\langle\na\rangle^N}\sqrt{|l|}\big[|\widehat{\na_{x,z}U_{\neq,1}^{1,3}}(k-k',\eta-\eta',l-l')|\\
	&  \times|\widehat{\na_{x,z}U_{\neq,1}^{1,3}}(k',\eta',l')|+|\widehat{\na_{L,h}U_{\neq,1}^2}(k-k',\eta-\eta',l-l')||\widehat{\p_y^LU_{\neq,1}^{1,2}}(k',\eta',l')|\big]{\rm d}\eta{\rm d}\eta'{\rm d}\tau\\
	\lesssim& \left\|\mathcal{A}\sqrt{-\frac{\dot{M}}{M}}Q_{\neq,1}\right\|_{L^2_tH^N}\||l|^{\frac12}\widehat{\na_LU_{\neq,1}}\|_{L^\infty_tL^1}\|\mathcal{A}\na_L(Q_{\neq,1},K_{\neq,1})\|_{L^2_tH^N} \\
	\lesssim&\left\|\mathcal{A}\sqrt{-\frac{\dot{M}}{M}}Q_{\neq,1}\right\|_{L^2_tH^N}\|\mathcal{A}(Q_{\neq,1},K_{\neq,1})\|_{L^\infty_tL^N}\nu^{-\frac13}\|\mathcal{A}\na_L(Q_{\neq,1},K_{\neq,1})\|_{L^2_tH^N}\\
	\lesssim& \delta\nu^c\delta\nu^c\nu^{-\frac13}\delta\nu^c\nu^{-\frac12}=\delta^2\nu^{2c}\cdot\delta\nu^{c-\frac56}.
\end{align*}

For the terms in the equation of $K_{\neq,1}$, we decompose it into some parts as follows:

\begin{align*}
	I_3=&\int_0^t\big|\langle\mathcal{A}K_{\neq,1},\mathcal{A}\sqrt{\frac{\beta}{\beta-1}}|\na_L|^{\frac12}|\na_{L,h}|^{-1}\big[\p_xU_{\neq,1}\cdot\na_L U^2_{\neq,1}-\p_y^LU_{\neq,1}\cdot\na_LU^1_{\neq,1} \\
	&+U_{\neq,1}^{1,3}\cdot\na_{x,z}W^3_{\neq,1}+U_{\neq,1}^{2}\p_y^L W^3_{\neq,1}\big]\rangle_{H^N}\big|{\rm d}\tau =:I_{31}+I_{32}+I_{33}+I_{34}
\end{align*}
For $I_{31}$, we use \eqref{tool2}, \eqref{tool4} and the fact that $\sqrt{|k-k'|}\lesssim \sqrt{|k'|}\sqrt{|k|}$ to obtain 
\begin{align*}
	I_{31}=&\int_0^t\left|\langle\mathcal{A}K_{\neq,1},\mathcal{A}\sqrt{\frac{\beta}{\beta-1}}|\na_L|^{\frac12}|\na_{L,h}|^{-1}\big[\p_xU_{\neq,1}\cdot\na_{L} U^2_{\neq,1}\big]\rangle_{H^N}\right|{\rm d}\tau \\
	\leq& \sqrt{\frac{\beta}{\beta-1}}\int_0^t\sum_{\substack{k,l\\k',l'}}\iint_{\eta,\eta'}\widehat{\langle\na\rangle^N}\mathcal{A}\widehat{K_{\neq,1}}(k,\eta,l)\widehat{\langle\na\rangle^N}\mathcal{A}p^{\frac14}p_h^{-\frac12}(k,\eta,l,t)|k-k'||\widehat{U_{\neq,1}}(k-k',\eta-\eta',l-l')|\\
	&  \times |\widehat{\na_L U_{\neq,1}^2}(k',\eta',l')|{\rm d}\eta{\rm d}\eta'{\rm d}\tau\\
	\lesssim& \sqrt{\frac{\beta}{\beta-1}}\left[\int_0^t\sum_{\substack{k,l\\k',l'}}\iint_{\eta,\eta'}\widehat{\langle\na\rangle^N}\mathcal{A}\widehat{K_{\neq,1}}(k,\eta,l)\frac{e^{\lambda\nu^{\frac13}\tau}}{(k^2+l^2)^{\frac14}}\widehat{\langle\na\rangle^N}\left(\sqrt{|k|}\sqrt{|k'|}\sqrt{|k-k'|}\right.\right.\\
	&  \left.\times |\widehat{U_{\neq,1}}(k-k',\eta-\eta',l-l')||\widehat{\na_L U_{\neq,1}^2}(k',\eta',l')|\right){\rm d}\eta{\rm d}\eta'{\rm d}\tau + \int_0^t\sum_{\substack{k,l\\k',l'}}\iint_{\eta,\eta'}\widehat{\langle\na\rangle^N}\frac{\sqrt{|l|}}{\sqrt{p_h}(k^2+l^2)^{\frac14}}\mathcal{A}\widehat{K_{\neq,1}}(k,\eta,l)\\
	&  \left.\times e^{\lambda\nu^{\frac13}\tau}\widehat{\langle\na\rangle^N}\sqrt{|l|}\left(\sqrt{|k|}\sqrt{|k'|}\sqrt{|k-k'|}|\widehat{U_{\neq,1}}(k-k',\eta-\eta',l-l')| |\widehat{\na_L U_{\neq,1}^2}(k',\eta',l')|\right){\rm d}\eta{\rm d}\eta'{\rm d}\tau\right]\\
	\lesssim& \sqrt{\frac{\beta}{\beta-1}} \left(\|\mathcal{A}K_{\neq,1}\|_{L^2_tH^N}+\left\|\mathcal{A}\sqrt{-\frac{\dot{M}}{M}}K_{\neq,1}\right\|_{L^2_tH^N}^{\frac12}\|\mathcal{A}\na_L K_{\neq,1}\|_{L^2_tH^N}^{\frac12}\right)\\
	&\times\left( \|e^{\lambda\nu^{\frac13}\tau}\widehat{|\p_x|^{\frac12}U_{\neq,1}}\|_{L^\infty_tL^1}\|\na_L|\p_x|^{\frac12} U_{\neq,1}^2\|_{L^2_tH^N}+\|e^{\lambda\nu^{\frac13}\tau}\widehat{|\p_x|^{\frac12}U_{\neq,1}}\|_{L^\infty_tH^N}\|\na_L|\p_x|^{\frac12} U_{\neq,1}^2\|_{L^2_tL^1}\right)\\
		\lesssim& \sqrt{\frac{\beta}{\beta-1}} \delta\nu^{c}\nu^{-\frac14}\|\mathcal{A}(Q_{\neq,1},K_{\neq,1})\|_{L^\infty_tH^N}\|\mathcal{A}\na_L(Q_{\neq,1},K_{\neq,1})\|_{L^2_tH^N}\\
		\lesssim &\sqrt{\frac{\beta}{\beta-1}} \delta\nu^{c}\nu^{-\frac14}\delta\nu^c\delta\nu^c\nu^{-\frac12}=\delta^2\nu^{2c}\cdot \delta\nu^{c-\frac34}.
\end{align*}
where we have used the following inequality
\begin{equation}mp^{\frac14}p_h^{-\frac12}\lesssim (k^2+l^2)^{-\frac14}+\frac{\sqrt{|l|}\cdot\sqrt{|l|}}{\sqrt{p_h}(k^2+l^2)^{\frac14}},\label{multi2}\end{equation}
due to 
\begin{equation}\label{555}
\begin{cases}
	mp^{\frac14}p_h^{-\frac12}\lesssim \sup(m)p_h^{-\frac14}\lesssim\min\{\nu^{-\frac16},\frac{p_h^{\frac14}}{(k^2+l^2)^{\frac14}}\}p_h^{-\frac14}\lesssim 1,\ \text{if }l^2\leq p_h,\\
	mp^{\frac14}p_h^{-\frac12}\lesssim\sup(m)\frac{|l|^{\frac12}}{\sqrt{p_h}} \lesssim\min\{\nu^{-\frac16},\frac{|l|^{\frac12}}{(k^2+l^2)^{\frac14}}\}\frac{|l|^{\frac12}}{\sqrt{p_h}} \lesssim \langle l\rangle^{\frac12},\ \text{if }l^2\geq p_h.
\end{cases}
\end{equation}
For $I_{32}$, we use \eqref{tool2}, \eqref{tool3}, \eqref{tool4} and \eqref{multi2} to obtain
\begin{align*}
	I_{32}=&\int_0^t\left|\langle\mathcal{A}K_{\neq,1},\mathcal{A}\sqrt{\frac{\beta}{\beta-1}}|\na_L|^{\frac12}|\na_{L,h}|^{-1}\big[\p_y^LU_{\neq,1}\cdot\na_LU^1_{\neq,1}\big]\rangle_{H^N}\right|{\rm d}\tau\\
	\leq& \sqrt{\frac{\beta}{\beta-1}}\int_0^t\sum_{\substack{k,l\\k',l'}}\iint_{\eta,\eta'}\widehat{\langle\na\rangle^N}\mathcal{A}\widehat{K_{\neq,1}}(k,\eta,l)\widehat{\langle\na\rangle^N}\mathcal{A}p^{\frac14}p_h^{-\frac12}(k,\eta,l)\\
	&\times |\eta-\eta'-(k-k')t||\widehat{U_{\neq,1}}(k-k',\eta-\eta',l-l')|p^{\frac12}(k',\eta',l')|\widehat{U_{\neq,1}^1}(k',\eta',l')|{\rm d}\eta{\rm d}\eta'{\rm d}\tau\\
	\leq& \sqrt{\frac{\beta}{\beta-1}}\int_0^t\sum_{\substack{k,l\\k',l'}}\iint_{\eta,\eta'}\widehat{\langle\na\rangle^N}\mathcal{A}\widehat{K_{\neq,1}}(k,\eta,l)\widehat{\langle\na\rangle^N}\frac{e^{\lambda\nu^{\frac13}\tau}}{(k^2+l^2)^{\frac14}}\big[|\eta-\eta'-(k-k')t||\widehat{U_{\neq,1}}(k-k',\eta-\eta',l-l')|\\
	&\times p^{\frac12}(k',\eta',l')|\widehat{U_{\neq,1}^1}(k',\eta',l')|\big]{\rm d}\eta{\rm d}\eta'{\rm d}\tau + \sqrt{\frac{\beta}{\beta-1}}\int_0^t\sum_{\substack{k,l\\k',l'}}\iint_{\eta,\eta'}\widehat{\langle\na\rangle^N}\frac{\sqrt{|l|}}{\sqrt{p_h}(k^2+l^2)^{\frac14}}\mathcal{A}\widehat{K_{\neq,1}}(k,\eta,l)\\
	& \times e^{\lambda\nu^{\frac13}\tau}\widehat{\langle\na\rangle^N}\sqrt{|l|}\big[|\eta-\eta'-(k-k')t||\widehat{U_{\neq,1}}(k-k',\eta-\eta',l-l')|p^{\frac12}(k',\eta',l')|\widehat{U_{\neq,1}^1}(k',\eta',l')|\big]{\rm d}\eta{\rm d}\eta'{\rm d}\tau\\
	\lesssim& \sqrt{\frac{\beta}{\beta-1}}\|\mathcal{A}K_{\neq,1}\|_{L^2_tH^N}\left(\|e^{\lambda\nu^{\frac13}\tau}\p_y^LU_{\neq,1}^{1,3} \na_{x,z}U_{\neq,1}^1\|_{L^2_tH^N}+\|e^{\lambda\nu^{\frac13}\tau}\p_y^LU_{\neq,1}^2\p_y^LU_{\neq,1}^1\|_{L^2_tH^N}\right)\\
	&+\left\|\mathcal{A}\sqrt{-\frac{\dot{M}}{M}}K_{\neq,1}\right\|_{L^2_tH^N}\left(\|e^{\lambda\nu^{\frac13}\tau}|l|^{\frac12}\widehat{\p_y^LU_{\neq,1}}\|_{L^2_tH^N}\||l|^{\frac12}\widehat{\na_L U_{\neq,1}^1}\|_{L^\infty_tL^1}\right.\\
	& \left. +\|e^{\lambda\nu^{\frac13}\tau}|l|^{\frac12}\widehat{\p_y^LU_{\neq,1}}\|_{L^\infty_tL^1}\||l|^{\frac12}\widehat{\na_L U_{\neq,1}^1}\|_{L^2_tH^N}\right)\\
	\lesssim&\sqrt{\frac{\beta}{\beta-1}}\|\mathcal{A}K_{\neq,1}\|_{L^2_tH^N}\left(\|e^{\lambda\nu^{\frac13}\tau}\p_y^LU_{\neq,1}^{1,3} \na_{x,z}U_{\neq,1}^1\|_{L^2_tH^N} + \|e^{\lambda\nu^{\frac13}\tau}\p_y^LU_{\neq,1}^2\p_y^LU_{\neq,1}^1\|_{L^2_tH^N}\right)\\
	&+\left\|\mathcal{A}\sqrt{-\frac{\dot{M}}{M}}K_{\neq,1}\right\|_{L^2_tH^N}\|\mathcal{A}\na_L(Q_{\neq,1},K_{\neq,1})\|_{L^2_tH^N}\|\widehat{\na_L U_{\neq,1}}\|_{L^\infty_tW^{\frac12,1}}\\
	\lesssim&\sqrt{\frac{\beta}{\beta-1}}\|\mathcal{A}K_{\neq,1}\|_{L^2_tH^N}\left(\|e^{\lambda\nu^{\frac13}\tau} \p_y^LU_{\neq,1}^{1,3}\|_{L^2_tH^N}\|\na_{x,z}U_{\neq,1}^1\|_{L^\infty_tL^\infty}+\|\p_y^LU_{\neq,1}^{1,3}\|_{L^\infty_tL^\infty}\|e^{\lambda\nu^{\frac13}\tau}\na_{x,z}U_{\neq,1}^1\|_{L^2_tH^N}\right.\\
	& \left. +\|\p_y^LU_{\neq,1}^2\|_{L^\infty_tL^\infty}\|e^{\lambda\nu^{\frac13}\tau}\p_y^LU_{\neq,1}^1\|_{L^2_tH^N}+\|e^{\lambda\nu^{\frac13}\tau}\p_y^LU_{\neq,1}^2\|_{L^2_tH^N}\|\p_y^LU_{\neq,1}^1\|_{L^\infty_tL^\infty}\right) \\
	&+ \left\|\mathcal{A}\sqrt{-\frac{\dot{M}}{M}}K_{\neq,1}\right\|_{L^2_tH^N}\|\mathcal{A}\na_L(Q_{\neq,1},K_{\neq,1})\|_{L^2_tH^N}\|\widehat{\na_L U_{\neq,1}}\|_{L^\infty_tW^{\frac12,1}}\\
	\lesssim&\sqrt{\frac{\beta}{\beta-1}}\|\mathcal{A}K_{\neq,1}\|_{L^2_tH^N}\left(\|\mathcal{A}\na_L(Q_{\neq,1},K_{\neq,1})\|_{L^2_tH^N}\|\na_{x,z}U_{\neq,1}^1\|_{L^\infty_tL^\infty}\right.\\
	& \left.+\|\na_LU_{\neq,1}^{1,3}\|_{L^\infty_tL^\infty}\|\mathcal{A}|\na_L|^{\frac12}(Q_{\neq,1},K_{\neq,1})\|_{L^2_tH^N}+\|\p_y^LU_{\neq,1}^2\|_{L^\infty_tL^\infty}\|\mathcal{A}\na_L(Q_{\neq,1},K_{\neq,1})\|_{L^2_tH^N}\right) \\
	&+ \left\|\mathcal{A}\sqrt{-\frac{\dot{M}}{M}}K_{\neq,1}\right\|_{L^2_tH^N}\|\mathcal{A}\na_L(Q_{\neq,1},K_{\neq,1})\|_{L^2_tH^N}\|\widehat{\na_L U_{\neq,1}}\|_{L^\infty_tW^{\frac12,1}}\\
	\lesssim&\sqrt{\frac{\beta}{\beta-1}} \delta\nu^c\nu^{-\frac16}(\delta\nu^c\nu^{-\frac12}\delta\nu^c+\delta\nu^c\delta\nu^c\nu^{-\frac13}+\delta\nu^c\delta\nu^c\nu^{-\frac12})+\delta\nu^c(\delta\nu^c\nu^{-\frac12}\delta\nu^c\nu^{-\frac13})\\
	\lesssim& \sqrt{\frac{\beta}{\beta-1}}\delta^2\nu^{2c}\cdot\delta\nu^{c-\frac56}.
\end{align*}
For $I_{33}$, using \eqref{tool2}, \eqref{tool3}, \eqref{multi2} and $|(k',\eta'-k't)|\leq |(k,\eta-kt)|+|(k-k',\eta-\eta'-(k-k')t)|$ gives
\begin{align*}
	I_{33}=&\int_0^t\left|\langle\mathcal{A}K_{\neq,1},\mathcal{A}\sqrt{\frac{\beta}{\beta-1}}|\na_L|^{\frac12}|\na_{L,h}|^{-1}\big[U_{\neq,1}^{1,3}\cdot\na_{x,z}W^3_{\neq,1}\big]\rangle_{H^N}\right|{\rm d}\tau\\
	\leq& \int_0^t\left|\langle\mathcal{A}K_{\neq,1},\mathcal{A}|\na_L|^{\frac12}|\na_{L,h}|^{-1}\big[U_{\neq,1}^{1,3}|\na_{x,z}|^{\frac12}|\na_{L,h}|mK_{\neq,1}\big]\rangle_{H^N}\right|{\rm d}\tau\\
	\leq& \int_0^t\sum_{\substack{k,l\\k',l'}}\iint_{\eta,\eta'}\widehat{\langle\na\rangle^N}\mathcal{A}\widehat{K_{\neq,1}}(k,\eta,l)\widehat{\langle\na\rangle^N}\mathcal{A}p^{\frac14}p_h^{-\frac12}(k,\eta,l,t)|\widehat{U_{\neq,1}^{1,3}}(k-k',\eta-\eta',l-l')|\\
	& \times(k'^2+l'^2)^{\frac14}|(k',\eta'-k't)||\widehat{mK_{\neq,1}}(k',\eta',l')|{\rm d}\eta{\rm d}\eta'{\rm d}\tau\\
	\leq& \int_0^t\sum_{\substack{k,l\\k',l'}}\iint_{\eta,\eta'}\widehat{\langle\na\rangle^N}\mathcal{A}\widehat{K_{\neq,1}}(k,\eta,l)e^{\lambda\nu^{\frac13}\tau}\widehat{\langle\na\rangle^N}\left(\frac{1}{(k^2+l^2)^{\frac14}}+\frac{\sqrt{|l|}}{\sqrt{p_h(k,\eta)}(k^2+l^2)^{\frac14}}\sqrt{|l|}\right)\\
	& \times|\widehat{U_{\neq,1}^{1,3}}(k-k',\eta-\eta',l-l')|(k'^2+l'^2)^{\frac14}\big[|(k,\eta-kt)|\\
	& +|(k-k',\eta-\eta'-(k-k')t)|\big]|\widehat{mK_{\neq,1}}(k',\eta',l')|{\rm d}\eta{\rm d}\eta'{\rm d}\tau\\
	\lesssim&\|\mathcal{A}\na_LK_{\neq,1}\|_{L^2_tH^N}\|U_{\neq,1}^{1,3}\|_{L^\infty_tH^N}\|\mathcal{A}|\na_L|^{\frac12}K_{\neq}\|_{L^2_tH^N} \\
	&+ \|\mathcal{A}K_{\neq,1}\|_{L^2_tH^N}\left(\|e^{\lambda\nu^{\frac13}\tau}\na_L U_{\neq,1}^{1,3}\|_{L^2_tH^N}\|\widehat{|\na_{x,z}|^{\frac12}mK_{\neq,1}}\|_{L^\infty_tL^1}+\|e^{\lambda\nu^{\frac13}\tau}\widehat{\na_L U_{\neq,1}^{1,3}}\|_{L^\infty_tL^1}\||\na_{x,z}|^{\frac12}mK_{\neq,1}\|_{L^2_tH^N}\right)\\
	&+ \left\|\mathcal{A}\sqrt{-\frac{\dot{M}}{M}}K_{\neq,1}\right\|_{L^2_tH^N}\left(\|e^{\lambda\nu^{\frac13}\tau}|l|^{\frac12}\widehat{\na_L U_{\neq,1}^{1,3}}\|_{L^2_tH^N}\|\widehat{|\na_{x,z}|^{\frac12}mK_{\neq,1}}\|_{L^\infty_tL^1}\right.\\
	&\left. +\|\widehat{\na_L U_{\neq,1}^{1,3}}\|_{L^\infty_tL^1}\|e^{\lambda\nu^{\frac13}\tau}|\na_{x,z}|mK_{\neq,1}\|_{L^2_tH^N}\right)\\
	&+ \|\mathcal{A}K_{\neq,1}\|_{L^2_tH^N}\left(\|e^{\lambda\nu^{\frac13}\tau}|l|^{\frac12}\widehat{U_{\neq,1}^{1,3}}\|_{L^2_tH^N}\|\widehat{|\na_{x,z}|^{\frac12}mK_{\neq,1}}\|_{L^\infty_tL^1} + \|\widehat{U_{\neq,1}^{1,3}}\|_{L^\infty_tL^1}\|e^{\lambda\nu^{\frac13}\tau}|\na_{x,z}|mK_{\neq,1}\|_{L^2_tH^N}\right)\\
	\lesssim&\|\mathcal{A}\na_LK_{\neq,1}\|_{L^2_tH^N}\|\mathcal{A}(Q_{\neq,1},K_{\neq,1})\|_{L^\infty_tH^N}\|\mathcal{A}|\na_L|^{\frac12}K_{\neq}\|_{L^2_tH^N} \\
	&+ \left\|\mathcal{A}\sqrt{-\frac{\dot{M}}{M}}K_{\neq,1}\right\|_{L^2_tH^N}\left(\|\mathcal{A}\na_L(Q_{\neq,1},K_{\neq,1})\|_{L^2_tH^N}\|\widehat{\na_{x,z}mK_{\neq,1}}\|_{L^\infty_tL^1}\right.\\
	&\left. +\|p_h^{\frac12}\widehat{m(Q_{\neq,1},K_{\neq,1})}\|_{L^\infty_tL^1}\|\mathcal{A}\na_LK_{\neq,1}\|_{L^2_tH^N} \right)+\|\mathcal{A}K_{\neq,1}\|_{L^2_tH^N}\\
	&\times\left(\|\mathcal{A}\na_L(Q_{\neq,1},K_{\neq,1})\|_{L^2_tH^N}\|\widehat{|\na_{x,z}|^{\frac12}mK_{\neq,1}}\|_{L^\infty_tL^1} +\|p_h^{\frac12}\widehat{m(Q_{\neq,1},K_{\neq,1})}\|_{L^\infty_tL^1}\|\mathcal{A}|\na_L|^{\frac12}K_{\neq,1}\|_{L^2_tH^N} \right)\\
	\lesssim& \delta\nu^c\nu^{-\frac12}\delta\nu^c\delta\nu^{c-\frac13}+\delta\nu^c\nu^{-\frac16}(\delta\nu^c\nu^{-\frac12}\delta\nu^c+\delta\nu^c\nu^{-\frac13}\delta\nu^c\nu^{-\frac13})\\
	&\\
	\lesssim& \delta^2\nu^{2c}\cdot\delta\nu^{c-\frac56}.
\end{align*}
Similar to the proof of \eqref{multi2}, since \begin{equation}mp^{\frac14}p_h^{-\frac12}\lesssim\nu^{-\frac16}p_h^{-\frac14}+\frac{\sqrt{|l|}}{\sqrt{p_h}},\label{multi3}\end{equation} 
for the last term $I_{34}$, combining \eqref{multi3}  and $\sqrt{p_h(k',\eta')}\leq\sqrt{p_h(k,\eta)}+\sqrt{p_h(k-k',\eta-\eta')}$ yields
\begin{align*}
	I_{34}=&\int_0^t\left|\langle\mathcal{A}K_{\neq,1},\mathcal{A}\sqrt{\frac{\beta}{\beta-1}}|\na_L|^{\frac12}|\na_{L,h}|^{-1}\big[U_{\neq,1}^{2}\p_y^L W^3_{\neq,1}\big]\rangle_{H^N}\right|{\rm d}\tau\\
	\leq& \int_0^t\sum_{\substack{k,l\\k',l'}}\iint_{\eta,\eta'}\widehat{\langle\na\rangle^N}\mathcal{A}\widehat{K_{\neq,1}}(k,\eta,l)\widehat{\langle\na\rangle^N}\mathcal{A}p^{\frac14}p_h^{-\frac12}(k,\eta,l,t)\big[|\widehat{U_{\neq,1}^2}(k-k',\eta-\eta',l-l')|\\
	& \times |\eta'-k't|p^{-\frac14}(k',\eta',l',t)\sqrt{p_h(k',\eta')}|\widehat{K_{\neq,1}}(k',\eta',l')|\big]{\rm d}\eta{\rm d}\eta'{\rm d}\tau\\
	\leq& \int_0^t\sum_{\substack{k,l\\k',l'}}\iint_{\eta,\eta'}\widehat{\langle\na\rangle^N}\mathcal{A}\widehat{K_{\neq,1}}(k,\eta,l)e^{\lambda\nu^{\frac13}\tau}\widehat{\langle\na\rangle^N}\left(\nu^{-\frac16}p_h^{-\frac14}(k,\eta)+\frac{\sqrt{|l|}}{\sqrt{p_h(k,\eta)}}\right)\big[|\widehat{U_{\neq,1}^2}(k-k',\eta-\eta',l-l')|\\
	&\times |\eta'-k't|p^{-\frac14}(k',\eta',l')\big[\sqrt{p_h(k,\eta)}+\sqrt{p_h(k-k',\eta-\eta')}\big]|\widehat{K_{\neq,1}}(k',\eta',l')|\big]{\rm d}\eta{\rm d}\eta'{\rm d}\tau\\
	\leq& \nu^{-\frac16}\int_0^t\sum_{\substack{k,l\\k',l'}}\iint_{\eta,\eta'}\widehat{\langle\na\rangle^N}p_h^{\frac14}(k,\eta)\mathcal{A}\widehat{K_{\neq,1}}(k,\eta,l)e^{\lambda\nu^{\frac13}\tau}\widehat{\langle\na\rangle^N}\big[|\widehat{U_{\neq,1}^2}(k-k',\eta-\eta',l-l')|\\
	&\times p_h^{\frac14}(k',\eta',l')|\widehat{mK_{\neq,1}}(k',\eta',l')|\big]\nu^{-\frac16}{\rm d}\eta{\rm d}\eta'{\rm d}\tau + \nu^{-\frac16}\int_0^t\sum_{\substack{k,l\\k',l'}}\iint_{\eta,\eta'}\widehat{\langle\na\rangle^N}p_h^{-\frac14}(k,\eta)\mathcal{A}\widehat{K_{\neq,1}}(k,\eta,l)\\
	&\times e^{\lambda\nu^{\frac13}\tau}\widehat{\langle\na\rangle^N}\big[\sqrt{p_h(k-k',\eta-\eta')}|\widehat{U_{\neq,1}^2}(k-k',\eta-\eta',l-l')|p_h^{\frac14}(k',\eta',l')|\widehat{mK_{\neq,1}}(k',\eta',l')|\big]\nu^{-\frac16}{\rm d}\eta{\rm d}\eta'{\rm d}\tau\\
	&+\int_0^t\sum_{\substack{k,l\\k',l'}}\iint_{\eta,\eta'}\widehat{\langle\na\rangle^N}\sqrt{|l|}\mathcal{A}\widehat{K_{\neq,1}}(k,\eta,l)\widehat{\langle\na\rangle^N}\frac{e^{\lambda\nu^{\frac13}\tau}}{\sqrt{p_h(k,\eta)}}\sqrt{p_h(k,\eta)}\big[|\widehat{U_{\neq,1}^2}(k-k',\eta-\eta',l-l')|\\
	&\times p_h^{\frac14}(k',\eta',l')|\widehat{mK_{\neq,1}}(k',\eta',l')|\big]\nu^{-\frac16}{\rm d}\eta{\rm d}\eta'{\rm d}\tau + \int_0^t\sum_{\substack{k,l\\k',l'}}\iint_{\eta,\eta'}\widehat{\langle\na\rangle^N}p_h^{-\frac12}(k,\eta)\mathcal{A}\widehat{K_{\neq,1}}(k,\eta,l)e^{\lambda\nu^{\frac13}\tau}\\
	&\times \widehat{\langle\na\rangle^N}|l|^{\frac12}\big[\sqrt{p_h(k-k',\eta-\eta')}|\widehat{U_{\neq,1}^2}(k-k',\eta-\eta',l-l')|p_h^{\frac14}(k',\eta',l')|\widehat{mK_{\neq,1}}(k',\eta',l')|\big]\nu^{-\frac16}{\rm d}\eta{\rm d}\eta'{\rm d}\tau\\
	\lesssim& \nu^{-\frac16-\frac16}\big(\|\mathcal{A}|\na_L|^{\frac12}K_{\neq,1}\|_{L^4_tH^N}^2\|\widehat{U_{\neq,1}^2}\|_{L^2_tL^1}+\|\mathcal{A}|\na_L|^{\frac12}K_{\neq,1}\|_{L^2_tH^N}\|e^{\lambda\nu^{\frac13}\tau}U_{\neq,1}^2\|_{L^2_tH^N}\|\widehat{|\na_L|^{\frac12}mK_{\neq,1}}\|_{L^\infty_tL^1}\big)\\
	&+ \nu^{-\frac16-\frac16}\|\mathcal{A}|\na_L|^{-\frac12}K_{\neq,1}\|_{L^2_tH^N}\big(\|\widehat{\na_{L,h}U_{\neq,1}^2}\|_{L^\infty_tL^1}\|\mathcal{A}|\na_L|^{\frac12}K_{\neq,1}\|_{L^2_tH^N}\\
	& +\|\mathcal{A}|\na_L|^{\frac12}(Q_{\neq,1},K_{\neq,1})\|_{L^2_tH^N}\|\widehat{|\na_L|^{\frac12}mK_{\neq,1}}\|_{L^\infty_tL^1}\big)+\nu^{-\frac16}\left\|\mathcal{A}\sqrt{-\frac{\dot{M}}{M}}K_{\neq,1}\right\|_{L^2_tH^N}\\
	&\times \left(\|\widehat{|\p_z|^{\frac12}\na_{L,h}U_{\neq,1}^2}\|_{L^\infty_tL^1}\|\mathcal{A}\na_L (Q_{\neq,1},K_{\neq,1})\|_{L^2_tH^N}+\|\mathcal{A}\na_L (Q_{\neq,1},K_{\neq,1})\|_{L^2_tH^N}\|\widehat{|\p_z|^{\frac12}|\na_L|^{\frac12}mK_{\neq,1}}\|_{L^\infty_tL^1}\right)\\
	\lesssim& \nu^{-\frac13}(\delta^2\nu^{2c}\nu^{-\frac12}\delta\nu^c+\delta\nu^c\nu^{-\frac13}\delta\nu^c\nu^{-\frac{\kappa}{6(1+\kappa)}}\delta\nu^c\nu^{-\frac16})\\&+\nu^{-\frac13}\delta\nu^c\nu^{-\frac{\kappa}{6(1+\kappa)}}(\delta\nu^c\delta\nu^c\nu^{-\frac13}+\delta\nu^c\nu^{-\frac13}\delta\nu^{c-\frac{1}{6}})+\nu^{-\frac16}\delta\nu^c(\delta^2\nu^c\nu^c\nu^{-\frac12}+\delta^2\nu^c \nu^{-\frac{1}{2}}\nu^c\nu^{-\frac16}) \\
	\lesssim&\delta^2\nu^{2c}\cdot\delta\nu^{c-\frac56-\frac{\kappa}{6(1+\kappa)}},
\end{align*}
here we use \eqref{tool7}, \eqref{tool4}, \eqref{tool6} and \eqref{tool5}.

For the terms in the equation of $H_{\neq,1}$, using \eqref{tool5}, \eqref{tool7}, \eqref{tool6} and \eqref{4.11-1} yields that
\begin{align*}
	I_4=&\int_0^t\left|\langle\mathcal{A}H_{\neq,1},\mathcal{A}|\na_L|^{\frac12}\big[U_{\neq,1}^{1,3}\cdot\na_{x,z}\Theta_{\neq,1}+U_{\neq,1}^2\p_y^L\Theta_{\neq,1}\big]\rangle_{H^N}\right|{\rm d}\tau \\
	\leq& \int_0^t\sum_{\substack{k,l\\k',l'}}\iint_{\eta,\eta'}\widehat{\langle\na\rangle^N}\mathcal{A}\widehat{H_{\neq,1}}(k,\eta,l)\widehat{\langle\na\rangle^N}\mathcal{A}p^{\frac14}(k,\eta,l,t)\big[|\widehat{U_{\neq,1}^{1,3}}(k-k',\eta-\eta',l-l')|(k',l')||\widehat{\Theta_{\neq,1}}(k',\eta',l')|\\
	& +|\widehat{U_{\neq,1}^2}(k-k',\eta-\eta',l-l')|\eta'-k't||\widehat{\Theta_{\neq,1}}(k',\eta',l')|\big]{\rm d}\eta{\rm d}\eta'{\rm d}\tau\\
	\leq& \int_0^t\sum_{\substack{k,l\\k',l'}}\iint_{\eta,\eta'}\widehat{\langle\na\rangle^N}\mathcal{A}p^{\frac12}(k,\eta,l)\widehat{H_{\neq,1}}(k,\eta,l)e^{\lambda\nu^{\frac13}\tau}\widehat{\langle\na\rangle^N}\big[|\widehat{U_{\neq,1}^{1,3}}(k-k',\eta-\eta',l-l')(k'^2+l'^2)^{\frac12}p^{-\frac14}(k',\eta',l')\\
	& \times|\widehat{mH_{\neq,1}}(k',\eta',l')|\frac{p^{\frac14}(k',\eta',l')}{(k'^2+l'^2)^{\frac14}}\big]{\rm d}\eta{\rm d}\eta'{\rm d}\tau + \int_0^t\sum_{\substack{k,l\\k',l'}}\iint_{\eta,\eta'}\widehat{\langle\na\rangle^N}\mathcal{A}p^{\frac14}(k,\eta,l)\widehat{H_{\neq,1}}(k,\eta,l)e^{\lambda\nu^{\frac13}\tau}\\
	&\times\widehat{\langle\na\rangle^N}\nu^{-\frac16}\big[|\widehat{U_{\neq,1}^2}(k-k',\eta-\eta',l-l')|\eta'-k't|p^{-\frac14}(k',\eta',l')|\widehat{mH_{\neq,1}}(k',\eta',l')|\big]\nu^{-\frac16}{\rm d}\eta{\rm d}\eta'{\rm d}\tau\\
	\lesssim& \|\mathcal{A}|\na_L|^{\frac{1}{2}} H_{\neq,1}\|_{L^2_tH^N}\big(\|\mathcal{A}(Q_{\neq,1},K_{\neq,1})\|_{L^\infty_tH^N}\|\mathcal{A}\na_L H_{\neq,1}\|_{L^2_tH^N}\\
	& +\nu^{-\frac{1}{6}}\|\mathcal{A}|\na_L|^{-\frac12}(Q_{\neq,1},K_{\neq,1})\|_{L^2_tH^N}\|p^{\frac14}\widehat{mH_{\neq,1}}\|_{L^\infty_tL^1}\nu^{-\frac16}\big) + (\nu^{-\frac16}\|\mathcal{A}|\na_L|^{\frac12}H_{\neq,1}\|_{L^4_tH^N})^2\|\widehat{U^2_{\neq,1}}\|_{L^2_tL^1}\\
	\lesssim& \delta\nu^c\nu^{-\frac13}(\delta^2\nu^{2c}\nu^{-\frac12}+\nu^{-\frac{1}{6}}\delta\nu^c\nu^{-\frac{\kappa}{6(1+\kappa)}}\delta\nu^c\nu^{-\frac16}\delta\nu^c\nu^{-\frac16})+(\nu^{-\frac16}\delta\nu^c\nu^{-\frac14})^2\delta\nu^c\\\lesssim& \delta^2\nu^{2c}\cdot\delta\nu^{c-\frac56-\frac{\kappa}{6(1+\kappa)}}.
\end{align*}

We then handle the terms about nonzero mode interacts with simple zero modes in the equation of $Q_{\neq,1}$. Similar to \eqref{multi1}, we have $mp^{\frac34}p_h^{-\frac12}\lesssim \frac{|l|^{\frac32}}{\sqrt{p_h}}+\min\{p^{\frac12}(k^2+l^2)^{-\frac14},\nu^{-\frac16}p^{\frac14}\}$ which is used to 
\begin{align*}
	J_1=&\int_0^t\big|\langle\mathcal{A}Q_{\neq,1},\mathcal{A}|\na_L|^{\frac32}|\na_{L,h}|^{-1}\big(U_{0,1}^1\p_xU_{\neq,1}^3\big)\rangle_{H^N}\big|{\rm d}\tau\\
	\lesssim& \int_0^t\sum_{\substack{k,l\\l'}}\iint_{\eta,\eta'}\widehat{\langle\na\rangle^N}|\widehat{\mathcal{A}Q_{\neq,1}}(k,\eta,l)|\widehat{\langle\na\rangle^N}e^{\lambda\nu^{\frac13}\tau}mp_h^{-\frac12}p^{\frac34}(k,\eta,l) \big[|\widehat{U_{0,1}^1}(0,\eta-\eta',l-l')||k||\widehat{U_{\neq,1}^3}(k,\eta',l')|\big]{\rm d}\eta{\rm d}\eta'{\rm d}\tau\\
	\lesssim& \int_0^t\sum_{\substack{k,l\\l'}}\iint_{\eta,\eta'}\widehat{\langle\na\rangle^N}|\widehat{\mathcal{A}Q_{\neq,1}}(k,\eta,l)|e^{\lambda\nu^{\frac13}\tau}\widehat{\langle\na\rangle^N}\left(\frac{|l|^{\frac32}}{\sqrt{p_h(k,\eta)}}+\frac{\sqrt{p(k,\eta,l)}}{(k^2+l^2)^{\frac14}}\right)\\
	&\times \big[|\widehat{U_{0,1}^1}(0,\eta-\eta',l-l')||k||\widehat{U_{\neq,1}^3}(k,\eta',l')|\big]{\rm d}\eta{\rm d}\eta'{\rm d}\tau\\
	\lesssim& \int_0^t\sum_{\substack{k,l\\l'}}\iint_{\eta,\eta'}\Big\{\widehat{\langle\na\rangle^N}|l||\widehat{\mathcal{A}Q_{\neq,1}}(k,\eta,l)|e^{\lambda\nu^{\frac13}\tau}\widehat{\langle\na\rangle^N}|l|^{\frac12}\big[|\widehat{U_{0,1}}(0,\eta-\eta',l-l')|\frac{|k|}{\sqrt{p_h(k,\eta)}}|\widehat{U_{\neq,1}^3}(k,\eta',l')|\big]\\
	&+\widehat{\langle\na\rangle^N}p^{\frac12}(k,\eta,l)|\widehat{\mathcal{A}Q_{\neq,1}}(k,\eta,l)|e^{\lambda\nu^{\frac13}\tau}\widehat{\langle\na\rangle^N}\big[|\widehat{U_{0,1}^1}(0,\eta-\eta',l-l')|\frac{|k|}{(k^2+l^2)^{\frac14}}|\widehat{U_{\neq,1}^3}(k,\eta',l')|\big]\Big\}{\rm d}\eta{\rm d}\eta'{\rm d}\tau\\
	\lesssim& \|\mathcal{A}\na_LQ_{\neq,1}\|_{L^2_tH^N}\big(\|\widehat{U^1_{0,1}}\|_{L^\infty_tW^{\frac12,1}}\|\mathcal{A}Q_{\neq,1}\|_{L^2_tH^N}+\|U_{0,1}^1\|_{L^\infty_tH^{N,N+\frac12}}\|e^{\lambda\nu^{\frac13}\tau}\widehat{|\na_{x,z}|^{\frac12}U^3_{\neq,1}}\|_{L^2_tL^1}\big)\\
	\lesssim& \delta\nu^c\nu^{-\frac12}(\delta^2\nu^c\nu^c\nu^{-\frac16}+\delta^2\nu^c\nu^c\nu^{-\frac16})\\
	\lesssim&\delta^2\nu^{2c}\cdot\delta\nu^{c-\frac{2}{3}},
\end{align*}
where we have used the fact that $k'=k$, \eqref{tool1} and \eqref{tool2}. We also use \eqref{tool2} and $m|l|p_h^{-\frac12}p^{-\frac14}\lesssim \frac{\sqrt{|l|}}{\sqrt{p_h}(k^2+l^2)^{\frac14}}\cdot\sqrt{|l|}$ to obtain
\begin{align*}
	J_2=&\int_0^t\big|\langle\mathcal{A}Q_{\neq,1},\mathcal{A}\p_z|\na_L|^{-\frac12}|\na_{L,h}|^{-1}\big[2\p_z U^{1,3}_{0,1}\cdot\na_{x,z} U^3_{\neq,1}+2\p_yU_{0,1}^1\p_xU_{\neq,1}^2\big]\rangle_{H^N}\big|{\rm d}\tau\\
	\lesssim& \int_0^t\sum_{\substack{k,l\\l'}}\iint_{\eta,\eta'}\widehat{\langle\na\rangle^N}\widehat{\mathcal{A}Q_{\neq,1}}(k,\eta,l)e^{\lambda\nu^{\frac13}\tau}m|l|p_h^{-\frac12}p^{-\frac14}(k,\eta,l)\widehat{\langle\na\rangle^N}\big[|l-l'||\widehat{U_{0,1}^{1,3}}(0,\eta-\eta',l-l')|\\
	& \times|(k,l')||\widehat{U_{\neq,1}^{3}}(k,\eta',l')|+|k||\widehat{U_{\neq,1}^2}(k,\eta-\eta',l-l')||\eta'|\widehat{U_{0,1}^{1}}(0,\eta',l')\big]{\rm d}\eta{\rm d}\eta'{\rm d}\tau\\
	\lesssim& \int_0^t\sum_{\substack{k,l\\l'}}\iint_{\eta,\eta'}\widehat{\langle\na\rangle^N}\sqrt{-\frac{\dot{M}}{M}}\widehat{\mathcal{A}Q_{\neq,1}}(k,\eta,l)|l|^{\frac12}e^{\lambda\nu^{\frac13}\tau}\widehat{\langle\na\rangle^N}\big[|l-l'||\widehat{U_{0,1}^{1,3}}(0,\eta-\eta',l-l')|\\
	& \times|(k,l')||\widehat{U_{\neq,1}^{3}}(k,\eta',l')|+|k||\widehat{U_{\neq,1}^2}(k,\eta-\eta',l-l')||\eta'|\widehat{U_{0,1}^{1}}(0,\eta',l')\big]{\rm d}\eta{\rm d}\eta'{\rm d}\tau\\
	\lesssim& \left\|\mathcal{A}\sqrt{-\frac{\dot{M}}{M}}Q_{\neq,1}\right\|_{L^2_tH^N}\big(\|\widehat{|\p_z|^{\frac12}\na U_{0,1}}\|_{L^\infty_tL^1}\|\mathcal{A}\na_L(Q_{\neq,1},K_{\neq,1})\|_{L^2_tH^N} \\
	&+ \|\na U_{0,1}\|_{L^2_tH^{N,N+\frac12}}\|e^{\lambda\nu^{\frac13}\tau}\widehat{|\p_z|^{\frac12}\na_{x,z} U_{\neq,1}}\|_{L^\infty_tL^1}\big) \\
	\lesssim& \delta\nu^c(\delta^2\nu^c\nu^c\nu^{-\frac12}+\delta^2\nu^c\nu^{-\frac12}\nu^c)\\ \lesssim& \delta^2\nu^{2c}\cdot\delta\nu^{c-\frac{1}{2}}.
\end{align*}
For the terms about nonzero mode interacts with simple zero modes in the equation of $K_{\neq,1}$, one has
\begin{align*}
	J_3=&\sqrt{\frac{\beta}{\beta-1}}\int_0^t\big|\langle\mathcal{A}K_{\neq,1},\mathcal{A}|\na_L|^{\frac12}|\na_{L,h}|^{-1}\big[(\p_xU_{\neq,1}\cdot\na U^2_{0,1}-\p_y^LU_{\neq,1}^2\p_y U_{0,1}^1-\p_y U_{0,1}^{1,3}\cdot\na_{x,z} U_{\neq,1}^1)\\
	& +(U_{0,1}^1\p_x W_{\neq,1}^3+U_{0,1}^3\p_z W_{\neq,1}^3)\big]\rangle_{H^N}\big|{\rm d}\tau=:J_{31}+J_{32}
\end{align*}
For $J_{31}$, we use \eqref{tool2}, \eqref{tool4} and \eqref{multi2} to obtain 
\begin{align*}
	J_{31}=&\sqrt{\frac{\beta}{\beta-1}}\int_0^t\big|\langle\mathcal{A}K_{\neq,1},\mathcal{A}|\na_L|^{\frac12}|\na_{L,h}|^{-1}\big[(\p_xU_{\neq,1}\cdot\na U^2_{0,1}-\p_y^LU_{\neq,1}^2\p_y U_{0,1}^1-\p_y U_{0,1}^{1,3}\cdot\na_{x,z} U_{\neq,1}^1)\big]\rangle_{H^N}\big|{\rm d}\tau\\
	\leq& \sqrt{\frac{\beta}{\beta-1}}\int_0^t\sum_{\substack{k,l\\l'}}\iint_{\eta,\eta'}\widehat{\langle\na\rangle^N}\mathcal{A}\widehat{K_{\neq,1}}(k,\eta,l)\mathcal{A}p^{\frac14}p_h^{-\frac12}(k,\eta,l)\widehat{\langle\na\rangle^N}\\
	&\times\big[|k||\widehat{U_{\neq,1}}(k,\eta-\eta',l-l')||(0,\eta',l')||\widehat{U_{0,1}^2}(0,\eta',l')|+|\eta-\eta'-kt||\widehat{U_{\neq,1}^2}(k,\eta-\eta',l-l')||\eta'||\widehat{U_{0,1}^1}(0,\eta',l')| \\
	&+ |\eta-\eta'|\widehat{U_{0,1}^{1,3}}(0,\eta-\eta',l-l')||(k,l')||\widehat{U^1_{\neq,1}}(k,\eta',l')|\big]{\rm d}\eta{\rm d}\eta'{\rm d}\tau\\
	\leq& \sqrt{\frac{\beta}{\beta-1}}\int_0^t\sum_{\substack{k,l\\l'}}\iint_{\eta,\eta'}\widehat{\langle\na\rangle^N}\left(\frac{1}{(k^2+l^2)^{\frac14}}+\sqrt{-\frac{\dot{M}}{M}}\right)\mathcal{A}\widehat{K_{\neq,1}}(k,\eta,l)|l|^{\frac12}e^{\lambda\nu^{\frac13}\tau}\widehat{\langle\na\rangle^N}\\
	&\times\big[|k||\widehat{U_{\neq,1}}(k,\eta-\eta',l-l')||(0,\eta',l')||\widehat{U_{0,1}^2}(0,\eta',l')|+|\eta-\eta'-kt||\widehat{U_{\neq,1}^2}(k,\eta-\eta',l-l')||\eta'||\widehat{U_{0,1}^1}(0,\eta',l')| \\
	& + |\eta-\eta'|\widehat{U_{0,1}^{1,3}}(0,\eta-\eta',l-l')||(k,l')||\widehat{U^1_{\neq,1}}(k,\eta',l')|\big]{\rm d}\eta{\rm d}\eta'{\rm d}\tau\\
	\lesssim&\sqrt{\frac{\beta}{\beta-1}} (\|\mathcal{A}K_{\neq,1}\|_{L^2_tH^N}+\left\|\mathcal{A}\sqrt{-\frac{\dot{M}}{M}}K_{\neq,1}\right\|_{L^2_tH^N})\big(\|\widehat{U_{0,1}}\|_{L^\infty_tW^{2,1}}\|\mathcal{A}\na_L(Q_{\neq,1},K_{\neq,1})\|_{L^2_tH^N}\\
	&   +\|\na U_{0,1}\|_{L^2_tH^{N,N+\frac12}}\|e^{\lambda\nu^{\frac13}\tau}\widehat{|\p_z|^{\frac12}\na_{x,z}U^1_{\neq,1}}\|_{L^\infty_tL^1}+\|\na U_{0,1}\|_{L^2_tH^{N,N+\frac12}}\|e^{\lambda\nu^{\frac13}\tau}\widehat{|\p_z|^{\frac12}\na_LU^2_{\neq,1}}\|_{L^\infty_tL^1}\big)\\
	\lesssim&\sqrt{\frac{\beta}{\beta-1}} \delta(\nu^c\nu^{-\frac16}+\nu^c)\delta^2(\nu^c\nu^c\nu^{-\frac12}+\nu^c\nu^{-\frac12}\nu^c+\nu^c\nu^{-\frac12}\nu^c)\\
	\lesssim&\sqrt{\frac{\beta}{\beta-1}} \delta^2\nu^{2c}\cdot\delta\nu^{c-\frac23}.
\end{align*}
For $J_{32}$, we use \eqref{tool1} and \eqref{tool8} to obtain
\begin{align*}
	J_{32}=&\sqrt{\frac{\beta}{\beta-1}}\int_0^t\big|\langle\mathcal{A}K_{\neq,1},\mathcal{A}|\na_L|^{\frac12}|\na_{L,h}|^{-1}\big[U_{0,1}^1\p_x W_{\neq,1}^3+U_{0,1}^3\p_z W_{\neq,1}^3\big]\rangle_{H^N}\big|{\rm d}\tau\\
	\leq& \int_0^t\sum_{\substack{k,l\\l'}}\iint_{\eta,\eta'}\widehat{\langle\na\rangle^N}\widehat{\mathcal{A}K_{\neq,1}}(k,\eta,l)\mathcal{A}p^{\frac14}p_h^{-\frac12}(k,\eta,l,t)\widehat{\langle\na\rangle^N}\big[|\widehat{U_{0,1}}(0,\eta-\eta',l-l')||k|p^{-\frac14}(k,\eta',l')p_h^{\frac12}(k,\eta')\\
	&\times |\widehat{K_{\neq,1}}(k,\eta',l')|+|\widehat{U_{0,1}^3}(0,\eta-\eta',l-l')||l'|p^{-\frac14}(k,\eta',l')p_h^{\frac12}(k,\eta')|\widehat{K_{\neq,1}}(k,\eta',l')| \big]{\rm d}\eta{\rm d}\eta'{\rm d}\tau\\
	\leq& \int_0^t\sum_{\substack{k,l\\l'}}\iint_{\eta,\eta'}\widehat{\langle\na\rangle^N}\left(\frac{1}{(k^2+l^2)^{\frac14}}+\frac{\sqrt{|l|}\cdot\sqrt{|l|}}{\sqrt{p_h(k,\eta)}(k^2+l^2)^{\frac14}}\right)\widehat{\mathcal{A}K_{\neq,1}}(k,\eta,l)|e^{\lambda\nu^{\frac13}\tau}|\widehat{\langle\na\rangle^N}\big[|\widehat{U_{0,1}}(0,\eta-\eta',l-l')|\\
	&\times |k|p^{-\frac14}(k,\eta',l')p_h^{\frac12}(k,\eta')|\widehat{K_{\neq,1}}(k,\eta',l')|\\
	&+|\widehat{U_{0,1}^3}(0,\eta-\eta',l-l')||l'|p^{-\frac14}(k,\eta',l')p_h^{\frac12}(k,\eta')|\widehat{K_{\neq,1}}(k,\eta',l')| \big]{\rm d}\eta{\rm d}\eta'{\rm d}\tau\\
	\lesssim&\left(\|\mathcal{A}K_{\neq,1}\|_{L^2_tH^N}+\left\|\mathcal{A}\sqrt{-\frac{\dot{M}}{M}}K_{\neq,1}\right\|_{L^2_tH^N}\right)\big(\|U_{0,1}\|_{L^\infty_tH^{N,N+\frac12}}\|e^{\lambda\nu^{\frac13}\tau}|l|^{\frac12}p_h^{\frac12}\widehat{mK_{\neq,1}}\|_{L^2_tL^1}\\
	&+\|U_{0,1}\|_{L^\infty_tH^{N,N+\frac12}}\|e^{\lambda\nu^{\frac13}\tau}|l|^{\frac12}p_h^{\frac12}\widehat{|\na_{x,z}|mK_{\neq,1}}\|_{L^2_tL^1}\big)+\|\mathcal{A}\na_LK_{\neq,1}\|_{L^2_tH^N}\|\widehat{U_{0,1}}\|_{L^\infty_tW^{2,1}}\|\mathcal{A}K_{\neq,1}\|_{L^2_tH^N}\\
	\lesssim& \delta(\nu^c\nu^{-\frac16}+\nu^c)\delta^2(\nu^{2c}\nu^{-\frac12})+\delta\nu^c\nu^{-\frac12}\delta^2\nu^c\nu^c\nu^{-\frac16}\\
	\lesssim&\delta^2\nu^{2c}\cdot\delta\nu^{c-\frac{2}{3}}.
\end{align*}
For the terms about nonzero mode interacts with simple zero modes in the equation of $H_{\neq,1}$, we get
\begin{align*}
	J_4=&\int_0^t\big|\langle\mathcal{A}H_{\neq,1},\mathcal{A}|\na_L|^{\frac12}\big[U_{0,1}^{1,3}\na_{x,z}\Theta_{\neq,1}+U^3_{\neq,1}\p_z\Theta_{0,1}\big]\rangle_{H^N}\big|{\rm d}\tau \\
	\leq& \int_0^t\sum_{\substack{k,l\\l'}}\iint_{\eta,\eta'}\widehat{\langle\na\rangle^N}\widehat{\mathcal{A}H_{\neq,1}}(k,\eta,l)\mathcal{A}p^{\frac14}(k,\eta,l)\widehat{\langle\na\rangle^N}\big[|\widehat{U_{0,1}^{1,3}}(0,\eta-\eta',l-l')||(k,l')||\widehat{\Theta_{\neq,1}}(k,\eta',l')|\\
	& +|\widehat{U_{\neq,1}^3}(k,\eta-\eta',l-l')|l'||\widehat{\Theta_{0,1}}(0,\eta',l')|\big]{\rm d}\eta{\rm d}\eta'{\rm d}\tau\\
	\leq& \int_0^t\sum_{\substack{k,l\\l'}}\iint_{\eta,\eta'}\widehat{\langle\na\rangle^N}p^{\frac12}(k,\eta,l)|\widehat{\mathcal{A}H_{\neq,1}}(k,\eta,l)|(k^2+l^2)^{-\frac14}\widehat{\langle\na\rangle^N}e^{\lambda\nu^{\frac13}\tau}\\
	&\times\big[|\widehat{U_{0,1}^{1,3}}(0,\eta-\eta',l-l')||(k,l')||\widehat{\Theta_{\neq,1}}(k,\eta',l')|+|\widehat{U_{\neq,1}^3}(k,\eta-\eta',l-l')|l'||\widehat{\Theta_{0,1}}(0,\eta',l')|\big]{\rm d}\eta{\rm d}\eta'{\rm d}\tau\\
	\lesssim& \|\mathcal{A}\na_L H_{\neq,1}\|_{L^2_tH^N}\big(\|\widehat{U_{0,1}^{1,3}}\|_{L^\infty_tL^1}\|\mathcal{A}|\na_L|^{\frac12}H_{\neq,1}\|_{L^2_tH^N}+\|U_{0,1}^{1,3}\|_{L^\infty_tH^N}\|e^{\lambda\nu^{\frac13}\tau}\widehat{\na_{x,z}\Theta_{\neq,1}}\|_{L^2_tL^1}\\
	&+\|\widehat{\p_z\Theta_{0,1}}\|_{L^\infty_tL^1}\|\mathcal{A}Q_{\neq,1}\|_{L^2_tH^N}+\|e^{\lambda\nu^{\frac13}\tau}\widehat{U^3_{\neq,1}}\|_{L^4_tL^1}\||\p_z|^{\frac12}\Theta_{0,1}\|_{L^4_tH^{N,N+\frac12}}\big) \\
	\lesssim& \delta\nu^c\nu^{-\frac12}\delta^2(\nu^c\nu^c\nu^{-\frac13}+\nu^c\nu^c\nu^{-\frac16}+\nu^c\nu^c\nu^{-\frac16}+\nu^c\nu^{-\frac{1}{12}}\nu^c\nu^{-\frac14})\\
	\lesssim& \delta^2\nu^{2c}\cdot\delta\nu^{c-\frac{5}{6}},
\end{align*}
here we used the estimate   \eqref{tool2}. 
 
For  nonlinear cross terms, thanks to the fact   $|\hat{G}|\leq 1$,  the proof process for nonlinear terms mentioned above is no different from that for cross-terms, so we omit its proof process. Thus, we  finish the proof of proposition \ref{propEneq1}.  \hfill$\square$

\subsection{ Energy estimates on $U_{0,2} $ and $ \Theta_{0,2}$   }\label{sub4.5}
\qquad  In this section, we     prove that under the bootstrap hypotheses of Proposition \ref{boots}, the estimates on $U_{0,2} $, and $ \Theta_{0,2}$  hold (i.e., \eqref{u02}), with 100 replaced by 50 on the right-hand side. 



\begin{pro}\label{propE02}
	Suppose that the assumptions in Proposition \ref{boots} are true, then it holds that for any $t\in [0,T]$,
	\begin{equation}\label{L55}
	\begin{aligned}
	\sup_{t>0}	&E_{0,2}(t)+\int_0^t F_{0,2}(\tau){\rm d}\tau\\
		\leq&  C\left(\delta^4\nu^{4c-\frac{10}{3}} +{ \delta^6\nu^{6c-\frac{16}{3}} + \delta^8\nu^{2a+6c-\frac{22}{3}}}  + q^{-\frac23}(\delta^2\nu^{2a-\frac53} + {\delta^6\nu^{6c-\frac{16}{3}}} +{\delta^8\nu^{8c-\frac{22}{3}}})\right)\int_0^t F_{0,1}(\tau) {\rm d}\tau\\
		&+ C( q^{-\frac13}{\delta^3\nu^{3c-\frac{8}{3}}} + q^{-\frac13}\delta^2\nu^{2c-\frac53} + q^{-\frac43}\delta^4\nu^{4c-3} + \delta\nu^{a-1})\int_0^t F_{0,2}(\tau){\rm d}\tau \\
		&+ C\delta\nu^{a-\frac23}\int_0^t \left[F_{\neq,1}(\tau)+ F_{\neq,2}(\tau)\right]{\rm d}\tau + \delta^2\nu^{2a}\delta^2\nu^{2a-2}+\delta^4\nu^{4c-\frac43}+{q^{-\frac13}\delta^5\nu^{4c+a-\frac83}}\\
		&+q^{-\frac23}\delta^2\nu^{2c}(\delta^2\nu^{2c-\frac43} + \delta^4\nu^{4c-\frac{10}{3}}),
	\end{aligned}
	\end{equation}
	where $ E_{0,2}(t)$ and $ F_{0,2}(t)$ are defined by \eqref{pre7}
	and \eqref{pre8}, respectively. 
\end{pro}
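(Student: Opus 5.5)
The argument is an $H^{s,s+\frac12}$ energy estimate on the second (perturbation) subsystem \eqref{2ndQuasi1}--\eqref{2ndQuasi3}. These equations have zero initial data, so no $\epsilon_0^2$ term appears and everything on the right of \eqref{L55} is forcing.

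\textbf{Linear part.} I pair each equation with its unknown in $H^{s,s+\frac12}$. The skew couplings cancel:
\begin{itemize}
\item $\mathcal{R}V_{0,2}^2$ against $-\mathcal{R}U_{0,2}^2$;
\item $\mathcal{R}\tilde V^3_{0,2}$ against $-\mathcal{R}\tilde U^3_{0,2}$;
\item $\alpha\overline{\Theta_{0,2}}$ against $-\alpha\overline{U^3_{0,2}}$.
\end{itemize}
The $\Lambda_{0,2}$ equation is a pure heat equation. This gives
$$\frac{d}{dt}E_{0,2}+F_{0,2}\lesssim \sum|\langle \mathcal{N},\cdot\rangle_{H^{s,s+\frac12}}|.$$
The components $\tilde U^1_{0,2},\tilde\Theta_{0,2}$ are not evolved directly. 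I recover them at the end from $(V^2_{0,2},\Lambda_{0,2})$ by inverting \eqref{000}; the symbol matrix $(G_1,G_2;G_3,G_4)$ has determinant bounded below because $h>0$ when $B_\beta>0$.

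\textbf{Sorting the nonlinear terms.} Each difference such as $U_0\cdot\nabla G_0-\tilde U_{0,1}\cdot\nabla G_{0,1}$ expands into force--force, force--perturbation, perturbation--force and perturbation--perturbation pieces, plus the non-zero contributions $(U_{\neq}\cdot\nabla_L U_{\neq})_0-(U_{\neq,1}\cdot\nabla_L U_{\neq,1})_0$. I treat each type as follows.

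\begin{enumerate}
\item \emph{Force--force terms that were filtered out of the first system.} These are the dispersion--thermal and thermal--thermal products, e.g. $\tilde U^{2,3}_{0,1}\nabla\{U^1_{0,1},\Theta_{0,1}\}$, $\overline{U^3_{0,1}}\partial_z U_{0,1}$, and $G_1(\tilde U_{0,1}\cdot\nabla U^1_{0,1})-\tilde U_{0,1}\cdot\nabla V^2_{0,1}$. I put the dispersive factor in $L^\infty$ using Lemma \ref{refine u023} and Lemma \ref{lowDisper}: the bound $(qt)^{-\frac13}e^{-\nu t}\epsilon_0+q^{-\frac13}\nu^{-\frac23}\delta^2\nu^{2c}$, with $\|t^{-\frac13}e^{-\nu t}\|_{L^2_t}\lesssim\nu^{-\frac16}$. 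The gradient factor goes in $L^2_tH^{s+1}\subset$ the $F_{0,1}$ norm, which is available because $N\ge s+4$. Where needed, the $\overline{U^3_{0,1}}$ factor is controlled by Lemma \ref{double0}. Young's inequality produces the coefficients $q^{-\frac23}(\cdots)\int F_{0,1}$ and the absorbable $\int F_{0,2}$ terms, following cases (1)--(2) of the strategy in the introduction.
\item \emph{Force--perturbation and perturbation--force terms.} In $U_{0,1}\cdot\nabla G_{0,2}$ the dispersive $W^{s,\infty}$ factor is again estimated by Lemma \ref{lowDisper}. In $U_{0,2}\cdot\nabla G_{0,1}$ I interpolate $U_{0,2}$ between its $W^{s-3,\infty}$ dispersive bound and its energy bound, as in the introduction. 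These give the $q^{-\frac13}\delta^2\nu^{2c-\frac53}$, $q^{-\frac43}\delta^4\nu^{4c-3}$ and $\delta^3\nu^{3c-\frac83}$ coefficients of $\int F_{0,2}$.
\item \emph{Perturbation--perturbation terms.} These are bounded by $E_{0,2}^{\frac12}F_{0,2}\nu^{-1}\lesssim\delta\nu^{a-1}F_{0,2}$.
\item \emph{Non-zero-mode contributions.} Writing $U_{\neq}=U_{\neq,1}+U_{\neq,2}$, these are bilinear in the non-zero components. I use Lemma \ref{lem4.7}: $\nabla_L$ of one factor in $L^2_t$ and the other in $L^\infty_t$, together with the bootstrap bounds \eqref{qkh1}--\eqref{qkh2}. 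This yields $\delta\nu^{a-\frac23}\int(F_{\neq,1}+F_{\neq,2})$ plus constants like $\delta^4\nu^{4c-\frac43}$.
\end{enumerate}

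\textbf{Double-zero modes.} For \eqref{2ndQuasi3} I use $\nu^{-1}\int\|\cdot\|_{H^s}^2$ of the flux. This term produces $\delta^2\nu^{2a}\delta^2\nu^{2a-2}$.

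\textbf{Main obstacle.} The hardest step is the first type: commuting $G_1,G_2$ past $U_{0,1}\cdot\nabla$ without losing the $\frac12$ $z$-derivative in $H^{s,s+\frac12}$, and handling $\partial_z^{-1}\partial_y$ in the $\tilde V^3$ equation. I would first try estimating the commutator $[G_i,U_{0,1}\cdot\nabla]$ symbolically, using that $g_i$ is homogeneous of degree zero and bounded. The $\partial_z^{-1}$ is harmless because only $l\neq0$ modes occur. The half $z$-derivative is placed on the dispersive $L^\infty$ factor.
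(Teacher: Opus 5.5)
Your architecture is the paper's: the energy identity in $H^{s,s+\frac12}$ with the $\mathcal{R}$- and $\alpha$-couplings cancelling, the force/perturbation sorting, Lemma \ref{lem4.7} for the non-zero contributions, and the flux estimate for \eqref{2ndQuasi3}. There is, however, a genuine gap, and it is not the commutator $[G_i,U_{0,1}\cdot\nabla]$ you single out; $G_1(\tilde U_{0,1}\cdot\nabla\tilde U^1_{0,1})$ and $\tilde U_{0,1}\cdot\nabla V^2_{0,1}$ can be bounded separately, each having a dispersive factor.

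The problem is the perturbation--force term $U_{0,2}\cdot\nabla(U_{0,1},\Theta_{0,1})$. Besides $\tilde U^{2,3}_{0,2}$, the field $U_{0,2}$ contains the double-zero mode $\overline{U^3_{0,2}}$. This produces $\overline{U^3_{0,2}}\,\partial_z(\tilde U^1_{0,1},\tilde\Theta_{0,1})$ in the $V^2_{0,2}$, $\tilde V^3_{0,2}$ and $\Lambda_{0,2}$ equations. That mode does not disperse (it only rotates with $\overline{\Theta_{0,2}}$ at frequency $\alpha$), Lemma \ref{lowDisper} says nothing about it, and the other factor is thermal. So there is no $W^{s-3,\infty}$ bound to interpolate with. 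You invoke Lemma \ref{double0} for $\overline{U^3_{0,1}}$, but your plan has no analogue for $\overline{U^3_{0,2}}$; your double-zero paragraph only gives the $H^s$ bound of size $\delta\nu^a$. With that bound, H\"older gives $\delta\nu^a\cdot\nu^{-\frac12}\delta\nu^c\cdot\nu^{-\frac12}\delta\nu^a=\delta^3\nu^{2a+c-1}$ for this term. After interpolation and Young this becomes a coefficient $\delta^4\nu^{2a+2c-4}$ in front of $\int_0^tF_{0,1}$. Either way it is $\ll\delta^2\nu^{2a}$ only if $c\ge 1$, so the $\frac{14}{15}$ threshold is lost.

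The missing step is the second estimate of Lemma \ref{L11}, namely \eqref{L34}: redo the energy estimate for $(\overline{U^3_{0,2}},\overline{\Theta_{0,2}})$ in the lower norm $H^{s-3}$. Since $\overline{U^2_0}=0$, every zero-mode flux $\partial_y\overline{(U^2_0\tilde F_0)}$ in \eqref{2ndQuasi3} contains a simple-zero factor $U^2_{0,1}$ or $U^2_{0,2}$. At regularity $s-3$ all derivatives can be placed on that factor in $W^{s-3,\infty}$ by Lemma \ref{lowDisper}. This yields $\|(\overline{U^3_{0,2}},\overline{\Theta_{0,2}})\|_{L^\infty_tH^{s-3}}^2\lesssim\delta^6\nu^{2a+4c-\frac{10}{3}}+\delta^4\nu^{4c-\frac43}+\cdots$, which for $a=1$, $c=\frac{14}{15}$ is smaller than $\delta^2\nu^{2a}$ by a positive power of $\nu$. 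One then bounds the term by $\|\overline{U^3_{0,2}}\|_{H^{s-3}}^{\frac12}\|\partial_y\overline{U^3_{0,2}}\|_{H^s}^{\frac12}E_{0,1}^{\frac14}F_{0,1}^{\frac14}\nu^{-\frac14}F_{0,2}^{\frac12}\nu^{-\frac12}$ and applies Young. Indeed, the coefficients $\delta^6\nu^{6c-\frac{16}{3}}+\delta^8\nu^{2a+6c-\frac{22}{3}}$ of $\int_0^tF_{0,1}$ in \eqref{L55} are exactly $\nu^{-4}E_{0,1}$ times the right side of \eqref{L34}.

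A smaller point: your reason for recovering $\tilde U^1_{0,2},\tilde\Theta_{0,2}$ is wrong. Writing $g_i$ for the symbol of $G_i$ in \eqref{000}, one computes for $\beta>1$ that $|g_1g_4-g_2g_3|=\sqrt{\frac{\beta}{\beta-1}}\,\frac{h}{|\eta,l|}\,\frac{l^2}{\eta^2+l^2}$, which is not bounded below. Both $V^2_0$ and $\Lambda_0$ carry a factor $|l|/|\eta,l|$, so the inversion loses $|\eta|/|l|$. The paper also passes over this with a reference to \eqref{000}. A correct version uses $V^2_{0,2}$ together with $\tilde V^3_{0,2}$ for the dispersive component, and estimates $|\nabla|\,|\partial_z|^{-1}\Lambda_{0,2}$, whose forcing symbols $|\eta,l|\,|l|^{-1}g_{3}$ and $|\eta,l|\,|l|^{-1}g_{4}$ remain bounded.
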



Before proving this proposition, we  present the following lemma \ref{L11}. We  first establish an improved estimation of the double-zero modes.

\begin{lem}\label{L11}
Suppose that the assumptions in Proposition \ref{boots} are true, then it holds that for any $t\in [0,T]$,
\begin{equation}
    \begin{aligned}\label{L33}
		&\|(\overline{U^1_{0,2}},\overline{U^3_{0,2}},\overline{\Theta_{0,2}})\|_{L^{\infty}_t H^{s}}^2 + \nu\|\p_y (\overline{U^1_{0,2}},\overline{U^3_{0,2}},\overline{\Theta_{0,2}})\|_{L^2_t H^{s}}^2 \\
		&\quad\lesssim 	\delta^2\nu^{2a}\delta^2\nu^{2a-2}+\delta^4\nu^{4c-\frac43}+{q^{-\frac13}\delta^5\nu^{4c+a-\frac83}}+q^{-\frac23}\delta^2\nu^{2c}(\delta^2\nu^{2c-\frac43} + \delta^4\nu^{4c-\frac{10}{3}}),
	\end{aligned}
\end{equation}
\begin{equation}
	\begin{aligned}\label{L34}
		&\|(\overline{U^3_{0,2}},\overline{\Theta_{0,2}})\|_{L^{\infty}_t H^{s-3}}^2 + \nu\|\p_y (\overline{U^3_{0,2}},\overline{\Theta_{0,2}})\|_{L^2_t H^{s-3}}^2 \\
		&\quad\lesssim 	\delta^2\nu^{2a}\delta^4\nu^{4c-\frac{10}{3}} + \delta^4\nu^{4c-\frac43} + q^{-\frac23}\delta^2\nu^{2c}(\delta^2\nu^{2c-\frac43} + \delta^4\nu^{4c-\frac{10}{3}}).
	\end{aligned}
\end{equation}
\end{lem}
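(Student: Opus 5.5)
Since the double-zero modes are one-dimensional in $y$, the plan is a plain $H^s$ energy estimate on system \eqref{2ndQuasi3}. The zero-order coupling $\alpha\overline{\Theta_{0,2}}$, $-\alpha\overline{U^3_{0,2}}$ between the second and third equations is antisymmetric, so it cancels when we test the $\overline{U^3_{0,2}}$ equation against $\overline{U^3_{0,2}}$ and the $\overline{\Theta_{0,2}}$ equation against $\overline{\Theta_{0,2}}$ and add. Each right-hand side has the form $-\p_y(\cdot)$. After integrating by parts and applying Young's inequality, we get
\[
\|\overline{G_{0,2}}\|_{L^\infty_tH^s}^2+\nu\|\p_y\overline{G_{0,2}}\|_{L^2_tH^s}^2\lesssim \nu^{-1}\int_0^t\|\text{(products)}\|_{H^s}^2\,{\rm d}\tau ,
\]
which is the same structure as in Lemma \ref{double0}, using zero initial data. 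The products then split into four classes, and each is bounded using the bootstrap hypotheses \eqref{qkh1}--\eqref{u02} and Lemma \ref{lowDisper}.

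The first class is the non-zero products, $(\overline{U^2_\neq G_\neq})_0-(\overline{U^2_{\neq,1}G_{\neq,1}})_0$. These are bilinear expressions with at least one factor carrying index $2$. Using the inviscid damping bound \eqref{tool6}, we get $\nu^{-1}\int\|\cdot\|_{H^s}^2\lesssim \nu^{-1}\|\mathcal A U_\neq\|_{L^\infty H^s}^2\|U^2_\neq\|_{L^2_tH^s}^2$. This produces the term $\delta^4\nu^{4c-\frac43}$ (as in Lemma \ref{double0}), together with cross terms dominated by it since $c\le a,b$.

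The second class is the zero-mode products. Write $\overline{U_0^2\tilde G_0}-\overline{U^2_{0,1}\tilde G_{0,1}}=\overline{U^2_{0,2}\tilde G_0}+\overline{U^2_{0,1}\tilde G_{0,2}}$, and keep the extra piece $-\p_y\overline{(U_0^2\tilde\Theta_0)}$ in full. For a factor $U^2_{0,2}$ we use its dispersive $L^\infty$ bound $q^{-1/3}\nu^{-2/3}\delta^2\nu^{2c}$ from Lemma \ref{lowDisper} against $\|\tilde G_0\|_{H^s}$. Only after the $y$-integration by parts do we use the $L^2_t$ dissipation $\nu^{-1/2}\delta\nu^a$ in $\|\p_y\overline{G_{0,2}}\|$. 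This produces the mixed term $q^{-1/3}\delta^5\nu^{4c+a-8/3}$.

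For a factor $U^2_{0,1}$ we split it into a linear part and a nonlinear part as in Lemma \ref{refine u023}. The linear part decays like $(q\tau)^{-1/3}e^{-\nu\tau}\epsilon_0$ in $W^{s,\infty}$, and the nonlinear part is small in $H^{N,N+\frac12}$. This yields $q^{-\frac23}\delta^2\nu^{2c}(\delta^2\nu^{2c-\frac43}+\delta^4\nu^{4c-\frac{10}{3}})$, exactly as in the computation of Lemma \ref{double0}. The purely quadratic perturbation piece $\overline{U^2_{0,2}\tilde G_{0,2}}$, estimated crudely by energy and dissipation, gives $\delta^2\nu^{2a}\delta^2\nu^{2a-2}$.

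The main obstacle is the term $\overline{U^2_0\tilde\Theta_0}$ in the $\overline{\Theta_{0,2}}$ equation, together with $\overline{U_0^2\tilde U_0^3}$. There the thermal factor $\tilde\Theta_0$ has no dispersion. Here I would convert $\tilde\Theta_0$ through \eqref{000} into $V_0^2$ and $\Lambda_0$, so that one factor is always dispersive, and then use that $\Lambda$ is purely heat-like with the energy bound from Lemma \ref{lambda}.

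For \eqref{L34}, the loss of three derivatives is what allows full $L^\infty$ dispersive control of every $U^2_0$ factor in $W^{s-3,\infty}$ from Lemma \ref{lowDisper}. It also exploits \eqref{1.23}, through which $\overline{U^3_{0,1}},\overline{\Theta_{0,1}}$ are themselves quadratically small by Lemma \ref{double0}. This removes the $\delta^2\nu^{2a}\delta^2\nu^{2a-2}$ term and leaves $\delta^2\nu^{2a}\delta^4\nu^{4c-\frac{10}{3}}$. The $\overline{U^1_{0,2}}$ component is excluded from \eqref{L34} because it lacks this structure.
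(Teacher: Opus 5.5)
Your overall route is the paper's: an $H^s$ energy estimate on \eqref{2ndQuasi3} with zero data; non-zero products bounded by the bootstrap; $\overline{U^2_{0,1}\tilde F_0}$ bounded by the dispersive decay of $U^2_{0,1}$ in $W^{s,\infty}$; and $\overline{U^2_{0,2}\tilde F_{0,2}}$ bounded crudely. The gap is the one delicate term, $\partial_y\overline{(U^2_{0,2}\tilde F_{0,1})}$ with $F\in\{U^1,U^3,\Theta\}$.

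Lemma \ref{lowDisper} controls $U_{0,2}$ only in $W^{s-3,\infty}$. So you cannot put $U^2_{0,2}$ in $L^\infty$ against $\|\tilde G_0\|_{H^s}$ when more than $s-3$ derivatives fall on it. The Moser alternative,
\[
\|U^2_{0,2}\|_{L^\infty_tH^s}\|\tilde F_{0,1}\|_{L^2_tL^\infty}\|\partial_y\overline{G_{0,2}}\|_{L^2_tH^s}\lesssim\delta^3\nu^{2a+c-1},
\]
is not $\ll\delta^2\nu^{2a}$ when $c=\frac{14}{15}<1$, so the bootstrap would not close.

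Moving $\partial_y$ onto $\overline{G_{0,2}}$, as you propose, goes the wrong way. It leaves $s$ derivatives on the product, while interpolating the $W^{s-3,\infty}$ bound with the dissipation $\|\nabla\tilde U_{0,2}\|_{H^{s,s+\frac12}}$ reaches only $s-1$. The paper instead uses incompressibility of the zero modes to write $\partial_y\overline{(U^2_{0,2}\tilde F_{0,1})}=\overline{\tilde U^{2,3}_{0,2}\cdot\nabla_{y,z}\tilde F_{0,1}}$. The derivative then falls on the smooth factor, only $H^{s-1}$ is needed, and
\[
\|\overline{\tilde U^{2,3}_{0,2}\cdot\nabla_{y,z}\tilde F_{0,1}}\|_{H^{s-1}}^2\lesssim\|\tilde U^{2,3}_{0,2}\|_{W^{s-3,\infty}}\|\nabla\tilde U_{0,2}\|_{H^{s,s+\frac12}}\|\nabla\tilde F_{0,1}\|_{H^{s+1}}^2 .
\]
This uses one power of dispersion and one power of the dissipation of $\tilde U_{0,2}$ itself, with the surplus derivatives absorbed by $\tilde F_{0,1}$ (this is where $N\ge s+4$ enters). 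Inserting $q^{-\frac13}\nu^{-\frac23}\delta^2\nu^{2c}$, $\nu^{-\frac12}\delta\nu^a$ and $\delta\nu^c\cdot\nu^{-\frac12}\delta\nu^c$ with the prefactor $\nu^{-1}$ gives exactly $q^{-\frac13}\delta^5\nu^{4c+a-\frac83}$. Your accounting (dispersion of $U^2_{0,2}$, times $\|\tilde G_0\|_{H^s}$, times $\|\partial_y\overline{G_{0,2}}\|_{H^s}$) would not produce that power even at low frequencies.

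Two further points are misattributed.
\begin{itemize}
\item The obstacle you single out is not one. In every zero-mode product the factor $U^2_0$ is already dispersive. The paper bounds the full $\overline{U^2_{0,1}\tilde\Theta_{0,1}}$ (present because the $\overline{\Theta_{0,1}}$ equation has no zero-mode nonlinearity) exactly like $\overline{U^2_{0,1}\tilde F_{0,2}}$, namely by $\nu^{-1}\int_0^t\|U^2_{0,1}\|_{W^{s,\infty}}^2\|\tilde\Theta_{0,1}\|_{H^s}^2{\rm d}\tau$. Converting $\tilde\Theta_0$ through \eqref{000} is unnecessary, and would only add the $y$-derivative cost of inverting \eqref{000}.
\item For \eqref{L34}, neither \eqref{1.23} nor Lemma \ref{double0} plays any role. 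Since $\overline{U^2_0\,\overline{G_0}}=0$, the double-zero modes of the first subsystem never enter \eqref{2ndQuasi3}. The gain comes solely from the lower norm, in which $U_{0,2}$ can be placed entirely in $W^{s-3,\infty}$ (or $W^{s-4,\infty}$ for the mixed term, estimated in $H^{s-4}$). This turns $\delta^4\nu^{4a-2}$ into $\delta^2\nu^{2a}\delta^4\nu^{4c-\frac{10}{3}}$, and $q^{-\frac13}\delta^5\nu^{4c+a-\frac83}$ into $q^{-\frac23}\delta^6\nu^{6c-\frac{10}{3}}$. The $\overline{U^1_{0,2}}$ equation has the same structure, so its omission from \eqref{L34} is not structural.
\end{itemize}
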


\pf  Now we consider the perturbation system \eqref{2ndQuasi1},\eqref{2ndQuasi2} and \eqref{2ndQuasi3}.   The energy estimates tell us that 
\begin{align*}
	&\|(\overline{U^1_{0,2}},\overline{U^3_{0,2}},\overline{\Theta_{0,2}})\|_{H^{s}}^2 + \nu\|\p_y (\overline{U^1_{0,2}},\overline{U^3_{0,2}},\overline{\Theta_{0,2}})\|_{L^2_t H^{s}}^2 \\
    & \quad\lesssim \nu^{-1}\int_0^{t} \|\overline{U_{0,1}^2\tilde{\Theta}_{0,1}}\|_{H^{s}}^2+\sum_{F\in U^1,U^3,\Theta}(\|\overline{U_{0,1}^2\tilde{F}_{0,2}}\|_{H^{s}}^2 + {\|\overline{\tilde{U}_{0,2}^{2,3}\cdot\na_{y,z}\tilde{F}_{0,1}}\|_{H^{s-1}}^2}+\|\overline{U_{0,2}^2\tilde{F}_{0,2}}\|_{H^{s}}^2) {\rm d}\tau\\
	&\qquad+  \nu^{-1}\int_0^{t} \sum_{F\in U^1,U^3,\Theta}(\|(\overline{U_{\neq,1}^2F_{\neq,2}})_0\|_{H^{s}}^2 + \|(\overline{U_{\neq,2}^2F_{\neq,1}})_0\|_{H^{s}}^2 + \|(\overline{U_{\neq,2}^2F_{\neq,2}})_0\|_{H^{s}}^2) {\rm d}\tau.
\end{align*}
For $F\in\{U^1,U^3,\Theta\}$, using estimates of Lemma \ref{lowDisper} to handle the zero modes contributions, which gives
{\begin{align*}
    \nu^{-1}\int_0^{t} \|\overline{U_{0,1}^2\tilde{F}_0}\|_{H^{s}}^2{\rm d}\tau\lesssim& \nu^{-1}\int_0^{t} \|U_{0,1}^2\|_{W^{s,\infty}}^2\|(\tilde{F}_{0,1},\tilde{F}_{0,2})\|_{H^{s}}^2{\rm d}\tau\\
	\lesssim& \nu^{-1}\int_0^tq^{-\frac23}\delta^2\nu^{2c}(\tau^{-\frac13}e^{-\nu \tau}+\delta\nu^{c-\frac23})^2\|(\tilde{F}_{0,1},\tilde{F}_{0,2})\|_{H^s}^2{\rm d}\tau\\
	\lesssim& q^{-\frac23}\delta^2(\nu^{2c}+\nu^{2a})(\delta^2\nu^{2c-\frac43}+\delta^4\nu^{4c-\frac{10}{3}}),\\
	\nu^{-1}\int_0^{t} {\|\overline{\tilde{U}_{0,2}^{2,3}\cdot\na_{y,z}\tilde{F}_{0,1}}\|_{H^{s-1}_y}^2}{\rm d}\tau\lesssim& \nu^{-1}\int_0^{t} \|\tilde{U}_{0,2}^{2,3}\|_{W^{s-3,\infty}}\|\na \tilde{U}_{0,2}\|_{H^{s,s+\frac12}}\|\na \tilde{F}_{0,1}\|_{H^{s+1}}^2{\rm d}\tau\\
	\lesssim& q^{-\frac13}\delta^5\nu^{4c+a-\frac83},\\
	\nu^{-1}\int_0^{t} \|\overline{U_{0,2}^2\tilde{F}_{0,2}}\|_{H^{s}}^2{\rm d}\tau\lesssim& \nu^{-1} \|U_{0,2}\|_{L^{\infty}_t H^s}^2\|F_{0,2}\|_{L^2_tH^s}^2\lesssim \delta^4\nu^{4a-2}.
\end{align*}}
For the nonzero modes contributions, we have 
\begin{align*}
	&\nu^{-1}\int_0^{t} \sum_{F\in U^1,U^3,\Theta}(\|(\overline{U_{\neq,1}^2F_{\neq,2}})_0\|_{H^{s}}^2 + \|(\overline{U_{\neq,2}^2F_{\neq,1}})_0\|_{H^{s}}^2 + \|(\overline{U_{\neq,2}^2F_{\neq,2}})_0\|_{H^{s}}^2) {\rm d}\tau\\
	& \quad\lesssim\nu^{-1}\int_0^t (\|\mathcal{A}(Q_{\neq,1},K_{\neq,1},H_{\neq,1})\|_{H^s}^2+\|\mathcal{B}(Q_{\neq,2},K_{\neq,2},H_{\neq,2})\|_{H^s}^2)\|\mathcal{B}(Q_{\neq,2},K_{\neq,2},H_{\neq,2})\|_{H^s}^2{\rm d}\tau\\ 
	&\quad\lesssim \delta^2(\nu^{2c}+\nu^{2b})\delta^2\nu^{2b-\frac43}.
\end{align*}
The proof of  \eqref{L34} is similar to that of \eqref{L33},
it suffice to note that
\begin{align*}
	\nu^{-1}\int_0^{t} {\|\overline{\tilde{U}_{0,2}^{2,3}\cdot\na_{y,z}\tilde{F}_{0,1}}\|_{H^{s-4}_y}^2}{\rm d}\tau\lesssim& \nu^{-1}\int_0^{t} \|\tilde{U}_{0,2}^{2,3}\|_{W^{s-4,\infty}}^2\|\na \tilde{F}_{0,1}\|_{H^s}^2{\rm d}\tau \lesssim q^{-\frac23}\delta^6\nu^{6c-\frac{10}3},\\
	\nu^{-1}\int_0^{t} {\|\overline{U_{0,2}^2\tilde{F}_{0,2}}\|_{H^{s-3}}^2}{\rm d}\tau\lesssim& \nu^{-1} \|U_{0,2}\|_{L^\infty_t W^{s-3,\infty}}^2\|\na \tilde{F}_{0,2}\|_{L^2_tH^s}^2\lesssim \delta^6\nu^{4c+2a-\frac{10}{3}}.
\end{align*}
where we need  $c\leq a$ and $c \leq b$. Thus we finish the proof.\hfill$\square$

{\textit  {Proof of Proposition \ref{propE02}.}} In the following, We will use the above lemma to prove Proposition \ref{propE02}. Energy estimates yield that
\begin{flalign*}
    &\  \frac12  \frac{{\rm d}}{{\rm d}t}(\|U^2_{0,2}\|_{H^{s,s+\frac12}}^2+\|V^2_{0,2}\|_{H^{s,s+\frac12}}^2) + \nu\|\na (U^2_{0,2},V^2_{0,2})\|_{H^{s,s+\frac12}}^2   \\&\quad
   = \langle\mathcal{N}_{\neq}(U_{0,2}^2),U_{0,2}^2\rangle_{H^{s,s+\frac12}}+\langle\mathcal{N}_{\neq}(V_{0,2}^2),V_{0,2}^2\rangle_{H^{s,s+\frac12}}-\langle U_0\cdot\na U_0^2,U^2_{0,2}\rangle_{H^{s,s+\frac12}} + \langle U_{0,1}\cdot\na U_{0,1}^2,U^2_{0,2}\rangle_{H^{s,s+\frac12}} \\
    &\qquad+ 2 \langle \p_{y}\Delta_{L,0}^{-1}(\p_yU_0^2\p_yU_0^2+\p_yU_0^3 \p_zU^2_0),U^2_{0,2}\rangle_{H^{s,s+\frac12}}-2 \langle \p_{y}\Delta_{L,0}^{-1}(\p_yU_{0,1}^2\p_yU_{0,1}^2+\p_y\tilde{U}_{0,1}^3 \p_zU^2_{0,1}),U^2_{0,2}\rangle_{H^{s,s+\frac12}} \\
    &\qquad-  \langle G_1\widetilde{(U_0\cdot\na U^1_0)},V^2_{0,2}\rangle_{H^{s,s+\frac12}} - \langle G_2\widetilde{(U_0\cdot\na \Theta_0)},V^2_{0,2}\rangle_{H^{s,s+\frac12}} +  \langle \tilde{U}_{0,1}\cdot\na V^2_{0,1},V^2_{0,2}\rangle_{H^{s,s+\frac12}},
\end{flalign*}
and 
\begin{flalign*}
    &  \frac12  \frac{{\rm d}}{{\rm d}t}(\|\tilde{U}^3_{0,2}\|_{H^{s,s+\frac12}}^2+\| \tilde{V}^3_{0,2}\|_{H^{s,s+\frac12}}^2) + \nu\|\na (\tilde{U}^3_{0,2}, \tilde{V}^3_{0,2})\|_{H^{s,s+\frac12}}^2\\&
   =\langle\mathcal{N}_{\neq}(U_{0,2}^2),U_{0,2}^2\rangle_{H^{s,s+\frac12}}+\langle\mathcal{N}_{\neq}(V_{0,2}^2),V_{0,2}^2\rangle_{H^{s,s+\frac12}}-\langle \widetilde{U_0\cdot\na U_0^3},\tilde{U}^3_{0,2}\rangle_{H^{s,s+\frac12}} + \langle \widetilde{\tilde{U}_{0,1}\cdot\na\tilde{U}_{0,1}^3},\tilde{U}^3_{0,2}\rangle_{H^{s,s+\frac12}}   \\
   &\quad+  2\langle \p_{z}\Delta_{L,0}^{-1}(\p_yU_0^2\p_yU^2_0+\p_{y}U_0^3 \p_zU^2_0),\tilde{U}^3_{0,2}\rangle_{H^{s,s+\frac12}}- 2\langle \p_{z}\Delta_{L,0}^{-1}(\p_yU_{0,1}^2\p_yU^2_{0,1} + \p_{y}\tilde{U}_{0,1}^3 \p_zU^2_{0,1}),\tilde{U}^3_{0,2}\rangle_{H^{s,s+\frac12}}\\
    &\quad -\langle  \widetilde{G_1'(U_0\cdot\na U^1_0)}, \tilde{V}^3_{0,2}\rangle_{H^{s,s+\frac12}} -  \langle \widetilde{G_2'(U_0\cdot\na \Theta_0)}, \tilde{V}^3_{0,2}\rangle_{H^{s,s+\frac12}}  -  {\langle \p_z^{-1}\p_y(\tilde{U}_{0,1}\cdot\na V_{0,1}^2), \tilde{V}^3_{0,2}\rangle_{H^{s,s+\frac12}}},
\end{flalign*}
and 
\begin{flalign*}
   &\frac12 \frac{{\rm d}}{{\rm d}t}\|\Lambda_{0,2}\|_{H^{s,s+\frac12}}^2+\nu\|\na \Lambda_{0,2}\|_{H^{s,s+\frac12}}^2  \\
   &\quad= \langle\mathcal{N}_{\neq}(\Lambda_{0,2}),\Lambda_{0,2}\rangle_{H^{s,s+\frac12}}+\langle {G_3(\tilde{U}_{0,1}\cdot\na U^1_{0,1})+G_4(\tilde{U}_{0,1}\cdot\na \Theta_{0,1})},\Lambda_{0,2}\rangle_{H^{s,s+\frac12}}\\
   &\qquad- \langle G_3\widetilde{(U_0\cdot\na U_0^1)},\Lambda_{0,2}\rangle_{H^{s,s+\frac12}} - \langle G_4\widetilde{(U_0\cdot\na \Theta_0)},\Lambda_{0,2}\rangle_{H^{s,s+\frac12}}.
\end{flalign*}
Then adding all the above terms together, we have 
\begin{align*}
    \frac{{\rm d}}{{\rm d}t}&E_{0,2}(t) + 2F_{0,2}(t)\\
	& \lesssim \left|\langle \na(U_{0,1}\cdot\na(\tilde{U}_{0,1},\tilde{\Theta}_{0,1})), (U_{0,2}^2,\tilde{U}_{0,2}^3,V_{0,2}^2,\tilde{V}_{0,2}^3)\rangle_{H^{s,s+\frac12}}\right|+ \left|\langle \overline{U_{0,1}^3}\p_z(U_{0,1},\Theta_{0,1}), (\tilde{U}_{0,2},\tilde{\Theta}_{0,2})\rangle_{H^{s,s+\frac12}}\right| \\
	&\quad+ \left|\langle U_{0,1}\cdot\na (U_{0,2},\Theta_{0,2}), (\tilde{U}_{0,2},\tilde{\Theta}_{0,2})\rangle_{H^{s,s+\frac12}}\right| + \left|\langle U_{0,2}\cdot\na (U_{0,1},\Theta_{0,1}), (\tilde{U}_{0,2},\tilde{\Theta}_{0,2})\rangle_{H^{s,s+\frac12}}\right| \\
	&\quad+ \left|\langle U_{0,2}\cdot\na (U_{0,2},\Theta_{0,2}), (\tilde{U}_{0,2},\tilde{\Theta}_{0,2})\rangle_{H^{s,s+\frac12}}\right| + \left|\langle U_{\neq,1}\cdot\na_L (U_{\neq,2},\Theta_{\neq,2})_0, (\tilde{U}_{0,2},\tilde{\Theta}_{0,2})\rangle_{H^{s,s+\frac12}}\right| \\
	&\quad+ \left|\langle U_{\neq,2}\cdot\na_L (U_{\neq,1},\Theta_{\neq,1})_0, (\tilde{U}_{0,2},\tilde{\Theta}_{0,2})\rangle_{H^{s,s+\frac12}}\right| + \left|\langle U_{\neq,2}\cdot\na_L (U_{\neq,2},\Theta_{\neq,2})_0, (\tilde{U}_{0,2},\tilde{\Theta}_{0,2})\rangle_{H^{s,s+\frac12}}\right|\\
	&\lesssim F^{\frac12}_{0,1}(t)\nu^{-\frac12}\left(F_{0,1}^{\frac12}(t)\nu^{-\frac12}\|(U_{0,2}^2,\tilde{U}_{0,2}^3,V_{0,2}^2,\tilde{V}_{0,2}^3)\|_{W^{s-3}} + F_{0,2}^{\frac12}(t)\nu^{-\frac12}\|\overline{U_{0,1}^3}\|_{H^{s,s+\frac12}}\right) \\
	&\quad+ F_{0,2}(t)\nu^{-1} \|(\overline{U_{0,1}^3},\overline{\Theta_{0,1}})\|_{H^{s+1}} + \|\langle\p_z\rangle^{\frac12}\tilde{U}_{0,1}^{2,3}\|_{W^{s,\infty}}F_{0,2}^{\frac12}\nu^{-\frac12}F_{0,2}^{\frac14}(t)\nu^{-\frac14}\|(\tilde{U}_{0,2},\tilde{\Theta}_{0,2})\|_{H^{s+1}}^{\frac12} \\
	&\quad+ { \|\overline{U_{0,2}^3}\|_{H^{s-3}}^{\frac12}\|\p_y\overline{U_{0,2}^3}\|_{H^s}^{\frac12}E_{0,1}^{\frac14}(t)F_{0,1}^{\frac14}(t)\nu^{-\frac14}F_{0,2}^{\frac12}(t)\nu^{-\frac12}} + {F_{0,2}^{\frac34}(t)\nu^{-\frac34}q^{-\frac16}\delta\nu^{c-\frac13}F_{0,1}^{\frac14}(t)\nu^{-\frac14}E_{0,1}^{\frac14}(t)}\\
	&\quad+ \delta\nu^a F_{0,2}(t)\nu^{-1} + \delta\nu^a\cdot F^{\frac12}_{\neq,1}(t)F^{\frac12}_{\neq,2}(t)\nu^{-\frac16-\frac12} + \delta\nu^a F_{\neq,2}(t)\nu^{-\frac16}\nu^{-\frac12}\\
	&\lesssim   \left({\delta^8\nu^{2a+6c-\frac{22}{3}} + \delta^6\nu^{6c-\frac{16}{3}}} + \delta^4\nu^{4c-\frac{10}{3}} + q^{-\frac23}(\delta^2\nu^{2a-\frac53} + {\delta^6\nu^{6c-\frac{16}{3}}} + {\delta^8\nu^{8c-\frac{22}{3}}} )\right)F_{0,1}(t) \\
	&\quad+F_{0,2}(t)+ (q^{-\frac13}{\delta^3\nu^{3c-\frac{8}{3}}} + q^{-\frac13}\delta^2\nu^{2c-\frac53} + q^{-\frac43}\delta^4\nu^{4c-3} + \delta\nu^{a-1})F_{0,2}(t) \\
	&\quad+ \delta\nu^{a-\frac23}F^{\frac12}_{\neq,1}(t)F^{\frac12}_{\neq,2}(t) + \delta\nu^{a-\frac23} F_{\neq,2}(t),
\end{align*}
where in the fifth line from the bottom, we have used the following fact for $G\in\{U,\Theta\}$ 
\begin{align*}
	\langle \tilde{U}_{0,2}^{2,3}\cdot\na_{y,z}G_{0,1},\tilde{G}_{0,2}\rangle_{H^{s,s+\frac12}}\lesssim& F_{0,2}^{\frac12}(t)\nu^{-\frac12}\|\langle\p_z\rangle^{\frac12}\left(\tilde{U}_{0,2}^{2,3}\cdot\na_{y,z}G_{0,1}\right)\|_{H^{s-1}}\\
	\lesssim& F_{0,2}^{\frac12}(t)\nu^{-\frac12}\|\langle\p_z\rangle^{\frac12}\tilde{U}_{0,2}^{2,3}\|_{W^{s-3,\infty}}^{\frac12}\|\na \tilde{U}_{0,2}\|_{H^{s,s+\frac12}}^{\frac12}\|\na G_{0,1}\|_{H^{s+\frac32}}\\
	\lesssim& F_{0,2}^{\frac34}(t)\nu^{-\frac34}q^{-\frac16}\delta\nu^{c-\frac13}F_{0,1}^{\frac14}(t)\nu^{-\frac14}E_{0,1}^{\frac14}(t).
\end{align*}
By using \eqref{000}, we can return to the estimates of $\tilde{U}^1_{0,2}$ and $\tilde{\Theta}_{0,2}$. Integrating the above inequality respect to $t$  immediately gives \eqref{L55}. Thus, the proof of Proposition \ref{propE02} is finished.\hfill$\square$

\subsection{  Energy estimates on $Q_{\neq,2}, K_{\neq,2}$ and $H_{\neq,2}$}\label{sub4.6} 
 \qquad  In this section, we  aim to   prove that under the bootstrap hypotheses of Proposition \ref{boots}, the estimates on $U_{\neq,2} $, and $ \Theta_{\neq,2}$  hold (i.e., \eqref{qkh2}), with 100 replaced by 50 on the right-hand side.  
\begin{pro}\label{propEneq2}
Suppose that the assumptions in Proposition \ref{boots} are true, then it holds that for any $t\in [0,T]$,
	\begin{equation}
	\begin{aligned}\label{L66}
		&\sup_{t>0}E_{\neq,2}(t)+  \int_0^tF_{\neq,2}(\tau){\rm d}\tau\\
		&\quad\lesssim  \max\left\{1,\frac{\beta}{\beta-1}\right\}\left[\left(\delta\nu^{c-\frac56} + \delta(\nu^{b-1}+\nu^{a-1})+\delta\nu^{\frac{\kappa}{6(1+\kappa)}-\frac{1}{12}}\right) \int_0^tF_{\neq,2}(\tau){\rm d}\tau\right. \\
		&\qquad+ \left(\delta^2\nu^{2c-\frac43}+\delta^2\nu^{2b-\frac53-\frac{\kappa}{3(1+\kappa)}}+\delta^2\nu^{2a-2+\frac{\kappa}{3(1+\kappa)}} +  \delta\nu^{b-\frac23} + \delta^2\nu^{2c-1-\frac{\kappa}{3(1+ \kappa)}}+\delta^2\nu^{2a-\frac{4}{3}}\right)\int_0^t F_{\neq,1}(\tau){\rm d}\tau\\
		&\qquad \left.+ \left(\delta\nu^{-\frac16+\frac{\kappa}{3(1+\kappa)}} + \delta^2\nu^{2b-2}+\delta^2 \nu^{2c-\frac{11}{6}}+\delta^4\nu^{2a+2b-4}\right)\int_0^t F_{0,2}(\tau){\rm d}\tau\right],
	\end{aligned}
	\end{equation}
 where $ E_{\neq,2}(t)$ and $ F_{\neq,2}(t)$ are defined by \eqref{pre3} and \eqref{pre4}, respectively.
\end{pro}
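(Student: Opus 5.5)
The plan is to mirror the proof of Proposition \ref{propEneq1}, now at the lower regularity $H^s$ with the weight $\mathcal{B}=mMe^{\frac{\lambda}{2}\nu^{\frac13}t}$ and zero initial data. First, I run the linear energy argument of Lemma \ref{lem3.4} for the system \eqref{Non0NL2}, including the cross term $2\langle G\mathcal{B}Q_{\neq,2},\mathcal{B}K_{\neq,2}\rangle_{H^s}$ and the coercivity bound \eqref{00} (this uses $B_\beta>1/4$). Since the initial data vanish, this gives
\[
\sup_t E_{\neq,2}+\int_0^tF_{\neq,2}\lesssim \int_0^t\big|\langle \mathcal{B}(Q,K,H)_{\neq,2},\mathcal{B}\,\mathcal{N}_2\rangle_{H^s}\big|\,{\rm d}\tau
\]
plus the analogous cross terms. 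The cross terms are handled identically because $|\widehat G|\le1$. The factor $\max\{1,\beta/(\beta-1)\}$ comes from the $K$-equation prefactor $\sqrt{\beta/(\beta-1)}$ and from recovering $U^{1,2}_{\neq}$ via \eqref{ll6}--\eqref{ll7}.

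Next, I sort the right-hand side of \eqref{Non0NL2} by interaction type:
\begin{itemize}
\item[(i)] force--perturbation and perturbation--perturbation terms among non-zero modes;
\item[(ii)] the pure force--force term $2\partial_zU^2_{\neq,1}\partial_y^LU^3_{\neq,1}$, together with $U^{2,3}_{\neq,1}\nabla^L_{y,z}U^3_{\neq,1}$;
\item[(iii)] non-zero/zero interactions, with the zero mode drawn from either $U_{0,1}$ or $U_{0,2}$.
\end{itemize}
For (i), I reuse the multiplier inequalities \eqref{multi1}, \eqref{multi2}, \eqref{multi3}, \eqref{multi4} and the tools \eqref{tool1}--\eqref{tool6}, placing one factor in $L^\infty_t$ ($E^{1/2}\lesssim\delta\nu^{c}$ or $\delta\nu^b$) and two in $L^2_t$ ($F$-type). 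This produces coefficients $\delta\nu^{c-\frac56}$, $\delta\nu^{b-1}$ and the $\kappa$-dependent losses multiplying $\int F_{\neq,2}$. The bilinear force--perturbation terms are bounded by Cauchy--Schwarz as $\delta\nu^{b-\frac23}\int F_{\neq,1}+\ldots$. For (iii) with $U_{0,1}$, I use the $W^{s,\infty}$ bounds of Lemma \ref{lowDisper} and the $H^{N}$ control of $U_{0,1}$. With $U_{0,2}$, I use \eqref{u02}, Lemma \ref{L11} for the double-zero parts, and the $F_{0,2}$ dissipation. These give the $\int F_{0,2}$ coefficients $\delta^2\nu^{2b-2}$, $\delta^2\nu^{2c-\frac{11}6}$ and so on, after Young's inequality splits products such as $F_{\neq,2}^{1/2}F_{0,2}^{1/2}$.

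The main obstacle is (ii), the term that forced the quasi-linear splitting, where $|l|\gg p_h^{1/2}$ prevents using $|\nabla_{L,h}|^{-1}$ or inviscid damping. Here I exploit the regularity gap $N\ge s+4$. I put all $s$ derivatives and the multiplier loss $mp^{3/4}p_h^{-1/2}\lesssim |l|^{3/2}+\nu^{-\frac16}p^{1/4}$ onto the force factors $U_{\neq,1}$, which are controlled in $H^{s+4}$ through $\mathcal{A}$. I keep $\mathcal{B}Q_{\neq,2}$ in $L^\infty_tH^s$ and spend the two $\nu^{-\frac16}$ losses from $m$'s bounds \eqref{4.11-1}, exactly as in the sketch of Strategy 2. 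Note $\mathcal{B}\le\mathcal{A}$ pointwise, since the exponential weight is smaller. This yields $\nu^{-\frac23}E_{\neq,2}^{1/2}\int F_{\neq,1}$, which after Young's inequality contributes to the $\int F_{\neq,1}$ coefficient and to $E_{\neq,2}$ with a small factor.

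Collecting all terms gives \eqref{L66}.
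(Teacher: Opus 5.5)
Your proposal is correct and follows essentially the same route as the paper. The shared steps are: the linear energy argument with the $G$-cross term and zero initial data; a term-by-term treatment of \eqref{Non0NL2} organized by how many factors come from the force subsystem; and the key term $U^{2,3}_{\neq,1}\nabla^L_{y,z}U^3_{\neq,1}$, which uses the gap $N\geq s+4$ with $\mathcal{B}Q_{\neq,2}$ kept in $L^\infty_tH^s$ to give $\nu^{-\frac23}E_{\neq,2}^{1/2}\int_0^tF_{\neq,1}$. The only difference is that the paper closes this term with the bootstrap bound $E_{\neq,2}^{1/2}\lesssim\delta\nu^{b}$, giving $\delta\nu^{b-\frac23}$, while your Young's-inequality route gives $\delta^2\nu^{2c-\frac43}$ after invoking \eqref{qkh1}; both coefficients appear in \eqref{L66} and require the same condition $2c-\frac23\geq b$.
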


\pf According to the linear analysis, an energy estimate yields
\begin{equation}
	\begin{aligned}\label{LLL}
		&\|\mathcal{B}(Q_{\neq,2},K_{\neq,2},H_{\neq,2})\|_{H^s}^2 + \nu\|\mathcal{B}\na_L(Q_{\neq,2},K_{\neq,2},H_{\neq,2})\|_{L^2_tH^s}^2 + \left\|\mathcal{B}\sqrt{-\frac{\dot{M}}{M}}(Q_{\neq,2},K_{\neq,2},H_{\neq,2})\right\|_{L^2_tH^s}^2\\
		&\quad\lesssim  \frac{\nu^{\frac13}}{32}\|\mathcal{B}(Q_{\neq,2},K_{\neq,2},H_{\neq,2})\|_{L^2_tH^s}^2+\frac{4\sqrt{\beta(\beta-1)}}{2\sqrt{\beta(\beta-1)}-1}\int_0^t\mathcal{N}_{\mathcal{B}1}{\rm d}\tau+\frac{4\sqrt{\beta(\beta-1)}}{2\sqrt{\beta(\beta-1)}-1}\int_0^tG\mathcal{N}_{\mathcal{B}2}{\rm d}\tau,
	\end{aligned}
\end{equation}
where
\begin{align*}
	\int_0^t\mathcal{N}_{\mathcal{B}1}{\rm d}\tau=&\int_0^t\langle  \mathcal{B}Q_{\neq,2},\text{Nonlinear terms in the equation of }Q_{\neq,2}\rangle_{H^s}{\rm d}\tau\\
	&+\int_0^t\langle \mathcal{B}K_{\neq,2},\text{Nonlinear terms in the equation of }K_{\neq,2}\rangle_{H^s}{\rm d}\tau,\\
	&+\int_0^t\langle  \mathcal{B}H_{\neq,2},\text{Nonlinear terms in the equation of }H_{\neq,2}\rangle_{H^s}{\rm d}\tau,
\end{align*} 
and $\int_0^tG\mathcal{N}_{\mathcal{B}2}{\rm d}\tau$ is the associated  nonlinear cross terms
\begin{align*}
	\int_0^tG\mathcal{N}_{\mathcal{B}2}{\rm d}\tau=&\int_0^t\langle G\mathcal{B}K_{\neq,2},\text{Nonlinear terms in the equation of }Q_{\neq,2}\rangle_{H^s}{\rm d}\tau\\
	&+\int_0^t\langle G\mathcal{B}Q_{\neq,2},\text{Nonlinear terms in the equation of }K_{\neq,2}\rangle_{H^s}{\rm d}\tau.
\end{align*} 

We are ready to estimate each term on the right-hand side \eqref{LLL} as follows.

\textit{4.5.1 Contribution from two external force terms:} 
Using \eqref{3x.1}, \eqref{ll7} and \eqref{multi1}, we have
\begin{align*}
	&\big|\langle\mathcal{B}Q_{\neq,2},\mathcal{B}|\na_L|^{\frac32}|\na_{L,h}|^{-1}\big[U_{\neq,1}^2\p_y^L U_{\neq,1}^3+U_{\neq,1}^3\p_zU_{\neq,1}^3\big]\rangle_{H^s}\big|\\
    &\quad \lesssim\|\mathcal{B}Q_{\neq,2}\|_{H^{s}}\nu^{-\frac{1}{6}}\langle t\rangle^{\frac12}e^{-\frac{\lambda}{2}\nu^{\frac{1}{3}}t}\left\| |\nabla|^{\frac{5}{2}}\left(|\nabla|^{\frac{1}{2}}|\nabla|\langle t\rangle^{-1}e^{\lambda\nu^{\frac{1}{3}}t}m(K_{\neq,1},Q_{\neq,1})|\nabla|\langle t\rangle mQ_{\neq,1} \right) \right\|_{H^{s}}\\&\qquad+\|BQ_{\neq,2}\|_{H^{s}}\nu^{-\frac{1}{6}}\langle t\rangle^{\frac12}e^{-\frac{\lambda}{2}\nu^{\frac{1}{3}}t}\left\||\nabla|^{\frac{5}{2}}\left(mQ_{\neq,1}e^{\lambda\nu^{\frac{1}{3}}t}|\nabla|mQ_{\neq,1}\right)\right\|_{H^{s}}\\&\quad\lesssim
    \|BQ_{\neq,2}\|_{H^{s}}\nu^{-\frac{1}{6}}\nu^{-\frac{1}{6}}\|\mathcal{A}(K_{\neq,1},Q_{\neq,1})\|_{H^{s+4}}\|\mathcal{A}Q_{\neq,1}\|_{H^{s+\frac{7}{2}}} \\&\quad\lesssim (\delta \nu^{b})\nu^{-\frac{1}{6}}\nu^{-\frac{1}{6}}F_{\neq,1}\nu^{-\frac{1}{6}}F_{\neq,1}\nu^{-\frac{1}{6}}\\&\quad\lesssim\delta\nu^{b-\frac{2}{3}}F_{\neq,1},
\end{align*}
where we have used   $mp^{\frac34}p_h^{-\frac12}\lesssim\nu^{-\frac16}|k, \eta, l|^{\frac52}\langle t\rangle^{\frac12}$ due to the fact $p\lesssim \langle t\rangle^{2}|k,\eta,l|^{2}$ and $p^{-1}\lesssim \langle t\rangle^{-2}|k,\eta,l|^{2}$.
For the term { $U_{0,1}^2\p_y^LU_{\neq,1}^3$}, when $|l|\leq\sqrt{p_h}$, we use the fact that $p^{\frac34}p_h^{-\frac12}\lesssim p_h^{\frac14}$ and  \eqref{4.11-3} to obtain
{\begin{align*}
	&\big|\langle\mathcal{B}Q_{\neq,2},\mathcal{B}|\na_L|^{\frac32}|\na_{L,h}|^{-1}(U_{0,1}^2\p_y^LU_{\neq,1}^3)\rangle_{H^s}\big|\\
	&\quad\leq \sum_{\substack{k,l\\k',l'}}\iint_{\eta,\eta'}\widehat{\langle\na\rangle^s}\widehat{\mathcal{B}Q_{\neq,2}}(k,\eta,l)e^{\frac{\lambda}{2}\nu^{\frac13}\tau}mp^{\frac34}p_h^{-\frac12}(t,k,\eta,l)\widehat{\langle\na\rangle^s}\big[\widehat{U_{0,1}^2}(0,\eta-\eta',l-l') |\widehat{|\na_L|^{\frac12}Q_{\neq,1}}(k,\eta',l')|\big]{\rm d}\eta{\rm d}\eta'\\
	&\quad\leq \sum_{\substack{k,l\\k',l'}}\iint_{\eta,\eta'}\widehat{\langle\na\rangle^s}\widehat{\mathcal{B}|\na_L|^{\frac12}Q_{\neq,2}}(k,\eta,l)e^{\frac{\lambda}{2}\nu^{\frac13}\tau}\widehat{\langle\na\rangle^s}\big[\widehat{|\na|^{\frac12}U_{0,1}^2}(0,\eta-\eta',l-l') |\widehat{|\na_L|^{\frac12}mQ_{\neq,1}}(k,\eta',l')|\big]{\rm d}\eta{\rm d}\eta'\\
	&\quad\lesssim F_{\neq,2}^{\frac12}(t)\nu^{-\frac13}\|U_{0,1}^2\|_{H^{s+\frac12}}F_{\neq,1}^{\frac12}(t)\nu^{-\frac13} \\
	&\quad\lesssim \delta F_{\neq,2}(t) + \delta\nu^{2c-\frac43} F_{\neq,1}(t),
\end{align*}}
when $|l|>\sqrt{p_h}$, by using \eqref{4.11-3} and \eqref{3x.1}  we have 
{\begin{align*}
	&\big|\langle\mathcal{B}Q_{\neq,2},\mathcal{B}|\na_L|^{\frac32}|\na_{L,h}|^{-1}(U_{0,1}^2\p_y^LU_{\neq,1}^3)\rangle_{H^s}\big|\\
&\quad\lesssim \big|\langle\mathcal{B}|\na_{L,h}|^{-1}Q_{\neq,2},e^{\frac{\lambda}{2}\nu^{\frac13}t}\big[|\p_z|^{\frac32}U_{0,1}^2|\p_z|^{2}m|\na_L|^{\frac12}Q_{\neq,1}\big]\rangle_{H^s}\big| \\
&\quad\lesssim F_{\neq,2}^{\frac12}(t)\|U_{0,1}^2\|_{H^{s+2}}F_{\neq,1}^{\frac12}(t)\nu^{-\frac13}\\&\quad\leq \delta F_{\neq,2}(t)+ \delta\nu^{2c-\frac{2}{3}}F_{\neq,1}(t).
\end{align*}}

Due to \eqref{6666},   \eqref{3x.1} we have 
\begin{align*}
	&\big|\langle\mathcal{B}Q_{\neq,2},\mathcal{B}|\na_L|^{\frac32}|\na_{L,h}|^{-1}(U_{0,1}^3\p_zU_{\neq,1}^3)\rangle_{H^s}\big|\\
	&\quad\lesssim \big|\langle\mathcal{B}\na_{L}Q_{\neq,2},e^{\frac{\lambda}{2}\nu^{\frac13}t}\big[\langle\p_z\rangle^{\frac12}U_{0,1}^2\langle\p_z\rangle mQ_{\neq,1}\big]\rangle_{H^s}\big|\\
	&\quad\lesssim F_{\neq,2}^{\frac12}(t)\nu^{-\frac12}E_{0,1}^{\frac12}(t)F_{\neq,1}^{\frac12}(t)\nu^{-\frac16}\\&\quad\leq \delta F_{\neq,2}(t) + \delta\nu^{2c-\frac43}F_{\neq,1}(t).
\end{align*}
Direct calculation shows
\begin{align*}
	&\big|\langle\mathcal{B}Q_{\neq,2},\mathcal{B}|\na_L|^{\frac32}|\na_{L,h}|^{-1}\big[U_{\neq,1}^{2,3}\cdot\na_{y,z}U_{0,1}^3\big]\rangle_{H^s}\big|\\
	&\quad\lesssim F^{\frac12}_{\neq,2}(t)\nu^{-\frac16}\cdot\nu^{-\frac16}\cdot\langle t\rangle^{\frac12}e^{-\frac{\lambda}{2}\nu^{\frac13}t}F^{\frac12}_{\neq,1}(t)\nu^{-\frac16}(\delta\nu^c)\\
	&\quad\lesssim \delta F_{\neq,2}(t) + \delta\nu^{2c-\frac43}F_{\neq,1}(t),
\end{align*}
and
\begin{align*}
	&\big|\langle\mathcal{B}Q_{\neq,2}, \mathcal{B}\p_z|\na_{L,h}|^{-1}|\na_L|^{-\frac12}\big[\p_zU_{\neq,1}^2\p_y^LU_{\neq,1}^3+\p_yU_{0,1}^3\p_zU_{\neq,1}^2+\p_iU_{0,1}^2\p_y^LU_{\neq,1}^i\big]\rangle_{H^s}\big|\\
	&\quad\lesssim F^{\frac12}_{\neq,2}(t)(\delta\nu^c)F^{\frac12}_{\neq,1}(t)\nu^{-\frac12}\\
	&\quad\lesssim \delta F_{\neq,2}(t) + \delta\nu^{2c-1}F_{\neq,1}(t).
\end{align*}
where we have used the fact $mp^{\frac{3}{4}}p_{h}^{-\frac{1}{2}}\lesssim m|\hat{\nabla}|^{\frac{5}{2}}\langle t \rangle^{\frac{1}{2}}$ and $m|l|p_h^{-\frac{1}{2}}p^{-\frac{1}{4}}\lesssim \frac{\sqrt{|l|}}{\sqrt{p_{h}}}.$

We now focus on the equation of $K_{\neq,2}$. 
For the term { $U_{0,1}^2\p_y^LW_{\neq,1}^3$}, if $|l|\leq\sqrt{p_h}$, using \eqref{yyy}, \eqref{4.11-3} and $p^{\frac14}p_h^{-\frac12}\lesssim p_h^{-\frac14}$ and integrating by parts, one has
{\begin{align*}
	&\sqrt{\frac{\beta}{\beta-1}}\big|\langle\mathcal{B}K_{\neq,2},\mathcal{B}|\na_L|^{\frac12}|\na_{L,h}|^{-1}(U_{0,1}^2\p_y^LW_{\neq,1}^3)\rangle_{H^s}\big|\\
	&\leq \sqrt{\frac{\beta}{\beta-1}}(\big|\langle\mathcal{B}K_{\neq,2},\mathcal{B}|\na_L|^{\frac12}(U_{0,1}^2W_{\neq,1}^3)\rangle_{H^s}\big| + \big|\langle\mathcal{B}K_{\neq,2},\mathcal{B}|\na_L|^{\frac12}|\na_{L,h}|^{-1}(\p_yU_{0,1}^2W_{\neq,1}^3)\rangle_{H^s}\big|)\\
	&\leq \sum_{\substack{k,l\\k',l'}}\iint_{\eta,\eta'}\widehat{\langle\na\rangle^s}\widehat{\mathcal{B}K_{\neq,2}}(k,\eta,l)e^{\lambda\nu^{\frac13}\tau}mp^{\frac14}(t,k,\eta,l)\widehat{\langle\na\rangle^s}\big[\widehat{U_{0,1}^2}(0,\eta-\eta',l-l') |\widehat{|\na_L|^{\frac12}K_{\neq,1}}(k,\eta',l')|\big]{\rm d}\eta{\rm d}\eta'\\
	&\quad+ \sum_{\substack{k,l\\k',l'}}\iint_{\eta,\eta'}\widehat{\langle\na\rangle^s}\widehat{\mathcal{B}K_{\neq,2}}(k,\eta,l)e^{\lambda\nu^{\frac13}\tau}mp^{\frac14}p_h^{-\frac12}(t,k,\eta,l)\widehat{\langle\na\rangle^s}\big[\widehat{\p_yU_{0,1}^2}(0,\eta-\eta',l-l') |\widehat{|\na_L|^{\frac12}K_{\neq,1}}(k,\eta',l')|\big]{\rm d}\eta{\rm d}\eta'\\
	&\leq \sum_{\substack{k,l\\k',l'}}\iint_{\eta,\eta'}\widehat{\langle\na\rangle^s}\widehat{\mathcal{B}|\na_L|^{\frac12}K_{\neq,2}}(k,\eta,l)e^{\lambda\nu^{\frac13}\tau}\widehat{\langle\na\rangle^s}\big[\widehat{|\na_{y,z}|^{\frac12}U_{0,1}^2}(0,\eta-\eta',l-l') |\widehat{|\na_L|^{\frac12}mK_{\neq,1}}(k,\eta',l')|\big]{\rm d}\eta{\rm d}\eta'\\
	&\quad + \sum_{\substack{k,l\\k',l'}}\iint_{\eta,\eta'}\widehat{\langle\na\rangle^s}\widehat{\mathcal{B}|\na_{L,h}|^{-\frac12}K_{\neq,2}}(k,\eta,l)e^{\lambda\nu^{\frac13}\tau}\widehat{\langle\na\rangle^s}\big[\widehat{|\na_{y,z}|^{\frac12}\p_yU_{0,1}^2}(0,\eta-\eta',l-l') |\widehat{|\na_L|^{\frac12}mK_{\neq,1}}(k,\eta',l')|\big]{\rm d}\eta{\rm d}\eta'\\
	&\lesssim F_{\neq,2}^{\frac12}(t)\nu^{-\frac13}\|U_{0,1}^2\|_{H^{s+\frac12}}F_{\neq,1}^{\frac12}(t)\nu^{-\frac13} + F_{\neq,2}^{\frac12}(t)\nu^{-\frac{\kappa}{6(1+\kappa)}}\|U_{0,1}^2\|_{H^{s+\frac32}}F_{\neq,1}^{\frac12}(t)\nu^{-\frac13}\\
	&\lesssim \delta F_{\neq,2}(t) + \delta(\nu^{2c-\frac43}+\nu^{2c-\frac23-\frac{\kappa}{3(1+\kappa)}}) F_{\neq,1}(t),
\end{align*}}
if $|l|>\sqrt{p_h}$, using the fact that $mp^{\frac14}p_h^{-\frac12}\lesssim \frac{|l|^{\frac12}}{\sqrt{p_h}}$ and \eqref{yyy} and  integrating by parts yields that
{\begin{align*}
	&{\sqrt{\frac{\beta}{\beta-1}}\big|\langle \mathcal{B}K_{\neq,2},\mathcal{B}|\na_L|^{\frac12}|\na_{L,h}|^{-1}\big[U_{0,1}^2\p_y^LW_{\neq,1}^3\big]\rangle_{H^s}\big|}\\
	&\quad\leq\big|\langle \mathcal{B}|\na_{L,h}|^{-1}K_{\neq,2},e^{\frac{\lambda}{2}\nu^{\frac13}t}|\p_z|^{\frac12}\big[U_{0,1}^2\p_y^L|\na_L|^{-\frac12}|\na_{L,h}|mK_{\neq,1}\big]\rangle_{H^s}\big|\nu^{-\frac16}\\
	&\quad\leq e^{\frac{\lambda}{2}\nu^{\frac13}t}\nu^{-\frac16}\big(\big|\langle \mathcal{B}K_{\neq,2},\big[|\p_z|^{\frac12}U_{0,1}^2|\p_z|^{\frac12}m|\na_L|^{\frac12}K_{\neq,1}\big]\rangle_{H^s}\big| \\&\qquad  + \big|\langle \mathcal{B}|\na_{L,h}|^{-1}K_{\neq,2},\big[|\p_z|^{\frac12}\p_yU_{0,1}^2|\p_z|^{\frac12}m|\na_L|^{\frac12}K_{\neq,1}\big]\rangle_{H^s}\big|\big)\\&
	\quad\lesssim \nu^{-\frac16}F_{\neq,2}^{\frac12}(t)\nu^{-\frac16}\|U_{0,1}^2\|_{H^{s+\frac12}}F_{\neq,1}^{\frac12}(t)\nu^{-\frac13} + \nu^{-\frac16}F_{\neq,2}^{\frac12}(t)\|U_{0,1}^2\|_{H^{s+\frac32}}F_{\neq,1}^{\frac12}(t)\nu^{-\frac13}\\& \quad\lesssim \delta F_{\neq,2}(t) + \delta F_{\neq,1}(t)\nu^{2c-\frac43}.
\end{align*}}

By using $mp^{\frac14}p_h^{-\frac12}\lesssim p^\frac{1}{2}p_{h}^{-\frac{1}{2}} \lesssim|\hat{\na}|^2 $, one has
\begin{align*}
	&\sqrt{\frac{\beta}{\beta-1}}\big|\langle \mathcal{B}K_{\neq,2},\mathcal{B}|\na_L|^{\frac12}|\na_{L,h}|^{-1}\big[U_{\neq,1}^{2,3}\cdot\na_{y,z}\p_yU_{0,1}^1\big]\rangle_{H^s}\big|\\&\quad
	\lesssim \sqrt{\frac{\beta}{\beta-1}}F^{\frac12}_{\neq,2}(t)\nu^{-\frac16}F^{\frac12}_{\neq,1}(t)\nu^{-\frac16}(\delta\nu^c)\\&\quad \lesssim \delta F_{\neq,2}(t)+ \delta\frac{\beta}{\beta-1}\nu^{2c-\frac23}F_{\neq,1}(t),
\end{align*}
and
\begin{align*}
	&\sqrt{\frac{\beta}{\beta-1}}\big|\langle \mathcal{B}K_{\neq,2},\mathcal{B}|\na_L|^{\frac12}|\na_{L,h}|^{-1}\big[\na_{y,z}U_{0,1}\p_y^L U_{\neq,1}\big]\rangle_{H^s}\big|\\&\quad
	\lesssim  \sqrt{\frac{\beta}{\beta-1}}F^{\frac12}_{\neq,2}(t)\nu^{-\frac16}(\delta\nu^c)F^{\frac12}_{\neq,1}(t)\nu^{-\frac12}\\&\quad\lesssim \delta F_{\neq,2}(t) + \delta\frac{\beta}{\beta-1}\nu^{2c-\frac43}F_{\neq,1}(t).
\end{align*}

We now focus on the equation of $H_{\neq,2}$. For the term { $U_{0,1}^2\p_y^L\Theta_{\neq,1}$}, we use \eqref{yyy},  \eqref{4.11-3} to obtain
{\begin{align*}
	&\big|\langle\mathcal{B}H_{\neq,2},\mathcal{B}|\na_L|^{\frac12}(U_{0,1}^2\p_y^L\Theta_{\neq,1})\rangle_{H^s}\big|\\
	&\quad\leq \sum_{\substack{k,l\\k',l'}}\iint_{\eta,\eta'}\widehat{\langle\na\rangle^s}\widehat{\mathcal{B}H_{\neq,2}}(k,\eta,l)e^{\lambda\nu^{\frac13}\tau}mp^{\frac14}(t,k,\eta,l)\widehat{\langle\na\rangle^s}\big[\widehat{U_{0,1}^2}(0,\eta-\eta',l-l') |\widehat{|\na_L|^{\frac12}H_{\neq,1}}(k,\eta',l')|\big]{\rm d}\eta{\rm d}\eta'\\
	&\quad\leq \sum_{\substack{k,l\\k',l'}}\iint_{\eta,\eta'}\widehat{\langle\na\rangle^s}\widehat{\mathcal{B}|\na_L|^{\frac12}H_{\neq,2}}(k,\eta,l)e^{\lambda\nu^{\frac13}\tau}\widehat{\langle\na\rangle^s}\big[\widehat{|\na_{y,z}|^{\frac12}U_{0,1}^2}(0,\eta-\eta',l-l') |\widehat{|\na_L|^{\frac12}mH_{\neq,1}}(k,\eta',l')|\big]{\rm d}\eta{\rm d}\eta'\\
	&\quad\lesssim F_{\neq,2}^{\frac12}(t)\nu^{-\frac13}\|U_{0,1}^2\|_{H^{s+\frac12}}F_{\neq,1}^{\frac12}(t)\nu^{-\frac13}\\&\quad \lesssim \delta F_{\neq,2}(t) + \delta\nu^{2c-\frac43} F_{\neq,1}(t).
\end{align*}}

Due to  $mp^{\frac14}\lesssim p^{\frac12} $ and \eqref{ll7},  direct calculation yields
\begin{align*}
	\big|\langle \mathcal{B}H_{\neq,2},\mathcal{B}|\na_L|^{\frac12}\big[U_{\neq,1}^2\p_y\Theta_{0,1}\big]\rangle_{H^s}\big|\lesssim F^{\frac12}_{\neq,2}(t)\nu^{-\frac12}\|U_{\neq,1}^2\|_{H^s}(\delta\nu^c)
	\lesssim \delta F_{\neq,2}(t) + \delta F_{\neq,1}(t)\nu^{2c-1}.
\end{align*}

Recalling the system \eqref{Non0NL2}, we have already completed the estimation of the nonlinear terms for the two external force terms.

\textit{4.5.2 Contribution with only one external force term}
We consider the contribution of only one external force term for the second system. Focus on the equation of $Q_{\neq,2}$. Thanks to the incompressible condiction,  the fact    $p_h(k',\eta')^{\frac14}\lesssim p_h(k,\eta)^{\frac14}p_h(k-k',\eta-\eta')^{\frac14}$, \eqref{multi1} and \eqref{pro1}, we have
\begin{align*}
	&\left|\langle\mathcal{B}Q_{\neq,2},\mathcal{B}|\na_L|^{\frac32}|\na_{L,h}|^{-1}\big[U_{\neq,1}^3\p_zU_{\neq,2}^3\big]\rangle_{H^s}\right|\\
	&\quad\leq \sum_{k,k',l,l'}\int_{\eta,\eta'}\langle\hat{\na}\rangle^s|\widehat{\mathcal{B}Q_{\neq,2}}(k,\eta,l)|e^{\frac{\lambda}{2}\nu^{\frac13}t}\frac{|l|^{\frac32}}{\sqrt{p_h}}\langle\hat{\na}\rangle^s(|\widehat{U_{\neq,1}^3}(k-k',\eta-\eta',l-l')||\widehat{\na_{L,h}\cdot U_{\neq,2}^{1,2}}(k',\eta',l')|){\rm d}\eta{\rm d}\eta'\\
	&\qquad+ \sum_{k,k',l,l'}\int_{\eta,\eta'}\langle\hat{\na}\rangle^s|\widehat{\mathcal{B}Q_{\neq,2}}(k,\eta,l)|e^{\frac{\lambda}{2}\nu^{\frac13}t}\frac{\sqrt{p_h}}{\sqrt{|k,l|}}\langle\hat{\na}\rangle^s(|\widehat{U_{\neq,1}^3}(k-k',\eta-\eta',l-l')||l'||\widehat{U_{\neq,2}^3}(k',\eta',l')|){\rm d}\eta{\rm d}\eta'\\
	&\quad\lesssim \sum_{k,k',l,l'}\int_{\eta,\eta'}\langle\hat{\na}\rangle^s|l||\widehat{\mathcal{B}Q_{\neq,2}}(k,\eta,l)|\langle\hat{\na}\rangle^s\frac{|l|^{\frac12}e^{\frac{\lambda}{2}\nu^{\frac13}t}}{p_h^{\frac14}(k,\eta)}(|\widehat{|\na_{L,h}|^{\frac12}U_{\neq,1}^3}(k-k',\eta-\eta',l-l')|(|\widehat{|\p_x|^{\frac12}U_{\neq,2}^1}| \\
	&\qquad + |\widehat{|\p_y^L|^{\frac12}U_{\neq,2}^2}|)(k',\eta',l')){\rm d}\eta{\rm d}\eta' + \sum_{k,k',l,l'}\int_{\eta,\eta'}\langle\hat{\na}\rangle^s|\widehat{\mathcal{B}Q_{\neq,2}}(k,\eta,l)|e^{\frac{\lambda}{2}\nu^{\frac13}t}\frac{\sqrt{p_h}}{\sqrt{|k,l|}}\langle\hat{\na}\rangle^s\\
	&\qquad\times(|\widehat{U_{\neq,1}^3}(k-k',\eta-\eta',l-l')||l'||\widehat{U_{\neq,2}^3}(k',\eta',l')|){\rm d}\eta{\rm d}\eta'\\
	&\quad\lesssim F^{\frac12}_{\neq,2}(t)\nu^{-\frac12}(\delta\nu^{\frac56+\frac{\kappa}{6(1+\kappa)}})F^{\frac12}_{\neq,2}(t)\nu^{-\frac14} + F^{\frac12}_{\neq,2}(t)\nu^{-\frac12}\langle t\rangle^{\frac12} e^{-\frac{\lambda}{2}\nu^{\frac13}t}(\delta\nu^{\frac56+\frac{\kappa}{6(1+\kappa)}})F^{\frac12}_{\neq,2}(t)\nu^{-\frac14} \\
	&\qquad+ F^{\frac12}_{\neq,2}(t)\nu^{-\frac12}(\delta\nu^{\frac56+\frac{\kappa}{6(1+\kappa)}})F^{\frac12}_{\neq,2}(t)\nu^{-\frac13}\\
	&\quad\lesssim \delta F_{\neq,2}(t)[\nu^{\frac{\kappa}{6(1+\kappa)}-\frac{1}{12}} + \nu^{\frac{\kappa}{6(1+\kappa)}}],
	\end{align*}
	and
	\begin{align*}
	&\left|\langle\mathcal{B}Q_{\neq,2},\mathcal{B}|\na_L|^{\frac32}|\na_{L,h}|^{-1}\big[U_{\neq,1}^h\cdot\na_{L,h}U_{\neq,2}^3\big]\rangle_{H^s}\right|\\
	&\leq \sum_{k,k',l,l'}\int_{\eta,\eta'}\langle\hat{\na}\rangle^s|\widehat{\mathcal{B}Q_{\neq,2}}(k,\eta,l)|e^{\frac{\lambda}{2}\nu^{\frac13}t}\frac{l^{\frac32}}{\sqrt{p_h}}\langle\hat{\na}\rangle^s(|\widehat{U_{\neq,1}^h}(k-k',\eta-\eta',l-l')|p_h^{\frac12}(k',\eta')|\widehat{U_{\neq,2}^3}(k',\eta',l')|){\rm d}\eta{\rm d}\eta'\\
	&\quad+ \sum_{k,k',l,l'}\int_{\eta,\eta'}\langle\hat{\na}\rangle^s|\widehat{\mathcal{B}Q_{\neq,2}}(k,\eta,l)|e^{\frac{\lambda}{2}\nu^{\frac13}t}\nu^{-\frac16}p_h^{\frac14}(k,\eta)\langle\hat{\na}\rangle^s(|\widehat{U_{\neq,1}^1}(k-k',\eta-\eta',l-l')|k'|\widehat{U_{\neq,2}^3}(k',\eta',l')|){\rm d}\eta{\rm d}\eta'\\
	&\quad+ \sum_{k,k',l,l'}\int_{\eta,\eta'}\langle\hat{\na}\rangle^s|\widehat{\mathcal{B}Q_{\neq,2}}(k,\eta,l)|e^{\frac{\lambda}{2}\nu^{\frac13}t}\nu^{-\frac16}p_h^{\frac14}(k,\eta)\langle\hat{\na}\rangle^s(|\widehat{U_{\neq,1}^2}(k-k',\eta-\eta',l-l')|\eta'-k't|\widehat{U_{\neq,2}^3}(k',\eta',l')|){\rm d}\eta{\rm d}\eta'\\
	&\lesssim F^{\frac12}_{\neq,2}(t)\nu^{-\frac12}(\delta\nu^{\frac56+\frac{\kappa}{6(1+\kappa)}})F^{\frac12}_{\neq,2}(t)\nu^{-\frac16} + F^{\frac12}_{\neq,2}(t)\nu^{-\frac{\kappa}{6(1+\kappa)}}\langle t\rangle e^{-\frac{\lambda}{2}\nu^{\frac13}t}(\delta\nu^{\frac56+\frac{\kappa}{6(1+\kappa)}})F^{\frac12}_{\neq,2}(t)\nu^{-\frac12} \\
	&\quad+ F^{\frac12}_{\neq,2}(t)\nu^{-\frac12}(\delta\nu^{\frac56+\frac{\kappa}{6(1+\kappa)}})F^{\frac12}_{\neq,2}(t)\nu^{-\frac13} + F^{\frac14}_{\neq,2}(t)\nu^{-\frac14}\nu^{-\frac16}(\delta\nu^b)^{\frac12}F^{\frac12}_{\neq,1}(t)\nu^{-\frac{\kappa}{6(1+\kappa)}}(\delta\nu^b)^{\frac12}F^{\frac14}_{\neq,2}(t)\nu^{-\frac14}\cdot\nu^{-\frac16}\\& 
	\lesssim \delta F_{\neq,2}(t) + \delta\nu^{2b-\frac53-\frac{\kappa}{3(1+\kappa)}}F_{\neq,1}(t),
\end{align*}
where we have used the following inequality
\begin{align*}
|\widehat{\partial_{y}U_{\neq,2}^{3}}|\lesssim |\hat{|\partial_{y}|}\widehat{|\nabla_{L}|}^{-\frac{3}{2}}\widehat{|\nabla_{L,h}|}\widehat{|mQ_{\neq,2}|}|m^{-1}\lesssim |\widehat{m|\nabla_{L}|^{\frac{1}{2}}Q_{\neq,2}}|\nu^{-\frac{1}{6}}.
\end{align*}

Similarly, by \eqref{6666} we have
\begin{align*}
	&\big|\langle\mathcal{B}Q_{\neq,2},\mathcal{B}|\na_L|^{\frac32}|\na_{L,h}|^{-1}\big[U_{\neq,2}^3\p_zU_{\neq,1}^3\big]\rangle_{H^s}\big|\\
	&\quad\lesssim F^{\frac12}_{\neq,2}(t)\nu^{-\frac12}F^{\frac12}_{\neq,2}(t)\nu^{-\frac16}(\delta\nu^{\frac56+\frac{\kappa}{6(1+\kappa)}})\\&\quad=\delta F_{\neq,2}(t)\nu^{\frac16+\frac{\kappa}{6(1+\kappa)}}, 
	\end{align*}
	Similarly, since $\sqrt{|\eta'-k't|}\lesssim \sqrt{|\eta-\eta'-(k-k')t|}p_h^{\frac14}(k,\eta)$, we have
	\begin{align*}
	& \big|\langle\mathcal{B}Q_{\neq,2},\mathcal{B}|\na_L|^{\frac32}|\na_{L,h}|^{-1}\big[U_{\neq,2}^1\p_xU_{\neq,1}^3+U_{\neq,2}^2\p_y^LU_{\neq,1}^3\big]\rangle_{H^s}\big|\\
	&\quad\lesssim F^{\frac12}_{\neq,2}(t)\nu^{-\frac12}F^{\frac12}_{\neq,2}(t)\nu^{-\frac16}(\delta\nu^{\frac56+\frac{\kappa}{6(1+\kappa)}})\\
	&\qquad+ \sum_{k,k',l,l'}\int_{\eta,\eta'}\Big[ \langle\hat{\na}\rangle^s|\widehat{\mathcal{B}Q_{\neq,2}}(k,\eta,l)|\frac{|l|^{\frac32}e^{\frac{\lambda}{2}\nu^{\frac13}t}}{p_h^{\frac14}(k,\eta)}\langle\hat{\na}\rangle^s(|\widehat{|\p_y^L|^{\frac12}U_{\neq,2}^2}(k-k',\eta-\eta',l-l')|\eta'-k't|^{\frac12}\widehat{U_{\neq,1}^3}(k',\eta',l')|)\\
	&\qquad+ \langle\hat{\na}\rangle^s|\widehat{\mathcal{B}Q_{\neq,2}}(k,\eta,l)|e^{\frac{\lambda}{2}\nu^{\frac13}t}\frac{\sqrt{p_h}}{\sqrt{|k,l|}}(k,\eta)\langle\hat{\na}\rangle^s(|\widehat{U_{\neq,2}^2}(k-k',\eta-\eta',l-l')|\eta'-k't|\widehat{U_{\neq,1}^3}(k',\eta',l')|)\Big]{\rm d}\eta{\rm d}\eta'\\
	&\quad\lesssim \delta\nu^{\frac16+\frac{\kappa}{6(1+\kappa)}}F_{\neq,2}(t) + F^{\frac12}_{\neq,2}(t)\nu^{-\frac12}\left(F^{\frac12}_{\neq,2}(t)\nu^{-\frac14}\delta\nu^{\frac56+\frac{\kappa}{6(1+\kappa)}}\langle t\rangle^{\frac12}e^{-\frac{\lambda}{2}\nu^{\frac13}t} + F^{\frac12}_{\neq,2}(t)\nu^{-\frac{\kappa}{6(1+\kappa)}}\nu^{-\frac13}(\delta\nu^{\frac56+\frac{\kappa}{6(1+\kappa)}})\right)\\
	&\quad\lesssim F_{\neq,2}(t)\delta[\nu^{\frac{\kappa}{6(1+\kappa)}-\frac{1}{12}} + \nu^{\frac{\kappa}{6(1+\kappa)}} + 1],
	\end{align*}
	and 
	\begin{align*}
	\big|\langle\mathcal{B}Q_{\neq,2},\mathcal{B}|\na_L|^{\frac32}|\na_{L,h}|^{-1}\big[U_{0,2}^{1,3}\cdot\na_{x,z}U_{\neq,1}^3\big]\rangle_{H^s}&\lesssim F^{\frac12}_{\neq,2}(t)\nu^{-\frac12}\cdot(\delta\nu^a)F^{\frac12}_{\neq,1}(t)\nu^{-\frac16}\\&\lesssim \delta F_{\neq,2}(t) + \delta\nu^{2a-\frac43}F_{\neq,1}(t).
\end{align*}

For term {$U_{0,2}^2\p_y^LU_{\neq,1}^3$}, if $|l|\leq\sqrt{p_h}$, we have
{\begin{align*}
	\big|\langle\mathcal{B}Q_{\neq,2},\mathcal{B}|\na_L|^{\frac32}|\na_{L,h}|^{-1}\big[U_{0,2}^2\p_y^LU_{\neq,1}^3\big]\rangle_{H^s}\big|&\lesssim \big|\langle\mathcal{B}|\na_L|^{\frac12}Q_{\neq,2},\big[|\na_{y,z}|^{\frac12}U_{0,2}^2\p_y^LmU_{\neq,1}^3\big]\rangle_{H^s}\big|\\
	&\lesssim F_{\neq,2}^{\frac12}(t)\nu^{-\frac13}F_{0,2}^{\frac14}(t)\nu^{-\frac14}E_{0,2}^{\frac14}(t)F_{\neq,1}^{\frac14}(t)\nu^{-\frac14}E_{\neq,1}^{\frac14}(t)\\&\lesssim \delta F_{\neq,2}(t) + \delta\nu^{a-\frac56 + \frac{\kappa}{6(1+\kappa)}}F_{0,2}^{\frac12}(t)F_{\neq,1}^{\frac12}(t).
\end{align*}}
If $|l|>\sqrt{p_h}$, then $m\leq 1$.   One gets
{\begin{align*}
	&\big|\langle\mathcal{B}Q_{\neq,2},\mathcal{B}|\na_L|^{\frac32}|\na_{L,h}|^{-1}\big[U_{0,2}^2\p_y^LU_{\neq,1}^3\big]\rangle_{H^s}\big|\\
	&\lesssim \sum_{k,l,l'}\int_{\eta,\eta'}\langle\hat{\na}\rangle^s|\left(\frac{p}{p_h}\right)^{\frac14}\widehat{\mathcal{B}Q_{\neq,2}}(k,\eta,l)|e^{\frac{\lambda}{2}\nu^{\frac13}t}\frac{|l||(k,\eta)|^{\frac12}}{\langle\tau\rangle^{\frac12}}\\&\quad \times \langle\hat{\na}\rangle^s(|\widehat{U_{0,2}^2}(0,\eta-\eta',l-l')\nu^{-\frac16}|\eta'-kt|\widehat{mU_{\neq,1}^3}(k,\eta',l')|){\rm d}\eta{\rm d}\eta'\\
	& \lesssim F_{\neq,2}^{\frac12}\nu^{-\frac14}\langle t\rangle^{-\frac12}\cdot F_{0,2}^{\frac12}\nu^{-\frac12}\nu^{-\frac16}\|\mathcal{A}Q_{\neq,1}\|_{H^{s+2}}\langle t\rangle^{\frac12}\\ &\lesssim \delta F_{\neq,2}(t) +\delta \nu^{\frac{\kappa}{3(1+\kappa)}-\frac16}F_{0,2}(t),
\end{align*}}
where we have used \begin{equation}\begin{aligned}\label{LL88} m p^{\frac34}p_h^{-\frac12}\lesssim p^{\frac14}\cdot |l|\cdot p_h^{-\frac14}p_{h}^{-\frac14}\lesssim  p^{\frac14}p_h^{-\frac14}\frac{|l|\cdot |k,\eta|^{\frac12}}{\langle \tau\rangle^{\frac12}},\end{aligned}\end{equation}
and
$$
\left\|\left(\frac{p}{p_{h}}\right)^{\frac{1}{4}}\widehat{\mathcal{B}Q_{\neq,2}}\right\|_{L^{2}} \leq \|\widehat{\mathcal{B}\nabla_{L}Q_{\neq,2}}\|_{L^{2}}^{\frac{1}{2}}\|\widehat{\mathcal{B}\nabla_{L}^{-1}Q_{\neq,2}}\|_{L^{2}}^{\frac{1}{2}}.
$$

The estimate of   {$U_{0,1}\cdot\na_L U_{\neq,2}^3$} is similar to the term $U_{0,1}^2\p_y^L U_{\neq,1}^3$, we have 
{\begin{align*}
	\big|\langle\mathcal{B}Q_{\neq,2},\mathcal{B}|\na_L|^{\frac32}|\na_{L,h}|^{-1}\big[U_{0,1}\cdot\na_L U_{\neq,2}^3\big]\rangle_{H^s}\big|&\lesssim F_{\neq,2}^{\frac12}\nu^{-\frac13}\|U_{0,1}\|_{H^{s+\frac32}}F_{\neq,2}^{\frac12}\nu^{-\frac13}\\&\lesssim \delta \nu^{c-\frac23}F_{\neq,2}(t).
\end{align*}}
Integrating by part and using \eqref{6666} we have 
\begin{align*}
	&\big|\langle\mathcal{B}Q_{\neq,2},\mathcal{B}|\na_L|^{\frac32}|\na_{L,h}|^{-1}\big[U_{\neq,1}^2\p_yU_{0,2}^3\big]\rangle_{H^s}\big|\\
	&\quad\lesssim \big|\langle\mathcal{B}Q_{\neq,2},\mathcal{B}|\na_L|^{\frac32}\big[U_{\neq,1}^2U_{0,2}^3\big]\rangle_{H^s}\big| + \big|\langle\mathcal{B}Q_{\neq,2},\mathcal{B}|\na_L|^{\frac32}|\na_{L,h}|^{-1}\big[\p_y^LU_{\neq,1}^2U_{0,2}^3\big]\rangle_{H^s}\big|\\
	&\quad\lesssim \nu^{-\frac16}F^{\frac12}_{\neq,2}(t)\nu^{-\frac12}[F^{\frac14}_{\neq,1}(t)(\delta\nu^{\frac56+\frac{\kappa}{6(1+\kappa)}})^{\frac12}][F^{\frac14}_{0,2}(t)\nu^{-\frac14}(\delta\nu^a)^{\frac12}] + F^{\frac12}_{\neq,2}(t)\nu^{-\frac12}F^{\frac12}_{\neq,1}(t)\nu^{-\frac16}(\delta\nu^a)\\
	&\quad\lesssim 2\delta F_{\neq,2}(t) + \delta\nu^{a-1+\frac{\kappa}{6(1+\kappa)}}F^{\frac12}_{\neq,1}(t)F^{\frac12}_{0,2}(t) + \delta\nu^{2a-\frac43}F_{\neq,1}(t),
\end{align*}
and 
\begin{align*}
	&\big|\langle\mathcal{B}Q_{\neq,2},\mathcal{B}|\na_L|^{\frac32}|\na_{L,h}|^{-1}\big[U_{\neq,2}^2\p_yU_{0,1}^3\big]\rangle_{H^s}\big|+\big|\langle\mathcal{B}Q_{\neq,2},\mathcal{B}|\na_L|^{\frac32}|\na_{L,h}|^{-1}\big[U_{\neq,2}^3\p_zU_{0,1}^3\big]\rangle_{H^s}\big|\\
	&\quad\lesssim F^{\frac12}_{\neq,2}(t)\nu^{-\frac12}F^{\frac12}_{\neq,2}(t)\nu^{-\frac14}(\delta\nu^c)\\
	&\quad=\delta \nu^{c-\frac34}F_{\neq,2}(t).
\end{align*}
The estimate of term {$U_{\neq,1}^3\p_zU_{0,2}^3$} is similar to $U_{0,2}^2\p_y^LU_{\neq,1}^3$. Integrating by parts give
{\begin{align*}
	&\big|\langle\mathcal{B}Q_{\neq,2},\mathcal{B}|\na_L|^{\frac32}|\na_{L,h}|^{-1}\big[U_{\neq,1}^3\p_zU_{0,2}^3\big]\rangle_{H^s}\big|\\
	&\quad\lesssim \big|\langle\mathcal{B}Q_{\neq,2},\mathcal{B}|\na_L|^{\frac32}|\na_{L,h}|^{-1}\big[\p_y^LU_{\neq,1}^3U_{0,2}^2\big]\rangle_{H^s}\big| + \big|\langle\mathcal{B}Q_{\neq,2},\mathcal{B}|\na_L|^{\frac32}\big[U_{\neq,1}^3U_{0,2}^2\big]\rangle_{H^s}\big|\\
	& \quad\lesssim\delta F_{\neq,2}(t) + \delta\nu^{a-\frac56 + \frac{\kappa}{6(1+\kappa)}}F_{0,2}^{\frac12}(t)F_{\neq,1}^{\frac12}(t) +\delta \nu^{\frac{\kappa}{3(1+\kappa)}-\frac16}F_{0,2}(t)\\&\qquad + \big|\langle\mathcal{B}\na_L Q_{\neq,2},e^{\frac{\lambda}{2}\nu^{\frac13}t}\big[|\na_L|^{\frac12}mU_{\neq,1}^3|\p_z|^{\frac12}|\na|^{\frac12}U_{0,2}\big]\rangle_{H^s}\big|\\
	&\quad\lesssim \delta F_{\neq,2}(t) + \delta\nu^{a-\frac56 + \frac{\kappa}{6(1+\kappa)}}F_{0,2}^{\frac12}(t)F_{\neq,1}^{\frac12}(t) + \delta\nu^{\frac{\kappa}{3(1+\kappa)}-\frac16}F_{0,2}(t)\\&\qquad +F_{\neq,2}^{\frac12}(t)\nu^{-\frac12}F_{\neq,1}^{\frac14}\nu^{-\frac{1}{12}}E_{\neq,1}^{\frac14}(t)F_{0,2}^{\frac14}\nu^{-\frac14}E_{0,2}^{\frac14}(t)\\
	&\quad\lesssim \delta F_{\neq,2}(t) + \delta\nu^{a-\frac56 + \frac{\kappa}{6(1+\kappa)}}F_{0,2}^{\frac12}(t)F_{\neq,1}^{\frac12}(t) + \delta\nu^{\frac{\kappa}{3(1+\kappa)}-\frac16}F_{0,2}(t),
\end{align*}}
where we have used \eqref{4.11-3} and the incompressible condition and 
$$ |m|\nabla_{L}|^{\frac{1}{2}} U_{\neq,1}^{3}|\leq |mQ_{\neq,1}|\leq|mQ_{\neq,1}|^{\frac{1}{2}} |mQ_{\neq,1}|^{\frac{1}{2}} .$$

For the pressure terms, using the fact that $\sqrt{|\eta'-k't|}\lesssim \sqrt{|\eta-kt|}p_h^{\frac14}(k',\eta')$, \eqref{multi4} and the incompressible condition, we have
\begin{align*}
	&\big|\langle\mathcal{B}Q_{\neq,2},\mathcal{B}\p_z|\na_L|^{-\frac12}|\na_{L,h}|^{-1}\big[\p_{x,z}U_{\neq,1}\cdot\na_L U_{\neq,2}^{1,3}+\p_y^LU_{\neq,1}^{1,2}\cdot\na_{L,h}U_{\neq,2}^2+\p_y^LU_{\neq,1}^3\p_zU_{\neq,2}^2\big]\rangle_{H^s}\big|\\
	&\quad\lesssim F^{\frac12}_{\neq,2}(t)(1+\langle t\rangle e^{-\frac{\lambda}{2}\nu^{\frac13}t})(\delta\nu^{\frac56+\frac{\kappa}{6(1+\kappa)}})F^{\frac12}_{\neq,2}(t)\nu^{-\frac12} + \big|\langle \mathcal{B}Q_{\neq,2},\mathcal{B}\p_z|\na_L|^{-\frac12}|\na_{L,h}|^{-\frac12}\big[|\p_y^L|^{\frac12}U_{\neq,1}^3\p_z|\na_{L,h}|^{\frac12} U_{\neq,2}^2\big] \rangle_{H^s}\big|\\
	&\quad\lesssim \delta\nu^{\frac{\kappa}{6(1+\kappa)}}F_{\neq,2}(t)+F_{\neq,2}^{\frac12}(t)\nu^{-\frac14}\langle t\rangle^{\frac12}e^{-\frac{\lambda}{2}\nu^{\frac13}t}(\delta\nu^{\frac56+\frac{\kappa}{6(1+\kappa)}})F_{\neq,2}^{\frac12}(t)\nu^{-\frac12}\\
	&\quad \lesssim \delta F_{\neq,2}(t)\nu^{\frac{\kappa}{6(1+\kappa)}-\frac{1}{12}},
	\end{align*}
	\begin{align*}
	&\big|\langle\mathcal{B}Q_{\neq,2},\mathcal{B}\p_z|\na_L|^{-\frac12}|\na_{L,h}|^{-1}\big[\p_iU_{0,1}^j\p_j^LU_{\neq,2}^i\big]\rangle_{H^s}\big|\\&\quad\lesssim F^{\frac12}_{\neq,2}(t)\nu^{-\frac14}(\delta\nu^c)F^{\frac12}_{\neq,2}(t)\nu^{-\frac12}=\delta\nu^{c-\frac34}F_{\neq,2}(t),
	\end{align*}
	and 
	\begin{align*}
	&\big|\langle\mathcal{B}Q_{\neq,2},\mathcal{B}\p_z|\na_L|^{-\frac12}|\na_{L,h}|^{-1}\big[\p_iU_{0,2}^j\p_j^LU_{\neq,1}^i\big]\rangle_{H^s}\big|\\&\quad
	\lesssim F^{\frac12}_{\neq,2}(t)F^{\frac12}_{0,2}(t)\nu^{-\frac12}\nu^{-\frac13}(\delta\nu^{\frac56+\frac{\kappa}{6(1+\kappa)}})\\&\quad\lesssim\delta F_{\neq,2}(t) + \delta\nu^{\frac{\kappa}{3(1+\kappa)}}F_{0,2}(t).
\end{align*}

Now we focus on the equation of $K_{\neq,2}$. \eqref{555} and $|k-k',l-l'|^{\frac12}\lesssim |k',l'|^{\frac14}|k,l|^{\frac14}$ yields that
\begin{align*}
	&\sqrt{\frac{\beta}{\beta-1}}\big|\langle \mathcal{B}K_{\neq,2},\mathcal{B}|\na_L|^{\frac12}|\na_{L,h}|^{-1}\big[\p_xU_{\neq,1}^{1,3}\cdot\na_{x,z} U^2_{\neq,2}+\p_xU_{\neq,1}^2\p_y^L U^2_{\neq,2}+\p_xU_{\neq,2}\cdot\na_L U^2_{\neq,1}\big]\rangle_{H^s}\big|\\
	&\quad\lesssim \sqrt{\frac{\beta}{\beta-1}}\left(\big|\langle \mathcal{B}K_{\neq,2},e^{\frac{\lambda}{2}\nu^{\frac13}t}|\na_L||\na_{L,h}|^{-1}\big[\p_x|\na_{x,z}|^{\frac12}U_{\neq,1}^{1,3}\cdot|\na_{x,z}|^{\frac12} U^2_{\neq,2}\big]\rangle_{H^s}\big|\right.\\
	&\qquad \left.+F_{\neq,2}^{\frac12}(t)\nu^{-\frac12}(\delta\nu^{\frac56+\frac{\kappa}{6(1+\kappa)}})F_{\neq,2}^{\frac12}(t)\nu^{-\frac13}+\nu^{-\frac16}(F_{\neq,2}^{\frac14}(t)\nu^{-\frac14}(\delta\nu^b)^{\frac12})^2F_{\neq,1}^{\frac12}(t)\right)\\
	&\quad \lesssim \sqrt{\frac{\beta}{\beta-1}}\left((1+\delta\nu^{\frac{\kappa}{6(1+\kappa)}})F_{\neq,2}(t) + \delta^2\nu^{2b-\frac43}F_{\neq,1}(t)\right),\\
	&\sqrt{\frac{\beta}{\beta-1}}\big|\langle \mathcal{B}K_{\neq,2},\mathcal{B}|\na_L|^{\frac12}|\na_{L,h}|^{-1}\big[\p_xU_{\neq,2}\cdot\na U_{0,1}^2-\p_y^LU_{\neq,2}\cdot\na U_{0,1}^1-\p_yU_{0,1}\cdot\na_LU^1_{\neq,2}\big]\rangle_{H^s}\big|\\
	&\quad\lesssim \sqrt{\frac{\beta}{\beta-1}}F^{\frac12}_{\neq,2}(t)\nu^{-\frac13}F^{\frac12}_{\neq,2}(t)\nu^{-\frac12}(\delta\nu^c)\\&\quad=\sqrt{\frac{\beta}{\beta-1}}\delta\nu^{c-\frac56} F_{\neq,2}(t),
	\end{align*}
	and
	\begin{align*}
	&\sqrt{\frac{\beta}{\beta-1}}\big|\langle \mathcal{B}K_{\neq,2},\mathcal{B}|\na_L|^{\frac12}|\na_{L,h}|^{-1}\big[\p_xU_{\neq,1}\cdot\na U_{0,2}^2-U_{\neq,2}\cdot\na \p_yU^1_{0,1}\big]\rangle_{H^s}\big|\\
	&\quad\lesssim \sqrt{\frac{\beta}{\beta-1}}F^{\frac12}_{\neq,2}(t)\nu^{-\frac16}(\delta\nu^{\frac56+\frac{\kappa}{6(1+\kappa)}})F^{\frac12}_{0,2}(t)\nu^{-\frac12} + \sqrt{\frac{\beta}{\beta-1}}F^{\frac12}_{\neq,2}(t)\nu^{-\frac16}F^{\frac12}_{\neq,2}(t)\nu^{-\frac13}(\delta\nu^c)\\
	& \quad\lesssim\delta F_{\neq,2}(t) + \frac{\beta}{\beta-1}\delta\nu^{\frac13+\frac{\kappa}{3(1+\kappa)}}F_{0,2}(t) + \sqrt{\frac{\beta}{\beta-1}}\delta\nu^{c-\frac12} F_{\neq,2}(t).
\end{align*}

Now we divide the term {$\p_y^LU_{\neq,1}\cdot\na U_{0,2}^1+\p_yU_{0,2}\cdot\na_LU^1_{\neq,1}=(\p_yU_{0,2}^2\p_y^LU_{\neq,1}^1 + \p_y^LU_{\neq,1}^3\p_zU_{0,2}^1) + (\p_y^LU_{\neq,1}^2\p_yU_{0,2}^1+\p_yU_{0,2}^{1,3}\cdot\na_{x,z}U_{\neq,1}^1)$}. The estimate of first one is similar to $U_{0,2}^2\p_y^LU_{\neq,1}^3$, we discuss two case $|l|\leq\sqrt{p_h}$ and $|l|>\sqrt{p_h}$ using \eqref{LL88} gives
{\begin{align*}
	&\sqrt{\frac{\beta}{\beta-1}}\big|\langle \mathcal{B}K_{\neq,2},\mathcal{B}|\na_L|^{\frac12}|\na_{L,h}|^{-1}\big[\p_yU_{0,2}^2\p_y^LU_{\neq,1}^1 + \p_y^LU_{\neq,1}^3\p_zU_{0,2}^1\big]\rangle_{H^s}\big|\\
	&\quad\lesssim \sqrt{\frac{\beta}{\beta-1}}F_{\neq,2}^{\frac12}\nu^{-\frac{\kappa}{6(1+\kappa)}}E_{\neq,1}^{\frac12}\nu^{-\frac16}F_{0,2}^{\frac12}\nu^{-\frac12} \\
	&\qquad+ \sqrt{\frac{\beta}{\beta-1}}\big|\langle \mathcal{B}|\na_L|^{\frac12}|\na_{L,h}|^{-\frac12}K_{\neq,2},e^{\frac{\lambda}{2}\nu^{\frac13}t}\frac{|\na_{x,y}|^{\frac12}}{\langle t\rangle^{\frac12}}\big[\p_zU_{0,2}\nu^{-\frac16}|\na_L|^{\frac12}m(Q_{\neq,1},K_{\neq,1})\big]\rangle_{H^s}\big|\\
	&\quad\lesssim \sqrt{\frac{\beta}{\beta-1}}\left(F_{\neq,2}^{\frac12}\nu^{-\frac{\kappa}{6(1+\kappa)}}E_{\neq,1}^{\frac12}\nu^{-\frac16}F_{0,2}^{\frac12}\nu^{-\frac12}+F_{\neq,2}^{\frac12}(t)\nu^{-\frac14}\nu^{-\frac16}E_{\neq,1}^{\frac12}F_{0,2}^{\frac12}\nu^{-\frac12}\right)\\
	&\quad\lesssim \delta F_{\neq,2}(t) + \frac{\beta}{\beta-1}[\delta\nu^{2c-\frac{11}{6}}F_{0,2}(t)+\delta \nu^{\frac13}F_{0,2}(t)],
\end{align*}}
Similarly, we have
{\begin{align*}
	&\sqrt{\frac{\beta}{\beta-1}}\big|\langle \mathcal{B}K_{\neq,2},\mathcal{B}|\na_L|^{\frac12}|\na_{L,h}|^{-1}\big[\p_y^LU_{\neq,1}^2\p_yU_{0,2}^1+\p_yU_{0,2}^{1,3}\cdot\na_{x,z}U_{\neq,1}^1\big]\rangle_{H^s}\big|\\
	&\quad\lesssim \sqrt{\frac{\beta}{\beta-1}}\left(F_{\neq,2}^{\frac12}(t)\nu^{-\frac{\kappa}{6(1+\kappa)}}\nu^{-\frac{1}{6}}E_{\neq,1}^{\frac12}F_{0,2}^{\frac12}(t)\nu^{-\frac12} + F_{\neq,2}^{\frac12}(t)F_{0,2}^{\frac12}\nu^{-\frac12}E_{\neq,1}^{\frac12}\right)\\
	&\quad\lesssim\delta F_{\neq,2}(t) + \frac{\beta}{\beta-1}[\delta\nu^{2c-1}F_{0,2}(t) + \delta\nu^{\frac13}F_{0,2}(t)],
\end{align*}}
where we have used the fact $mp^{\frac{1}{4}}p_{h}^{-\frac{1}{2}}\lesssim \frac{\langle l \rangle^{\frac{1}{2}}}{p_{h}^{\frac{1}{2}}},$ if $|l|\geq \sqrt{p_{h}}$ and $mp^{\frac{1}{4}}p_{h}^{-\frac{1}{2}}\lesssim m|\nabla_{L}|^{-\frac{1}{2}},$ if $|l|\leq \sqrt{p_{h}}$.

For $\p_y^LU_{\neq,1}\cdot\na_LU^1_{\neq,2}+\p_y^LU_{\neq,2}\cdot\na_LU^1_{\neq,1}$, we use \eqref{multi2}, \eqref{pro1}, the facts $|\hat{\na}\lesssim |\hat{\na}|\langle t \rangle$ and $\sqrt{|k',l'|}\leq C\sqrt{|k-k',l-l'|}+C\sqrt{|k,l|}$ to obtain
\begin{align*}
	&\sqrt{\frac{\beta}{\beta-1}}\big|\langle \mathcal{B}K_{\neq,2},\mathcal{B}|\na_L|^{\frac12}|\na_{L,h}|^{-1}\big[\p_y^LU_{\neq,1}\cdot\na_LU^1_{\neq,2}+\p_y^LU_{\neq,2}\cdot\na_LU^1_{\neq,1}\big]\rangle_{H^s}\big|\\
	&\quad\lesssim \sqrt{\frac{\beta}{\beta-1}}\sum_{k,k',l,l'}\int_{\eta,\eta'}\langle\hat{\na}\rangle^s|\widehat{\mathcal{B}K_{\neq,2}}(k,\eta,l)|\frac{e^{\frac{\lambda}{2}\nu^{\frac13}t}\sqrt{l}}{\sqrt{p_h}(k,\eta)}\langle\hat{\na}\rangle^s\left[|\widehat{\p_y^LU_{\neq,1}}(k-k',\eta-\eta',l-l')||(k',\eta'-k't,l')|\right.\\
	&\qquad\left.\times |\widehat{U_{\neq,2}^1}(k',\eta',l')| + |\widehat{\p_y^LU_{\neq,2}}(k-k',\eta-\eta',l-l')||(k',\eta'-k't,l')||\widehat{U_{\neq,1}^1}(k',\eta',l')|\right]{\rm d}\eta{\rm d}\eta'\\
	&\qquad+ \sqrt{\frac{\beta}{\beta-1}}\sum_{k,k',l,l'}\int_{\eta,\eta'}\langle\hat{\na}\rangle^s|\widehat{\mathcal{B}K_{\neq,2}}(k,\eta,l)|\frac{e^{\frac{\lambda}{2}\nu^{\frac13}t}}{\sqrt{|k,l|}}\langle\hat{\na}\rangle^s\left[|\widehat{\p_y^LU_{\neq,1}^{1,3}}(k-k',\eta-\eta',l-l')|\right.\\
	&\qquad \times|\widehat{\na_{x,z}U_{\neq,2}^1}(k',\eta',l')|+\widehat{\p_y^LU_{\neq,2}^{1,3}}(k-k',\eta-\eta',l-l')||\widehat{\na_{x,z}U_{\neq,1}^1}(k',\eta',l')| + |\widehat{\p_y^LU_{\neq,1}^2}(k-k',\eta-\eta',l-l')|\\
	&\qquad \times|\widehat{\p_y^L U_{\neq,2}^1}(k',\eta',l')|\left.+ |\widehat{\p_y^LU_{\neq,2}^2}(k-k',\eta-\eta',l-l')||\widehat{\p_y^L U_{\neq,1}^1}(k',\eta',l')|\right]{\rm d}\eta{\rm d}\eta'\\
	&\quad\lesssim \sqrt{\frac{\beta}{\beta-1}}\left(F^{\frac12}_{\neq,2}(t)(\delta\nu^{\frac56+\frac{\kappa}{6(1+\kappa)}})\nu^{-\frac13}F^{\frac12}_{\neq,2}(t)\nu^{-\frac12} + F^{\frac12}_{\neq,2}(t)\nu^{-\frac16}(\delta\nu^{\frac56+\frac{\kappa}{6(1+\kappa)}})\nu^{-\frac13}F^{\frac12}_{\neq,2}(t)\nu^{-\frac16}\right.\\
	&\qquad\left.+ F^{\frac12}_{\neq,2}(t)\nu^{-\frac16}F^{\frac12}_{\neq,2}(t)\nu^{-\frac12}(\delta\nu^{\frac56+\frac{\kappa}{6(1+\kappa)}}) + F^{\frac12}_{\neq,2}(t)\nu^{-\frac16}F^{\frac12}_{\neq,2}(t)\nu^{-\frac13}(\delta\nu^{\frac56+\frac{\kappa}{6(1+\kappa)}})\nu^{-\frac13}\right)\\&\quad
	\lesssim \sqrt{\frac{\beta}{\beta-1}}\delta\nu^{\frac{\kappa}{6(1+\kappa)}}F_{\neq,2}(t).
\end{align*}

Using $mp^{\frac14}\lesssim \min\{\nu^{-\frac16}p^{\frac14},p^{\frac12}\}$, \eqref{ll7}, \eqref{multi2}  and recalling $W_{\neq}^3=i\sqrt{\frac{\beta-1}{\beta}}|\na_L|^{-\frac12}|\na_{L,h}|K_{\neq}$, and integration by parts, we have 
\begin{align*}
	&\sqrt{\frac{\beta}{\beta-1}}\big|\langle \mathcal{B}K_{\neq,2},\mathcal{B}|\na_L|^{\frac12}|\na_{L,h}|^{-1}\big[U_{\neq,1}^{1,3}\cdot\na_{x,z}W^3_{\neq,2}+U_{\neq,1}^2\p_y^L W^3_{\neq,2}\big]\rangle_{H^s}\big|\\
	& \quad\lesssim\big|\langle \mathcal{B}K_{\neq,2},\mathcal{B}|\na_L|^{\frac12}\big[U_{\neq,1}^{1,3}\cdot\na_{x,z}|\na_L|^{-\frac12}K_{\neq,2}\big]\rangle_{H^s}\big| + \big|\langle \mathcal{B}K_{\neq,2},\mathcal{B}|\na_L|^{\frac12}\big[U_{\neq,1}^2\p_y^L|\na_L|^{-\frac12}K_{\neq,2}\big]\rangle_{H^s}\big|\\
	&\qquad+ \big|\langle \mathcal{B}K_{\neq,2},\mathcal{B}|\na_L|^{\frac12}|\na_{L,h}|^{-1}\big[|\na_{L,h}|U_{\neq,1}^{1,3}\cdot\na_{x,z}|\na_L|^{-\frac12}K_{\neq,2}\big]\rangle_{H^s}\big|\\
	&\qquad+ \big|\langle \mathcal{B}K_{\neq,2},\mathcal{B}|\na_L|^{\frac12}|\na_{L,h}|^{-1}\big[|\na_{L,h}|U_{\neq,1}^2\p_y^L|\na_L|^{-\frac12}K_{\neq,2}\big]\rangle_{H^s}\big|\\
	&\quad\lesssim F^{\frac12}_{\neq,2}(t)\nu^{-\frac12}(\delta\nu^{\frac56+\frac{\kappa}{6(1+\kappa)}})F^{\frac12}_{\neq,2}(t)\nu^{-\frac13} + \nu^{-\frac16}F^{\frac14}_{\neq,2}(t)\nu^{-\frac14}(\delta\nu^b)^{\frac12}\|U_{\neq,1}^2\|_{H^s}F^{\frac14}_{\neq,2}(t)\nu^{-\frac14}(\delta\nu^b)^{\frac12}\nu^{-\frac16} \\
	&\qquad+ \sum_{k,k',l,l'}\int_{\eta,\eta'}\langle\hat{\na}\rangle^s|\widehat{\mathcal{B}K_{\neq,2}}(k,\eta,l)|\frac{e^{\frac{\lambda}{2}\nu^{\frac13}t}\sqrt{l}}{\sqrt{p_h}(k,\eta)}\langle\hat{\na}\rangle^s|\widehat{|\na_{L,h}|U_{\neq,1}^{1,3}}(k-k',\eta-\eta',l-l')\\
	&\qquad\times\widehat{\na_{x,z}|\na_{L}|^{-\frac12}K_{\neq,2}}(k',\eta',l')|{\rm d}\eta{\rm d}\eta'+ \sum_{k,k',l,l'}\int_{\eta,\eta'}\langle\hat{\na}\rangle^s|\widehat{\mathcal{B}K_{\neq,2}}(k,\eta,l)|\frac{e^{\frac{\lambda}{2}\nu^{\frac13}t}}{\sqrt{|k,l|}}\langle\hat{\na}\rangle^s\\
	&\qquad\times|\widehat{|\na_{L,h}|U_{\neq,1}^{1,3}}(k-k',\eta-\eta',l-l')\widehat{\na_{x,z}|\na_{L}|^{-\frac12}K_{\neq,2}}(k',\eta',l')|{\rm d}\eta{\rm d}\eta'+ (\delta\nu^b)F^{\frac12}_{\neq,1}(t)\nu^{-\frac16}F^{\frac12}_{\neq,2}(t)\nu^{-\frac12}\nu^{-\frac{1}{6}}\\
	&\quad\lesssim \delta\nu^{\frac{\kappa}{6(1+\kappa)}}F_{\neq,2}(t) + \delta\nu^{b-\frac56}F^{\frac12}_{\neq,2}(t)F_{\neq,1}^{\frac12}(t)+ F^{\frac12}_{\neq,2}(t)\nu^{-\frac{\kappa}{6(1+\kappa)}}\nu^{-\frac16}(\delta\nu^{\frac56+\frac{\kappa}{6(1+\kappa)}})F^{\frac12}_{\neq,2}(t)\nu^{-\frac12}\nu^{-\frac{1}{6}} \\
	&\qquad+ F^{\frac12}_{\neq,2}(t)\nu^{-\frac16}\nu^{-\frac13}(\delta\nu^{\frac56+\frac{\kappa}{6(1+\kappa)}})F^{\frac12}_{\neq,2}(t)\nu^{-\frac16}\nu^{-\frac16}+ \delta F_{\neq,2}(t) + \delta\nu^{2b-\frac53}F_{\neq,1}(t)\\
	&\quad \lesssim 5\delta F_{\neq,2}(t) + \delta\nu^{2b-\frac53}F_{\neq,1}(t).
\end{align*}
The estimate of {$U_{0,1}\cdot\na_LW^3_{\neq,2}$} is similar to $U_{0,1}^2\p_y^L W^3_{\neq,1}$, via the integration by part, we obtain
\begin{align*}
	&\sqrt{\frac{\beta}{\beta-1}}\big|\langle \mathcal{B}K_{\neq,2},\mathcal{B}|\na_L|^{\frac12}|\na_{L,h}|^{-1}\big[U_{0,1}\cdot\na_LW^3_{\neq,2}\big]\rangle_{H^s}\big|\\
	&\quad\leq \big|\langle \mathcal{B}K_{\neq,2},\mathcal{B}|\na_L|^{\frac12}|\na_{L,h}|^{-1}\big[U_{0,1}\cdot\na_L|\na_L|^{-\frac12}|\na_{L,h}|K_{\neq,2}\big]\rangle_{H^s}\big| \\
	&\quad\lesssim \big|\langle \mathcal{B}K_{\neq,2},\mathcal{B}|\na_L|^{\frac12}\big[U_{0,1}|\na_L|^{\frac12}K_{\neq,2}\big]\rangle_{H^s}\big| + \big|\langle \mathcal{B}K_{\neq,2},\mathcal{B}|\na_L|^{\frac12}|\na_{L,h}|^{-1}\big[|\p_y|U_{0,1}\|\na_L|^{\frac12}K_{\neq,2}\big]\rangle_{H^s}\big|\\
	&\quad\lesssim F_{\neq,2}^{\frac12}(t)\nu^{-\frac13}(\|U_{0,1}\|_{H^{s+\frac12}}+\|U_{0,1}\|_{H^{s+\frac32}})F_{\neq,2}^{\frac12}(t)\nu^{-\frac13}\\&\quad\lesssim \delta\nu^{c-\frac23}F_{\neq,2}(t).
\end{align*}

Using  \eqref{multi2}, we have
\begin{align*}
	&\sqrt{\frac{\beta}{\beta-1}}\big|\langle \mathcal{B}K_{\neq,2},\mathcal{B}|\na_L|^{\frac12}|\na_{L,h}|^{-1}\big[U_{\neq,2}\cdot\na_LW^3_{\neq,1}\big]\rangle_{H^s}\big|\\
	&\quad=\big|\langle \mathcal{B}K_{\neq,2},\mathcal{B}|\na_L|^{\frac12}|\na_{L,h}|^{-1}\big[U_{\neq,2}\cdot\na_L|\na_L|^{-\frac12}|\na_{L,h}|K_{\neq,1}\big]\rangle_{H^s}\big|\\
	&\quad\lesssim \sum_{k,k',l,l'}\int_{\eta,\eta'}\Big[\langle\hat{\na}\rangle^s|\widehat{\mathcal{B}K_{\neq,2}}(k,\eta,l)|\frac{e^{\frac{\lambda}{2}\nu^{\frac13}t}\sqrt{l}}{p_h^{\frac{1}{4}}(k,\eta)}\langle\hat{\na}\rangle^s(|\widehat{U_{\neq,2}^{2}}(k-k',\eta-\eta',l-l')\widehat{|\na_{L}|^{\frac12}|\na_{L,h}|^{\frac{1}{2}}K_{\neq,1}}(k',\eta',l')|) \\
	& \qquad+  \langle\hat{\na}\rangle^s|\widehat{\mathcal{B}K_{\neq,2}}(k,\eta,l)|\frac{e^{\frac{\lambda}{2}\nu^{\frac13}t}\sqrt{l}}{\sqrt{p_h}(k,\eta)}\langle\hat{\na}\rangle^s(|\widehat{|\na_{L,h}|^{\frac{1}{2}}U_{\neq,2}^{2}}(k-k',\eta-\eta',l-l')\widehat{|\na_{L}|^{\frac12}|\na_{L,h}|^{\frac{1}{2}}K_{\neq,1}}(k',\eta',l')|) \\
   & \qquad+   \langle\hat{\na}\rangle^s|\widehat{\mathcal{B}K_{\neq,2}}(k,\eta,l)|\frac{e^{\frac{\lambda}{2}\nu^{\frac13}t}\sqrt{l}}{\sqrt{p_h}(k,\eta)}\langle\hat{\na}\rangle^s(|\widehat{|U_{\neq,2}^{1,3}}(k-k',\eta-\eta',l-l')\widehat{|\na_{L}|^{\frac12}|\na_{L,h}|K_{\neq,1}}(k',\eta',l')|) \\
	&\qquad+  \langle\hat{\na}\rangle^s|\widehat{\mathcal{B}K_{\neq,2}}(k,\eta,l)|\frac{e^{\frac{\lambda}{2}\nu^{\frac13}t}}{\sqrt{|k,l|}}\langle\hat{\na}\rangle^s(|\widehat{U_{\neq,2}^{1,3}}(k-k',\eta-\eta',l-l')\widehat{\na_{x,z}|\na_{L}|^{-\frac12}|\na_{L,h}|K_{\neq,1}}(k',\eta',l')|) \\
	&\qquad+   \langle\hat{\na}\rangle^s|\widehat{\mathcal{B}K_{\neq,2}}(k,\eta,l)|\frac{e^{\frac{\lambda}{2}\nu^{\frac13}t}}{\sqrt{|k,l|}}\langle\hat{\na}\rangle^s(|\widehat{U_{\neq,2}^2}(k-k',\eta-\eta',l-l')\widehat{\p_y^L|\na_{L}|^{-\frac12}|\na_{L,h}|K_{\neq,1}}(k',\eta',l')|)\Big]{\rm d}\eta{\rm d}\eta'\\
	&\quad\lesssim F_{\neq,2}^{\frac{1}{2}}\nu^{-\frac{\kappa}{6(1+\kappa)}}F_{\neq,2}^{\frac{1}{2}}\nu^{-\frac{1}{3}}\langle t\rangle^{\frac{3}{2}} e^{-\frac{\lambda}{2}\nu^{\frac13}t}\delta\nu^{\frac56+\frac{\kappa}{6(1+\kappa)}}+ F^{\frac12}_{\neq,2}(t)F^{\frac12}_{\neq,2}(t)\nu^{-\frac16}\langle t\rangle^2e^{-\frac{\lambda}{2}\nu^{\frac13}t}\delta\nu^{\frac56+\frac{\kappa}{6(1+\kappa)}}   \\
	&\qquad+F^{\frac12}_{\neq,2}(t)\nu^{-\frac16}F^{\frac12}_{\neq,2}(t)\nu^{-\frac16}\langle t\rangle e^{-\frac{\lambda}{2}\nu^{\frac13}t}\delta\nu^{\frac56+\frac{\kappa}{6(1+\kappa)}}+ F^{\frac12}_{\neq,2}(t)\nu^{-\frac16}F^{\frac12}_{\neq,2}(t)\nu^{-\frac{\kappa}{6(1+\kappa)}}\langle t\rangle^2 e^{-\frac{\lambda}{2}\nu^{\frac13}t}\delta\nu^{\frac56+\frac{\kappa}{6(1+\kappa)}}\\
	&\quad\lesssim \delta F_{\neq,2}(t),
\end{align*}
where we have using the following inequality 
\begin{align*}
	\langle l\rangle^{\frac12}p^{\frac14}p_h^{\frac14}=\langle l\rangle^{\frac12}p^{\frac14}p_h^{\frac14}m \frac{p^{\frac{1}{4}}}{|k,l|^{\frac{1}{2}}}\lesssim
	p^{\frac{1}{2}}p_{h}^{\frac{1}{4}}m\lesssim|\hat{\na}|^{\frac{3}{2}}\langle t\rangle^{\frac{3}{2}}m,
	\end{align*}
and $$|k,l|p^{-\frac{1}{4}}p_{h}^{\frac{1}{2}}\lesssim \sqrt{|k,l|}|\nabla|^{\frac{1}{2}} \langle t\rangle^{2}m.$$

Similarly, one has
\begin{align*}
	&\sqrt{\frac{\beta}{\beta-1}}\big|\langle \mathcal{B}K_{\neq,2},\mathcal{B}|\na_L|^{\frac12}|\na_{L,h}|^{-1}\big[U_{0,2}^{1,3}\cdot\na_{x,z}W^3_{\neq,1}\big]\rangle_{H^s}\big|\\
	&=\big|\langle \mathcal{B}K_{\neq,2},\mathcal{B}|\na_L|^{\frac12}|\na_{L,h}|^{-1}\big[U_{0,2}^{1,3}\cdot\na_{x,z}|\na_L|^{-\frac12}|\na_{L,h}|K_{\neq,1}\big]\rangle_{H^s}\big|\\
	&\quad\lesssim F^{\frac12}_{\neq,2}(t)(\delta\nu^a)\langle t\rangle e^{-\frac{\lambda}{2}\nu^{\frac13}t}F^{\frac12}_{\neq,1}(t)\nu^{-\frac16} + F^{\frac12}_{\neq,2}(t)\nu^{-\frac16}(\delta\nu^a)\langle t\rangle e^{-\frac{\lambda}{2}\nu^{\frac13}t}F^{\frac12}_{\neq,1}(t)\nu^{-\frac16}\\& \quad
	\lesssim\delta F_{\neq,2}(t) + \delta\nu^{2a-\frac43}F_{\neq,1}(t).
\end{align*}
The estimate of {$U_{0,2}^2\p_y^L W^3_{\neq,1}$} is similar to the $U_{0,2}^2\p_y^LU_{\neq,1}^3$ in the equation of $Q_{\neq,2}$, it obtains that
{\begin{align*}
	&\sqrt{\frac{\beta}{\beta-1}}\big|\langle \mathcal{B}K_{\neq,2},\mathcal{B}|\na_L|^{\frac12}|\na_{L,h}|^{-1}\big[U_{0,2}^2\p_y^LW^3_{\neq,1}\big]\rangle_{H^s}\big|\\
	&\quad=\big|\langle \mathcal{B}K_{\neq,2},\mathcal{B}|\na_L|^{\frac12}|\na_{L,h}|^{-1}\big[U_{0,2}^2\p_y^L|\na_L|^{-\frac12}|\na_{L,h}|K_{\neq,1}\big]\rangle_{H^s}\big|\\&\quad
	\lesssim \big|\langle \mathcal{B}|\na_L|^{\frac12}|\na_{L,h}|^{-\frac12}K_{\neq,2},e^{\frac{\lambda}{2}\nu^{\frac13}t}\frac{|\na_{x,y}|^{\frac12}}{\langle t\rangle^{\frac12}}\big[|\na_{y,z}|^{\frac12}U_{0,2}^2\p_y^L|\na|^{\frac12}\langle t\rangle^{\frac12}mK_{\neq,1}\big]\rangle_{H^s}\big|\\
&\quad\lesssim \big|\langle \mathcal{B}|\na_L|^{\frac12}|\na_{L,h}|^{-\frac12}K_{\neq,2},e^{\frac{\lambda}{2}\nu^{\frac13}t}\big[|\p_z|^{\frac12}|\na|^{\frac12}U_{0,2}|\na|^{\frac32}\langle t\rangle^{\frac12}m|\na_L|^{\frac12}K_{\neq,1}\big]\rangle_{H^s}\big|\\
&\quad\lesssim F_{\neq,2}^{\frac12}(t)\nu^{-\frac14}F_{0,2}^{\frac14}(t)\nu^{-\frac14}E_{0,2}^{\frac14}(t)F_{\neq,1}^{\frac14}(t)\nu^{-\frac14}E_{\neq,1}^{\frac14}(t)\nu^{-\frac16}\\&\quad\lesssim \delta F_{\neq,2}(t) + \nu^{a-1+\frac{\kappa}{6(1+\kappa)}}F_{0,2}^{\frac12}(t)F_{\neq,1}^{\frac12}(t).
\end{align*}}

Finally, the estimate of {$U_{\neq,1}^2\p_{yy}U^1_{0,2}+U_{\neq,1}^3\p_z\p_yU^1_{0,2}$} is similar to {$\p_y^LU_{\neq,1}\cdot\na U_{0,2}^1+\p_yU_{0,2}\cdot\na_LU^1_{\neq,1}$}. Integrating by part gives 
{\begin{align*}
	&\sqrt{\frac{\beta}{\beta-1}}\big|\langle \mathcal{B}K_{\neq,2},\mathcal{B}|\na_L|^{\frac12}|\na_{L,h}|^{-1}\big[U_{\neq,1}^2\p_{yy}U^1_{0,2}+U_{\neq,1}^3\p_z\p_yU^1_{0,2}\big]\rangle_{H^s}\big|\\&\quad
	\lesssim \sqrt{\frac{\beta}{\beta-1}}\big|\langle \mathcal{B}K_{\neq,2},\mathcal{B}|\na_L|^{\frac12}\big[U_{\neq,1}^2\p_{y}U^1_{0,2}+U_{\neq,1}^3\p_zU^1_{0,2}\big]\rangle_{H^s}\big|\\
	&\qquad+ \sqrt{\frac{\beta}{\beta-1}}\big|\langle \mathcal{B}K_{\neq,2},\mathcal{B}|\na_L|^{\frac12}|\na_{L,h}|^{-1}\big[\p_y^LU_{\neq,1}^2\p_{y}U^1_{0,2}+\p_y^LU_{\neq,1}^3\p_zU^1_{0,2}\big]\rangle_{H^s}\big|\\
	&\quad \lesssim\sqrt{\frac{\beta}{\beta-1}}\big|\langle \mathcal{B}K_{\neq,2},\mathcal{B}|\na_L|\big[|\p_y^L|^{\frac12}U_{\neq,1}^2|\p_y|^{\frac12}U^1_{0,2}\big]\rangle_{H^s}\big|+ \sqrt{\frac{\beta}{\beta-1}}F_{\neq,2}^{\frac12}(t)\nu^{-\frac13}E_{\neq,1}^{\frac12}(t)F_{0,2}^{\frac12}(t)\nu^{-\frac12} \\
	&\qquad+ \delta F_{\neq,2}(t) + \frac{\beta}{\beta-1}[\delta\nu^{2c-\frac{11}{6} }F_{0,2}(t)+ \nu^{\frac13}F_{0,2}(t)] \\&\quad
	\lesssim  \sqrt{\frac{\beta}{\beta-1}}F_{\neq,2}^{\frac12}(t)\nu^{-\frac12}\nu^{-\frac16}F_{\neq,1}^{\frac14}(t)E_{\neq,1}^{\frac14}(t)F_{0,2}^{\frac14}(t)\nu^{-\frac14}E_{0,2}^{\frac14}(t) \\
	&\qquad+ \delta F_{\neq,2}(t) + \frac{\beta}{\beta-1}[\delta\nu^{2c-\frac{11}{6} }F_{0,2}(t) + \nu^{\frac13}F_{0,2}(t) + \nu^{\frac{\kappa}{3(1+\kappa)}}F_{0,2}(t)]\\
	&\quad\lesssim \delta F_{\neq,2}(t) + \frac{\beta}{\beta-1}[\delta\nu^{a- 1 + \frac{\kappa}{6(1+\kappa)}}F_{0,2}^{\frac12}(t)F_{\neq,1}^{\frac12}(t)+\delta\nu^{2c-\frac{11}{6} }F_{0,2}(t) + \nu^{\frac13}F_{0,2}(t) + \nu^{\frac{\kappa}{3(1+\kappa)}}F_{0,2}(t)],
\end{align*}}
where in the third inequality we have used \eqref{ll7} and 
\begin{align*} \widehat{{|\partial_{y}^{L}}|^{\frac{1}{2}}|U_{\neq,1}^{2}|}&\lesssim m  p_{h}^{\frac{1}{4}}\frac{p^{\frac{1}{4}}}{|k,l|^{\frac{1}{2}}}\frac{1}{p_{h}^{\frac{1}{2}}}\left(\frac{|k|}{p^{\frac{1}{4}}}|\widehat{K_{\neq,1}}|+\frac{|l|}{p^{\frac{1}{4}}}|\widehat{Q_{\neq,1}}|\right)\\&\lesssim mp_{h}^{-\frac{1}{4}}\left(|k|^{\frac{1}{2}}|\widehat{K_{\neq,1}}|+|l|^{\frac{1}{2}}|\widehat{Q_{\neq,1}}|\right).\end{align*}

Now we focus on the equation of $H_{\neq,2}$. We use the fact that    to obtain
\begin{align*}
	&\big|\langle\mathcal{B}H_{\neq,2},\mathcal{B}|\na_L|^{\frac12}\big[U_{0,2}^2\p_y^L\Theta_{\neq,1}+U_{\neq,2}^{1,3}\cdot\nabla_{x,z}\Theta_{\neq,1}+U_{\neq,2}^2\p_y^L\Theta_{\neq,1}\big]\rangle_{H^s}\big|\\&\quad
	\lesssim  F_{\neq,2}^{\frac12}(t)\nu^{-\frac13}\Big(F_{0,2}^{\frac14}(t)\nu^{-\frac14}E_{0,2}^{\frac14}F_{\neq,1}^{\frac14}(t)\nu^{-\frac14}E_{\neq,1}^{\frac14}(t) +\nu^{-\frac16} F_{\neq,2}^{\frac12}(t)\nu^{-\frac16}E_{\neq,1}^{\frac12}(t)\\
	&\qquad+\nu^{-\frac16}F_{\neq,2}^{\frac12}(t)\nu^{-\frac{\kappa}{6(1+\kappa)}}E_{\neq,1}^{\frac12}(t)\nu^{-\frac13}\Big)\\
&\quad	\lesssim \delta F_{\neq,2}(t) + \delta\nu^{a-\frac56 + \frac{\kappa}{6(1+\kappa)}}F_{0,2}^{\frac12}(t)F_{\neq,1}^{\frac12}(t) + \nu^{c-\frac23}F_{\neq,2}(t),
\end{align*}
and 
\begin{align*}
	&\big|\langle\mathcal{B}H_{\neq,2},\mathcal{B}|\na_L|^{\frac12}\big[U_{\neq,2}\cdot\nabla\Theta_{0,1} + U_{0,2}^{1,3}\cdot\nabla_{x,z}\Theta_{\neq,1}\big]\rangle_{H^s}\big|\\&\quad
	\lesssim F^{\frac12}_{\neq,2}(t)\nu^{-\frac12}F^{\frac12}_{\neq,2}(t)\nu^{-\frac16}(\delta\nu^c) + F^{\frac12}_{\neq,2}(t)\nu^{-\frac12}(\delta\nu^a)F^{\frac12}_{\neq,1}(t)\nu^{-\frac16}\\&\quad\lesssim \delta\nu^{c-\frac23}F_{\neq,2}(t) + \delta F_{\neq,2}(t) + \delta\nu^{2a-\frac43}F_{\neq,1}(t).
\end{align*}

For the term {$U^2_{\neq,1}\p_y\Theta_{0,2}+U_{\neq,1}^3\p_z\Theta_{0,2}$}, whose estimate is similar to {$U_{\neq,1}^2\p_{yy}U^1_{0,2}+U_{\neq,1}^3\p_z\p_yU^1_{0,2}$} in $K_{\neq,2}$ equation as follows
{\begin{align*}
	&\big|\langle\mathcal{B}H_{\neq,2},\mathcal{B}|\na_L|^{\frac12}\big[U^2_{\neq,1}\p_y\Theta_{0,2}+U_{\neq,1}^3\p_z\Theta_{0,2}\big]\rangle_{H^s}\big|\\
	&\quad\lesssim \big|\langle\mathcal{B}|\na_L|H_{\neq,2},\mathcal{B}\big[|\p_y^L|^{\frac12}U^2_{\neq,1}|\p_y|^{\frac12}\Theta_{0,2}\big]\rangle_{H^s}\big| + F_{\neq,2}^{\frac12}(t)\nu^{-\frac13}E_{\neq,1}^{\frac12}(t)F_{0,2}^{\frac12}(t)\nu^{-\frac12}\\
	&\quad\lesssim F_{\neq,2}^{\frac12}(t)\nu^{-\frac12}\nu^{-\frac16}F_{\neq,1}^{\frac14}(t)E_{\neq,1}^{\frac14}(t)F_{0,2}^{\frac14}(t)\nu^{-\frac14}E_{0,2}^{\frac14}(t) + \delta F_{\neq,2}(t) + \delta\nu^{\frac{\kappa}{3(1+\kappa)}} F_{0,2}(t)\\
	&\quad\lesssim \delta F_{\neq,2}(t) + \delta\nu^{\frac{\kappa}{3(1+\kappa)}} F_{0,2}(t) + \delta\nu^{a- 1 + \frac{\kappa}{6(1+\kappa)}}F_{0,2}^{\frac12}(t)F_{\neq,1}^{\frac12}(t).
\end{align*}}
The estimate of term {$U_{0,1}\cdot\na_L \Theta_{\neq,2}$} is similar to $U_{0,1}^2\p_y^L\Theta_{\neq,1}$, which gives
{\begin{align*}
	\big|\langle\mathcal{B}H_{\neq,2},\mathcal{B}|\na_L|^{\frac12}\big[U_{0,1}\cdot\nabla_L\Theta_{\neq,2}\big]\rangle_{H^s}\big|&\lesssim \big|\langle\mathcal{B}|\na_L|^{\frac12}H_{\neq,2},\big[|\na_{y,z}|^{\frac12}U_{0,1}|\na_L|^{\frac12}mH_{\neq,2}\big]\rangle_{H^s}\big|\\
	&\lesssim F^{\frac12}_{\neq,2}(t)\nu^{-\frac13}\|U_{0,1}\|_{H^{s+\frac12}}F^{\frac12}_{\neq,2}(t)\nu^{-\frac13}\\&\lesssim \delta\nu^{c-\frac23} F_{\neq,2}(t).
\end{align*}}

Due to the fact that $|k,l||\widehat{\Theta_{\neq,2}}|\lesssim \sqrt{|k,l|}|\widehat{mH_{\neq,2}}|,\ \sqrt{|k',l'|}\leq C\sqrt{|k,l|}+ C\sqrt{|k-k',l-l'|}$ and $mp^{\frac14}\lesssim \min\{\nu^{-\frac16}p^{\frac14},p^{\frac12}\}$, we have 
\begin{align*}
	&\big|\langle\mathcal{B}H_{\neq,2},\mathcal{B}|\na_L|^{\frac12}\big[U_{\neq,1}^{1,3}\cdot\nabla_{x,z}\Theta_{\neq,2}+U_{\neq,1}^2\p_y^L\Theta_{\neq,2}\big]\rangle_{H^s}\big|\\
	&\quad\lesssim \big|\langle\mathcal{B}H_{\neq,2},\mathcal{B}|\na_L|^{\frac12}|\nabla_{x,z}|^{\frac12}\big[U_{\neq,1}^{1,3}\cdot mH_{\neq,2}\big]\rangle_{H^s}\big| + \big|\langle\mathcal{B}H_{\neq,2},\mathcal{B}|\na_L|^{\frac12}\big[|\nabla_{x,z}|^{\frac12}U_{\neq,1}^{1,3}\cdot mH_{\neq,2}\big]\rangle_{H^s}\big| \\
	&\qquad+ \big|\langle\mathcal{B}H_{\neq,2},\mathcal{B}|\na_L|^{\frac12}\big[U_{\neq,1}^2\p_y^L\Theta_{\neq,2}\big]\rangle_{H^s}\big|\\
	&\quad\lesssim F^{\frac12}_{\neq,2}(t)\nu^{-\frac12}\nu^{-\frac{1}{6}}(\delta\nu^{\frac56+\frac{\kappa}{6(1+\kappa)}})F^{\frac12}_{\neq,2}(t)\nu^{-\frac16} + \nu^{-\frac16}F^{\frac14}_{\neq,2}(t)\nu^{-\frac14}(\delta\nu^b)^{\frac12}\|U_{\neq,1}^2\|_{H^s}F^{\frac14}_{\neq,2}(t)\nu^{-\frac14}\nu^{-\frac16}(\delta\nu^b)^{\frac12}\\
	& \quad\lesssim\delta\nu^{\frac{\kappa}{6(1+\kappa)}} F_{\neq,2}(t) + \delta\nu^b F^{\frac12}_{\neq,2}(t)\nu^{-\frac56}F^{\frac12}_{\neq,1}(t)\\&\quad\lesssim  \delta\nu^{\frac{\kappa}{6(1+\kappa)}} F_{\neq,2}(t) + \delta F_{\neq,2}(t) + \delta\nu^{2b-\frac53}F_{\neq,1}(t).
\end{align*}

\textit{4.5.3 Contribution without any external force term}
We now consider the contribution of the second system without any force term. We focus on the equation of $Q_{\neq,2}$. Due to 	$mp^{\frac14}\lesssim \min\{\nu^{-\frac16}p^{\frac14},p^{\frac12}\}$ and $\sqrt{\eta-\eta'-(k-k')t}\lesssim \sqrt{|\eta'-k't|}p_h^{\frac14}(k,\eta)$ , one gets
\begin{align*}
	&\big|\langle\mathcal{B}Q_{\neq,2},\mathcal{B}|\na_L|^{\frac32}|\na_{L,h}|^{-1}\big[U_{\neq,2}^{1,3}\cdot\na_{x,z}U^3_{\neq,2} + U_{0,2}\cdot\nabla_LU^3_{\neq,2}+U_{\neq,2}^2\p_y^LU_{\neq,2}^3\big]\rangle_{H^s}\big|\\
	&\quad
	\lesssim F^{\frac12}_{\neq,2}(t)\nu^{-\frac12}F^{\frac12}_{\neq,2}(t)\nu^{-\frac12}[\delta\nu^b+\delta\nu^a] + \big|\langle\mathcal{B}Q_{\neq,2},\mathcal{B}|\na_L|^{\frac32}|\na_{L,h}|^{-\frac12}\big[|\p_y^L|^{\frac12}U_{\neq,2}^2|\p_y^L|^{\frac12}U_{\neq,2}^3\big]\rangle_{H^s}\big|\\
	&\quad \lesssim\delta[\nu^{b-1}+\nu^{a-1}]F_{\neq,2}(t) + F_{\neq,2}^{\frac12}(t)\nu^{-\frac12}(F_{\neq,2}^{\frac14}(t)\nu^{-\frac14})^2(\delta\nu^b)\\
	&\quad \lesssim \delta[\nu^{b-1}+\nu^{a-1}]F_{\neq,2}(t),
\end{align*}
and since $\sqrt{|\eta-\eta'|}\lesssim \sqrt{|\eta'-k't|}p_h^{\frac14}(k,\eta)$,
\begin{align*}
	&\big|\langle\mathcal{B}Q_{\neq,2},\mathcal{B}|\na_L|^{\frac32}|\na_{L,h}|^{-1}\big[U_{\neq,2}^3\p_z U_{0,2}^3+U_{\neq,2}^2\p_y U_{0,2}^3\big]\rangle_{H^s}\big|\\
	&\quad\lesssim F^{\frac12}_{\neq,2}(t)\nu^{-\frac12}(\delta\nu^b)F^{\frac12}_{0,2}(t)\nu^{-\frac12} + \big|\langle\mathcal{B}Q_{\neq,2},\mathcal{B}|\na_L|^{\frac32}|\na_{L,h}|^{-\frac12}\big[|\p_y^L|^{\frac12}U_{\neq,2}^2|\p_y|^{\frac12} U_{0,2}^3\big]\rangle_{H^s}\big|\\
	&\quad\lesssim \delta F_{\neq,2}(t) + \delta^2\nu^{2b-2}F_{0,2}(t) + F_{\neq,2}^{\frac12}(t)\nu^{-\frac12}F_{\neq,2}^{\frac14}(t)\nu^{-\frac14}F_{0,2}^{\frac14}(t)\nu^{-\frac14}\delta\nu^{\frac{a+b}{2}}\\
	&\quad\lesssim \delta F_{\neq,2}(t) + (\delta^2\nu^{2b-2}+\delta^4\nu^{2a+2b-4})F_{0,2}(t).
\end{align*}
For the pressure terms, due to the fact that $\sqrt{|\eta-\eta'-(k-k')t|}\lesssim \sqrt{|\eta'-k't|}p_h^{\frac14}(k,\eta)$ and $\sqrt{|\eta-\eta'|}\lesssim \sqrt{|\eta'-kt|}p_h^{\frac14}(k,\eta)$, we have 
\begin{align*}
	&\big|\langle\mathcal{B}Q_{\neq,2},\mathcal{B}\p_z|\na_L|^{-\frac12}|\na_{L,h}|^{-1}\big[\p_{x,z}U_{\neq,2}\cdot\na_L U_{\neq,2}^{1,3}+\p_y^LU_{\neq,2}^{1,2}\cdot\na_{L,h}U_{\neq,2}^2+\p_y^LU_{\neq,2}^3\p_zU_{\neq,2}^2\big]\rangle_{H^s}\big|\\
	&\quad +\big|\langle\mathcal{B}Q_{\neq,2},\mathcal{B}\p_z|\na_L|^{-\frac12}|\na_{L,h}|^{-1}\big[\p_zU_{0,2}\cdot\na_LU_{\neq,2}^3+\p_yU_{0,2}^{1,2}\cdot\na_{L,h}U_{\neq,2}^2 + \p_yU_{0,2}^3\p_zU_{\neq,2}^2\big]\rangle_{H^s}\big|\\
	&\quad\lesssim (\delta\nu^b)(F^{\frac12}_{\neq,2}(t)\nu^{-\frac12} + F^{\frac12}_{0,2}(t)\nu^{-\frac12})F^{\frac12}_{\neq,2}(t)\nu^{-\frac12} \\
	&\qquad+ \big|\langle\mathcal{B}Q_{\neq,2},\mathcal{B}\p_z|\na_L|^{-\frac12}|\na_{L,h}|^{-\frac12}\big[(|\p_y^L|^{\frac12}U_{\neq,2}^3+|\p_y|^{\frac12}U_{0,2}^3)\p_z|\p_y^L|^{\frac12}U_{\neq,2}^2\big]\rangle_{H^s}\big|\\
	&\quad \lesssim \delta\nu^{b-1}F_{\neq,2}(t) + \delta F_{\neq,2}(t) + \delta^2\nu^{2b-2}F_{0,2}(t) + F_{\neq,2}^{\frac14}(t)\nu^{-\frac14}(\delta\nu^b)^{\frac12}(F_{\neq,2}^{\frac14}\nu^{-\frac14}(\delta\nu^b)^{\frac12}+F_{0,2}^{\frac14}\nu^{-\frac14}(\delta\nu^a)^{\frac12})F_{\neq,2}^{\frac12}(t)\nu^{-\frac12}\\
	&\quad \lesssim \delta\nu^{b-1}F_{\neq,2}(t) + \delta F_{\neq,2}(t) + (\delta^2\nu^{2b-2}+\delta^4\nu^{2a+2b-4})F_{0,2}(t)
\end{align*}

Next we focus on the equation of $K_{\neq,2}$. Since $\sqrt{|k'|}\lesssim \sqrt{|k-k'|}p_h^{\frac14}(k,\eta)$, we have
\begin{align*}
	&\sqrt{\frac{\beta}{\beta-1}}\big|\langle\mathcal{B}K_{\neq,2},\mathcal{B}|\na_L|^{\frac12}|\na_{L,h}|^{-1}\big[\p_xU_{\neq,2}^{1,2}\cdot\na_{L,h}U_{\neq,2}^2-\p_y^LU_{\neq,2}\cdot\na_LU_{\neq,2}^1+\p_xU_{\neq,2}^3\p_z U_{\neq,2}^2\big]\rangle_{H^s}\big|\\
	&\quad\lesssim \sqrt{\frac{\beta}{\beta-1}}(\delta\nu^b)F^{\frac12}_{\neq,2}(t)\nu^{-\frac12}F^{\frac12}_{\neq,2}(t)\nu^{-\frac12} + \sqrt{\frac{\beta}{\beta-1}}\big|\langle\mathcal{B}K_{\neq,2},\mathcal{B}|\na_L|^{\frac12}|\na_{L,h}|^{-\frac12}\big[|\p_x|^{\frac12}U_{\neq,2}^3\p_z|\p_x|^{\frac12} U_{\neq,2}^2\big]\rangle_{H^s}\big|\\
	&\quad\lesssim \sqrt{\frac{\beta}{\beta-1}}\delta\nu^{b-1}F_{\neq,2}(t) + \sqrt{\frac{\beta}{\beta-1}}F_{\neq,2}^{\frac12}(t)\nu^{-\frac14}(\delta\nu^b)F_{\neq,2}^{\frac12}(t)\nu^{-\frac12}\\
	&\quad\lesssim \sqrt{\frac{\beta}{\beta-1}}\delta\nu^{b-1}F_{\neq,2}(t),
	\end{align*}
	and
	\begin{align*}
	&\sqrt{\frac{\beta}{\beta-1}}\big|\langle\mathcal{B}K_{\neq,2},\mathcal{B}|\na_L|^{\frac12}|\na_{L,h}|^{-1}\big[\p_xU_{\neq,2}\cdot\na U_{0,2}^2-\p_y^LU_{\neq,2}\cdot\na U_{0,2}^1-\p_yU_{0,2}\cdot\na_LU_{\neq,2}^1\big]\rangle_{H^s}\big|\\
	&\quad\lesssim \sqrt{\frac{\beta}{\beta-1}}(\delta\nu^b)F^{\frac12}_{\neq,2}(t)\nu^{-\frac12}F^{\frac12}_{0,2}(t)\nu^{-\frac12}\\&\quad \lesssim \delta F_{\neq,2}(t) + \frac{\beta}{\beta-1}\delta\nu^{2b-2}F_{0,2}(t).
\end{align*}
 Integration by part yields that 
\begin{align*}
	&\sqrt{\frac{\beta}{\beta-1}}\big|\langle\mathcal{B}K_{\neq,2},\mathcal{B}|\na_L|^{\frac12}|\na_{L,h}|^{-1}\big[U_{\neq,2}\cdot\na_LW_{\neq,2}^3\big]\rangle_{H^s}\big|\\
	&\quad\lesssim\big|\langle\mathcal{B}K_{\neq,2},\mathcal{B}|\na_L|^{\frac12}\big[U_{\neq,2}\cdot\na_L|\na_L|^{-\frac12}K_{\neq,2}\big]\rangle_{H^s}\big| \\
	&\qquad+ \big|\langle\mathcal{B}K_{\neq,2},\mathcal{B}|\na_L|^{\frac12}|\na_{L,h}|^{-1}\big[|\na_{L,h}|U_{\neq,2}\cdot\na_L|\na_L|^{-\frac12}K_{\neq,2}\big]\rangle_{H^s}\big|\\
	&\quad\lesssim F^{\frac12}_{\neq,2}(t)\nu^{-\frac12}(\delta\nu^b)F^{\frac12}_{\neq,2}(t)\nu^{-\frac12} + (\delta\nu^b)F^{\frac12}_{\neq,2}(t)\nu^{-\frac12}F^{\frac12}_{\neq,2}(t)\nu^{-\frac12}\\
	&\quad\lesssim \delta\nu^{b-1}F_{\neq,2}(t), \end{align*}
	and
	\begin{align*}
	&\sqrt{\frac{\beta}{\beta-1}}\big|\langle\mathcal{B}K_{\neq,2},\mathcal{B}|\na_L|^{\frac12}|\na_{L,h}|^{-1}\big[U_{0,2}\cdot\na_LW_{\neq,2}^3\big]\rangle_{H^s}\big|\\
	&\quad\lesssim\big|\langle\mathcal{B}K_{\neq,2},\mathcal{B}|\na_L|^{\frac12}\big[U_{0,2}\cdot\na_L|\na_L|^{-\frac12}K_{\neq,2}\big]\rangle_{H^s}\big| \\
	&\qquad+ \big|\langle\mathcal{B}K_{\neq,2},\mathcal{B}|\na_L|^{\frac12}|\na_{L,h}|^{-1}\big[\p_yU_{0,2}\cdot\na_L|\na_L|^{-\frac12}K_{\neq,2}\big]\rangle_{H^s}\big|\\
	&\quad\lesssim F^{\frac12}_{\neq,2}(t)\nu^{-\frac12}(\delta\nu^a)F^{\frac12}_{\neq,2}(t)\nu^{-\frac12} + (\delta\nu^b)F^{\frac12}_{0,2}(t)\nu^{-\frac12}F^{\frac12}_{\neq,2}(t)\nu^{-\frac12}\\
	&\quad\lesssim \delta\nu^{a-1}F_{\neq,2}(t) + \delta F_{\neq,2}(t)+\delta\nu^{2b-2}F_{0,2}(t),
\end{align*}
and
\begin{align*}
	&\sqrt{\frac{\beta}{\beta-1}}\big|\langle\mathcal{B}K_{\neq,2},\mathcal{B}|\na_L|^{\frac12}|\na_{L,h}|^{-1}\big[U_{\neq,2}\cdot\na\p_yU_{0,2}^1\big]\rangle_{H^s}\big|\\
	&\quad\lesssim\sqrt{\frac{\beta}{\beta-1}}\left(\big|\langle\mathcal{B}K_{\neq,2},\mathcal{B}|\na_L|^{\frac12}\big[U_{\neq,2}\cdot\na U_{0,2}^1\big]\rangle_{H^s}\big| + \big|\langle\mathcal{B}K_{\neq,2},\mathcal{B}|\na_L|^{\frac12}|\na_{L,h}|^{-1}\big[\p_y^LU_{\neq,2}\cdot\na U_{0,2}^1\big]\rangle_{H^s}\big|\right)\\
	&\quad\lesssim \sqrt{\frac{\beta}{\beta-1}}\left(F^{\frac12}_{\neq,2}(t)\nu^{-\frac12}(\delta\nu^b)F^{\frac12}_{0,2}(t)\nu^{-\frac12} + (\delta\nu^b)F^{\frac12}_{\neq,2}(t)\nu^{-\frac12}F^{\frac12}_{0,2}(t)\nu^{-\frac12}\right)\\&\quad\lesssim \delta F_{\neq,2}(t) + \frac{\beta}{\beta-1}\delta\nu^{2b-2}F_{0,2}(t).
\end{align*}

Finally, for the equation of $H_{\neq,2}$, recall that $H_{\neq}=-|\na_L|^{\frac12}\Theta_{\neq}$ and using the fact that $mp^{\frac14}\lesssim \min\{\nu^{-\frac16}p^{\frac14},p^{\frac12}\}$, we obtain 
\begin{align*}
	&\big|\langle\mathcal{B}H_{\neq,2},\mathcal{B}|\na_L|^{\frac12}\big[U_{\neq,2}\cdot\na_L\Theta_{\neq,2} + U_{0,2}\cdot\na_L\Theta_{\neq,2}\big]\rangle_{H^s}\big|\\
	&\quad\lesssim F^{\frac12}_{\neq,2}(t)\nu^{-\frac12}(\delta\nu^b+\delta\nu^a)F^{\frac12}_{\neq,2}(t)\nu^{-\frac13}\nu^{-\frac16}\\&\quad=\delta(\nu^{b-1}+\nu^{a-1})F_{\neq,2}(t),
\end{align*}
and
	\begin{align*}
	&\big|\langle\mathcal{B}H_{\neq,2},\mathcal{B}|\na_L|^{\frac12}\big[U_{\neq,2}\cdot\na\Theta_{0,2}\big]\rangle_{H^s}\big|\\
	&\quad\lesssim F^{\frac12}_{\neq,2}(t)\nu^{-\frac12}(\delta\nu^b)F^{\frac12}_{0,2}(t)\nu^{-\frac12}\\&\quad\lesssim \delta F_{\neq,2}(t) + \delta\nu^{2b-2}F_{0,2}(t).
\end{align*}
Integrating the above inequality respect to $t$  immediately gives \eqref{L66} and  the proof of Proposition \ref{propEneq2} is finished. \hfill$\square$

Now, we let $c=5/6+\kappa/(6+6\kappa)$. Combing \eqref{L55} with \eqref{L66}  obtain that $a=b=1$, $c=14/15,\ \kappa/(6+6\kappa)=1/10$ and finish the bootstrap argument.

\appendix

\section{Dynamics of nonzero modes when $B_{\beta}>0$} 
\label{secA}
\qquad In fact, when $B_{\beta}>0$,    we can also prove the linear stability of the system of equations (1.6). Let us talk about the differences in the proof process here. At this point, it is sufficient to demonstrate the enhanced dissipation and and the inviscid damping effect at non-zero frequencies. We introduce a new group of good unknowns
\[
	Q^*_{\neq}:=-|\na_L|^2|\na_{L,h}|^{-1}U^3_{\neq},\ K^*_{\neq}:=-i\sqrt{\frac{\beta}{\beta-1}}|\na_L||\na_{L,h}|^{-1}W_{\neq}^3,\ H_{\neq}^*:=-|\na_L|\Theta_{\neq},
\]
and then rewrite \eqref{Non0L} as 
\begin{align}\label{Non0L1}
	\begin{cases}
		\p_t \widehat{Q_{\neq}^*} + \nu p \widehat{Q_{\neq}^*} - \frac12 \frac{\p_t p_{h}}{p_{h}}\widehat{Q_{\neq}^*} - \sqrt{B_{\beta}}\frac{l}{p^{1/2}}\widehat{K_{\neq}^*} + \alpha \frac{p_h^{1/2}}{p^{1/2}}\widehat{H_{\neq}^*} + 2\sqrt{\frac{\beta-1}{\beta}}\frac{k^2}{p_h}\frac{l}{p^{1/2}}\widehat{K_{\neq}^*}  = 0, \\
		\p_t\widehat{K_{\neq}^*} + \nu p\widehat{K_{\neq}^*} +\frac12\frac{l^{2}}{p}\frac{\p_t p_h}{p_h}\widehat{K_{\neq}^*}+ \sqrt{B_{\beta}}\frac{l}{p^{1/2}}\widehat{Q_{\neq}^*} = 0,\\
		\p_t\widehat{H_{\neq}^*} + \nu p\widehat{H_{\neq}^*}  -\frac{1}{2}\frac{\partial_{t}p}{p} \widehat{H_{\neq}^*}+ \alpha \frac{p_h^{1/2}}{p^{1/2}}\widehat{Q_{\neq}^*} = 0.
	\end{cases}
\end{align}
We use a suitably constructed Fourier multiplier $m^*$ with symbol $m^*=m^*(t,k,\eta,l)$, defined by 
\begin{align}\label{A.....4}	
	-\frac{\dot{m^*}}{m^*}=\left\{\begin{array}{ll}
		\frac{| k \left( \eta- kt \right)|}{k^2+\left(\eta- kt\right)^2}, & \text{ if }  |t-\frac{\eta}{k}|\leq 1000\nu^{-\frac13}, \\
		0,  & \text{ if } |t-\frac{\eta}{k}|> 1000\nu^{-\frac13}.
	\end{array}\right. 
\end{align}
with the initial data
\[
	m^*(0,k,\eta,l)=\begin{cases}
		1,\ \text{ if } \ \frac{\eta}{k}\leq 0,\\
		\left(\frac{k^2+\eta^2}{k^2}\right)^{\frac12},\ \text{ if } \ 0<\frac{\eta}{k}<1000\nu^{-\frac13},\\
		\left(\frac{k^2+(1000\nu^{-\frac13}k)^2}{k^2}\right)^{\frac12},\ \text{ if } \ 1000\nu^{-\frac13}\leq\frac{\eta}{k}.
	\end{cases}
\]
The role of $m^{*}$  is to handle the slowly decaying term 
$\frac12  \frac{\p_t p_{h}}{p_{h}}\widehat{Q_{\neq}^*} $, $\frac{1}{2}\frac{l^{2}}{p}\frac{\partial_{t}p_{h}}{p_{h}}\widehat{K_{\neq}^*}$ and $\frac12 \frac{\p_t p}{p}\widehat{H_{\neq}^*}$ in \eqref{Non0L1}, balancing the growth that solutions experience in the transition between the inviscid damping regime $t<\frac{\eta}{k}$, and the dissipative regime $t\gtrsim \frac{\eta}{k}+\nu^{-\frac{1}{3}}$. 

Define the  multiplier
$$M^{*}=M_{1}M_{5},$$
with $M_{1}$ and $M_{5}$ as in  \eqref{4.5} and \eqref{44} respectively.

\begin{lem}
	$(1)$ For $k\neq0$, the multiplier $m^*(t, k, \eta)$ can be given by the following exact formula:
\begin{itemize}
	\item if $ \frac{\eta}{k}<-1000\nu^{-\frac13}: \ m^*(t,k,\eta)\equiv 1;$
	\item if $-1000\nu^{-\frac13}\leq\frac{\eta}{k}\leq 0$:
	\begin{equation}  
		m^*=\begin{cases}
		\left(\frac{k^2+\eta^2}{p_h}\right)^{\frac12},\ \text{ if } \ 0\leq t\leq\frac{\eta}{k}+1000\nu^{-\frac13},\\
		\left(\frac{k^2+\eta^2}{k^2+(1000\nu^{-\frac13}k)^2}\right)^{\frac12},\ \text{ if } \ \frac{\eta}{k}+1000\nu^{-\frac13}\leq t.
	    \end{cases} \nonumber
     \end{equation}
	\item if $0<\frac{\eta}{k}<1000\nu^{-\frac13}$:    \begin{equation}
		m^*=\begin{cases}
		\left(\frac{p_h}{k^2}\right)^{\frac12},\ \text{ if } \ 0\leq t\leq\frac{\eta}{k},\\
		\left(\frac{k^2}{p_h}\right)^{\frac12},\ \text{ if } \ \frac{\eta}{k}\leq t\leq\frac{\eta}{k} + 1000\nu^{-\frac13},\\
		\left(\frac{k^2}{k^2+(1000\nu^{-\frac13}k)^2}\right)^{\frac12},\ \text{ if } \ \frac{\eta}{k} + 1000\nu^{-\frac13}\leq t.
	\end{cases}\nonumber
	\end{equation}
 
	\item if $1000\nu^{-\frac13}\leq\frac{\eta}{k}$: 
	\begin{equation} 
		m^*=\begin{cases}
		\left(\frac{k^2+(1000\nu^{-\frac13}k)^2}{k^2}\right)^{\frac12},\ \text{ if } \ 0\leq t\leq \frac{\eta}{k}-1000\nu^{-\frac13},\\
		\left(\frac{p_h}{k^2}\right)^{\frac12},\ \text{ if } \ \frac{\eta}{k}-1000\nu^{-\frac13}\leq t\leq\frac{\eta}{k},\\
		\left(\frac{k^2}{p_h}\right)^{\frac12},\ \text{ if } \ \frac{\eta}{k}\leq t \leq \frac{\eta}{k}+1000\nu^{-\frac13},\\
		\left(\frac{k^2}{k^2+(1000\nu^{-\frac13}k)^2}\right)^{\frac12},\ \text{ if } \ \frac{\eta}{k}+1000\nu^{-\frac13}\leq t.
	\end{cases}\nonumber
	\end{equation}
\end{itemize}	
$(2)$ In particular,  $m^*(t, k, \eta)$  are bounded above and below, but its bound depends on $\nu$
\begin{equation}\label{A.11-1}
	\nu^{\frac{1}{3}}\lesssim m^*\left(t, k, \eta\right) \leqslant \nu^{-\frac{1}{3}}.
\end{equation}
$(3)$  $m^*(t, k, \eta)$  and the frequency have the following relationship
\begin{equation}\label{A.11-2}
	\frac{|k|}{\sqrt{p_h}} \lesssim m^*\left(t, k, \eta \right) \lesssim\frac{\sqrt{p_h}}{|k|}.
\end{equation}
$(4)$  Moreover,  for all $t, k, \eta, \eta'$,  the following product estimates hold that
\begin{equation}\label{A.11-3}
	m^*(t, k, \eta)\lesssim \langle \eta-\eta' \rangle m^*(t,k, \eta').
\end{equation}
\end{lem}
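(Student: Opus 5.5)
The plan is to integrate the ODE \eqref{A.....4} for $\log m^*$ explicitly on each time interval, treating $k\neq0$ and, by symmetry, reducing to $k>0$. The basic identity is that on $\{|t-\eta/k|\le 1000\nu^{-1/3}\}$,
\[
-\frac{\dot m^*}{m^*}=\frac{|k(\eta-kt)|}{k^2+(\eta-kt)^2}=\frac{|\partial_t p_h|}{2p_h}.
\]
Hence $m^*\propto p_h^{1/2}$ where $\eta-kt>0$ (since $p_h$ decreases in time there), and $m^*\propto p_h^{-1/2}$ where $\eta-kt<0$. Away from this window $m^*$ is constant.

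\textbf{Part (1).} Starting from the prescribed initial value, I glue the pieces by continuity at $t=\eta/k-1000\nu^{-1/3}$, at $t=\eta/k$ (where $p_h=k^2$), and at $t=\eta/k+1000\nu^{-1/3}$.
\begin{itemize}
\item Case $\eta/k<-1000\nu^{-1/3}$: the critical window is never entered, so $m^*\equiv1$.
\item Case $-1000\nu^{-1/3}\le\eta/k\le0$: we start inside the window with $\eta-kt\le0$, so $m^*=(p_h(0)/p_h(t))^{1/2}$, with $p_h(0)=k^2+\eta^2$. It then freezes at the value attained at the exit time, where $p_h=k^2+(1000\nu^{-1/3}k)^2$.
\item Case $0<\eta/k<1000\nu^{-1/3}$: the initial datum $(p_h(0)/k^2)^{1/2}$ makes $m^*=(p_h/k^2)^{1/2}$ up to $t=\eta/k$, where it equals $1$. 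After that $m^*=(k^2/p_h)^{1/2}$, and it freezes at exit.
\item Case $\eta/k\ge1000\nu^{-1/3}$: $m^*$ stays at its initial constant until entry, at which point it equals $(p_h/k^2)^{1/2}$ (consistent with the entry value). It then follows the same pattern as the previous case.
\end{itemize}

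\textbf{Parts (2) and (3).} These are read off from the formulas. Each piece is of the form $(p_h/k^2)^{\pm1/2}$ or a ratio with $k^2+\eta^2$ in the numerator, evaluated where $|\eta-kt|\le1000\nu^{-1/3}|k|$ or frozen at such a value. Hence $k^2\le p_h$ in every relevant denominator, and every such ratio lies in $[\,c\nu^{1/3},\,C\nu^{-1/3}]$, since $1+(1000\nu^{-1/3})^2\lesssim\nu^{-2/3}$. This gives \eqref{A.11-1}.

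For \eqref{A.11-2}, the point is to compare each branch with $(p_h/k^2)^{\pm1/2}$:
\begin{itemize}
\item On the $(p_h/k^2)^{1/2}$ and $(k^2/p_h)^{1/2}$ branches this is immediate.
\item On the frozen branches I use that after freezing the ratio $p_h(t)/p_h(t_{\rm exit})\ge1$, so $m^*\le$ const $\le (p_h/k^2)^{1/2}$. The lower bound $k/\sqrt{p_h}\le m^*$ follows since the frozen value dominates $k/\sqrt{p_h(t_{\rm exit})}\ge k/\sqrt{p_h(t)}$ for later $t$.
\item Before entry, in the last case, $p_h(t)\ge p_h$ at entry, which handles that branch.
\item In the case $-1000\nu^{-1/3}\le\eta/k\le0$, the factor $(k^2+\eta^2)/p_h$ is $\le1$, and it is $\ge k^2/p_h$.
\end{itemize}

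\textbf{Part (4): main obstacle.} This is the only non-mechanical point. Fix $t,k$ and change $\eta$ to $\eta'$. I would not compare branches directly. Instead I would bound
\[
\frac{m^*(\eta)}{m^*(\eta')}\lesssim\frac{\sqrt{p_h(\eta)}}{|k|}\cdot\frac{\sqrt{p_h(\eta')}}{|k|}
\]
using (3), and then split into two regimes.
\begin{itemize}
\item If $|\eta-kt|\lesssim|k|+|\eta-\eta'|$ and $|\eta'-kt|\lesssim |k|+|\eta-\eta'|$ fails, the crude bound is too weak. In that regime I would instead observe that both $\eta/k$ and $\eta'/k$ lie on the same side of, and far from, the window.
\item In that situation $m^*$ is given by the same frozen or $\eta$-dependent formula, and the ratio $\big((k^2+\eta^2)/(k^2+\eta'^2)\big)^{1/2}\lesssim\langle\eta-\eta'\rangle$ follows by the triangle inequality.
\item In the near-window regime, $\sqrt{p_h(\eta)}\le\sqrt{p_h(\eta')}+|\eta-\eta'|$, combined with monotonicity of $m^*$ in $|\eta-kt|$ within the window, gives the factor $\langle\eta-\eta'\rangle$.
\end{itemize}
The care needed is in making these case boundaries uniform in $\nu$. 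If the crude argument falls short, I would follow the proof of \eqref{4.11-3} from \cite{MR3612004}, since $m^*$ is just the square of the analogue of $m$ with $l=0$.
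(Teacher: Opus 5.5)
Your direct integration of \eqref{A.....4} for (1)--(3) is correct, and it is the computation the lemma needs. The paper itself gives no proof here; it treats this lemma, like the one for $m$, as a routine analogue of \cite{MR3612004}. One small point on (2): what you actually prove, $c\nu^{1/3}\le m^*\le C\nu^{-1/3}$, is the right statement. The literal bound $m^*\le\nu^{-1/3}$ is false, since for $\eta/k\ge1000\nu^{-1/3}$ one has $m^*(0)=(1+10^6\nu^{-2/3})^{1/2}>1000\nu^{-1/3}$.

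The genuine gap is in (4). Your step ``$\sqrt{p_h(\eta)}\le\sqrt{p_h(\eta')}+|\eta-\eta'|$ combined with monotonicity of $m^*$ in $|\eta-kt|$ gives the factor $\langle\eta-\eta'\rangle$'' fails, because $m^*$ is not monotone in $|\eta-kt|$. It increases in $|\eta-kt|$ before the critical time, where $m^*=(p_h/k^2)^{1/2}$, and decreases after it, where $m^*=(k^2/p_h)^{1/2}$; it is monotone only in the signed variable $\eta/k-t$. When $\eta-kt$ and $\eta'-kt$ have opposite signs, both factors in your bound $\frac{\sqrt{p_h(\eta)}}{|k|}\cdot\frac{\sqrt{p_h(\eta')}}{|k|}$ are genuinely present. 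In that configuration no argument can give a single power, so \eqref{A.11-3} is false as stated.

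Concretely, take $k=1$, $T=1000\nu^{-1/3}$, $t=2T$, $\eta=3T$, $\eta'=T$. Integrating \eqref{A.....4} gives $m^*(t,1,\eta)=(1+T^2)^{1/2}$ and $m^*(t,1,\eta')=(1+T^2)^{-1/2}$. The ratio is $1+T^2$, while $\langle\eta-\eta'\rangle=(1+4T^2)^{1/2}$, so the implicit constant would have to be about $500\nu^{-1/3}$.

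Your fallback to \eqref{4.11-3} does not rescue this. The same configuration with $l=0$ shows \eqref{4.11-3} also fails for the paper's $m$: the ratio is about $T$, against $\langle2T\rangle^{1/2}$.

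What is true, and sharp by the example above, is $m^*(t,k,\eta)\lesssim\langle\eta-\eta'\rangle^2m^*(t,k,\eta')$. Here is a sketch. At fixed $t$, set $a=\eta/k$ and $\sigma=a-t$. Then $\partial_a\log m^*=\frac{|\sigma|}{1+\sigma^2}\mathbf{1}_{|\sigma|\le T}+\frac{a}{1+a^2}\mathbf{1}_{[-T,0]}(a)$. The first term is nonnegative and the second nonpositive, so each direction of the comparison is controlled by a single one of them. Using $\int_{\sigma'}^{\sigma}\frac{|x|}{1+x^2}\,dx\le\log\bigl(2\langle\sigma-\sigma'\rangle^2\bigr)$, where the square is needed exactly when $\sigma'<0<\sigma$, together with $|a-a'|\le|\eta-\eta'|$ (since $|k|\ge1$), gives the corrected bound.
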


The proof of \eqref{A.11x}-\eqref{A.33x} as in  Theorem \ref{1.1}  can be directly obtained from the following Lemma, which gives the linear enhanced dissipation and inviscid damping on $U_{\neq}$.  
\begin{lem}\label{lemA.4}
	Assume that $\nu > 0$ and $\beta \in \mathbb{R}$ with $B_\beta>0$, then there holds
	\begin{align}
		&\left\| U^1_{\neq} (t) \right\|_{L^{2}} \lesssim  \, b_{\beta}^* \, e^{-\lambda^*  \nu^{\frac{1}{3}} t} \left(\left\|  u_{\mathrm{in}}\right\|_{H^{3}}+\left\|  \theta_{\mathrm{in}}\right\|_{H^{2}} \right) , \label{A.1x}  \\
		&{\left\langle {t} \right\rangle} \left\|  U^2_{\neq} (t) \right\|_{L^{2}} \lesssim  \, b_{\beta}^* \, e^{-\lambda^* \nu^{\frac{1}{3}} t} \left(\left\|  u_{\mathrm{in}}\right\|_{H^{5}}+\left\|  \theta_{\mathrm{in}}\right\|_{H^{4}} \right) , \label{A.2x}  \\
		&\left\| (U^3_{\neq}, \Theta_{\neq}) (t) \right\|_{L^2} \lesssim   e^{-\lambda^* \nu^{\frac{1}{3}} t} \left(\left\|  u^3_{\mathrm{in}}\right\|_{H^{3}} + \sqrt{\frac{\beta}{\beta-1}} \left\|  \p_xu^2_{\mathrm{in}}-\p_yu^1_{\mathrm{in}}\right\|_{H^{2}}+\left\|  \theta_{\mathrm{in}}\right\|_{H^{2}} \right),\label{A.3x}  
	\end{align}
	where  $\lambda^*=\frac{1}{16}$ and $b_{\beta}^*=\max \left\{\sqrt{\frac{\beta-1}{\beta}}, \sqrt{\frac{\beta}{\beta-1}} \right\}$.
\end{lem}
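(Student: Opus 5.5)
We follow the same energy-method scheme as in Lemma \ref{lem3.4}, now applied to the starred unknowns $(Q^*_{\neq},K^*_{\neq},H^*_{\neq})$ of \eqref{Non0L1}. We weight with $\mathcal{A}^*:=m^*M^*e^{\lambda^*\nu^{\frac13}t}$, where $M^*=M_1M_5$, and no cross term $G$ is introduced. This is the key point: in \eqref{Non0L1} the couplings $\mp\sqrt{B_\beta}\,l p^{-1/2}$ between $Q^*$ and $K^*$, and $\pm\alpha p_h^{1/2}p^{-1/2}$ between $Q^*$ and $H^*$, are exactly antisymmetric. Hence in $\frac12\frac{d}{dt}|\mathcal{A}^*(\widehat{Q^*_{\neq}},\widehat{K^*_{\neq}},\widehat{H^*_{\neq}})|^2$ they cancel identically, and only $B_\beta>0$ is needed to define $\sqrt{\beta/(\beta-1)}$ and $\sqrt{B_\beta}$.

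After this cancellation, four groups of terms remain.
\begin{itemize}
\item The dissipation $\nu p|\mathcal{A}^*\cdot|^2$.
\item The ghost terms $(-\dot m^*/m^*)+(-\dot M^*/M^*)-\lambda^*\nu^{\frac13}$.
\item The stretching terms $-\frac12\frac{\p_tp_h}{p_h}|\widehat{\mathcal{A}^*Q^*}|^2$, $+\frac12\frac{l^2}{p}\frac{\p_tp_h}{p_h}|\widehat{\mathcal{A}^*K^*}|^2$ and $-\frac12\frac{\p_tp}{p}|\widehat{\mathcal{A}^*H^*}|^2$.
\item The nonsymmetric coupling $2\sqrt{\frac{\beta-1}{\beta}}\frac{k^2}{p_h}\frac{l}{\sqrt p}\widehat{\mathcal{A}^*K^*}\overline{\widehat{\mathcal{A}^*Q^*}}$.
\end{itemize}
The last one is absorbed by $M_5$ through \eqref{44}, using $2|ab|\le|a|^2+|b|^2$; this is the reason $M_5$ enters $M^*$. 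For the stretching terms, note that $|\p_tp|/p\le|\p_tp_h|/p_h=\frac{2|k(\eta-kt)|}{p_h}$ and $\frac{l^2}{p}\le1$. So all three are bounded by $\frac12\cdot\frac{2|k(\eta-kt)|}{p_h}|\cdot|^2=\frac{|k(\eta-kt)|}{p_h}|\cdot|^2$. By \eqref{A.....4}, this is exactly $-\dot m^*/m^*$ inside the window $|t-\eta/k|\le1000\nu^{-\frac13}$. Outside the window we have $|\eta-kt|\ge1000\nu^{-\frac13}|k|$, hence $\frac{|k(\eta-kt)|}{p_h}\le\frac{1}{|\eta-kt|}\cdot|k|\le\frac{\nu p}{2}$ after a short check, since $\nu p_h|\eta-kt|\ge\nu(1000)^3\nu^{-1}|k|^3$. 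Thus $-\dot m^*/m^*+\frac{\nu p}{2}$ dominates all stretching.

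Lemma \ref{lem4.2} \eqref{4.15} then gives $\frac{\nu p}{2}+(-\dot M_1/M_1)\ge c\nu^{\frac13}$. Choosing $\lambda^*=\frac1{16}$ closes a differential inequality
\[
\frac{d}{dt}|\mathcal{A}^*\cdot|^2+\nu p|\mathcal{A}^*\cdot|^2\le0 .
\]
Integrating in time and using $M^*\approx1$ from \eqref{4.13} yields
\[
|m^*(\widehat{Q^*},\widehat{K^*},\widehat{H^*})(t)|\lesssim e^{-\lambda^*\nu^{\frac13}t}|m^*(0)(\cdots)(0)|.
\]

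It remains to return to the physical unknowns. By \eqref{A.11-2}, $m^*\gtrsim |k|/\sqrt{p_h}$ and $m^*(0)\lesssim\langle\eta\rangle$.
\begin{itemize}
\item For $U^3$: $|\widehat{U^3}|=p_h^{1/2}p^{-1}|\widehat{Q^*}|\le\frac{p_h}{|k|p}|m^*\widehat{Q^*}|\le|m^*\widehat{Q^*}|$. At time zero, $|\widehat{Q^*}(0)|\lesssim\langle k,\eta,l\rangle^2|\widehat{u^3_{\rm in}}|$, and the extra factor $m^*(0)$ costs one more derivative, giving $H^3$.
\item For $\Theta$: $|\widehat{\Theta}|=p^{-1/2}|\widehat{H^*}|\le|m^*\widehat{H^*}|$, with initial cost $H^2$.
\end{itemize}
This gives \eqref{A.3x}. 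Then \eqref{3.20}, written with $K^*,Q^*$ as in \eqref{ll6}--\eqref{ll7}, gives
\[
|\widehat{U^1}|\lesssim\sqrt{\tfrac{\beta-1}{\beta}}|m^*\widehat{K^*}|+|m^*\widehat{Q^*}|
\]
and hence \eqref{A.1x} with the constant $b^*_\beta$. For $U^2$, the symbols carry a factor $p_h^{-1/2}$ or $|l|p^{-1}$. The bound $p_h^{-1}\lesssim\langle t\rangle^{-2}|k,\eta,l|^2$ produces $\langle t\rangle^{-1}$ at the price of two more derivatives, which gives \eqref{A.2x}.

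The main obstacle is the stretching bound outside the critical window. Since $m^*$ does not decay there, one must verify carefully that $\frac{|k(\eta-kt)|}{p_h}\le\frac{\nu p}{2}$ whenever $|t-\eta/k|>1000\nu^{-\frac13}$. If this fails for small $|\eta-kt|$, the fallback is to enlarge the window and use \eqref{A.11-1}.
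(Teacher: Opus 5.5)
Your proposal is correct and is essentially the paper's own proof. It uses the same weight $\mathcal{A}^*=m^*M^*e^{\lambda^*\nu^{1/3}t}$ with $M^*=M_1M_5$ and no cross term. The antisymmetric $\sqrt{B_\beta}$- and $\alpha$-couplings cancel; your antisymmetry is right, and the $+\alpha$ in the $H^*$-equation of \eqref{Non0L1} is a sign typo, cf.\ \eqref{Non0L}. The stretching is controlled by $-\dot m^*/m^*$ in the critical window and by $\nu p/2$ outside it; your check $\nu|\eta-kt|^3\geq 10^9|k|^3$ already removes the obstacle you flag. The $k^2l$ coupling is absorbed by $M_5$, and you return to $U^1,U^2,U^3,\Theta$ via \eqref{A.11-2} and \eqref{3.20}.

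The differences are cosmetic: you close pointwise with \eqref{4.15} rather than integrating and absorbing with \eqref{4.17}, and your displayed inequality keeping the full $\nu p$ is slightly too strong for $\lambda^*=1/16$, though you only need monotonicity of $|\mathcal{A}^*\cdot|^2$. One caveat you share with the paper: the lower bound $M_5\geq\exp(-2\pi\sqrt{(\beta-1)/\beta})$ degenerates as $\beta\to0^-$, so the implicit constants depend on $\beta$ there.
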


\pf To control the evolution of the nonzero modes, we introduce the Fourier multiplier
\begin{equation}\label{A88}
	\mathcal{A}^*:=m^*M^{*}e^{\lambda^* \nu^{\frac{1}{3}}t},
\end{equation} 
which incorporates the multiplier $m^*$ already used in the linear analysis,   additional ghost weights ${M^*}$ as well as the time weight $e^{\lambda^*\nu^{\frac{1}{3}}t}$, which $\lambda^*$ can be confirmed later. Recall the system  \eqref{Non0L1}, energy estimates yield that 
\begin{equation}\label{A99}
\begin{aligned}
    \frac12&\frac{d}{dt}|\mathcal{A}(\widehat{Q_{\neq}^*},\widehat{K_{\neq}^*},\widehat{H_{\neq}^*})|^2  + \nu p |\mathcal{A}(\widehat{Q_{\neq}^*},\widehat{K_{\neq}^*},\widehat{H_{\neq}^*})|^2 +\left[(-\frac{\dot{M^{*}}}{M^*})+(-\frac{\dot{m^*}}{m^*})-\lambda^*\nu^{\frac13}\right]|\mathcal{A}(\widehat{Q_{\neq}^*},\widehat{K_{\neq}^*},\widehat{H_{\neq}^*})|^2\\
    &-\frac12\frac{\p_t p_{h}}{p_{h}}|\widehat{\mathcal{A}Q_{\neq}^*}|^2+\frac12\frac{l^{2}}{p}\frac{\p_t p_h}{p_h}|\widehat{\mathcal{A}K_{\neq}^*}|^2-\frac{1}{2}\frac{\partial_{t}{p}}{p}|\widehat{\mathcal{A}H_{\neq}^*}|+2\sqrt{\frac{\beta-1}{\beta}}\frac{k^2}{p_h}\frac{l}{\sqrt{p}}\widehat{\mathcal{A}K_{\neq}^*}\overline{\widehat{\mathcal{A}Q_{\neq}^*}}=0.
\end{aligned}
\end{equation}
Then we have
\begin{align*}
    \frac12&\frac{d}{dt}|\mathcal{A}(\widehat{Q_{\neq}^*},\widehat{K_{\neq}^*},\widehat{H_{\neq}^*})|^2 +\left[ \nu p+\left(-\frac{\dot{M^*}}{M^*}\right)\right]|\mathcal{A}(\widehat{Q_{\neq}^*},\widehat{K_{\neq}^*},\widehat{H_{\neq}^*})|^2  \\
    &=\lambda^*\nu^{\frac13}|\mathcal{A}(\widehat{Q_{\neq}^*},\widehat{K_{\neq}^*},\widehat{H_{\neq}^*})|^2-\left[(-\frac{\dot{m^*}}{m^*})-\frac12 \frac{\p_t p_{h}}{p_{h}}\right]|\widehat{\mathcal{A}Q_{\neq}^*}|^2-\left[(-\frac{\dot{m^*}}{m^*})+\frac12\frac{l^{2}}{p}\frac{\p_t p_h}{p_h}\right]|\widehat{\mathcal{A}K_{\neq}^*}|^2\\
	&\quad-\left[(-\frac{\dot{m^*}}{m^*})-\frac12 \frac{\p_t p}{p}\right]|\widehat{\mathcal{A}H_{\neq}^*}|^2 -2\sqrt{\frac{\beta-1}{\beta}}\frac{k^2}{p_h}\frac{l}{\sqrt{p}}\widehat{\mathcal{A}K_{\neq}^*}\overline{\widehat{\mathcal{A}Q_{\neq}^*}}.
\end{align*}

Donate that $e_{\neq}^*(t):=|\mathcal{A}(\widehat{Q_{\neq}^*},\widehat{K_{\neq}^*},\widehat{H_{\neq}^*})|^2$. Notice that the definition of $m^*$ as in \eqref{A.....4}, we have
 $ 
 -\frac{\dot{m^*}}{m^*}+\frac{\nu p}{2}-\frac12 \big|\frac{\p_t p}{p_{h}}\big|\gtrsim 0$.
A direct calculation yields
\begin{equation}
\begin{aligned}\label{A010}
    &\frac{d}{dt}e_{\neq}^*(t) + \left[\nu p+(-\frac{\dot{M^*}}{M^*})\right]|\mathcal{A}(\widehat{Q_{\neq}^*},\widehat{K_{\neq}^*},\widehat{H_{\neq}^*})|^2\\
    &\quad\leq 2\lambda^*\nu^{\frac13}|\mathcal{A}(\widehat{Q_{\neq}^*},\widehat{K_{\neq}^*},\widehat{H_{\neq}^*})|^2-\left[(-\frac{\dot{M_5}}{M_5})+2\sqrt{\frac{\beta-1}{\beta}}\frac{k^2}{p_h}\frac{l}{\sqrt{p}}\right]2\widehat{\mathcal{A}K_{\neq}^*}\overline{\widehat{\mathcal{A}Q_{\neq}^*}},
\end{aligned}
\end{equation}
In the following, we introduce two new notations
\begin{align*}
    E_{\neq}^*(t):=&\|\mathcal{A}(Q_{\neq}^*,K_{\neq}^*,H_{\neq}^*)\|_{H^r}^2,\\
    F_{\neq}^*(t):=&\nu\|\mathcal{A}\na_L (Q_{\neq}^*,K_{\neq}^*,H_{\neq}^*)\|_{H^r}^2+\left\|\mathcal{A}\sqrt{-\frac{\dot{M^*}}{M^*}} \left(Q_{\neq}^*,K_{\neq}^*,H_{\neq}^*\right)\right\|_{H^r}^2 ,
\end{align*}
for any $r \geq 0$. Thus, recalling \eqref{A010} and going back to the physical space, we obtain that
\begin{align*}
    &\frac{\rm d}{{\rm d}t}E_{\neq}^*(t) + F^*_{\neq}(t) \leq 2\lambda^*\nu^{\frac13}\|\mathcal{A}(Q_{\neq}^*,K_{\neq}^*,H_{\neq}^*)\|_{H^r}^2.
\end{align*}
Taking $\lambda^*=\frac{1}{16}$   and integrating the above inequality in time yields that
\begin{align*}
    &\|\mathcal{A}(Q_{\neq}^*,K_{\neq}^*,H_{\neq}^*)\|_{H^r}^2 + \nu\|\mathcal{A}\na_L(Q_{\neq}^*,K_{\neq}^*,H_{\neq}^*)\|_{L^2_tH^r}^2 + \left\|\mathcal{A}\sqrt{-\frac{\dot{M^*}}{M^*}}(Q_{\neq}^*,K_{\neq}^*,H_{\neq}^*)\right\|_{L^2_tH^r}^2\\
    & \quad\leq \|\mathcal{A}(Q_{\neq}^*,K_{\neq}^*,H_{\neq}^*)(t=0)\|_{H^r}^2 +\frac{\nu^{\frac13}}{8}\|\mathcal{A}(Q_{\neq}^*,K_{\neq}^*,H_{\neq}^*)\|_{L^2_tH^r}^2.
\end{align*}
Due to $\mathcal{A}^*=m^*M^*e^{\lambda^* \nu^{\frac{1}{3}} t }$, \eqref{4.17} and 
 \eqref{A.11-2},  we have
\begin{equation}\label{Ax.0}
\begin{aligned}
	&\|m^* M^*(Q_{\neq}^*,K_{\neq}^*,H_{\neq}^*)\|_{H^r}^2 + \frac{ \nu}{2}\|m^*M^*\na_L(Q_{\neq}^*,K_{\neq}^*,H_{\neq}^*)\|_{L^2 H^r}^2 + \frac{1}{2} \left\|\sqrt{-\dot{M^*}  M^*}m^*(Q_{\neq}^*,K_{\neq}^*,H_{\neq}^*)\right\|_{L^2 H^r}^2  \\
& 	\quad\leq  e^{-2\lambda^* \nu^{\frac{1}{3}} t} \| (Q_{\mathrm{in}}^*,K_{\mathrm{in}}^*,H_{\mathrm{in}}^*)\|_{H^{r+1}}^2.
\end{aligned}
\end{equation}

Next, recalling the definition of $Q_{\neq}^{*}$, $K_{\neq}^{*}$ and  $H_{\neq}^{*}$ and combining with \eqref{A.11-2}, we have the following fact
\begin{align}\label{Ax.1}
	|\widehat{U^3_{\neq}}|= \frac{p_h^{\frac{1}{2}}}{p}|\widehat{Q_{\neq}^*}| \lesssim \frac{p_h}{p|k|}|m^*M^*\widehat{Q_{\neq}^*}|\lesssim \frac{1}{|k|}|m^*M^*\widehat{Q_{\neq}^*}|,
\end{align}
and
\begin{align}\label{Ax.2}
	|\widehat{\Theta_{\neq}}|=\frac{1}{p^{\frac{1}{2}}}|H_{\neq}^*|\lesssim \frac{1}{|k|}|m^*M\widehat{H_{\neq}^*}|.
\end{align}
Hence, together  \eqref{Ax.0} with \eqref{Ax.1}-\eqref{Ax.2} gives 
\begin{equation}
\begin{aligned}\label{Ax.3}
	\left\|{U^3_{\neq}}\right\|_{L^2}^2\lesssim&\left\| m^*M^*{Q_{\neq}^*}   \right\|_{L^2}^2 \\
	\lesssim &  e^{-2\lambda^* \nu^{\frac{1}{3}} t} \| (Q_{\mathrm{in}}^*,K_{\mathrm{in}}^*,H_{\mathrm{in}}^*)\|_{H^{1}}^2 \\
	\lesssim& e^{-2\lambda^* \nu^\frac{1}{3} t} \left(\| u^3_{\mathrm{in}}\|_{H^3}^2+{\frac{\beta}{\beta-1}} \left\|  \p_xu^2_{\mathrm{in}}-\p_yu^1_{\mathrm{in}}\right\|_{H^2}^2+\| \theta_{\mathrm{in}}\|_{H^2}^2 \right),
\end{aligned}
\end{equation}
similarly, we have
\begin{equation}
\begin{aligned}\label{Ax.4}
	\left\|{\Theta_{\neq}}\right\|_{L^2}^2	\lesssim e^{-2\lambda^* \nu^{\frac{1}{3}} t} \left(\| u^3_{\mathrm{in}}\|_{H^3}^2+{\frac{\beta}{\beta-1}} \left\|  \p_xu^2_{\mathrm{in}}-\p_yu^1_{\mathrm{in}}\right\|_{H^2}^2+\| \theta_{\mathrm{in}}\|_{H^2}^2 \right).
\end{aligned}
\end{equation}

Using  \eqref{3.20} and \eqref{A.11-2}, we get $\widehat{U_{\neq}^1}$ and $\widehat{U_{\neq}^2}$ which satisfy
\begin{equation}\label{Al6}
\begin{aligned}
	|\widehat{U_{\neq}^1}|\leq &\frac{1}{p_h}\left( \sqrt{\frac{\beta-1}{\beta}}  \frac{|\eta-kt| p_h^{\frac{1}{2}}}{p^{\frac{1}{2}}} |\widehat{K_{\neq}^*}|+ \frac{|k||l| p_h^{\frac{1}{2}}}{p} |\widehat{Q_{\neq}^*}|\right)\\
	\lesssim & \frac{1}{|k|} \left(\sqrt{\frac{\beta-1}{\beta}}|m^*M^*\widehat{K_{\neq}^*}| + |m^*M^*\widehat{Q_{\neq}^*}|\right),
\end{aligned}
\end{equation}
and 
\begin{equation}\label{Al7}
	\begin{aligned}
	|\widehat{U_{\neq}^2}|\leq &\frac{1}{p_h}\left( \sqrt{\frac{\beta-1}{\beta}}  \frac{|k| p_h^{\frac{1}{2}}}{p^{\frac{1}{2}}} |\widehat{K_{\neq}^*}|+ \frac{|\eta-kt||l| p_h^{\frac{1}{2}}}{p} |\widehat{Q_{\neq}^*}|\right)\\
	\lesssim &  \frac{1}{p^{\frac12}}\left(\sqrt{\frac{\beta-1}{\beta}} |m^*M^*\widehat{K_{\neq}^*}| +\frac{|l|}{|k|} |m^*M^*\widehat{Q_{\neq}^*}|\right).
\end{aligned}
\end{equation}
Thus,  this together \eqref{Ax.0}  leads to $L^2$-norm of $U_{\neq}^1(t)$ and $U_{\neq}^2(t)$
\begin{equation}\label{Ax.5}
\begin{aligned}
	\left\|{U^1_{\neq}}\right\|_{L^2}^2\lesssim & \left({\frac{\beta-1}{\beta}}\left\|m^*M^*\widehat{K_{\neq}^*}\right\|_{L^2}^2 + \left\|m^*M^*\widehat{Q_{\neq}^*}\right\|_{L^2}^2\right) \\
	\lesssim& \max \left\{ {\frac{\beta-1}{\beta}},  {\frac{\beta}{\beta-1}} \right\} e^{-2\lambda^* \nu^{\frac{1}{3}} t} \left(\| u_{\mathrm{in}}\|_{H^3}^2+\| \theta_{\mathrm{in}}\|_{H^2}^2 \right),
\end{aligned}
\end{equation}
and
\begin{equation}\label{Ax.6}
\begin{aligned}
	\left\|{U^2_{\neq}}\right\|_{L^2}^2\lesssim & {\left\langle {t} \right\rangle}^{-2} \left({\frac{\beta-1}{\beta}}\left\|m^*M^*\widehat{K_{\neq}^*}\right\|_{H^{2}}^2 + \left\|m^*M^*\widehat{Q_{\neq}^*}\right\|_{H^{2}}^2\right) \\
	\lesssim& {\left\langle {t} \right\rangle}^{-2} \,\max \left\{ {\frac{\beta-1}{\beta}},  {\frac{\beta}{\beta-1}} \right\} e^{-2\lambda^* \nu^{\frac{1}{3}} t} \left(\| u_{\mathrm{in}}\|_{H^{5}}^2+\| \theta_{\mathrm{in}}\|_{H^{4}}^2 \right),
\end{aligned}
\end{equation}
where we have used  the fact $p^{-1} \lesssim {\left\langle {t} \right\rangle}^{-2} |k, \eta, l|^2$. Hence, together \eqref{Ax.3}, \eqref{Ax.4}, \eqref{Ax.5} with \eqref{Ax.6} gives \eqref{A.1x}-\eqref{A.3x} and  the proof of Lemma \ref{lemA.4} is finished.    \hfill $\square$

With Lemma \ref{lemA.4} at hand, we  directly  give   the enhanced dissipation of the $L^2$-norm of $U_{\neq}$ and $ \Theta_{\neq}$ and the inviscid damping effect of $U_{\neq}^2$ when $s=0$. In fact, when $s>0$, the enhanced dissipation of $U_{\neq}$ and $\Theta_{\neq}$ can also be obtained by using a similar treatment. 

\vspace{4mm} \noindent\textbf{Acknowledgements.} Sun was partially supported by the National Natural Science Foundation
of China (grant 12501296). Xu was partially supported by the National Key R\&D Program of
China (grant 2020YFA0712900) and the National Natural Science Foundation of China (grants
12171040, 11771045 and 12071069).

\end{document}